\documentclass[12pt]{amsart}
\usepackage[margin=1in]{geometry}
\usepackage{bm}
\usepackage[dvipsnames,svgnames,x11names]{xcolor}
\usepackage[hyphens]{url}
\usepackage[colorlinks=true,
            linkcolor=Maroon,
            citecolor=blue,
            urlcolor=blue,
            hypertexnames=false,
            linktocpage]{hyperref}
\usepackage{bookmark}
\usepackage{amsmath,thmtools,mathtools,amssymb}
\mathtoolsset{showonlyrefs=true}
\usepackage{graphicx}
\usepackage{tikz}
\usepackage{fancyhdr}
\usepackage{esint}
\usepackage{enumerate}
\usepackage{caption}
\allowdisplaybreaks

\newcommand{\R}{\mathbb{R}}
\newcommand{\Sn}{S^{n-1}}
\newcommand{\ip}[2]{\langle #1,#2\rangle}
\newcommand{\abs}[1]{\left|#1\right|}
\newcommand{\tr}{\operatorname{tr}}
\newcommand{\divnabla}{\operatorname{div}_{\nabla}}
\newcommand{\divPhi}{\operatorname{div}_{\Phi}}
\newcommand{\Ric}{\operatorname{Ric}}
\newcommand{\Rm}{\operatorname{Rm}}
\newcommand{\LPhi}{\mathcal{L}_{\Phi}}

\theoremstyle{plain}
\newtheorem{theorem}{Theorem}[section]
\newtheorem*{theorem*}{Theorem}
\newtheorem{claim}[theorem]{Claim}
\newtheorem{proposition}[theorem]{Proposition}
\newtheorem{lemma}[theorem]{Lemma}
\newtheorem{corollary}[theorem]{Corollary}

\theoremstyle{definition}
\newtheorem{definition}[theorem]{Definition}

\theoremstyle{remark}
\newtheorem{remark}[theorem]{Remark}
\numberwithin{equation}{section}

\newcommand{\eq}[1]{\begin{equation}\begin{alignedat}{2}#1\end{alignedat}\end{equation}}

\begin{document}

\title{Weighted centro-affine Poincar\'e inequalities}
\author[Y. Hu, M. N. Ivaki]{Yingxiang Hu, Mohammad N. Ivaki}

\begin{abstract}
We obtain weighted centro-affine Bochner formulas on spherical caps associated with smooth strictly convex hypersurfaces. As a consequence, we prove weighted Poincar\'e inequalities on caps and on intersections of caps for a class of weights depending on the position vector $X$ of the hypersurface. In the unconditional case, we obtain a centro-affine Poincar\'e inequality with weight $|X|^2$, which is used to prove a Brunn--Minkowski inequality for the $(n+2)$-th dual quermassintegral. 

We also establish an $L_0$-Brunn--Minkowski inequality for the $q$-th dual quermassintegral for $q\in(0,n)$, with equality only for dilates, and an $L_p$-Brunn--Minkowski inequality for $q=n+\alpha$ whenever
\[
0<\alpha\le \frac{2p(1-p)}{2-p},
\]
which in particular covers the range $q\in(n,n+6-4\sqrt{2}]$ for suitable $p\in(0,1)$. These Brunn--Minkowski inequalities imply weighted centro-affine Poincar\'e inequalities and uniqueness results for the $L_{p,q}$-Minkowski problem in the unconditional class. 

Our main contribution is the introduction of a flat logarithmic centro-affine geometry on the positive orthant $(0,\infty)^n$, adapted to the multiplicative structure of the $L_0$-sum. In this geometry, a Bochner formula yields a sharp Poincar\'e inequality, as well as a new proof of the centro-affine Poincar\'e inequality with constant $n$ due to Kolesnikov--Milman, for unconditional bodies and unconditional functions.
\end{abstract}

\maketitle

\tableofcontents

\section{Introduction}

Throughout the paper we assume $n\ge 2$. A convex body in $\R^n$ is a compact convex set with nonempty interior. Let $k\ge 2$. A convex body $K\subset \R^n$ is said to be $C^k_+$ if its boundary is $C^k$-smooth and it has positive Gauss curvature. We always assume that $K$ contains the origin in its interior. Let $h=h_K$ denote the support function of $K$, and define the inverse Gauss map of $K$ by
\eq{
X(x)=Dh(x)=\bar{\nabla} h(x)+h(x)x, \quad x\in \Sn,
}
where $D$ denotes the Euclidean gradient in $\R^n$, and $\Sn$ is the unit sphere equipped with its standard induced round metric $\bar{g}$ and Levi-Civita connection $\bar{\nabla}$.

For a smooth function $f$ on $\Sn$, we write $\bar{\nabla}f$ and $\bar{\nabla}^2f$ for the corresponding gradient and Hessian with respect to $\bar{g}$. The rescaled cone-volume measure of $K$ is defined as
\eq{
dV_K=h\det(\bar{\nabla}^2h+hI)\, dx,
}
where $dx$ denotes the standard measure on $\Sn$. The centro-affine metric of $K$ on $\Sn$ is defined by 
\eq{
g=\frac{1}{h}(\bar{\nabla}^2h+hI).
}
We write $\nabla$ for the torsion-free centro-affine connection.

For $w\in\R^n\setminus\{0\}$, define $\ell_w=\ip{X}{w}$. One of our main results is the following weighted centro-affine Poincar\'e inequality on intersections of caps.

\begin{theorem}\label{thm:intersection-caps-general-weight}
Let $K$ be a $C^2_+$ convex body. Assume that $\omega\in C([0,\infty)^m)\cap C^2((0,\infty)^m)$ and $\omega|_{(0,\infty)^m}>0$. Let
\eq{
\Phi(x)=-\log\omega(\ell_{w_1}(x),\dots,\ell_{w_m}(x)), \quad d\mu_{\Phi}=e^{-\Phi}\, dV_K=\omega(\ell_{w_1},\dots,\ell_{w_m})\, dV_K.
}
Suppose for some constants $\alpha>0$ and $\beta\ge 0$, and for all $t=(t_1,\dots,t_m)\in(0,\infty)^m$
\eq{
\label{eq:matrix-condition-general-weight} 
D^{2}\omega^{\frac{1}{\alpha}}(t)\le 0
}
as a symmetric $m\times m$ matrix, and
\eq{
\label{eq:radial-condition-general-weight} 
\sum_{i=1}^{m} t_i\partial_i\log\omega(t)\ge \beta.
}
Then for every $F\in C^{1}(\Sn)$,
\eq{
\lambda_{\alpha,\beta}\int_{\Omega}(F-\bar{F}_{\Phi})^{2}\,d\mu_{\Phi} \le \int_{\Omega}|\nabla F|_{g}^{2}\,d\mu_{\Phi},
}
where
\eq{
\Omega=\bigcap_{i=1}^m\{\ell_{w_i}>0\}\neq\emptyset, \quad \bar{F}_{\Phi}=\frac{\int_{\Omega}F\,d\mu_{\Phi}}{\int_{\Omega}d\mu_{\Phi}}, \quad \lambda_{\alpha,\beta}=\frac{n-1+\alpha}{n-2+\alpha}(n-2+\beta).
}
\end{theorem}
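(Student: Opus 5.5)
We follow the Bakry--Émery / Reilly strategy: a weighted centro-affine Bochner formula on $\Omega$, combined with a weighted Laplacian eigenvalue problem carrying a Neumann-type boundary condition. The target is the inequality $\int_\Omega (\mathcal{L}_\Phi u)^2\,d\mu_\Phi \ge \lambda_{\alpha,\beta}\int_\Omega |\nabla u|_g^2\,d\mu_\Phi$ for the weighted Laplacian $\mathcal{L}_\Phi u=\Delta_g u-\langle\nabla\Phi,\nabla u\rangle_g$ associated with $d\mu_\Phi$; here $\Delta_g$ is the centro-affine Laplacian, which is symmetric with respect to $dV_K$.

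\textbf{Reduction.} By density it suffices to treat $F$ smooth. We first restrict to $\Omega_\varepsilon=\bigcap_i\{\ell_{w_i}>\varepsilon\}$, where $\omega$ is $C^2$ and positive, and then let $\varepsilon\to0$. Continuity of $\omega$ up to the boundary makes both sides converge. We solve $\mathcal{L}_\Phi u=F-\bar F_\Phi$ on $\Omega_\varepsilon$ with the natural Neumann condition. If corners cause trouble, we replace this by the Hörmander $L^2$-duality argument, which only needs the integrated Bochner identity for test functions.

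\textbf{Integrated Bochner identity.} For such $u$ we expect
\[
\int (\mathcal{L}_\Phi u)^2\,d\mu_\Phi=\int\Big(|\nabla^2u|^2+\mathrm{Ric}_g(\nabla u,\nabla u)+\nabla^2\Phi(\nabla u,\nabla u)\Big)d\mu_\Phi+\text{boundary terms}.
\]
Since the centro-affine connection is not Levi-Civita, the terms need care. We use the known centro-affine structure: the cubic form enters, and in the unweighted case the effective Ricci term is $(n-2)g$ with a favorable cubic-form contribution, as in the Kolesnikov--Milman constant $n-1$. The key computation is the centro-affine Hessian of $\ell_w$. Because $X$ is the inverse Gauss map, $\nabla^2\ell_w=-\ell_w g$; this is the centro-affine analogue of $\bar\nabla^2 x_i=-x_i\bar g$ and is the reason the $\ell_w$ are natural. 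Each boundary face $\{\ell_{w_i}=\varepsilon\}$ is then totally geodesic-like, with second fundamental form proportional to $\varepsilon g$, so the boundary terms vanish as $\varepsilon\to0$ or have a favorable sign.

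\textbf{Curvature--dimension estimate.} With $\Phi=-\log\omega(\ell)$ and $\nabla^2\ell_i=-\ell_ig$, we get
\[
\nabla^2\Phi=-\sum_{i,j}\partial_{ij}\log\omega\,\nabla\ell_i\otimes\nabla\ell_j+\Big(\sum_i\ell_i\partial_i\log\omega\Big)g .
\]
The second term is at least $\beta g$ by \eqref{eq:radial-condition-general-weight}. Next, $D^2\omega^{1/\alpha}\le0$ is equivalent to $-D^2\log\omega\ge\frac1\alpha D\log\omega\otimes D\log\omega$, so the first term is at least $\frac1\alpha\langle\nabla\Phi,\nabla u\rangle^2$ on $\nabla u$. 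This gives a $CD(n-2+\beta,\,n-1+\alpha)$-type condition, with effective dimension $N=n-1+\alpha$. Writing $|\nabla^2u|^2\ge\frac{(\Delta u)^2}{n-1}$ and using $a^2/(n-1)+b^2/\alpha\ge (a+b)^2/(n-1+\alpha)$, we obtain
\[
\frac{N-1}{N}\int(\mathcal{L}_\Phi u)^2\ge (n-2+\beta)\int|\nabla u|^2 .
\]
Since $\frac{N}{N-1}=\frac{n-1+\alpha}{n-2+\alpha}$, this yields exactly $\lambda_{\alpha,\beta}$. The spectral inequality then follows from $\int(F-\bar F)^2=-\int\langle\nabla u,\nabla F\rangle$ and Cauchy--Schwarz.

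\textbf{Main obstacle.} The hardest step is the centro-affine Bochner formula with its nonmetric connection. The cubic-form terms must combine so that the Hessian part still dominates $(\Delta u)^2/(n-1)$ and the Ricci-type term is at least $n-2$. A secondary difficulty is justifying the boundary terms on the nonsmooth domain $\Omega$.
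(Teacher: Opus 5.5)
Your curvature--dimension step is correct and matches the paper's proof: the formula for $\nabla^2\Phi$, the equivalence of $D^2\omega^{1/\alpha}\le 0$ with $D^2\log\omega+\frac1\alpha D\log\omega\otimes D\log\omega\le 0$, the radial bound giving $\beta g$, and the finite-$N$ Cauchy--Schwarz with $N=n-1+\alpha$ produce $\lambda_{\alpha,\beta}$ exactly as there.

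The step you call the main obstacle is not one. The asymmetric identity $\divnabla(\nabla_ZZ)=Z(\divnabla Z)+\Ric(Z,Z)+\tr_g((\nabla Z)\circ(\nabla Z))$ with $Z=\nabla u$ is exact. Its last term equals $|(\nabla^*)^2u|_g^2$, whose $g$-trace is $\Delta u$, and $\Ric=(n-2)g$. So no cubic-form bookkeeping is needed.

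The genuine gap is the boundary. Your domains $\bigcap_i\{\ell_{w_i}>\varepsilon\}$ still have edges where two faces meet. There the Neumann solution is in general not $C^3$ up to the boundary. The identity $-g(\nabla_ZZ,\mathbf N)=\mathrm{II}^*(Z,Z)$ needs $Z$ tangent to a $C^2$ hypersurface, so it is unavailable at the edges. Hence the favorable sign on each face does not control the edge contributions.

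The H\"ormander fallback does not repair this. With $u\in C_c^\infty(\Omega)$ the Bochner identity has no boundary term. But the duality step needs $\{\mathcal{L}_\Phi u\}$ to be dense in $L^2_0(\mu_\Phi)$. Its orthogonal complement contains every $L^2$ weak solution of $\mathcal{L}_\Phi v=0$. That space is infinite-dimensional when the boundary is accessible, e.g.\ $\omega\equiv 1$, $\beta=0$, which the theorem allows. With Neumann test functions instead, you are back at the corner regularity problem.

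The missing idea is to exhaust $\Omega$ by \emph{smooth} domains whose defining function is still centro-affinely concave, so that $\mathrm{II}^*\ge 0$ persists. The paper uses the soft minimum $q_\varepsilon=-\varepsilon\log\sum_i e^{-\ell_{w_i}/\varepsilon}$. With $\theta_i\propto e^{-\ell_{w_i}/\varepsilon}$ and $\nabla^2\ell_{w_i}=-\ell_{w_i}g$, one gets $\nabla^2q_\varepsilon=-(\sum_i\theta_i\ell_{w_i})g-\frac1\varepsilon\bigl(\sum_i\theta_i\,d\ell_{w_i}^{\otimes 2}-(\sum_i\theta_i\,d\ell_{w_i})^{\otimes 2}\bigr)\le 0$ on $\Omega$. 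Also $q_\varepsilon\le\min_i\ell_{w_i}\le q_\varepsilon+\varepsilon\log m$.

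Take a Sard regular value $c(\varepsilon)\in(0,\varepsilon)$. Then the sets $\{q_\varepsilon>c(\varepsilon)\}$ are smooth and have closure inside $\Omega$, where $\omega$ is $C^2$ and positive. Their indicators converge pointwise to $\mathbf{1}_\Omega$, and they satisfy $\mathrm{II}^*_\varepsilon(Z,Z)=-\nabla^2q_\varepsilon(Z,Z)/|\nabla q_\varepsilon|_g\ge 0$. From there your argument goes through, with means taken over the approximating domains and dominated convergence at the end.

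You also need these domains to be connected. Otherwise the Neumann problem, which has only a global compatibility condition, is not solvable. The paper proves connectedness by joining $X(x_0)$ and $X(x_1)$ by a segment in $K$, pushing it radially out to $\partial K$, and using convexity of $t\mapsto e^{-t/\varepsilon}$.
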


In the dual Brunn--Minkowski theory, the $q$-th dual quermassintegral is defined by
\eq{
\widetilde{V}_q(K)=\frac{q}{n}\int_K |x|^{q-n}\, dx.
}

As a corollary of recent analytic developments, Sadovsky and Zhang \cite{SZ25}, building on results of Kolesnikov--Milman \cite{KM22}, Kolesnikov--Livshyts \cite{KL21}, and a key ingredient due to Cordero-Erausquin and Rotem \cite{CER23}, established a Brunn--Minkowski inequality for $\widetilde{V}_q$ in the range $1<q<n$ for origin-symmetric convex bodies. The case $q=n$ reduces to the classical Brunn--Minkowski inequality. For related results in the range $q\le 1$, see \cite{XZ22}.

\begin{theorem*}
Let $q\in(1,n)$, and let $K,L\subset \R^n$ be origin-symmetric convex bodies. Then
\eq{
\widetilde{V}_q(K+L)^{\frac{1}{q}}\ge \widetilde{V}_q(K)^{\frac{1}{q}}+\widetilde{V}_q(L)^{\frac{1}{q}}.
}
\end{theorem*}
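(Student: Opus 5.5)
The plan is the standard local-to-global scheme. First I would reduce to origin-symmetric $C^2_+$ bodies by approximation, since $\widetilde V_q$ is continuous in the Hausdorff metric. For such $K,L$ the Minkowski combination $K_t=K+tL$ has support function $h_t=h_K+th_L$. It then suffices to show that $t\mapsto \widetilde V_q(K_t)^{1/q}$ is concave on $[0,\infty)$. Concavity plus $1$-homogeneity of $\widetilde V_q^{1/q}$ gives the inequality in the usual way. By homogeneity, concavity in turn reduces to a pointwise statement at $t=0$ for every even smooth perturbation $f$ of $h=h_K$:
\[
\widetilde V_q(K)\,\delta^2\widetilde V_q(K)[f,f]\le \Bigl(1-\tfrac1q\Bigr)\bigl(\delta\widetilde V_q(K)[f]\bigr)^2 .
\]

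Next I would compute the variations in the language of the paper. Pushing forward by the Gauss map gives $\widetilde V_q(K)=\frac{q}{n}\cdot\frac{1}{q}\int_{\Sn}|X|^{q-n}\,dV_K$ up to normalisation. More precisely, $\widetilde V_q(K)=\frac1n\int_{\Sn}\rho_K^q$, and $\delta \widetilde V_q[f]=\frac qn\int_{\Sn}\frac fh\,|X|^{q-n}\,dV_K$.

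Write $\varphi=f/h$ and $d\mu_\Phi=|X|^{q-n}dV_K$, that is, $\Phi=-\frac{q-n}{2}\log|X|^2$. Differentiating once more produces two contributions:
\begin{itemize}
\item the centro-affine Dirichlet term $-\int|\nabla\varphi|_g^2\,d\mu_\Phi$ together with $(n-1)\int\varphi^2\,d\mu_\Phi$, coming from the variation of $\det(\bar\nabla^2h+hI)$;
\item a term coming from the variation of the weight, namely $(q-n)\int \varphi\,\ip{X}{Df\text{-variation}}|X|^{-2}\,d\mu_\Phi$. Using $X=Dh$, this can be rewritten through $\nabla\varphi$ and $\varphi$.
\end{itemize}
After completing squares, the concavity condition becomes a weighted Poincar\'e inequality of the form
\[
\lambda_q\int_{\Sn}(\varphi-\bar\varphi_\Phi)^2\,d\mu_\Phi\le\int_{\Sn}|\nabla\varphi|_g^2\,d\mu_\Phi
\qquad\text{for all even }\varphi ,
\]
plus a nonnegative remainder. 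Here $\lambda_q$ must be at least an explicit threshold depending on $n$ and $q$, which reduces to $n-1$ when $q=n$.

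The main obstacle is proving this even, weighted spectral gap with the required constant. The unweighted even case with the improved constant is the Kolesnikov--Milman estimate, which handles $q=n$ and (by perturbation) $q$ near $n$. For general $q\in(1,n)$ I would proceed in three steps.
\begin{enumerate}
\item Pass to $\R^n$: $\varphi$ is identified with its $1$-homogeneous extension, and the weighted sphere problem with the measure $|x|^{q-n}dx$ restricted to $K$.
\item Invoke the Kolesnikov--Livshyts framework, which converts a local Brunn--Minkowski inequality into a Poincar\'e-type inequality for even functions with respect to a measure with density.
\item Apply the Cordero-Erausquin--Rotem result. It gives the required (log-)Brunn--Minkowski behaviour for rotation-invariant measures $e^{-w(|x|)}dx$ with $s\mapsto w(e^s)$ convex. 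This applies to $|x|^{q-n}$, where $w(e^s)=(n-q)s$ is linear.
\end{enumerate}
Combining this with the Kolesnikov--Milman even spectral gap yields the needed constant for the whole range $1<q<n$.

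The constraint $q>1$ is where the bookkeeping is tight. The threshold $\lambda_q$ exceeds what the symmetric estimate supplies once $q\le 1$. The remaining step is a routine closing argument: integrate the second-order inequality along $t$ and use the approximation step to drop the smoothness assumption.
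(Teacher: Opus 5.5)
Your reduction is sound and matches the paper's framework. By \autoref{prop:q-identity} and the first variation formula, local $\frac1q$-concavity of $\widetilde V_q$ at $K$ along even perturbations is \emph{exactly} (with no remainder) the inequality
\[
(q-1)\int_{\Sn}(F-\bar F)^2|X|^{q-n}\,dV_K\le\int_{\Sn}|\nabla F|_g^2|X|^{q-n}\,dV_K\quad\text{for even }F .
\]
So $\lambda_q=q-1$, and the local-to-global step is \cite[Lem.~4]{SZ25} or \cite[Lem.~3.1]{CLM17}. The paper uses this equivalence in the reverse direction, to deduce \autoref{thm:SZ-poincare} from the theorem.

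The gap is that this inequality, which is the entire content of the statement, is never proved. Your three steps name the right references but give no mechanism, and partly misdescribe them.
\begin{itemize}
\item The Kolesnikov--Livshyts framework goes the other way: it reduces a local Brunn--Minkowski inequality on symmetric bodies to an estimate for even solutions of a Neumann problem. So your step 2 presupposes what you want.
\item The Kolesnikov--Milman even spectral gap concerns the unweighted measure $dV_K$, i.e.\ $q=n$. There the required constant $n-1$ already holds for all $F$, and it says nothing about the weight $|X|^{q-n}$ for $q<n$. Your ``by perturbation, $q$ near $n$'' is unsupported.
\item If in step 3 you mean to quote a global Brunn--Minkowski theorem of \cite{CER23} for $e^{-w(|x|)}\,dx$, then everything before it is superfluous. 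Log-concavity plus $q$-homogeneity gives the claim by the rescaling in the proof of \autoref{thm:Lp-homogeneous-BM}. The theorem is then being cited rather than proved, and the singularity of the weight at the origin still has to be handled.
\end{itemize}

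The missing idea is the Reilly-type estimate sketched in \autoref{section:q-greater-than-n}.
\begin{itemize}
\item By \cite{KM18,KM22,KL21}, log-concavity of $\widetilde V_q$ on origin-symmetric bodies follows from $I_q(u)=\int_K(\|D^2u\|^2+D^2W(Du,Du))\,d\mu\ge0$ for even solutions of $Lu=C_0$ with the Neumann condition, where $W=(n-q)\log|x|$.
\item With $\alpha=q-n<0$ one has $D^2W(Du,Du)=-\alpha|D_Tu|^2/|x|^2+\alpha(\partial_ru)^2/|x|^2$, so only the radial term is unfavorable.
\item Since $u$ is even, each $\partial_iu$ is odd, and \cite[Thm.~4]{CER23} gives $\int_K|D\partial_iu|^2\,d\mu\ge(n-q)\int_K(\partial_iu)^2|x|^{-2}\,d\mu$.
\item Summing over $i$ absorbs the radial term and yields $I_q(u)\ge2(n-q)\int_K|D_Tu|^2|x|^{-2}\,d\mu\ge0$.
\end{itemize}
Log-concavity then upgrades to $\frac1q$-concavity by homogeneity. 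Equivalently, the local log-concavity inequality applied to $F-\bar F$ is precisely your Poincar\'e inequality with constant $q-1$.

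Finally, your account of the hypothesis $q>1$ is backwards. Since $\lambda_q=q-1$, the inequality is trivial for $q\le1$. The hypothesis that actually drives the argument is $q<n$, which makes $\alpha<0$ and the Hardy constant $n-q$ positive.
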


A function $F:\Sn\to \R$ is called unconditional if it is invariant under all coordinate reflections. A convex body $K\subset\R^n$ is called unconditional if its support function $h_K$ is unconditional.

Our cap inequalities have several consequences in symmetric settings. In particular, in the unconditional class we obtain a weighted centro-affine Poincar\'e inequality with the weight $|X|^2$, and use it to prove the following Brunn--Minkowski inequality.

\begin{theorem}\label{thm:main-dualquermass-q}
Let $K,L\subset \R^n$ be unconditional convex bodies. Then
\eq{
\widetilde{V}_{n+2}(K+L)^{\frac{1}{n+2}}\ge \widetilde{V}_{n+2}(K)^{\frac{1}{n+2}}+\widetilde{V}_{n+2}(L)^{\frac{1}{n+2}}.
}
Moreover, equality holds if and only if $K$ and $L$ are dilates of each other.
\end{theorem}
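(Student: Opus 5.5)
The plan is the standard local-to-global scheme for Brunn--Minkowski inequalities, as in Kolesnikov--Milman. Write $q=n+2$. In terms of the radial function, $\widetilde{V}_q(K)=\frac1n\int_{\Sn}\rho_K^{q}\,dx$, and via the radial map one can rewrite this as an integral over $\partial K$:
\[
\widetilde V_q(K)=\frac1n\int_{\Sn} |X|^{q-n}\,dV_K=\frac1n\int_{\Sn}|X|^2\,dV_K .
\]
So the weight $|X|^2$ appears naturally. We want to show that $t\mapsto \widetilde V_{n+2}(K+tL)^{1/(n+2)}$ is concave on the class of unconditional $C^2_+$ bodies. Since this class is closed under Minkowski addition and dense in all unconditional bodies, the inequality then follows by approximation. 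Along $h_t=h+t\varphi$ with $\varphi$ unconditional, we compute the first and second variations of $\widetilde V_q$. The first variation is $\frac{d}{dt}\widetilde V_q=\frac qn\int \varphi\,\rho^{q}\ldots$; more cleanly, it is $q\int \frac{\varphi}{h}\,|X|^{2}\,dV_K/n$. This follows because the variation of the dual quermassintegral is an integral of $\varphi$ against the dual curvature measure $\rho^{q-n}\,dS_K$, which equals $|X|^{2}h^{-1}dV_K$.

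The next step is to express concavity of $\widetilde V_q^{1/q}$ as a spectral inequality. Set $f=\varphi/h$ and $d\mu=|X|^2dV_K$. The second variation, after integration by parts in the centro-affine language ($g$, $\nabla$), has the form
\[
\int |\nabla f|_g^2\,d\mu-\int (\ldots) f^2\,d\mu + (\text{terms from the weight } |X|^2).
\]
The condition $\frac{d^2}{dt^2}\widetilde V_q^{1/q}\le0$ should then be equivalent to a weighted Poincar\'e-type inequality
\[
\lambda\int (f-\bar f_\mu)^2\,d\mu\le\int|\nabla f|_g^2\,d\mu ,
\]
possibly with an additional controlled term involving $\ip{X}{\nabla f}$, and with $\lambda$ determined by $q=n+2$. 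I expect the required constant to be $\lambda=n+1$, or whatever value makes the $f^2$ coefficient balance exactly. To confirm, I will carry out the variation calculation with $\Phi=-\log|X|^2$ in the framework of Theorem~\ref{thm:intersection-caps-general-weight}.

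To prove the Poincar\'e inequality for unconditional $f$, I decompose the sphere into the $2^n$ orthants. Each orthant is the intersection of caps $\Omega=\bigcap_{i=1}^n\{\ell_{e_i}>0\}$, and $|X|^2=\sum_i \ell_{e_i}^2$. I will apply Theorem~\ref{thm:intersection-caps-general-weight} with $m=n$ and $\omega(t)=|t|^2$. Since $\omega^{1/\alpha}=|t|^{2/\alpha}$, this is concave for $\alpha\ge2$; take $\alpha=2$, which gives $D^2|t|\le0$, true. The radial condition holds with $\beta=2$ by homogeneity. This yields $\lambda_{2,2}=\frac{n+1}{n}\cdot n=n+1$ on each orthant. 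Unconditional functions and bodies are symmetric across the orthants, so the means agree on every orthant and the inequality globalizes to $\Sn$ for unconditional $f$. The main obstacle is making the second variation reduce exactly to this inequality, i.e.\ checking that the constant demanded by $q=n+2$ matches $n+1$ and absorbing the extra weight-derivative terms. If there is a mismatch, I would first try the Cauchy--Schwarz handling of cross terms as in Kolesnikov--Milman's $L_p$ analysis.

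For the equality case, equality in the concavity forces $f=\varphi/h$ to be constant in the second variation along the segment. Here I will use that the orthant Poincar\'e inequality has equality only for constants, which I expect to hold because $\beta>0$ makes the inequality strict otherwise. Then $\varphi=ch$, so $L$ is a dilate of $K$. For non-smooth bodies, I would use the usual argument via the local-to-global equivalence with strictness on smooth approximations, possibly supplemented by uniqueness of the equality case of an associated Minkowski-type problem.
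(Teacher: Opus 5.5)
\textbf{There is a genuine gap: the weighted Poincar\'e inequality cannot be obtained from the intersection-of-caps theorem with $\omega(t)=|t|^2$.}

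Your reduction to a weighted Poincar\'e inequality is correct and coincides with the paper's first proof. By \autoref{prop:q-identity} with $q=n+2$ there are no extra weight terms: $\frac{d^2}{ds^2}\mu(K_s)|_{s=0}=\int_{\Sn}((n+1)F^2-|\nabla F|_g^2)|X|^2\,dV_K$. So local concavity is exactly the inequality with constant $n+1$ and weight $|X|^2$.

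The gap is in how you prove that inequality. \autoref{thm:intersection-caps-general-weight} does not apply to $\omega(t)=|t|^2$, because the Euclidean norm is convex, not concave. More generally, for every $s=2/\alpha>0$,
\[
D^2|t|^{s}=s|t|^{s-2}\Big(I-\frac{t\otimes t}{|t|^2}\Big)+s(s-1)|t|^{s-2}\,\frac{t\otimes t}{|t|^2},
\]
which has the positive eigenvalue $s|t|^{s-2}$ on $t^\perp$ as soon as $n\ge 2$. Hence $D^2\omega^{1/\alpha}\le 0$ fails for every $\alpha$. This is the real obstruction, not a technicality: for $\Phi=-2\log|X|$ these directions produce the wrong-sign term $-\frac{2}{|X|^2}\bar g(\tau(\cdot),\tau(\cdot))$ of \autoref{rem:full-bochner-weight-pinching}.

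The orthant decomposition cannot be repaired by an admissible weight either. Take $\omega(t)=(t_1+\cdots+t_n)^2$, cf.\ \autoref{rem:unconditional-B1-ball}. It yields $(\sum_i|X_i|)^2=|X|^2+2\sum_{i<j}|X_i||X_j|$, and these cross terms do not integrate away.

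The missing idea is to keep the signs. Use the single cap $\Omega_w$, $w=(1,\dots,1)$, with weight $\ell_w^2$ (\autoref{thm:cap-poincare-from-epsilon} with $\alpha=2$, constant $n+1$). Combine $\Omega_w$ and $\Omega_{-w}$ to get the inequality on all of $\Sn$ with weight $(\sum_iX_i)^2$ (\autoref{cor:w-even}). Then use $\int_{\Sn}G\,X_iX_j\,dV_K=0$ for $i\ne j$ and unconditional $G$, with $G=1,F,(F-\bar F)^2,|\nabla F|_g^2$. This replaces $\ell_w^2$ by $|X|^2$ everywhere, including in the mean.

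\textbf{Your equality argument also does not go through as stated.} It needs strictness of the weighted Poincar\'e inequality for nonconstant $F$, which the cap method does not provide. Those inequalities arise as $\varepsilon\to0$ limits of inequalities on $\Omega_{w,\varepsilon}$, and nothing in the argument, including $\beta>0$, yields a strict inequality that survives this limit. Even with strictness, you would still have to transfer equality from arbitrary unconditional bodies to $C^2_+$ ones, and approximation does not preserve equality.

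The paper gets the equality case from its second proof instead. It applies Borell--Brascamp--Lieb to $\mathbf 1_{K\cap\mathcal H_w}\ip{x}{w}^2$ on the half-space $\mathcal H_w$ (\autoref{lem:linear-weight-alpha-vector}). It then uses the identity $\int_{A\cap\mathcal H_w}\ip{x}{w}^2\,dx=\frac12\int_A|x|^2\,dx$ for unconditional $A$. Finally, Dubuc's characterization gives $L\cap\mathcal H_w=b(K\cap\mathcal H_w)+y$, and $y=0$ is forced by the quadratic weight and central symmetry.
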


\emph{Note added:} After this paper was posted on arXiv, Shay Sadovsky brought to our attention that the inequality in \autoref{thm:main-dualquermass-q} is a special case of Hadwiger's more general Brunn--Minkowski inequality \cite[Sec. 2.3]{Had56}.

We present two proofs of \autoref{thm:main-dualquermass-q}. The first is based on the weighted centro-affine Poincar\'e inequality in \autoref{cor:main-unconditional-poincare}, which is derived from \autoref{thm:cap-poincare-from-epsilon} with $\alpha=2$ and $w=(1,\dots,1)$, namely a single-cap version of \autoref{thm:intersection-caps-general-weight} with a power weight. This cap-based approach is essential for our argument: within the weighted centro-affine Bochner framework on the whole sphere, we were not able to obtain effective control of the term involving $-2\nabla^2\log |X|$; see \autoref{rem:full-bochner-weight-pinching}. The second proof is shorter, relies on the Borell--Brascamp--Lieb inequality, and yields the equality characterization.

Let $K,L\subset\R^n$ be unconditional convex bodies. For $0<p\le 1$ and $\lambda\in[0,1]$, define
\eq{
(1-\lambda)\cdot K+_p\lambda\cdot L=\left\{x\in\R^n:\langle x,y\rangle\le\left((1-\lambda)h_K(y)^p+\lambda h_L(y)^p\right)^{\frac{1}{p}}\quad \forall y\in\Sn\right\}.
}
For $p=0$, define
\eq{
(1-\lambda)\cdot K+_0\lambda\cdot L=\left\{x\in\R^n:\langle x,y\rangle\le h_K(y)^{1-\lambda}h_L(y)^\lambda\quad\forall y\in\Sn\right\}.
}

We prove:

\begin{theorem}
\label{thm:unconditional-logBM-Vq}
Let $q\in(0,n)$. If $K,L\subset \R^n$ are unconditional convex bodies, then for every $\lambda\in(0,1)$,
\eq{
\label{eq:unconditional-logBM-Vq} 
\widetilde{V}_q\left((1-\lambda)\cdot K+_0 \lambda\cdot L\right) \ge \widetilde{V}_q(K)^{1-\lambda}\widetilde{V}_q(L)^{\lambda}.
}
Moreover, equality holds if and only if $K$ and $L$ are dilates of one another.
\end{theorem}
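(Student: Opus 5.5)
The idea is to move the problem to the positive orthant and apply the Prékopa--Leindler inequality in logarithmic coordinates $x=e^{s}$, in which the $L_0$-sum becomes a Minkowski combination.

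\emph{Step 1 (the $L_0$-sum contains the coordinatewise geometric mean).} Let $K_+=K\cap[0,\infty)^n$ and $L_+=L\cap[0,\infty)^n$. For $x\in K_+$ and $y\in L_+$ set $z=x^{1-\lambda}y^{\lambda}$, with powers taken coordinatewise. Fix $u\in\Sn\cap[0,\infty)^n$. Hölder's inequality gives
\[
\ip{z}{u}=\sum_i (x_iu_i)^{1-\lambda}(y_iu_i)^{\lambda}\le \ip{x}{u}^{1-\lambda}\ip{y}{u}^{\lambda}\le h_K(u)^{1-\lambda}h_L(u)^{\lambda}.
\]
For general $u$, unconditionality and $z\ge0$ give $\ip{z}{u}\le\ip{z}{|u|}$ and $h_K(|u|)=h_K(u)$, so the same bound holds. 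Hence $z\in M:=(1-\lambda)\cdot K+_0\lambda\cdot L$. Taking $L=K$ shows that $\log K_+^{\circ}:=\{s\in\R^n: e^{s}\in K\}$ is convex, using that $K$ is unconditional and convex. The set $M$ is also unconditional.

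\emph{Step 2 (Prékopa--Leindler).} By unconditionality,
\[
\widetilde V_q(K)=\frac{q}{n}\,2^n\int_{\R^n}f_K(s)\,ds,\qquad f_K(s)=\mathbf 1_{\log K_+^{\circ}}(s)\,\exp\Big(\sum_i s_i+\tfrac{q-n}{2}\log\sum_i e^{2s_i}\Big),
\]
and similarly for $L$ and $M$. Since $q<n$ and log-sum-exp is convex, the exponent is concave. Therefore $f_K$, $f_L$ and $f_M$ are all built from one log-concave weight $e^{\psi(s)}$ restricted to convex sets. Step 1 gives $(1-\lambda)\log K_+^{\circ}+\lambda\log L_+^{\circ}\subset\log M_+^{\circ}$, so $f_M((1-\lambda)s+\lambda t)\ge f_K(s)^{1-\lambda}f_L(t)^{\lambda}$. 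Prékopa--Leindler then yields \eqref{eq:unconditional-logBM-Vq}; the finiteness of the integrals uses $q>0$.

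\emph{Step 3 (equality, the main obstacle).} For dilates $L=cK$ we have $M=c^{\lambda}K$, and $\widetilde V_q$ is $q$-homogeneous, so equality holds. Conversely, suppose equality holds. By Dubuc's characterization of equality in Prékopa--Leindler, up to null sets there is $s_0$ with $f_L(s)=a f_K(s-s_0)$ and $f_M(s)=a^{\lambda}f_K(s-\lambda s_0)$. Comparing supports gives $\log L_+^{\circ}=\log K_+^{\circ}+s_0$, and comparing weights gives $\psi(s)-\psi(s-s_0)=\log a$ on an open set. Now $\psi$ is the sum of a linear function and a multiple of $\log\sum_i e^{2s_i}$, and the latter is strictly convex in every direction except $(1,\dots,1)$. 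The function $s\mapsto \log\sum_ie^{2s_i}-\log\sum_ie^{2(s_i-s_{0,i})}$ is constant on an open set only when $s_0\in\R(1,\dots,1)$. I expect this strict-convexity argument, together with justifying Dubuc's theorem for these possibly nonsmooth data (in particular the full-dimensionality of the supports), to be the delicate part. Once $s_0=\tau(1,\dots,1)$, we get $L_+=e^{\tau}K_+$, and unconditionality gives $L=e^{\tau}K$.
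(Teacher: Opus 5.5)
Your proposal is correct and follows the paper's proof essentially step for step. The shared steps are: logarithmic coordinates on the positive orthant with the log-concave density $e^{\sum_i s_i}\bigl(\sum_i e^{2s_i}\bigr)^{(q-n)/2}$; the inclusion of coordinatewise geometric means into the $L_0$-sum, which you prove directly by H\"older where the paper cites \cite[Lem. 4.1]{Sar15}; Pr\'ekopa--Leindler; and Dubuc's equality characterization. In the equality case the paper differentiates the translated weight identity and compares the ratios $p_i/p_j$, with $p_i=e^{2z_i}/\sum_j e^{2z_j}$, to force the translation onto the diagonal, which is the same information your strict-convexity argument for log-sum-exp transversal to $(1,\dots,1)$ extracts (both use $q\neq n$).
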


The proof for $q\le n$ and the second proof for $q=n+2$ both rely on the Pr\'ekopa--Leindler or Borell--Brascamp--Lieb inequality, but in essentially different coordinate systems. For $q<n$, one uses the well-known trick of logarithmic change of variables on the positive orthant, as in \cite{BL95,Sar15}, which is compatible with the $L_0$-sum and turns the radial density $|x|^{q-n}\, dx$ into a log-concave density. For $q=n+2$, one instead works in the original Euclidean coordinates on a half-space, where the weight $\langle x,w\rangle^2$ is directly suited to the Borell--Brascamp--Lieb inequality; when $w=(1,\ldots,1)$, this half-space weight reproduces the radial weight $|x|^2$ after integration over unconditional sets. Although we do not know of an analogous half-space mechanism for the whole range $q\in(n,n+2)$, these two arguments suggest interpolating between the logarithmic and Euclidean coordinate systems. This leads naturally to $L_p$-type changes of variables on the positive orthant $(0,\infty)^n$, and yields an extension to the range $q\in(n,n+6-4\sqrt{2}]$:

\begin{theorem}
\label{thm:Lp-homogeneous-BM}
Let $K,L\subset\R^n$ be unconditional convex bodies, let $0<p<1$, $q=n+\alpha$, and assume $0<\alpha\le\frac{2p(1-p)}{2-p}$.
Then
\eq{
\widetilde{V}_q(K+_pL)^{\frac{p}{q}}\ge \widetilde{V}_q(K)^{\frac{p}{q}}+\widetilde{V}_q(L)^{\frac{p}{q}}.
}
Moreover, equality holds if and only if $K$ and $L$ are dilates.
\end{theorem}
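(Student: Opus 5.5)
The plan is to pass to the positive orthant, change variables by $x_i=y_i^{1/p}$, and apply Borell's theorem on $s$-concave measures, i.e. the Borell--Brascamp--Lieb inequality, in the new coordinates. This is an $L_p$ analogue of the logarithmic change of variables used for \autoref{thm:unconditional-logBM-Vq}.

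\textbf{Step 1: reduction to the orthant.} Since $K$ is unconditional, we have $\widetilde V_q(K)=\frac{q}{n}2^n\int_{K_+}|x|^{\alpha}\,dx$, where $K_+=K\cap[0,\infty)^n$ and $\alpha=q-n$. Let $M_p(a,b)=((1-\lambda)a^p+\lambda b^p)^{1/p}$.

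We first show that $x\in K_+$ and $z\in L_+$ imply $(M_p(x_i,z_i))_i\in (1-\lambda)\cdot K+_p\lambda\cdot L$. By unconditionality it suffices to test directions $y$ in the closed orthant. For $0<p\le1$, $M_p$ is concave and $1$-homogeneous on $[0,\infty)^2$, hence superadditive. Therefore
\[
\textstyle\sum_i y_iM_p(x_i,z_i)\le M_p\bigl(\sum_i y_ix_i,\sum_i y_iz_i\bigr)\le M_p(h_K(y),h_L(y)).
\]

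Now define $\Psi(y)=(y_1^{1/p},\dots,y_n^{1/p})$, and set $A=\Psi^{-1}(K_+)$ and $B=\Psi^{-1}(L_+)$. The claim above says exactly that $\Psi((1-\lambda)A+\lambda B)\subset\bigl((1-\lambda)\cdot K+_p\lambda\cdot L\bigr)_+$.

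\textbf{Step 2: the density.} Under $\Psi$ the measure $|x|^\alpha dx$ becomes $p^{-n}\varphi(y)\,dy$, where
\[
\varphi(y)=\prod_i y_i^{\frac1p-1}\Bigl(\sum_i y_i^{2/p}\Bigr)^{\alpha/2}.
\]
This $\varphi$ is positively homogeneous of degree $s=n(\frac1p-1)+\frac{\alpha}{p}$, and one checks that $n+s=\frac{q}{p}$.

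The key claim is that $\varphi^{1/s}$ is concave on $(0,\infty)^n$ whenever $\alpha\le\frac{2p(1-p)}{2-p}$. Granting this, Borell's theorem gives that $\mu=\varphi\,dy$ is $\frac1{n+s}=\frac pq$-concave. Combined with Step 1, this yields
\[
\widetilde V_q\bigl((1-\lambda)\cdot K+_p\lambda\cdot L\bigr)^{p/q}\ge(1-\lambda)\widetilde V_q(K)^{p/q}+\lambda\widetilde V_q(L)^{p/q}.
\]
To pass to the non-normalized sum $K+_pL$, use homogeneity. Replacing $K$ by $tK$ multiplies $h^p$ by $t^p$ and $\widetilde V_q$ by $t^q$. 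Choosing $\lambda$ in proportion to $\widetilde V_q(L)^{p/q}$, after rescaling $K$ and $L$ to unit $\widetilde V_q$, gives the stated inequality.

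\textbf{Step 3: concavity of $\varphi^{1/s}$ (main obstacle).} Concavity of $\varphi^{1/s}$ is equivalent to
\[
sD^2\log\varphi+\nabla\log\varphi\otimes\nabla\log\varphi\le0.
\]
Write $u_i=y_i^{2/p}$ and $S=\sum_i u_i$. Then
\[
\log\varphi=(\tfrac1p-1)\sum_i\log y_i+\tfrac\alpha2\log S,
\]
and its Hessian is
\[
-(\tfrac1p-1)\operatorname{diag}(y_i^{-2})+\tfrac\alpha2\Bigl[\tfrac{2}{p}\bigl(\tfrac2p-1\bigr)\operatorname{diag}\bigl(\tfrac{u_i}{S y_i^2}\bigr)-\tfrac{4}{p^2}\tfrac{u_iu_j}{S^2y_iy_j}\Bigr].
\]
I would substitute $v_i=\xi_i/y_i$ and test the quadratic form on arbitrary $v\in\R^n$. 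This reduces the claim to a quadratic inequality in $v$ with weights $\theta_i=u_i/S$, which lie in the simplex.

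The dangerous positive contribution is $\frac{\alpha}{p}(\frac2p-1)\sum_i\theta_iv_i^2$. I would bound it using $\theta_i\le1$ against $-(\frac1p-1)s|v|^2$ together with the squared-gradient term. I would then optimize via Cauchy--Schwarz over the direction $v$. The extremal configuration should be $\theta$ concentrated at one vertex, which effectively reduces to one variable, and there the inequality becomes exactly $\alpha(2-p)\le2p(1-p)$. This is the step where the threshold must emerge, and it is the one I expect to require the most care.

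\textbf{Step 4: equality.} If equality holds, the equality case of Borell--Brascamp--Lieb with a strictly $s$-concave-type density forces $A$ and $B$ to be homothetic, and in fact dilates, because $\varphi$ is not translation invariant. Hence $K_+$ and $L_+$ are dilates, and by unconditionality so are $K$ and $L$.
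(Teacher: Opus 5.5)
Your overall plan is the paper's own: reduce to the orthant, change variables by a power map (the paper's $T_p$ is your $\Psi^{-1}$ up to the factor $1/p$), prove the $M_p$-superadditivity inclusion, apply a Pr\'ekopa--Leindler/BBL inequality to the transformed density, use homogeneity, and invoke Dubuc for equality.

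Using Borell's $\frac{p}{q}$-concavity instead of Pr\'ekopa--Leindler plus the normalization $\widetilde V_q(K_0)=\widetilde V_q(L_0)=1$, $\lambda=b/(a+b)$ is a harmless variant. It also asks for nothing beyond log-concavity. Euler's identity gives $D^2\log\varphi(y)\,y=-\nabla\log\varphi(y)$, so for $\xi=ty+\zeta$ with $d\log\varphi(\zeta)=0$,
\[
sD^2\log\varphi[\xi,\xi]+\bigl(d\log\varphi(\xi)\bigr)^2=sD^2\log\varphi[\zeta,\zeta].
\]
Hence concavity of $\varphi^{1/s}$ is equivalent to log-concavity of $\varphi$, which is exactly what the paper proves in \autoref{lem:transformed-density-log-concave}. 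The squared-gradient term never helps you; it only cancels the radial direction.

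The genuine gap is Step 3. This is the only place the threshold enters, and your sketch would not reach it.
\begin{itemize}
\item Bounding $\frac{\alpha}{p}(\frac2p-1)\sum_i\theta_iv_i^2$ via $\theta_i\le1$ gives at best $\frac{\alpha}{p}(\frac2p-1)\le\frac{1-p}{p}$, i.e.\ $\alpha\le\frac{p(1-p)}{2-p}$. That is half the claimed range.
\item The vertex $\theta=e_1$ is the most favourable configuration, not the extremal one. With $r=2/p$, the $\log S$ part contributes $\frac{\alpha}{2}(r(r-1)-r^2)v_1^2=-\frac{\alpha}{p}v_1^2\le0$ there.
\end{itemize}
What is missing is to keep the negative rank-one term and use a variance bound:
\[
D^2\log S[\xi,\xi]=r(r-1)\sum_i\theta_iv_i^2-r^2\Bigl(\sum_i\theta_iv_i\Bigr)^2\le r(r-1)\sum_{i<j}\theta_i\theta_j(v_i-v_j)^2\le\frac{r(r-1)}{2}\sum_iv_i^2.
\]
The last step uses $(v_i-v_j)^2\le2(v_i^2+v_j^2)$ and $\theta_i(1-\theta_i)\le\frac14$; this is \autoref{lem:log-sum-power-hessian}. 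It yields
\[
D^2\log\varphi[\xi,\xi]\le\Bigl(\frac{\alpha(2-p)}{2p^2}-\frac{1-p}{p}\Bigr)\sum_iv_i^2,
\]
which is nonpositive exactly when $\alpha\le\frac{2p(1-p)}{2-p}$. Both inequalities become equalities as $\theta\to(\frac12,\frac12,0,\dots,0)$ with $v=e_1-e_2$, a direction lying in $\ker d\log\varphi$. This two-equal-weights configuration, not a vertex, is where the threshold is sharp.

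In Step 4, ``$\varphi$ is not translation invariant'' is not yet an argument. Argue as the paper does instead. $\bar A$ and $\bar B$ are coordinatewise down-closed subsets of $[0,\infty)^n$, by \autoref{lem:unconditional-convexity} and monotonicity of $y\mapsto y^{1/p}$. Their common corner is $0$, so the homothety from Dubuc's characterization must fix $0$. Since $\Psi(cy)=c^{1/p}\Psi(y)$, it follows that $L_+$ is a dilate of $K_+$.
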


See the discussion in \autoref{section:q-greater-than-n} for alternative perspectives on the distinction between the cases $q<n$ and $q>n$ through the Reilly approach of \cite{KM22}, which was successfully used to treat the case $q=n$.

We also prove the following weighted Poincar\'e inequality as a consequence of the Brunn--Minkowski inequalities for the $q$-th dual quermassintegral for $q\in(0,n)$. The case $q=n$ is the local Brunn--Minkowski inequality, which was proved in \cite{KM22,Mil23}. 

\begin{theorem}\label{thm:SZ-poincare}
Let $q\in(0,n)$. Assume that $K$ is a $C^2_+$ origin-symmetric convex body and $F\in C^1(\Sn)$ is even. Then
\eq{
(q-1)\int_{\Sn}(F-\bar{F})^2|X|^{q-n}\, dV_K \le \int_{\Sn} |\nabla F|_g^2|X|^{q-n}\, dV_K,
}
where
\eq{
\bar{F}=\frac{\int_{\Sn}F|X|^{q-n}\, dV_K}{\int_{\Sn}|X|^{q-n}\, dV_K}.
}
If $K$ is unconditional and $F$ is unconditional, then the constant $q-1$ improves to $q$.
\end{theorem}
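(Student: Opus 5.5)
The plan is to derive the inequality by linearizing the Brunn--Minkowski inequality for $\widetilde V_q$ at $K$ along the Minkowski (or $L_0$) perturbation $h_t = h + tfh$, with $f$ even (respectively unconditional), and reading off the second-order condition. We use the Sadovsky--Zhang inequality quoted above for $q\in(1,n)$ in the origin-symmetric case. For the unconditional case we use \autoref{thm:unconditional-logBM-Vq}, valid for all $q\in(0,n)$. For $q\le 1$ in the symmetric case the claimed constant $q-1\le 0$ makes the inequality trivial, so only $q\in(1,n)$ needs the argument.

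\textbf{Step 1: the formulas.} First express $\widetilde V_q$ in terms of $h$. Using polar coordinates in $\R^n$ and the change of variables through the Gauss map, write $\widetilde V_q(K)=\frac1n\int_{\Sn}|X|^{q-n}\,dV_K$. Indeed, $\frac qn\int_K|x|^{q-n}dx$ equals $\frac1n\int_{\partial K}|x|^{q-n}\langle x,\nu\rangle\,d\mathcal H^{n-1}$ by the divergence theorem, since $\operatorname{div}(|x|^{q-n}x)=q|x|^{q-n}$. Next set $h_t=e^{tF}h$ (the $L_0$ path), or $h_t=h+tFh$. Compute $\frac{d}{dt}$ and $\frac{d^2}{dt^2}$ of $\widetilde V_q(K_t)$ at $t=0$. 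Here $\dot X=D(Fh)$, and $\frac{d}{dt}dV$ involves the linearized operator $\frac{1}{h}\tr_g(\bar\nabla^2(Fh)+FhI)$. After integrating by parts in the centro-affine structure, the second variation should take the form
\[
\int(\text{zeroth order in }F)\,|X|^{q-n}dV_K-\int|\nabla F|_g^2|X|^{q-n}dV_K+(\text{terms from }\partial_t|X|^{q-n}),
\]
where the last terms include $(q-n)|X|^{q-n-2}\langle X,\dot X\rangle$ contributions.

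\textbf{Step 2: concavity.} The Brunn--Minkowski inequality implies that $t\mapsto \widetilde V_q(K_t)^{1/q}$ is concave along Minkowski combinations, and $\log\widetilde V_q$ is concave along the $L_0$ path. Either statement gives the second-order condition $V\ddot V\le \frac{q-1}{q}\dot V^2$, respectively $V\ddot V\le\dot V^2$. Since $\dot V$ is proportional to $\int F|X|^{q-n}dV_K$, subtracting the mean converts this into a variance bound. Replacing $F$ by $F-\bar F$ is harmless, because a constant $c$ corresponds to dilation and gives exact scaling. With that replacement the first variation vanishes, and the condition becomes $\ddot V\le 0$, which should rearrange to
\[
c_q\int(F-\bar F)^2|X|^{q-n}dV_K\le\int|\nabla F|^2_g|X|^{q-n}dV_K.
\]
We should find $c_q=q-1$ from the Minkowski path, and $c_q=q$ from the $L_0$ path, since the $L_0$ sum avoids the loss of one unit.

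\textbf{Main obstacle.} The hard part is the second-variation bookkeeping: showing that the cross terms from differentiating the weight $|X|^{q-n}$ combine exactly with the gradient terms, after integration by parts with respect to $g$ and $\nabla$, to leave only $\int|\nabla F|_g^2$ and a multiple of $\int F^2$. The cleanest route is a homogeneity trick. We use that $\widetilde V_q$ is $q$-homogeneous, so the dilation direction $F\equiv1$ forces the zeroth-order coefficient to equal $q(q-1)$ (or $q^2$ for $L_0$) relative to the normalization. This avoids computing everything explicitly.

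A further point is regularity: the Brunn--Minkowski theorems are stated for general bodies, and $K_t$ is $C^2_+$ for small $|t|$, so we only need differentiability of $\widetilde V_q$ along smooth paths. For $F\in C^1$ we approximate by smooth even (respectively unconditional) functions.
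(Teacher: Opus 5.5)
\textbf{There is a genuine gap.} Your overall strategy matches the paper's. You treat $q\le 1$ as trivial, linearize the Sadovsky--Zhang inequality along $h+sFh$ in the symmetric case, and linearize \autoref{thm:unconditional-logBM-Vq} along $he^{sF}$ in the unconditional case. The gap is in the step you flag as the main obstacle: the homogeneity shortcut does not close it.

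After integrating by parts, the second variation of $\int_K|x|^{q-n}dx$ along $h+sFh$ has the form $\int(\mathcal{R}F^2-|\nabla F|_g^2)\,dV_{K,q}$. Here $\mathcal{R}$ is a priori a function on $\Sn$ built from $\tr(\tau^{-1})$, $h\langle DW(X),x\rangle$ and $g(\nabla\log h,\nabla w)$, with $W=(n-q)\log|x|$ and $w=W\circ X$. Testing with the dilation $F\equiv 1$ only gives the weighted mean, $\int\mathcal{R}\,dV_{K,q}=(q-1)\int dV_{K,q}$. The second-order condition then reads
\[
\int\mathcal{R}F^2\,dV_{K,q}-\int|\nabla F|_g^2\,dV_{K,q}\le (q-1)\frac{\left(\int F\,dV_{K,q}\right)^2}{\int dV_{K,q}}.
\]
To deduce $(q-1)\int(F-\bar F)^2\,dV_{K,q}\le\int|\nabla F|_g^2\,dV_{K,q}$ from this for every even $F$, you need $\int(\mathcal{R}-(q-1))F^2\,dV_{K,q}\ge 0$ for all such $F$. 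That means $\mathcal{R}\ge q-1$ pointwise, and together with the mean value it forces $\mathcal{R}\equiv q-1$. Evaluating the quadratic form at a single test function cannot give a pointwise identity.

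The same gap affects the $L_0$ case. There the coefficient $q$ is $q-1$ plus the first variation in the direction $hF^2$, namely $\frac qn\int F^2\,dV_{K,q}$, which comes from the $\frac{s^2}{2}hF^2$ term in $he^{sF}$. That increment of one is only useful once the linear-path coefficient is known to be identically $q-1$.

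\textbf{The missing ingredient is \autoref{prop:q-identity}.} The paper pulls back the Euclidean boundary second-variation formula to $\Sn$ and writes $\tr(\tau^{-1})=\Delta\frac1h+\frac{n-1}{h}$. It then integrates by parts so the cross terms cancel, leaving
\[
\mathcal{R}_q=(n-1)-h\langle DW(X),x\rangle-g(\nabla\log h,\nabla w).
\]
\autoref{lem:simple-cross} shows this is identically $q-1$.

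Homogeneity is the underlying reason, but it must be used pointwise, not globally. Since $\nabla\log h=\tau^{-1}\bar{\nabla} h$ and $dw=\langle DW(X),\tau(\cdot)\rangle$, one has
\[
g(\nabla\log h,\nabla w)=\langle DW(X),\bar{\nabla} h\rangle=\langle DW(X),X\rangle-h\langle DW(X),x\rangle.
\]
Euler's identity $\langle DW(X),X\rangle=n-q$ then gives $\mathcal{R}_q\equiv q-1$ at every point. With this identity in hand, the rest of your argument goes through, including replacing $F$ by $F-\bar F$ to make the first variation vanish.
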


In addition, we prove the following uniqueness results for the $L_{p,q}$-Minkowski problem.

\begin{theorem}
\label{thm:uniqueness-Lpq-unconditional}
Let $f\in C(\Sn)$ be positive and unconditional, and assume either
\begin{enumerate}
\item[\rm(i)] $q=n+2$ and $1\le p<q$, or
\item[\rm(ii)] $q\in(0,n)$ and $0\le p<q$.
\end{enumerate}
Then there exists at most one $C^2_+$ unconditional convex body whose support function $h$ solves
\eq{
|Dh|^{q-n}h^{1-p}\det(\bar{\nabla}^2 h+hI)=f.
}
\end{theorem}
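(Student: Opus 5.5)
Uniqueness for the $L_{p,q}$-Minkowski problem will be derived from the Brunn--Minkowski inequalities of Theorems~\ref{thm:main-dualquermass-q} and \ref{thm:unconditional-logBM-Vq}, together with their equality cases, by the standard variational argument. Suppose $h_K$ and $h_L$ are support functions of $C^2_+$ unconditional bodies solving the equation with the same $f$. The first variation of the dual quermassintegral is
\[
\frac{d}{dt}\Big|_{t=0}\widetilde{V}_q(K_t)=q\int_{\Sn}\frac{\dot h}{h}\,|X|^{q-n}\,dV_K,
\]
with $dV_K$ as in the introduction. For a solution we have $|X|^{q-n}dV_K=h^p f\,dx$, where $|X|=|Dh|$. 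The measure $d\widetilde{C}_{q}(K)=h^{-p}|X|^{q-n}dV_K=f\,dx$ therefore coincides for $K$ and $L$.

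\emph{Case (ii).} Here $q\in(0,n)$, and for $0\le p<q$ I would use the $L_0$-inequality of Theorem~\ref{thm:unconditional-logBM-Vq}, which implies the $L_p$ versions for $p\ge0$. Set $M_\lambda=(1-\lambda)\cdot K+_0\lambda\cdot L$. The support function of $M_\lambda$ satisfies $h_{M_\lambda}\le h_K^{1-\lambda}h_L^\lambda$, with equality at $\lambda=0$. The function $\lambda\mapsto\log\widetilde V_q(M_\lambda)$ is concave and dominates the linear interpolant. Differentiating at $\lambda=0^+$ and using that $\widetilde V_q$ of the Wulff shape depends only on the function on the support of the measure (Aleksandrov-type lemma) gives the Minkowski-type inequality
\[
\frac1{\widetilde V_q(K)}\int_{\Sn}\log\frac{h_L}{h_K}\,|X_K|^{q-n}dV_K\ \ge\ \frac1q\log\frac{\widetilde V_q(L)}{\widetilde V_q(K)}.
\]
Swap the roles of $K$ and $L$ and substitute $|X|^{q-n}dV=h^p f\,dx$. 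For $p=0$ the measures $|X|^{q-n}dV$ agree. Adding the two inequalities then forces $\widetilde V_q(K)=\widetilde V_q(L)$ together with equality throughout, so $K$ and $L$ are dilates; the volume equality and $q>0$ then make them equal. For $0<p<q$ I would instead use the $L_p$-Minkowski inequality obtained from Hölder and Jensen, $\int (h_L/h_K)^p\,d\bar\mu\ge(\widetilde V_q(L)/\widetilde V_q(K))^{p/q}$. Combining it with the symmetric statement in the standard way (as in Lutwak's $L_p$ uniqueness proof) gives $\widetilde V_q(K)=\widetilde V_q(L)$ when $p\ne q$, and then equality, hence dilates, hence equality.

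\emph{Case (i).} Here $q=n+2$ and $p\ge1$. Theorem~\ref{thm:main-dualquermass-q} gives $\widetilde V_{n+2}$ concavity of order $1/(n+2)$ under Minkowski addition. The first-variation argument then yields
\[
\int h_L\,d\widetilde C_{q,1}(K)\ge\widetilde V_q(K)^{1-1/q}\widetilde V_q(L)^{1/q},
\]
where $d\widetilde C_{q,1}(K)=h_K^{-1}|X_K|^{q-n}dV_K$. For $p\ge1$, Hölder gives the $L_p$ version: $\int (h_L/h_K)^p|X_K|^{q-n}dV_K/\widetilde V_q(K)\ge(\widetilde V_q(L)/\widetilde V_q(K))^{p/q}$. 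Writing $|X_K|^{q-n}dV_K=h_K^pf\,dx$, the left side becomes $\int h_L^pf\,dx/\widetilde V_q(K)$, which equals $1$ since $\int h_L^pf\,dx=\int |X_L|^{q-n}dV_L=\widetilde V_q(L)\cdot$const. Symmetrizing gives a chain of inequalities forcing $\widetilde V_q(K)=\widetilde V_q(L)$ because $p<q$. Equality holds in Hölder and in Brunn--Minkowski, so $K$ and $L$ are dilates and hence equal.

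The main obstacle is the equality bookkeeping. One must derive the Minkowski-type inequalities rigorously from the BM inequalities, which requires differentiating $\widetilde V_q$ along Wulff-shape or Minkowski combinations in the unconditional class. One must also ensure that the equality characterization in the Brunn--Minkowski theorems transfers to equality in the first-order inequality. For this I would argue that equality in the first-order inequality forces equality in the concavity along the whole segment, because the concave function then coincides with its tangent line at both endpoints.
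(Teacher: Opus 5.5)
Your proposal is correct and follows the paper's route. The paper also proceeds in two steps:
\begin{itemize}
\item It first derives Minkowski-type first-order inequalities from Theorems~\ref{thm:main-dualquermass-q} and \ref{thm:unconditional-logBM-Vq}. It handles their equality cases exactly as you do: the concave function is squeezed between its tangent line and its chord, so it is affine.
\item It then uses $|X|^{q-n}dV_K=h^pf\,dx$, H\"older/Jensen, and symmetrization in $K,L$ with $p<q$. This gives equal volumes, then dilates, then equality.
\end{itemize}

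There are two slips, neither affecting the argument. First, with the paper's normalization the first variation is $\frac{q}{n}\int\frac{\dot h}{h}|X|^{q-n}dV_K$, not $q\int\cdots$. Second, in Case~(i) the quantity $\int h_L^pf\,dx/\widetilde V_q(K)$ equals $\widetilde V_q(L)/\widetilde V_q(K)$, not $1$. That is exactly why $p<q$ is needed: writing $r=\widetilde V_q(L)/\widetilde V_q(K)$, the inequality $r\ge r^{p/q}$ forces $r\ge 1$, and swapping $K$ and $L$ gives $r=1$.
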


The uniqueness statement in $\rm(i)$ appears to be new even in the case $f=1$ and the pair $(p,q)=(1,n+2)$ for $n\ge 3$. For $f=1$, $n=2$, and $(p,q)=(1,4)$, uniqueness without the unconditional symmetry assumption was established in \cite[Thm. 1.2, Case (3), Subcase (2)]{LW26}. We refer the reader to \cite{And99,BLYZ12,BCD17,KM22,Iva23,IM23,HI24,LW24,CSX24,HYZ25,LW26} and the references therein for further related uniqueness results for the $L_{p,q}$-Minkowski problem. For $p\ge q$, uniqueness without any symmetry assumption is already known; see, for instance, \cite[Thm. B]{XZ22}. Alternatively, it also follows from a very simple maximum principle argument.

In the last section, we introduce the logarithmic centro-affine geometry. The proof of \autoref{thm:unconditional-logBM-Vq}, in the range $0<q\le n$, suggests that centro-affine geometry, although well adapted to the $L_1$-structure, does not fully reflect the multiplicative nature of the $L_0$-sum. The relevant infinitesimal geometry for unconditional bodies is instead obtained after passing to logarithmic coordinates on the positive orthant. More precisely, on
\eq{
\Omega_+=\{x\in\Sn:\ X_i(x)>0,\ i=1,\ldots,n\},
}
we consider the immersion $Y=\log X=(\log X_1,\ldots,\log X_n)$ and the constant transversal vector $\bm{\xi}=(1,\ldots,1)$. This induces a flat equiaffine structure $(g_{\log},\nabla^{\log})$ whose affine volume is $dV_{\log}=e^{-\eta}dV_K$, where $\eta:=\sum_iY_i$.

The logarithmic metric is closely related to the centro-affine metric. Indeed,
\eq{
g_{\log}=g+\sum_{i=1}^n\frac{x_iX_i}{h}d\log X_i\otimes d\log X_i.
}
Since $K$ is unconditional, we have $x_iX_i>0$ on $\Omega_+$, and hence $g_{\log}>g$. Consequently, the Poincar\'e inequality in the logarithmic geometry with weight $e^\eta$ and constant $n$ yields the centro-affine Poincar\'e inequality with the same constant $n$. See \autoref{lem:log-affine-metric}, \autoref{prop:g-log-g-polar-log} and \autoref{prop:g-log-plus-g-polar-log-is-round}.

The logarithmic geometry also combines the geometry of $K$ and its polar body. Let
\eq{
\bm{\xi}^*=\left(\frac{x_1X_1}{h},\ldots,\frac{x_nX_n}{h}\right)
}
denote the conormal field. Its components
\eq{
p_i=\frac{x_iX_i}{h},\quad i=1,\ldots,n,
}
are the diagonal entries of the matrix $\mathcal{P}=\frac{x}{h}\otimes X$, coupling the coordinate $X_i$ of the boundary point of $K$ with the corresponding coordinate $x_i/h$ of the polar point. These functions satisfy
\eq{
\mathcal{L}_0^{\log}p_i+np_i=1,\quad \text{where}\quad \mathcal{L}_0^{\log}:=\Delta_{\log}+g_{\log}(\nabla^{\log}\eta,\nabla^{\log}\cdot).
}
Hence the functions $p_i-\frac{1}{n}$ are eigenfunctions of $\mathcal{L}_0^{\log}$ with eigenvalue $-n$.

We note that, in the unconditional setting, \cite[Sec. 5]{KM16} uses a change of metric on the positive orthant in the derivation of functional inequalities. Here the change of metric emerges from the affine geometry induced by a constant transversal vector. Using the Hessian identity $(\nabla^{\log})^2\eta+ng_{\log}=0$ and a logarithmic Bochner formula, our approach yields
\eq{
n\int_{\Omega_+}(F-\bar{F}_+)^2\,dV_K
\le \int_{\Omega_+}\abs{\nabla^{\log}F}_{g_{\log}}^2\,dV_K
\le \int_{\Omega_+}\abs{\nabla F}_{g}^2\,dV_K,
}
where the first inequality is sharp in the logarithmic geometry. In particular, equality in the first inequality is attained by the functions $p_i-\frac{1}{n}$, which are the analogues of the quadratic spherical harmonics on the round sphere. Moreover, equality in the second inequality is attained only by constant functions. See \autoref{thm:log-proof-unconditional-poincare} and \autoref{thm:log-poincare-equality-case}. This should also be compared with \cite[Thm. 8.3 and Rem. 8.4]{KM22}, where the corresponding unconditional infinitesimal statement is obtained by a Reilly-type method, but without passing through the intermediate inequality above or the equality characterization.

\section{Background}

Define the positive definite $(1,1)$-tensor $\tau=\tau[h]=\bar{\nabla}^2 h+hI$. Then
\eq{
\label{eq:dX-tau-relation} 
dX|_x(v)=\tau(x)v,\quad v\in T_x\Sn.
}
We write $\tau^{ij}$ for the entries of the inverse matrix of $\tau$. 

Let $S_K$ denote the surface area measure of $K$:
\eq{
dS_K=\det(\tau)\, dx=\frac{1}{\mathcal{K}}\, dx=\frac{1}{h}\, dV_K,
}
where $\mathcal{K}$ denotes the Gauss curvature of $\partial K$ at the point $X(x)$.

\begin{lemma}[{\cite[Lem. 2.9]{HLYZ16}}]\label{lem:dual-quermass-centro-affine}
Let $K\subset \R^n$ be a $C^2_+$ convex body containing the origin in its interior. Then for every $q>0$, $\widetilde{V}_q(K)=\frac{1}{n}\int_{\Sn}|X|^{q-n}\, dV_K$.
\end{lemma}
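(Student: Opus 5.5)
The plan is to prove the identity by computing the volume of $K$ weighted by $|x|^{q-n}$ in polar coordinates, and then converting the resulting integral over the sphere of directions into an integral over the normal sphere by means of the Gauss map.

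\textbf{Step 1: polar coordinates.} Let $\rho_K$ denote the radial function of $K$. Writing $x=r\theta$ with $\theta\in\Sn$ gives
\[
\widetilde{V}_q(K)=\frac{q}{n}\int_{\Sn}\int_0^{\rho_K(\theta)}r^{q-n}r^{n-1}\,dr\,d\theta=\frac1n\int_{\Sn}\rho_K(\theta)^q\,d\theta .
\]
This step uses $q>0$ for integrability of $r^{q-1}$ near $0$.

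\textbf{Step 2: change of variables.} I would transfer this integral to the normal sphere through the map
\[
\theta(x)=\frac{X(x)}{|X(x)|},\qquad \rho_K(\theta(x))=|X(x)|.
\]
This map is a diffeomorphism $\Sn\to\Sn$ because $K$ is $C^2_+$ with the origin in its interior. Its Jacobian can be computed in either of two equivalent ways.

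\begin{itemize}
\item \emph{Via surface area.} The solid angle subtended by a surface element $dS$ at the boundary point $X$ equals $\frac{\langle X,x\rangle}{|X|^n}\,dS$. Since $\langle X,x\rangle=h$ and $dS=\det\tau\,dx$, this gives
\[
d\theta=\frac{h\det\tau}{|X|^n}\,dx=|X|^{-n}\,dV_K.
\]
\item \emph{Directly.} Differentiating $\theta=X/|X|$ and using \eqref{eq:dX-tau-relation}, $d\theta=\frac{1}{|X|}\bigl(I-\theta\otimes\theta\bigr)\tau\,dx$. The determinant of this map from $T_x\Sn$ to $T_\theta\Sn$ is $|X|^{-(n-1)}\det\tau\cdot\langle\theta,x\rangle$. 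Since $\langle\theta,x\rangle=h/|X|$, this again gives $|X|^{-n}h\det\tau$.
\end{itemize}

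\textbf{Step 3: conclusion.} Substituting into Step 1 yields
\[
\widetilde{V}_q(K)=\frac1n\int_{\Sn}|X|^q\,|X|^{-n}\,dV_K=\frac1n\int_{\Sn}|X|^{q-n}\,dV_K,
\]
which is the claimed formula.

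The only delicate point is the Jacobian in Step 2, specifically the projection factor $\langle\theta,x\rangle$ that arises when projecting $\tau v$ onto $T_\theta\Sn$. I would check it via the cone-volume interpretation: $\frac1n h\,dS$ is the volume of the cone over $dS$, which must equal $\frac1n\rho^n\,d\theta$.
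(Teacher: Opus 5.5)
Your proof is correct: polar coordinates give $\widetilde{V}_q(K)=\frac1n\int_{\Sn}\rho_K^q\,d\theta$, and the radial-to-normal change of variables $\theta=X/|X|$ has Jacobian $h\det(\tau)/|X|^n$, so $d\theta=|X|^{-n}\,dV_K$. The paper does not prove this lemma itself but cites \cite[Lem. 2.9]{HLYZ16}, and your argument (polar coordinates followed by the solid-angle change of variables) is the standard derivation behind that formula.
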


\begin{lemma}
\label{lem:unconditional-convexity}
Let $K\subset\R^n$ be unconditional and convex. If $x\in K$ and $|y_i|\le |x_i|$ for all $i=1,\dots,n$, then $y\in K$.
\end{lemma}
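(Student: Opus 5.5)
The plan is to write $y$ as a convex combination of points of $K$ that are obtained from $x$ by coordinate reflections. First I translate the hypothesis on $h_K$ into a statement about $K$. Let $R_i$ be the reflection $x_i\mapsto -x_i$. Since $R_i$ is orthogonal and self-adjoint, $h_{R_iK}(u)=h_K(R_iu)=h_K(u)$. A convex body is determined by its support function, so $R_iK=K$ for every $i$. Composing these reflections, $K$ is invariant under $x\mapsto(\epsilon_1x_1,\dots,\epsilon_nx_n)$ for every choice of signs $\epsilon\in\{\pm1\}^n$.

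Next I show that each coordinate can be shrunk one at a time. Fix $i$ and a point $z\in K$, and take any $t\in[-1,1]$. Then $R_iz\in K$, and by convexity
\[
\tfrac{1+t}{2}\,z+\tfrac{1-t}{2}\,R_iz\in K .
\]
This point agrees with $z$ except that its $i$-th coordinate is $tz_i$. So $K$ is closed under replacing a single coordinate $z_i$ by $tz_i$ with $|t|\le1$.

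To conclude, for each $i$ choose $t_i\in[-1,1]$ with $y_i=t_ix_i$. This is possible because $|y_i|\le|x_i|$: if $x_i\neq0$ take $t_i=y_i/x_i$, and if $x_i=0$ then $y_i=0$ and any $t_i$ works. Starting from $x$ and applying the one-coordinate step successively for $i=1,\dots,n$ stays inside $K$ at every stage and ends at $y$, so $y\in K$.

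There is no real obstacle here. The only point that needs a word of justification is the first step, passing from the unconditionality of $h_K$ to the symmetry of $K$ itself. This follows from the standard fact that $h_{AK}=h_K\circ A^{T}$ for linear maps $A$, together with the uniqueness of a convex body given its support function.
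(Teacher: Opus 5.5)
Your proof is correct and follows the same route as the paper: reflection invariance of $K$ combined with convexity. The paper simply asserts that the convex hull of the $2^n$ sign-flipped copies of $x$ is the box $\prod_{i=1}^n[-|x_i|,|x_i|]$; your one-coordinate-at-a-time convex combinations prove that fact explicitly, and your derivation of $R_iK=K$ from the unconditionality of $h_K$ supplies a step the paper leaves implicit.
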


\begin{proof}
By unconditionality, all points $(\sigma_1x_1,\dots,\sigma_nx_n)$, $\sigma_i\in\{-1,1\}$ belong to $K$. Since their convex hull is $\prod_{i=1}^n[-|x_i|,|x_i|]$, this box is contained in $K$.
\end{proof}

\begin{lemma}
\label{lem:sign-xi-Xi}
Let $K\subset \R^n$ be a $C^2_+$ unconditional convex body. Then, for every $x\in\Sn$ and every $i=1,\ldots,n$,
\eq{
X_i(x)>0 \quad\Longleftrightarrow\quad x_i>0.
}
In particular,
\eq{
\{x\in\Sn: X_i(x)>0,\ i=1,\ldots,n\}=\{x\in\Sn: x_i>0,\ i=1,\ldots,n\}.
}
\end{lemma}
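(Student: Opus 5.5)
The plan is to use the variational characterization of the inverse Gauss map, $\ip{X(x)}{x}=h(x)=\max_{y\in K}\ip{y}{x}$, where the maximum is attained only at $y=X(x)$ because $K$ is $C^2_+$ and hence strictly convex. Unconditionality makes $h$ invariant under every reflection $R_i$ that flips the $i$-th coordinate. Since $K=R_iK$, it follows that $X(R_ix)=R_iX(x)$.

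First I prove $x_i>0\Rightarrow X_i(x)>0$. Set $y=X(x)$ and let $y'$ be obtained from $y$ by replacing $y_i$ with $|y_i|$. By \autoref{lem:unconditional-convexity}, $y'\in K$. Moreover $\ip{y'}{x}-\ip{y}{x}=(|y_i|-y_i)x_i\ge0$, and this is strictly positive if $y_i<0$. That would contradict maximality, so $y_i\ge0$.

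To exclude $y_i=0$, I use that $X(x)$ is the unique maximizer. If $y_i=0$, choose $\epsilon>0$ small and look at points of $K$ near $y$ with positive $i$-th coordinate. Here I would argue differently: $y_i=0$ means $y\in\{y_i=0\}$, and by reflection symmetry $R_iy=y$. Since $y$ is a boundary point of $K$, the outer unit normal there is $x$. Reflection symmetry of $K$ fixes $y$, so the normal at $y$ is also $R_ix$. Uniqueness of the normal at a $C^1$ boundary point gives $R_ix=x$, hence $x_i=0$, which contradicts $x_i>0$. Thus $X_i(x)>0$.

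Next, $x_i<0\Rightarrow X_i(x)<0$ follows by applying the previous step to $R_ix$ and using $X(R_ix)=R_iX(x)$. The same normal argument shows that $x_i=0\Rightarrow X_i(x)=0$. Indeed, if $x_i=0$, then $R_ix=x$, so $X(x)=X(R_ix)=R_iX(x)$, which forces $X_i(x)=0$.

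Combining the three cases, $\operatorname{sign}X_i=\operatorname{sign}x_i$, and this gives both the equivalence and the set identity. The only delicate step is ruling out $X_i=0$ when $x_i>0$, and that is handled by the uniqueness of the normal at a boundary point, which holds because $\partial K$ is $C^2$ (equivalently, because the Gauss map is a bijection for $C^2_+$ bodies).
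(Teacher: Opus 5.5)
Your proof is correct, but it runs in the opposite direction from the paper's. The paper splits according to the sign of $X_i$. It shows $X_i=0\Rightarrow x_i=0$ by the same normal-uniqueness argument you use: both $x$ and $R_ix$ are outer normals at $y=R_iy$. It then shows $X_i>0\Rightarrow x_i>0$ by using \autoref{lem:unconditional-convexity} to move $y$ to $y-tE_i\in K$ for small $t>0$. With that point in hand, $x_i<0$ contradicts the support property, while $x_i=0$ would put the segment $[y-tE_i,y]$ in $\partial K$, contradicting strict convexity.

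You instead split according to the sign of $x_i$. You exclude $X_i<0$ by comparing with the reflected point $R_iy\in K$, and exclude $X_i=0$ by normal uniqueness. You then get the cases $x_i<0$ and $x_i=0$ from the equivariance $X\circ R_i=R_i\circ X$.

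What your route buys:
\begin{itemize}
\item It needs only $R_iK=K$. The appeal to \autoref{lem:unconditional-convexity} is superfluous, since $y'$ is either $y$ or $R_iy$.
\item It settles the case $x_i=0$ for free through equivariance, instead of through a segment-in-the-boundary argument.
\item It makes explicit the equivariance that the paper's ``by reflection symmetry'' uses tacitly.
\end{itemize}
In your argument, strict convexity enters only through the uniqueness of the maximizer, that is, through the well-definedness of $X$ and its equivariance. The paper's argument is more hands-on, and it separates cleanly where smoothness (unique normals) and strict convexity (no boundary segments) are each used.

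You should also delete the abandoned sentence about ``points of $K$ near $y$ with positive $i$-th coordinate''.
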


\begin{proof}
Let $\{E_i\}$ be the standard coordinate basis of $\R^n$. Let $R_i$ denote reflection in the coordinate hyperplane $\{x_i=0\}$. We first prove that $X_i(x)=0$ implies $x_i=0$. Set $y=X(x)$ and suppose that $y_i=0$. Since $R_iy=y$, both $x$ and $R_ix$ are outer unit normals to the boundary of $K$, $\partial K$, at $y$. Therefore, $R_ix=x$.

We now prove that $X_i(x)>0$ implies $x_i>0$. By reflection symmetry, we then also have that $X_i(x)<0$ implies $x_i<0$. By \autoref{lem:unconditional-convexity}, for every sufficiently small $t>0$ with $t<y_i$, the point $y-tE_i$ belongs to $K$. If $x_i<0$, then
\eq{
\langle y-tE_i,x\rangle=\langle y,x\rangle-tx_i>\langle y,x\rangle=h_K(x),
}
which contradicts the definition of the support function. If $x_i=0$, then
\eq{
\langle y-tE_i,x\rangle=\langle y,x\rangle=h_K(x).
}
Thus the line segment $[y-tE_i,y]$ lies on $\partial K$. This contradicts the strict convexity of $K$. Hence $x_i>0$.

Putting everything together, the claim follows.
\end{proof}

\section{Centro-affine geometry}\label{sec:Background-centro-affine-preliminaries}

We now recall some basics from centro-affine geometry; see \cite{NS94,Mil23} for details. The position vector $X$ induces a torsion-free connection $\nabla$ and a metric $g$ on $\Sn$ as follows:
\eq{
D_UdX(V)=dX(\nabla_UV)-g(U,V)X,\quad U,V\in T\Sn.
}
If $\{e_i\}_{i=1}^{n-1}$ is a local $\bar{g}$-orthonormal frame diagonalizing $\tau$, say $\tau e_i=\lambda_i e_i$, then
\eq{
\label{eq:g-frame} 
g_{ij}=g(e_i,e_j)=\frac{\lambda_i}{h}\delta_{ij}.
}

We also use the conjugate torsion-free connection $\nabla^*$ defined via:
\eq{
\label{eq:conjugacy} 
U(g(V,W))=g(\nabla_UV,W)+g(V,\nabla_U^*W).
}
For a smooth function $F$ on $\Sn$, the centro-affine gradient is defined by
\eq{
g(\nabla F,V)=dF(V),\quad V\in T\Sn.
}
We also define
\eq{
\nabla^2F(U,V)&=U(VF)-(\nabla_UV)F, \\ 
(\nabla^*)^2F(U,V)&=U(VF)-(\nabla_U^*V)F, \\ 
\Delta F&=\tr_g\left((\nabla^*)^2F\right).
}
We record two important identities, which will be used repeatedly:
\eq{
\Ric:=\Ric^{\nabla}=(n-2)g,\quad \nabla^2X+gX=0.
}
The latter identity is a special case of the following useful lemma.

\begin{lemma}
\label{lem:homogeneous-extension}
Let $\tilde{f}\in C^{2}(\R^{n}\setminus\{0\})$ be $k$-homogeneous, and define
\eq{
f(x)=\tilde{f}(X(x)),\quad x\in \Sn.
}
Then for every $x\in \Sn$ and every $v_1,v_2\in T_{x}\Sn$,
\eq{
D^{2}\tilde{f}(X(x))[dX(v_1),dX(v_2)]=\nabla^{2} f(v_1,v_2)+kfg(v_1,v_2).
}
In particular, if $\tilde{f}$ is concave on $\R^{n}$, then $\nabla^2f+kf g\le 0$.
\end{lemma}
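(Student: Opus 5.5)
The plan is to prove the identity by differentiating $f=\tilde f\circ X$ twice along vector fields on $\Sn$, converting the second derivative of $X$ via the structure equation that defines $(\nabla,g)$, and removing the leftover first-order term with Euler's identity for homogeneous functions.

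Since $K$ contains the origin in its interior, $X(x)\neq 0$ for all $x$, so $\tilde f$ is $C^2$ near $X(x)$. It suffices to prove the identity at a point $x$ for smooth local vector fields $U,V$ with $U(x)=v_1$ and $V(x)=v_2$. Both sides are tensorial: the left side is evaluated pointwise, and $\nabla^2f$ is a tensor.

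\textbf{First derivative.} By the chain rule,
\[
Vf=D\tilde f(X)[dX(V)].
\]

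\textbf{Second derivative.} Differentiating once more along $U$, and viewing $dX(V)$ as an $\R^n$-valued function on $\Sn$, gives
\[
U(Vf)=D^2\tilde f(X)[dX(U),dX(V)]+D\tilde f(X)\big[D_U\,dX(V)\big].
\]

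\textbf{Structure equation.} Substituting $D_U dX(V)=dX(\nabla_UV)-g(U,V)X$ yields
\[
U(Vf)=D^2\tilde f(X)[dX(U),dX(V)]+D\tilde f(X)[dX(\nabla_UV)]-g(U,V)\,D\tilde f(X)[X].
\]
By the chain rule again, the middle term equals $(\nabla_UV)f$.

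\textbf{Euler's identity.} Since $\tilde f$ is $k$-homogeneous, $D\tilde f(X)[X]=k\tilde f(X)=kf$. Rearranging,
\[
\nabla^2f(U,V)=U(Vf)-(\nabla_UV)f=D^2\tilde f(X)[dX(U),dX(V)]-kf\,g(U,V).
\]
This is exactly the claimed identity.

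\textbf{Concavity consequence.} If $\tilde f$ is concave, then $D^2\tilde f\le0$ as a quadratic form. Setting $v_1=v_2=v$ gives
\[
\nabla^2f(v,v)+kf\,g(v,v)=D^2\tilde f(X)[dX(v),dX(v)]\le 0
\]
for every $v\in T_x\Sn$, that is, $\nabla^2 f+kfg\le0$.

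There is no real obstacle. The only care needed is to interpret $D_U dX(V)$ with $V$ extended to a vector field, and to note that $\tilde f$ is only $C^2$ away from the origin, which $X$ avoids.
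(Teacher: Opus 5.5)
Your proposal is correct and follows essentially the same route as the paper. Both apply the chain rule twice, substitute the centro-affine structure equation $D_UdX(V)=dX(\nabla_UV)-g(U,V)X$, use Euler's identity $D\tilde f(X)[X]=kf$, and rearrange; your explicit extension of $v_1,v_2$ to local vector fields and the remark that $X$ avoids the origin only make precise what the paper does implicitly.
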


\begin{proof}
We first take the directional derivative in the $v_2$-direction:
\eq{
v_2(f)=D\tilde{f}(X)[dX(v_2)].
}
Differentiating once more in the direction $v_1$:
\eq{
v_1(v_2 f)=D^{2}\tilde{f}(X)[dX(v_1),dX(v_2)]+D\tilde{f}(X)[D_{v_1} dX(v_2)].
}
Now using $D_{v_1} dX(v_2)=dX(\nabla_{v_1}v_2)-g(v_1,v_2)X$, and Euler's identity, we obtain
\eq{
D\tilde{f}(X)[D_{v_1} dX(v_2)] &=D\tilde{f}(X)[dX(\nabla_{v_1}v_2)] -g(v_1,v_2)\,D\tilde{f}(X)[X]\\ &=(\nabla_{v_1}v_2)f-kfg(v_1,v_2).
}
Rearranging the terms yields
\eq{
D^{2}\tilde{f}(X)[dX(v_1),dX(v_2)] &=v_1(v_2 f)-(\nabla_{v_1}v_2)f+kfg(v_1,v_2)\\ 
&=\nabla^{2} f(v_1,v_2)+kfg(v_1,v_2).
}
\end{proof}

\begin{lemma}
\label{lem:Delta-bar-nabla}
For every $f\in C^2(\Sn)$, we have
\eq{
\Delta f=\frac{1}{h\det(\tau)}\bar{\nabla}_i\left(h^2\tau^{ij}\det(\tau)\bar{\nabla}_j f\right).
}
\end{lemma}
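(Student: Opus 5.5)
The plan is to write $\Delta$ as a divergence of the centro-affine gradient and then compute that divergence against the centro-affine volume $dV_K$. This uses only the structure equation $D_U dX(V)=dX(\nabla_UV)-g(U,V)X$ and the conjugacy relation \eqref{eq:conjugacy}.

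\emph{Step 1: $\Delta f$ is a $\nabla$-divergence.} For $U,W\in T\Sn$, conjugacy gives
\[
U(Wf)=U\big(g(\nabla f,W)\big)=g(\nabla_U\nabla f,W)+g(\nabla f,\nabla^*_UW)=g(\nabla_U\nabla f,W)+(\nabla^*_UW)f .
\]
Hence $(\nabla^*)^2f(U,W)=g(\nabla_U\nabla f,W)$. Taking the $g$-trace yields $\Delta f=\tr(U\mapsto\nabla_U\nabla f)=:\operatorname{div}_\nabla(\nabla f)$.

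\emph{Step 2: $\nabla$ preserves $dV_K$.} Define the induced volume form
\[
\theta(v_1,\dots,v_{n-1})=\det\big(dX(v_1),\dots,dX(v_{n-1}),X\big).
\]
The structure equation has transversal field $X$, and $D_UX=dX(U)$ is tangential. Differentiating $\theta$ along $U$, the $-g(U,v_i)X$ terms die in the determinant because $X$ already occupies the last column. The tangential part $dX(U)$ of the derivative of the last column also dies, since it lies in the span of the other columns. Therefore $\nabla\theta=0$, so the structure is equiaffine.

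We then identify $\theta$ in terms of round data. Take a $\bar g$-orthonormal frame $e_i$ with $\tau e_i=\lambda_ie_i$, so that $dX(e_i)=\lambda_ie_i$ by \eqref{eq:dX-tau-relation}. Since $\ip{X}{x}=h$, we get $|\theta(e_1,\dots,e_{n-1})|=h\det\tau$, and so $\theta$ corresponds to $dV_K$.

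For a torsion-free connection preserving a volume form, the standard formula $\operatorname{div}_\nabla V=\frac{1}{\rho}\partial_i(\rho V^i)$ holds in any coordinates, where $\theta=\rho\,dx^1\wedge\cdots$. Using $\bar g$-normal coordinates and covariance, this becomes
\[
\operatorname{div}_\nabla V=\frac{1}{h\det\tau}\,\bar\nabla_i\big(h\det\tau\,V^i\big).
\]

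\emph{Step 3: compute $\nabla f$.} By \eqref{eq:g-frame}, in round-orthonormal components $g_{ij}=h^{-1}\tau_{ij}$. So $g^{ij}=h\tau^{ij}$ and $(\nabla f)^i=h\tau^{ij}\bar\nabla_jf$. Substituting into Step 2 gives
\[
\Delta f=\frac{1}{h\det\tau}\bar\nabla_i\big(h^2\tau^{ij}\det\tau\,\bar\nabla_jf\big).
\]

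The only genuinely delicate point is Step 2: verifying $\nabla\theta=0$ and pinning down the density $h\det\tau$. The rest is formal bookkeeping.

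As a cross-check one can use duality instead. Steps 1 and 2 imply $\int\varphi\Delta f\,dV_K=-\int g(\nabla\varphi,\nabla f)\,dV_K$. Writing this in round terms and integrating by parts on $\Sn$ with respect to $dx$ yields the same expression.
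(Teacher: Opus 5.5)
Your proof is correct, and it takes a genuinely different route from the paper's.

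\textbf{The paper's route.} The paper argues extrinsically, in round-sphere terms. It starts from Milman's formula \eqref{eq:centro-affine-laplacian}, $\Delta f+(n-1)f=\tau^{ij}(\bar\nabla^2_{ij}(hf)+\bar g_{ij}hf)$. Expanding and using $\tau^{ij}\tau_{ij}=n-1$ gives $\Delta f=h\tau^{ij}\bar\nabla^2_{ij}f+2\tau^{ij}\bar\nabla_ih\,\bar\nabla_jf$. It then matches this with the expanded divergence, using the cofactor identity $\bar\nabla_i(\tau^{ij}\det\tau)=0$, which holds because $\tau$ is Codazzi.

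\textbf{Your route.} You argue intrinsically, in four steps. Conjugacy gives $\Delta=\divnabla\circ\nabla$. The Gauss structure equation gives $\nabla dV_K=0$. Torsion-freeness turns this into $\Gamma^i_{ik}=\partial_k\log\rho$, hence $\divnabla Z=\frac{1}{h\det\tau}\bar\nabla_i(h\det\tau\,Z^i)$. Finally, $\nabla f=h\,\tau^{-1}\bar\nabla f$, which is a frame-independent identity of vector fields.

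\textbf{What each buys.} The paper's version is a two-line computation, but it depends on an externally quoted formula for $\Delta$ and on the divergence-free property of the cofactor matrix. Yours needs neither. It also proves \autoref{lem:divnabla-divbarnabla} directly along the way. The paper derives that lemma afterwards by integration by parts with respect to $(g,\nabla,V_K)$, which tacitly uses the same equiaffinity $\nabla dV_K=0$ that you verify explicitly. Your mechanism is also exactly the one the paper later uses in \autoref{lem:div-log-weighted-equals-centro-affine} for the logarithmic structure. So it transfers verbatim to any torsion-free equiaffine structure, whereas the paper's computation is tied to the specific form $\tau=\bar\nabla^2h+hI$.
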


\begin{proof}
We recall the following identity; see \cite{Mil23}:
\eq{
\label{eq:centro-affine-laplacian} 
\Delta f+(n-1)f
=\tau^{ij}\left(\bar{\nabla}_{ij}^2(hf)+\bar{g}_{ij}hf\right).
}
Therefore
\eq{
\Delta f=h\tau^{ij}\bar{\nabla}_{ij}^2 f+2\tau^{ij}\bar{\nabla}_i h\bar{\nabla}_j f.
}
Moreover, due to the identity $\bar{\nabla}_i (\tau^{ij}\det(\tau))=0$,
\eq{
\bar{\nabla}_i\left(h^2\tau^{ij}\det(\tau)\bar{\nabla}_j f\right)=2h\tau^{ij}\det(\tau)\bar{\nabla}_i h\bar{\nabla}_j f+h^2\tau^{ij}\det(\tau)\bar{\nabla}_{ij}^2 f.
}
Dividing this last equation by $h\det(\tau)$ proves the claim.
\end{proof}

\begin{lemma}
\label{lem:divnabla-divbarnabla}
Let $Z=Z^i\partial_i$ be a smooth vector field on $\Sn$, written in local coordinates with respect to the round metric $\bar{g}$. Then
\eq{
\divnabla Z=\frac{1}{h\det(\tau)}\bar{\nabla}_i\left(h\det(\tau)Z^i\right).
}
\end{lemma}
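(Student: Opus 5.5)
The plan is to identify the centro-affine volume form and then use the standard coordinate formula for divergence with respect to a torsion-free connection that preserves a volume form.

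\textbf{Step 1: $\nabla$ preserves $dV_K$.} First I will show that $\nabla$ is equiaffine with parallel volume form $dV_K=h\det(\tau)\,dx$. This follows from the centro-affine structure equation $D_U dX(V)=dX(\nabla_UV)-g(U,V)X$, which says that $-X$ is a transversal field with $D_U X=dX(U)$. The transversal derivative therefore has no tangential component, so the shape operator is the identity and the transversal connection form vanishes. The induced volume form
\[
\theta(V_1,\dots,V_{n-1})=\det\bigl(dX(V_1),\dots,dX(V_{n-1}),X\bigr)
\]
is then $\nabla$-parallel, up to sign.

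\textbf{Step 2: compute $\theta$ in round coordinates.} Using $dX=\tau$ from \eqref{eq:dX-tau-relation}, $X=\bar{\nabla}h+hx$, and the fact that the tangential part $\bar{\nabla}h$ does not contribute to the determinant, I get, for a $\bar g$-orthonormal frame,
\[
\theta(e_1,\dots,e_{n-1})=\pm h\det(\tau).
\]
Hence $\theta=\pm h\det(\tau)\,dx=\pm dV_K$.

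\textbf{Step 3: divergence formula.} For a torsion-free connection $\nabla$ with parallel volume form $\rho\,dx^1\wedge\cdots\wedge dx^{n-1}$, the Christoffel symbols satisfy $\Gamma^k_{ik}=\partial_i\log\rho$. Therefore
\[
\divnabla Z=\partial_iZ^i+\Gamma^k_{ik}Z^i=\frac{1}{\rho}\partial_i(\rho Z^i).
\]
I will work in local coordinates on $\Sn$, where $\rho=h\det(\tau)\sqrt{\det\bar g}$. Using $\bar{\nabla}_iZ^i=\frac{1}{\sqrt{\det\bar g}}\partial_i\bigl(\sqrt{\det\bar g}\,Z^i\bigr)$, this becomes
\[
\divnabla Z=\frac{1}{h\det(\tau)}\bar{\nabla}_i\bigl(h\det(\tau)Z^i\bigr),
\]
which is the claim.

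\textbf{Alternative check.} The formula can also be confirmed directly via the difference tensor $\nabla-\bar{\nabla}$. From the structure equation one finds
\[
\nabla_UV=\bar{\nabla}_UV+\tau^{-1}(\bar{\nabla}_U\tau)V+\tau^{-1}\bigl(\bar g(U,V)\bar{\nabla}h-g(U,V)\bar{\nabla}h\bigr)+\dots
\]
Tracing and using the Codazzi property of $\tau$ gives the same $\partial_i\log(h\det\tau)$. This route is only a check.

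\textbf{Main obstacle.} The delicate step is the trace $\Gamma^k_{ik}$, equivalently the parallelism of $dV_K$. I will rely on the structural argument in Step 1 rather than the direct computation.
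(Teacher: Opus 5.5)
Your proof is correct, but it takes a different route from the paper's.

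\textbf{How the two proofs differ.} The paper argues weakly. It takes the centro-affine integration-by-parts formula $\int_{\Sn}\phi\,\divnabla Z\,dV_K=-\int_{\Sn}g(\nabla\phi,Z)\,dV_K$ as known, and this formula amounts to $dV_K$ being $\nabla$-parallel. It then rewrites $g(\nabla\phi,Z)=Z^i\bar{\nabla}_i\phi$ and $dV_K=h\det(\tau)\,dx$, integrates by parts again on the round sphere, and concludes because the test function $\phi$ is arbitrary.

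You instead prove that $dV_K$ is parallel. You read off from the structure equation that $-X$ is a transversal field with shape operator $\mathrm{id}$ and vanishing transversal connection form. You compute the induced volume form as $\det(dX(\cdot),\dots,dX(\cdot),X)=\pm h\det(\tau)\,dx$. Finally you use the pointwise identity $\Gamma^k_{ik}=\partial_i\log\rho$, valid for a torsion-free connection with parallel density $\rho$.

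Both routes rest on the same fact, $\nabla dV_K=0$. Yours is local and self-contained: it needs no global integration over $\Sn$ and no test functions, so it applies verbatim on caps or on $\Omega_+$. It is the same mechanism the paper itself uses later, in the proof that $\operatorname{div}^{\log}_{\Phi_0}Z=\divnabla Z$. The paper's route is shorter, but only because it imports the divergence theorem for $(\nabla,dV_K)$ from the literature.

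\textbf{Two small slips, neither load-bearing.}

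In Step 1, the derivative $D_U(-X)=-dX(U)$ has no \emph{transversal} component. That is why the transversal connection form vanishes. Its tangential part $-dX(U)$ is what makes the shape operator the identity. You wrote ``no tangential component,'' which reverses this.

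In your ``alternative check'' the displayed difference tensor is wrong. Compare the tangential parts of
\[
D_U(\tau V)=(\bar{\nabla}_U\tau)V+\tau\bar{\nabla}_UV-\bar{g}(U,\tau V)\,x
\]
and of $dX(\nabla_UV)-g(U,V)X$, where $X=\bar{\nabla}h+hx$. This gives
\[
\nabla_UV=\bar{\nabla}_UV+\tau^{-1}(\bar{\nabla}_U\tau)V+g(U,V)\,\tau^{-1}\bar{\nabla}h .
\]
There is no $\bar{g}(U,V)$ term, and the $g(U,V)$ term has the opposite sign to yours. Now trace in $U$ against $V=Z$. The Codazzi property of $\tau$ handles the first correction term, and $g(\tau^{-1}\bar{\nabla}h,Z)=Z(\log h)$ handles the second. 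The result is $Z(\log\det\tau)+Z(\log h)$, as required. Since you use this only as a check, your main argument is unaffected.
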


\begin{proof}
Let $\phi\in C^\infty(\Sn)$. By integration-by-parts with respect to $(g,\nabla,V_K)$,
\eq{
\int_{\Sn}\phi\divnabla Z\, dV_K=-\int_{\Sn}g(\nabla\phi,Z)\, dV_K=-\int_{\Sn}(Z^i\bar{\nabla}_i\phi)h\det(\tau)\, dx.
}
Moreover, applying the integration-by-parts with respect to $(\bar{g}, \bar{\nabla},dx)$, we obtain
\eq{
-\int_{\Sn}(Z^i\bar{\nabla}_i\phi)h\det(\tau)\, dx=\int_{\Sn}\phi\bar{\nabla}_i\left(h\det(\tau)Z^i\right)\, dx.
}
Hence
\eq{
\int_{\Sn}\phi\divnabla Z\, dV_K=\int_{\Sn}\phi\bar{\nabla}_i\left(h\det(\tau)Z^i\right)\, dx.
}
Since this holds for all smooth test functions $\phi$, the claim follows.
\end{proof}

Let $\Phi, F\in C^{\infty}(\Sn)$, and set $d\mu_{\Phi}=e^{-\Phi}\, dV_K$. We define the weighted divergence of a smooth vector field $Z$ and the weighted centro-affine Laplacian of $F$ by
\eq{
\divPhi Z&=\divnabla Z-g(\nabla\Phi,Z),\\ 
\LPhi F&=\Delta F-g(\nabla\Phi,\nabla F)=\divPhi(\nabla F).
}

\begin{lemma}
For every smooth function $u$ and smooth vector field $Z$ on $\Sn$,
\eq{
\label{eq:weighted-ibp} 
\int_{\Sn}u\divPhi Z\,d\mu_{\Phi}=-\int_{\Sn}g(\nabla u,Z)\,d\mu_{\Phi}.
}
In particular,
\eq{
\label{eq:weighted-div-zero} 
\int_{\Sn}\divPhi Z\,d\mu_{\Phi}=0,
}
and for smooth functions $u,v$,
\eq{
\label{eq:weighted-green} 
\int_{\Sn}u\LPhi v\,d\mu_{\Phi}=-\int_{\Sn}g(\nabla u,\nabla v)\,d\mu_{\Phi}.
}
\end{lemma}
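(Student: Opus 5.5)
The plan is to reduce everything to the unweighted integration-by-parts formula for $(g,\nabla,V_K)$ on the closed manifold $\Sn$, which was already used in the proof of \autoref{lem:divnabla-divbarnabla}:
\eq{
\int_{\Sn}\phi\,\divnabla Z\,dV_K=-\int_{\Sn}g(\nabla\phi,Z)\,dV_K .
}
This formula itself follows from \autoref{lem:divnabla-divbarnabla}. Indeed, $\divnabla Z\,dV_K=\bar{\nabla}_i(h\det(\tau)Z^i)\,dx$ is a round divergence, so its integral over the closed sphere vanishes. Applying this to the vector field $\phi Z$ gives the displayed formula, because $\divnabla(\phi Z)=\phi\divnabla Z+Z\phi$ and $Z\phi=g(\nabla\phi,Z)$ by the definition of the centro-affine gradient.

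For \eqref{eq:weighted-ibp}, I apply the unweighted formula with $\phi=ue^{-\Phi}$. Then
\eq{
\nabla(ue^{-\Phi})=e^{-\Phi}\left(\nabla u-u\nabla\Phi\right),
}
so
\eq{
\int_{\Sn}u\,\divnabla Z\,e^{-\Phi}dV_K=-\int_{\Sn}g(\nabla u,Z)\,d\mu_\Phi+\int_{\Sn}u\,g(\nabla\Phi,Z)\,d\mu_\Phi .
}
Moving the last term to the left-hand side produces exactly $\int_{\Sn} u\,\divPhi Z\,d\mu_\Phi$, by the definition $\divPhi Z=\divnabla Z-g(\nabla\Phi,Z)$. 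This proves \eqref{eq:weighted-ibp}.

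The two remaining identities are special cases. Taking $u\equiv1$ in \eqref{eq:weighted-ibp} gives \eqref{eq:weighted-div-zero}, since then $\nabla u=0$. Taking $Z=\nabla v$ gives \eqref{eq:weighted-green}, since $\LPhi v=\divPhi(\nabla v)$. This last step needs $\Delta v=\divnabla(\nabla v)$, i.e. that $\tr_g(\nabla^*)^2v$ is the divergence of $\nabla v$ with respect to $dV_K$. I would verify this by comparing \autoref{lem:Delta-bar-nabla} with \autoref{lem:divnabla-divbarnabla}. In round coordinates $\nabla v$ has components $h\tau^{ij}\bar{\nabla}_jv$, because $g=\tau/h$. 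Inserting these components into \autoref{lem:divnabla-divbarnabla} gives
\eq{
\frac{1}{h\det\tau}\bar{\nabla}_i\left(h^2\tau^{ij}\det(\tau)\bar{\nabla}_jv\right),
}
which is exactly $\Delta v$ by \autoref{lem:Delta-bar-nabla}.

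The only point needing care is this last identification of $\Delta$ with $\divnabla\circ\nabla$; it is settled by the two earlier lemmas, and the rest is bookkeeping. The argument needs no regularity beyond smoothness of $u$, $Z$ and $\Phi$ on the closed manifold $\Sn$, where boundary terms are absent.
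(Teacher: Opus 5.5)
Your argument is correct and is essentially the paper's proof: apply the unweighted identity $\int_{\Sn}f\divnabla Z\,dV_K=-\int_{\Sn}g(\nabla f,Z)\,dV_K$ with $f=ue^{-\Phi}$, rearrange, and then specialize to $u\equiv1$ and to $Z=\nabla v$. One caveat: the paper proves \autoref{lem:divnabla-divbarnabla} using that very unweighted identity, so deriving the identity from that lemma is circular as written. Take the identity instead as the standard equiaffine divergence theorem (torsion-free $\nabla$ with $\nabla dV_K=0$ on the closed manifold $\Sn$), which is how the paper treats it.
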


\begin{proof}
Let us recall the following unweighted integration-by-parts formula:
\eq{
\label{eq:unweighted-div-ibp} 
\int_{\Sn}f \divnabla Z\, dV_K=-\int_{\Sn}g(\nabla f,Z)\, dV_K.
}
By the definition of the weighted divergence, the left-hand side of \eqref{eq:weighted-ibp} expands as
\eq{
\label{eq:defn-weighted-divergence-proof} 
\int_{\Sn}u\divPhi Z\,d\mu_{\Phi}=\int_{\Sn}ue^{-\Phi}\divnabla Z\, dV_K-\int_{\Sn}u g(\nabla\Phi,Z)e^{-\Phi}\, dV_K.
}

We now apply \eqref{eq:unweighted-div-ibp} with $f=ue^{-\Phi}$:
\eq{
\int_{\Sn}ue^{-\Phi}\divnabla Z\, dV_K=-\int_{\Sn}g\left(\nabla(ue^{-\Phi}),Z\right)\, dV_K.
}
On the other hand, we have
\eq{
g\left(\nabla(ue^{-\Phi}),Z\right)=e^{-\Phi}g(\nabla u,Z)-ue^{-\Phi}g(\nabla\Phi,Z).
}
Hence
\eq{
\int_{\Sn}ue^{-\Phi}\divnabla Z\, dV_K=-\int_{\Sn}e^{-\Phi}g(\nabla u,Z)\, dV_K+\int_{\Sn}u\,e^{-\Phi}g(\nabla\Phi,Z)\, dV_K.
}
Inserting this identity into \eqref{eq:defn-weighted-divergence-proof} and rearranging the terms establishes \eqref{eq:weighted-ibp}. Taking the special case $u\equiv 1$ in this identity then yields \eqref{eq:weighted-div-zero}.

Finally, to obtain \eqref{eq:weighted-green}, it suffices to substitute the  vector field $Z=\nabla v$ into \eqref{eq:weighted-ibp}.
\end{proof}

\begin{lemma}
\label{lem:grad-hess}
For a smooth function $F$ and vector fields $U,V$ on $\Sn$ we have
\begin{align}
g(\nabla_U\nabla F,V)&=(\nabla^*)^2F(U,V),\label{eq:grad-hess-star}\\
g(V,\nabla_U^*\nabla F)&=\nabla^2F(U,V).\label{eq:grad-hess-plain}
\end{align}
Moreover, we have
\eq{
\label{eq:trace-hess} 
\tr_g\left((\nabla Z)\circ(\nabla Z)\right)=\abs{(\nabla^*)^2F}_g^2\quad \text{for }Z=\nabla F.
}
\end{lemma}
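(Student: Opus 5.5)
The statement to prove is \autoref{lem:grad-hess}. It is essentially an unwinding of definitions combined with the conjugacy relation \eqref{eq:conjugacy}.

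For \eqref{eq:grad-hess-star}, apply \eqref{eq:conjugacy} with $V$ replaced by $\nabla F$ and $W$ replaced by $V$:
\[
U\bigl(g(\nabla F,V)\bigr)=g(\nabla_U\nabla F,V)+g(\nabla F,\nabla^*_UV).
\]
By the definition of the centro-affine gradient, $g(\nabla F,V)=VF$ and $g(\nabla F,\nabla^*_UV)=(\nabla^*_UV)F$. Rearranging gives
\[
g(\nabla_U\nabla F,V)=U(VF)-(\nabla^*_UV)F=(\nabla^*)^2F(U,V).
\]

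For \eqref{eq:grad-hess-plain}, use the symmetry of $g$ and put $\nabla F$ in the slot differentiated by $\nabla^*$:
\[
U\bigl(g(V,\nabla F)\bigr)=g(\nabla_UV,\nabla F)+g(V,\nabla^*_U\nabla F).
\]
Here the left side is $U(VF)$ and the first term on the right is $(\nabla_UV)F$. Hence
\[
g(V,\nabla^*_U\nabla F)=U(VF)-(\nabla_UV)F=\nabla^2F(U,V).
\]

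For the trace identity \eqref{eq:trace-hess}, set $Z=\nabla F$ and view $A:=\nabla Z$ as the $(1,1)$-tensor $U\mapsto\nabla_UZ$.

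1. By \eqref{eq:grad-hess-star}, $g(AU,V)=(\nabla^*)^2F(U,V)$.
2. The connection $\nabla^*$ is torsion-free, so $(\nabla^*)^2F(U,V)-(\nabla^*)^2F(V,U)=[U,V]F-(\nabla^*_UV-\nabla^*_VU)F=0$. Thus $(\nabla^*)^2F$ is a symmetric bilinear form, and $A$ is $g$-self-adjoint.
3. Take a local $g$-orthonormal frame $\{e_i\}$. Then
\[
\tr_g(A\circ A)=\sum_i g(A^2e_i,e_i)=\sum_i g(Ae_i,Ae_i)=\sum_{i,j}g(Ae_i,e_j)^2=\sum_{i,j}\bigl((\nabla^*)^2F(e_i,e_j)\bigr)^2=\abs{(\nabla^*)^2F}_g^2 .
\]
The second equality uses self-adjointness.

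The only point needing care is step 2. One must use the symmetry of $(\nabla^*)^2F$, which comes from torsion-freeness of $\nabla^*$, rather than of $\nabla^2F$. Without it, $\tr(A^2)$ would equal $\sum_{i,j}B_{ij}B_{ji}$ instead of $|B|^2$, where $B_{ij}=(\nabla^*)^2F(e_i,e_j)$.
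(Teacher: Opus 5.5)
Your proof is correct and follows the paper's argument. Both identities come from the conjugacy relation, with $\nabla F$ placed in the $\nabla$-slot and then in the $\nabla^*$-slot, and the trace identity comes from evaluating $\tr(A\circ A)$ in a $g$-orthonormal frame and using the symmetry of $(\nabla^*)^2F$. Your derivation of that symmetry from the torsion-freeness of $\nabla^*$ supplies a step the paper only asserts.
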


\begin{proof}
The conjugacy identity \eqref{eq:conjugacy} states
\eq{
U(VF)=U\left(g(\nabla F,V)\right)=g(\nabla_U\nabla F,V)+g(\nabla F,\nabla_U^*V).
}
Hence
\eq{
g(\nabla_U\nabla F,V)=U(VF)-(\nabla_U^*V)F=(\nabla^*)^2F(U,V),
}
which is \eqref{eq:grad-hess-star}. The proof of the second identity \eqref{eq:grad-hess-plain} follows similarly, with the roles of the two connections interchanged.

To establish \eqref{eq:trace-hess}, we work in a local $g$-orthonormal frame $\{e_i\}_{i=1}^{n-1}$. By definition of the trace we have
\eq{
\tr_g\left((\nabla Z)\circ(\nabla Z)\right)=\sum_{i,j=1}^{n-1}g(\nabla_{e_i}Z,e_j)g(\nabla_{e_j}Z,e_i).
}
Choosing the vector field $Z=\nabla F$ and applying \eqref{eq:grad-hess-star} yields
\eq{
g(\nabla_{e_i}Z,e_j)=g(\nabla_{e_i}\nabla F,e_j)=(\nabla^*)^2F(e_i,e_j).
}
Since the Hessian $(\nabla^*)^2F$ is symmetric, the double sum on the right-hand side is the squared norm of this Hessian:
\eq{
\sum_{i,j=1}^{n-1}g(\nabla_{e_i}Z,e_j)g(\nabla_{e_j}Z,e_i)=\abs{(\nabla^*)^2F}_g^2.
}
\end{proof}

\begin{theorem}
Let $\Phi,F\in C^{\infty}(\Sn)$, and $Z=\nabla F$. Then
\eq{
\label{eq:centro-affine-bochner-pointwise} 
\divPhi(\nabla_ZZ)=Z(\LPhi F)+\left(\Ric+\nabla^2\Phi\right)(Z,Z)+\abs{(\nabla^*)^2F}_g^2.
}
Moreover, integrating against $d\mu_{\Phi}=e^{-\Phi}\, dV_K$ yields
\eq{
\label{eq:main-centro-affine-bochner-integrated} 
\int_{\Sn}(\LPhi F)^2\,d\mu_{\Phi}=\int_{\Sn}\abs{(\nabla^*)^2F}_g^2\,d\mu_{\Phi}+\int_{\Sn}(\Ric+\nabla^2\Phi)(\nabla F,\nabla F)\,d\mu_{\Phi}.
}
\end{theorem}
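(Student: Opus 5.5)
The plan is to establish the pointwise identity \eqref{eq:centro-affine-bochner-pointwise} in two stages, first with $\Phi=0$ and then for general $\Phi$. After that, integrating against $d\mu_{\Phi}$ with \eqref{eq:weighted-div-zero} and \eqref{eq:weighted-green} gives \eqref{eq:main-centro-affine-bochner-integrated}.

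\emph{Unweighted step.} Take $Z=\nabla F$ and compute $\divnabla(\nabla_ZZ)$ in a local frame. By definition, $\divnabla W=\tr(V\mapsto\nabla_VW)$, which is consistent with the integration by parts \eqref{eq:unweighted-div-ibp} because $\nabla$ is equiaffine with respect to $V_K$. Then
\[
\divnabla(\nabla_ZZ)=\tr\left(V\mapsto \nabla_V\nabla_ZZ\right).
\]
Using the curvature tensor $R(V,Z)Z=\nabla_V\nabla_ZZ-\nabla_Z\nabla_VZ-\nabla_{[V,Z]}Z$ and $[V,Z]=\nabla_VZ-\nabla_ZV$ (torsion-freeness), we can rewrite
\[
\nabla_V\nabla_ZZ=R(V,Z)Z+\nabla_Z\nabla_VZ+\nabla_{\nabla_VZ}Z-\nabla_{\nabla_ZV}Z .
\]
Taking traces term by term:
\begin{itemize}
\item The curvature term gives $\Ric(Z,Z)$.
\item The combination $\tr(V\mapsto\nabla_Z\nabla_VZ-\nabla_{\nabla_ZV}Z)$ equals $Z(\divnabla Z)$, since the trace commutes with covariant differentiation.
\item The remaining term is $\tr((\nabla Z)\circ(\nabla Z))$, which equals $\abs{(\nabla^*)^2F}_g^2$ by \eqref{eq:trace-hess}.
\end{itemize}
Finally, $\divnabla Z=\Delta F$ by \eqref{eq:grad-hess-star} (take the $g$-trace), so we obtain
\[
\divnabla(\nabla_ZZ)=Z(\Delta F)+\Ric(Z,Z)+\abs{(\nabla^*)^2F}_g^2 .
\]
The sign convention for $\Ric$ is taken to agree with $\Ric=(n-2)g$.

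\emph{Weight correction.} We have $\divPhi(\nabla_ZZ)=\divnabla(\nabla_ZZ)-g(\nabla\Phi,\nabla_ZZ)$ and $Z(\LPhi F)=Z(\Delta F)-Z\bigl(g(\nabla\Phi,\nabla F)\bigr)$. It therefore suffices to show
\[
Z\bigl(g(\nabla\Phi,Z)\bigr)-g(\nabla\Phi,\nabla_ZZ)=\nabla^2\Phi(Z,Z).
\]
The left side is $Z(Z\Phi)-(\nabla_ZZ)\Phi$, which is exactly $\nabla^2\Phi(Z,Z)$ by definition. This gives \eqref{eq:centro-affine-bochner-pointwise}.

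\emph{Integration.} Integrate against $d\mu_\Phi$. The left side vanishes by \eqref{eq:weighted-div-zero}. For the first term on the right, \eqref{eq:weighted-green} with $u=\LPhi F$ and $v=F$ gives
\[
\int Z(\LPhi F)\,d\mu_\Phi=\int g(\nabla \LPhi F,\nabla F)\,d\mu_\Phi=-\int(\LPhi F)^2\,d\mu_\Phi .
\]
Rearranging yields \eqref{eq:main-centro-affine-bochner-integrated}.

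The main obstacle is the unweighted trace computation, specifically identifying $\tr(V\mapsto\nabla_Z\nabla_VZ-\nabla_{\nabla_ZV}Z)$ with $Z(\divnabla Z)$. The cleanest route is to view $\nabla Z$ as a $(1,1)$-tensor $A$ and use $\tr(\nabla_ZA)=Z(\tr A)$, which holds for any connection. One must also make sure the divergence operator $\divnabla$ used in the paper is the connection trace. This follows from \autoref{lem:divnabla-divbarnabla} together with the fact that $\nabla$ preserves $dV_K$, which is a standard centro-affine fact.
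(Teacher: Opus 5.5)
Your proof is correct and follows the same route as the paper. You prove the unweighted asymmetric Bochner identity, rewrite it using $\divnabla\nabla F=\Delta F$ and \eqref{eq:trace-hess}, correct for the weight, and integrate using \eqref{eq:weighted-div-zero} and \eqref{eq:weighted-green}. The differences are minor: you derive the unweighted identity from the curvature tensor where the paper cites \cite[Prop. 5.1]{Mil23}, and you get $Z(Z\Phi)-(\nabla_ZZ)\Phi=\nabla^2\Phi(Z,Z)$ directly from the definition of the Hessian where the paper goes through the conjugacy identity and \eqref{eq:grad-hess-plain}.
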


\begin{proof}
We start from the asymmetric Bochner identity (cf. \cite[Prop. 5.1]{Mil23}):
\eq{
\divnabla(\nabla_ZZ)=Z(\divnabla Z)+\Ric(Z,Z)+\tr_g\left((\nabla Z)\circ(\nabla Z)\right).
}
For $Z=\nabla F$, we have $\divnabla Z=\Delta F$ and, by \eqref{eq:trace-hess},
\eq{
\tr_g\left((\nabla Z)\circ(\nabla Z)\right)=\abs{(\nabla^*)^2F}_g^2.
}
Hence
\eq{
\divnabla(\nabla_ZZ)=Z(\Delta F)+\Ric(Z,Z)+\abs{(\nabla^*)^2F}_g^2
}
and
\eq{
\label{eq:weighted-step-1} 
\divPhi(\nabla_ZZ) &=\divnabla(\nabla_ZZ)-g(\nabla\Phi,\nabla_ZZ) \\ 
&=Z(\Delta F)-g(\nabla\Phi,\nabla_ZZ)+\Ric(Z,Z)+\abs{(\nabla^*)^2F}_g^2.
}

Since $\LPhi F=\Delta F-g(\nabla\Phi,Z)$, we have
\eq{
\label{eq:XLPhi} 
Z(\LPhi F)=Z(\Delta F)-Z\left(g(\nabla\Phi,Z)\right).
}
Due to \eqref{eq:conjugacy},
\eq{
Z\left(g(\nabla\Phi,Z)\right)=g(\nabla_ZZ,\nabla\Phi)+g(Z,\nabla_Z^*\nabla\Phi).
}
Moreover, by the second identity in \autoref{lem:grad-hess}, $g(Z,\nabla_Z^*\nabla\Phi)=\nabla^2\Phi(Z,Z)$. Thus
\eq{
\label{eq:XgYX-2} 
Z\left(g(\nabla\Phi,Z)\right)=g(\nabla\Phi,\nabla_ZZ)+\nabla^2\Phi(Z,Z).
}
Putting \eqref{eq:XLPhi} and \eqref{eq:XgYX-2} together yields
\eq{
\label{eq:key-rewrite} 
Z(\Delta F)-g(\nabla\Phi,\nabla_ZZ)=Z(\LPhi F)+\nabla^2\Phi(Z,Z).
}
Now substituting \eqref{eq:key-rewrite} into \eqref{eq:weighted-step-1} proves \eqref{eq:centro-affine-bochner-pointwise}.

We integrate \eqref{eq:centro-affine-bochner-pointwise} against $d\mu_{\Phi}$ and then use \eqref{eq:weighted-div-zero} to obtain
\eq{
0=\int_{\Sn}Z(\LPhi F)\,d\mu_{\Phi}+\int_{\Sn}(\Ric+\nabla^2\Phi)(Z,Z)\,d\mu_{\Phi}+\int_{\Sn}\abs{(\nabla^*)^2F}_g^2\,d\mu_{\Phi}.
}
By \eqref{eq:weighted-green} with $u=\LPhi F$ and $v=F$,
\eq{
\int_{\Sn}Z(\LPhi F)\,d\mu_{\Phi}=\int_{\Sn}g\left(\nabla(\LPhi F),\nabla F\right)\,d\mu_{\Phi}=-\int_{\Sn}(\LPhi F)^2\,d\mu_{\Phi}.
}
Rearranging the terms, we obtain \eqref{eq:main-centro-affine-bochner-integrated}.
\end{proof}

\begin{remark}
Let $\sigma_k=\sigma_k(\tau)$ denote the $k$-th elementary symmetric function of eigenvalues of $\tau=\tau[h]$. We write
\eq{
\sigma_{k}^{ij}:=\frac{\partial \sigma_{k}}{\partial \tau_{ij}}, \quad \sigma_{k}^{ij,pq}:=\frac{\partial^2 \sigma_{k}}{\partial \tau_{ij}\partial \tau_{pq}},
}
and define $\mathrm{H}=(\nabla^*)^2 f$.
By \cite[Lem. 2.3]{CHG17}, $\sigma_{k}^{ij,pq}\tau[hf]_{pq}$ is $\bar{\nabla}$-divergence-free. Hence
\eq{
\label{bochner-higher-order} 
\int_{\Sn}h\sigma_k^{ij,pq}\tau[hf]_{pq}\left(\tau[hf]_{ij}-f\tau_{ij}\right)\, dx=\int_{\Sn}\sigma_{k}^{ij,pq}\tau[hf]_{pq}\bar\nabla_i\left(h^2\bar\nabla_jf\right)\, dx=0.
}

Due to \eqref{eq:centro-affine-laplacian}, $\tau[hf]_{ij}=h\mathrm{H}_{ij}+f\tau_{ij}$. Thus the identity \eqref{bochner-higher-order} for $k=n-1$ becomes
\eq{
0=\int_{\Sn}h^2\sigma_{n-1}^{ij,pq}\tau[hf]_{pq}\mathrm{H}_{ij}\, dx.
}
Substituting $\tau[hf]_{pq}=h\mathrm{H}_{pq}+f\tau_{pq}$ into this last identity yields
\eq{
0=\int_{\Sn}\left(h^3\sigma_{n-1}^{ij,pq}\mathrm{H}_{ij}\mathrm{H}_{pq}+h^2 f\sigma_{n-1}^{ij,pq}\tau_{pq}\mathrm{H}_{ij}\right)\, dx.
}

Now using the identity $\sigma_{n-1}^{ij,pq}\tau_{pq}=(n-2)\sigma_{n-1}^{ij}$
we obtain
\eq{
\label{eq:bochner-n-pre} 
0=\int_{\Sn}\left(h^3\sigma_{n-1}^{ij,pq}\mathrm{H}_{ij}\mathrm{H}_{pq}+(n-2)h^2 f\sigma_{n-1}^{ij}\mathrm{H}_{ij}\right)\, dx.
}
The second term in \eqref{eq:bochner-n-pre} is simply $(n-2)\int_{\Sn} f\Delta f\, dV_K$. Also note that
\eq{
\sigma_{n-1}^{ij,pq} =\left(\tau^{ij}\tau^{pq} -\tau^{ip}\tau^{jq}\right)\sigma_{n-1}.
}
Therefore
\eq{
h^3\sigma_{n-1}^{ij,pq}\mathrm{H}_{ij}\mathrm{H}_{pq}\, dx =\left((\Delta f)^2-|(\nabla^*)^2f|_g^2\right)\, dV_K.
}
Substituting this into \eqref{eq:bochner-n-pre}, using $h^2\sigma_{n-1}^{ij}\mathrm{H}_{ij}\, dx=\Delta f\, dV_K$, and integrating by parts yields
\eq{
\int_{\Sn}(\Delta f)^2\, dV_K=\int_{\Sn}|(\nabla^*)^2 f|_g^2+(n-2)|\nabla f|_g^2\, dV_K.
}
This last identity, the integrated centro-affine Bochner formula, was proved in \cite{Opo15,Mil23}. Hence, \eqref{bochner-higher-order} may be regarded as a higher-order integrated Bochner formula.

It is worth noting that the integrated centro-affine Bochner formula may also be recovered variationally from the identity
\eq{
\int_{\Sn}\Delta_t f\,dV_t=0,
}
valid for every $t$ with $|t|$ sufficiently small, where
\eq{
h_t&=e^{tf}h_K,\quad K_t&&=\text{the convex body with support function }h_t,\\ \Delta_t&=\Delta_{K_t},\quad dV_t&&=dV_{K_t}.
}
Indeed, one computes
\eq{
\dot{dV}_t\big|_{t=0}&=(\Delta_K f+nf)\, dV_K,\\ 
\dot{\Delta}_t\big|_{t=0}f &=-\left|(\nabla_K^*)^2f\right|_{g_K}^2+2|\nabla_K f|_{g_K}^2.
}
Then differentiating the identity $\int_{\Sn}\Delta_t f\,dV_t=0$ at $t=0$ and integrating by parts yields
\eq{
\int_{\Sn}(\Delta_K f)^2\, dV_K =\int_{\Sn}\left|(\nabla_K^*)^2f\right|_{g_K}^2\, dV_K+(n-2)\int_{\Sn}|\nabla_K f|_{g_K}^2\, dV_K.
}
We leave the details to the interested reader.
\end{remark}

\subsection{Poincar\'e inequalities on caps}
Let $w\in\R^n\setminus\{0\}$, and recall that $\ell_w=\ip{X}{w}$. We define the open cap
\eq{
\Omega_w=\{x\in\Sn:\ \ell_w(x)>0\}
}
together with its boundary
\eq{
\Sigma_{w}=\partial\Omega_{w}=\{x\in\Sn:\ \ell_w=0\}.
}

For every sufficiently small positive number $0<\varepsilon<\varepsilon_0$, we also consider the corresponding smooth open set and its boundary
\eq{
\Omega_{w,\varepsilon}=\{\ell_w>\varepsilon\}, \quad \Sigma_{w,\varepsilon}=\partial\Omega_{w,\varepsilon}=\{\ell_w=\varepsilon\},
}
where the threshold $\varepsilon_0>0$ is chosen exactly as in \autoref{lem:cap-boundary-smooth-x}.  

Note that $\Omega_{w,\varepsilon}$ is connected: Let $x_0,x_1\in\Omega_{w,\varepsilon}$ and set $p_i=X(x_i)\in\partial K$, $i=0,1$. Then $p_t=(1-t)p_0+tp_1\in K$ and $\langle p_t,w\rangle>\varepsilon$ for all $t\in[0,1]$. Let $q_t=p_t/\|p_t\|_K$, where $\|\cdot\|_K$ denotes the Minkowski functional of $K$. Since $p_t\in K$, we have $\|p_t\|_K\le 1$, and hence $\langle q_t,w\rangle>\varepsilon$. Therefore, the path $t\mapsto X^{-1}(q_t)\subset \Omega_{w,\varepsilon}$ joins $x_0$ to $x_1$.

\begin{lemma}\label{lem:cap-boundary-smooth-x}
Suppose $K$ is $C^{\infty}_+$. There exists $\varepsilon_0>0$ such that, for every $0< \varepsilon<\varepsilon_0$, the level set $\Sigma_{w,\varepsilon}$ is a  smooth hypersurface of $\Sn$.
\end{lemma}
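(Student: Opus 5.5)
The plan is to apply the regular value theorem to the smooth function $\ell_w=\ip{X}{w}$ on $\Sn$. Since $K$ is $C^\infty_+$, the support function $h$ is smooth and so is the inverse Gauss map $X=Dh$; hence $\ell_w\in C^\infty(\Sn)$. It then suffices to produce $\varepsilon_0>0$ such that $d\ell_w\neq 0$ on $\{0<\ell_w<\varepsilon_0\}$. Every $\varepsilon\in(0,\varepsilon_0)$ will then be a regular value, and $\Sigma_{w,\varepsilon}=\ell_w^{-1}(\varepsilon)$ will be a smooth closed embedded hypersurface of $\Sn$. It is nonempty for small $\varepsilon$: $\ell_w$ is continuous on the connected sphere and takes both signs, since $\max_{\partial K}\ip{\cdot}{w}=h(w/|w|)|w|>0$ and $\min_{\partial K}\ip{\cdot}{w}=-h(-w/|w|)|w|<0$.

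First I compute the differential. By \eqref{eq:dX-tau-relation}, for $v\in T_x\Sn$,
\eq{
d\ell_w(v)=\ip{\tau(x)v}{w}=\ip{v}{\tau(x)w^{T}},
}
where $w^T=w-\ip{w}{x}x$ is the tangential projection of $w$ onto $T_x\Sn$, and $\tau$ is $\bar g$-self-adjoint. Because $\tau$ is positive definite, $d\ell_w(x)=0$ holds exactly when $w^T=0$, that is, when $x=\pm w/|w|$. So the only critical points of $\ell_w$ are $\pm w/|w|$. The corresponding critical values are $\ell_w(\pm w/|w|)=\ip{Dh(\pm w/|w|)}{w}=\pm|w|\,h(\pm w/|w|)$, using Euler's identity for the $1$-homogeneous extension of $h$.

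Since $K$ contains the origin in its interior, $h>0$. Hence the two critical values are $|w|h(w/|w|)>0$ and $-|w|h(-w/|w|)<0$. Setting
\eq{
\varepsilon_0=|w|\,h(w/|w|)>0,
}
every $\varepsilon\in(0,\varepsilon_0)$ is a regular value of $\ell_w$, and the regular value theorem gives the claim. In fact the same argument shows that $\Sigma_w=\{\ell_w=0\}$ is also smooth.

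I do not expect a real obstacle. The one point that needs care is the identification of $d\ell_w$ through $\tau$ and the use of its positive definiteness. This is exactly where the $C_+$ hypothesis enters: without positive curvature, $d\ell_w$ could vanish at other points.
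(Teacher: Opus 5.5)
Your proposal is correct and follows the same route as the paper: both use the invertibility of $dX=\tau$ to locate the critical points of $\ell_w$ at $x=\pm w/|w|$, and both note that the corresponding critical values $\pm|w|\,h(\pm w/|w|)$ are nonzero. Your version is slightly more explicit: it gives $\varepsilon_0=|w|\,h(w/|w|)$ and shows the level sets are nonempty, whereas the paper proves that $0$ is a regular value and then appeals to continuity.
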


\begin{proof}
We show that $0$ is a regular value of $\ell_w$. Let $x\in \Sigma_w$. If $d\ell_w|_x=0$, then
\eq{
0=d\ell_w|_x(v)=\ip{dX|_x(v)}{w} \quad \text{for all } v\in T_x\Sn.
}
That is, $w$ is parallel to $x$. Therefore $\ell_w(x)=h_K(x)\ip{x}{w}\neq 0$,
which contradicts $\ell_w(x)=0$. By continuity, all sufficiently small $\varepsilon > 0$ are also regular values.
\end{proof}

The $g$-unit and $\bar{g}$-unit outward normals to the hypersurface $\Sigma_{w,\varepsilon}$ are given by
\eq{
\mathbf{N}_{\varepsilon}=-\frac{\nabla \ell_{w}}{|\nabla \ell_{w}|_g}, \quad \mathbf{n}_{\varepsilon}=-\frac{\bar{\nabla} \ell_{w}}{|\bar{\nabla} \ell_{w}|_{\bar{g}}}.
}
Here, ``outward'' means that as $\varepsilon$ decreases, the domains $\Omega_{w,\varepsilon}$ expand in the outward normal direction. For the limiting case $\varepsilon=0$, we simply write $\mathbf{N}=\mathbf{N}_0$ and $\mathbf{n}=\mathbf{n}_0$. 

Note that, in general, the two unit normals $\mathbf{N}_{\varepsilon}$ and $\mathbf{n}_{\varepsilon}$ are not collinear. At each point of $\Sigma_{w,\varepsilon}$ we have the $\bar{g}$-orthogonal decomposition
\eq{
\mathbf{N}_{\varepsilon}=\frac{|\nabla \ell_{w}|_g}{|\bar{\nabla} \ell_{w}|_{\bar{g}}} \mathbf{n}_{\varepsilon}+\mathbf{T}, \quad \mathbf{T}\in T\Sigma_{w,\varepsilon}.
}
Indeed, this follows from the definitions of the following inner products:
\eq{
g(Z,\mathbf{N}_{\varepsilon}) =-\frac{1}{|\nabla\ell_{w}|_g}d\ell_{w}(Z), \quad \bar{g}(Z,\mathbf{n}_{\varepsilon}) =-\frac{1}{|\bar{\nabla}\ell_{w}|_{\bar{g}}}d\ell_{w}(Z).
}
Hence, for every vector field $Z$, we have the following relations along $\Sigma_{w,\varepsilon}$:
\eq{
\label{eq:N-n-relation} 
\bar{g}(Z,\mathbf{n}_{\varepsilon}) =\frac{|\nabla\ell_{w}|_g}{|\bar{\nabla}\ell_{w}|_{\bar{g}}} g(Z,\mathbf{N}_{\varepsilon}), \quad d\ell_{w}\left(\mathbf{N}_{\varepsilon} -\frac{|\nabla \ell_{w}|_g}{|\bar{\nabla} \ell_{w}|_{\bar{g}}} \mathbf{n}_{\varepsilon}\right)=0.
}

Let $\alpha>0$. On $\Omega_w$ we consider the weighted measure associated with the potential
\eq{
\Phi=\Phi_{w,\alpha}=-\alpha\log \ell_w, \quad d\mu_{w,\alpha}=e^{-\Phi_{w,\alpha}}\, dV_K.
}
We also introduce a boundary measure induced by $dV_K$ on the hypersurface $\Sigma_{w,\varepsilon}$:
\eq{
\label{def: sigma-K-eps} 
d\sigma_{K,\varepsilon}=h\det(\tau)\frac{|\nabla \ell_{w}|_g}{|\bar{\nabla} \ell_{w}|_{\bar{g}}}\,d\bar\sigma_{\varepsilon},
}
where $d\bar\sigma_{\varepsilon}$ denotes the hypersurface measure of $\Sigma_{w,\varepsilon}$ with respect to the metric $\bar{g}$. The corresponding weighted boundary measure is then defined by
\eq{
d\sigma_{w,\alpha,\varepsilon}=e^{-\Phi_{w,\alpha}}\,d\sigma_{K,\varepsilon}.
}

\begin{lemma}\label{lem:unweighted-divergence-eps-cap}
For every smooth vector field $Z$ on a neighborhood of $\overline{\Omega_{w,\varepsilon}}$,
\eq{
\int_{\Omega_{w,\varepsilon}} \divnabla Z\, dV_K=\int_{\Sigma_{w,\varepsilon}} g(Z,\mathbf{N}_{\varepsilon})\,d\sigma_{K,\varepsilon}.
}
\end{lemma}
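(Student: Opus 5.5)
The plan is to reduce the statement to the Euclidean divergence theorem on $(\Sn,\bar g)$, using \autoref{lem:divnabla-divbarnabla}, and then to convert the resulting boundary integrand into the $g$-normal form by means of \eqref{eq:N-n-relation} and the definition \eqref{def: sigma-K-eps} of $d\sigma_{K,\varepsilon}$.

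First, by \autoref{lem:cap-boundary-smooth-x}, for $0<\varepsilon<\varepsilon_0$ the domain $\Omega_{w,\varepsilon}$ is an open subset of $\Sn$ with smooth boundary $\Sigma_{w,\varepsilon}$. By \autoref{lem:divnabla-divbarnabla}, writing $Z=Z^i\partial_i$,
\[
\divnabla Z\, dV_K=\frac{1}{h\det(\tau)}\bar{\nabla}_i\left(h\det(\tau)Z^i\right)h\det(\tau)\,dx=\bar{\nabla}_i\left(h\det(\tau)Z^i\right)dx .
\]
Hence $\int_{\Omega_{w,\varepsilon}}\divnabla Z\,dV_K$ is the Riemannian divergence, with respect to $\bar g$, of the vector field $h\det(\tau)Z$, integrated over $\Omega_{w,\varepsilon}$ against $dx$. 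The classical divergence theorem on the smooth domain $\Omega_{w,\varepsilon}\subset(\Sn,\bar g)$ then gives
\[
\int_{\Omega_{w,\varepsilon}}\divnabla Z\,dV_K=\int_{\Sigma_{w,\varepsilon}}h\det(\tau)\,\bar g(Z,\mathbf{n}_\varepsilon)\,d\bar\sigma_\varepsilon .
\]
Here $\mathbf{n}_\varepsilon=-\bar\nabla\ell_w/|\bar\nabla\ell_w|_{\bar g}$ is indeed the outward $\bar g$-normal, since $\ell_w>\varepsilon$ inside $\Omega_{w,\varepsilon}$.

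Next, apply the first relation in \eqref{eq:N-n-relation}, namely $\bar g(Z,\mathbf{n}_\varepsilon)=\frac{|\nabla\ell_w|_g}{|\bar\nabla\ell_w|_{\bar g}}\,g(Z,\mathbf{N}_\varepsilon)$, which holds because both sides equal $-d\ell_w(Z)/|\bar\nabla\ell_w|_{\bar g}$. Substituting this into the boundary integral and comparing with \eqref{def: sigma-K-eps}, the integrand becomes exactly $g(Z,\mathbf{N}_\varepsilon)\,d\sigma_{K,\varepsilon}$, which is the claim.

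The only point needing care is the smoothness required by \autoref{lem:divnabla-divbarnabla} and by the divergence theorem. The lemma was stated for smooth fields on the whole sphere, but its proof is pointwise: the identity of distributions tested against compactly supported $\phi$ yields a pointwise identity, which then holds on any open set where $Z$ is defined. Moreover, $h\det(\tau)$ is smooth for $K\in C^\infty_+$, so the vector field $h\det(\tau)Z$ is smooth near $\overline{\Omega_{w,\varepsilon}}$. Given these observations, I expect no real obstacle; the argument is a bookkeeping check that the normal orientations and the density factors match.
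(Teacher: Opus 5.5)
Your proof is correct and is essentially the paper's argument: rewrite $\divnabla Z\,dV_K$ as $\bar\nabla_i\left(h\det(\tau)Z^i\right)dx$ via \autoref{lem:divnabla-divbarnabla}, apply the divergence theorem on $(\Sn,\bar g)$, then convert $\bar g(Z,\mathbf{n}_\varepsilon)$ into $g(Z,\mathbf{N}_\varepsilon)$ using \eqref{eq:N-n-relation} and the definition of $d\sigma_{K,\varepsilon}$. The paper leaves implicit that \autoref{lem:divnabla-divbarnabla} holds pointwise for fields defined only near $\overline{\Omega_{w,\varepsilon}}$; your remark checking this is a harmless addition.
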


\begin{proof}
By \autoref{lem:divnabla-divbarnabla}, if $Z=Z^i\partial_i$, then $(\divnabla Z)\, dV_K=\bar{\nabla}_i\left(h\det(\tau)Z^i\right)\, dx$. Integrating this over $\Omega_{w,\varepsilon}$ and applying the divergence theorem on $(\Sn,\bar{g},\bar{\nabla})$ yields
\eq{
\label{eq:div-boundary-round} 
\int_{\Omega_{w,\varepsilon}} \divnabla Z\, dV_K = \int_{\Sigma_{w,\varepsilon}} h\det(\tau)\bar{g}(Z,\mathbf{n}_{\varepsilon})\,d\bar\sigma_{\varepsilon}.
}
Due to \eqref{eq:N-n-relation} and \eqref{def: sigma-K-eps}, the claim follows.
\end{proof}

\begin{lemma}\label{lem:unweighted-eps-cap-x}
For all $f\in C^1(\overline{\Omega_{w,\varepsilon}})$ and $\eta\in C^2(\overline{\Omega_{w,\varepsilon}})$ we have
\eq{
\int_{\Omega_{w,\varepsilon}} f\Delta\eta\, dV_K=-\int_{\Omega_{w,\varepsilon}} g(\nabla f,\nabla\eta)\, dV_K+\int_{\Sigma_{w,\varepsilon}} f\eta_{\mathbf{N}_{\varepsilon}}\,d\sigma_{K,\varepsilon},
}
where $\eta_{\mathbf{N}_{\varepsilon}}:=g(\nabla\eta,\mathbf{N}_{\varepsilon})$.
\end{lemma}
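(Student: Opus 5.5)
The identity is Green's formula on the $\varepsilon$-cap, so the plan is to apply \autoref{lem:unweighted-divergence-eps-cap} to the vector field $Z=f\nabla\eta$. This field is only $C^1$ on a neighborhood of $\overline{\Omega_{w,\varepsilon}}$ rather than smooth. To handle this, I would first prove the identity for smooth $f$ and $\eta$ defined near $\overline{\Omega_{w,\varepsilon}}$. I would then pass to general $f\in C^1$, $\eta\in C^2$ by approximation: extend $f$ and $\eta$ to a neighborhood, for instance via Whitney extension or by slightly shrinking to $\Omega_{w,\varepsilon'}$ with $\varepsilon'>\varepsilon$, and mollify. The proof of \autoref{lem:unweighted-divergence-eps-cap} only uses the classical divergence theorem on $(\Sn,\bar g)$ for the $C^1$ field $h\det(\tau)Z^i\partial_i$, so it in fact holds verbatim for $C^1$ vector fields, and approximation can be avoided.

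The key computation is the divergence of $f\nabla\eta$. From the coordinate formula in \autoref{lem:divnabla-divbarnabla},
\[
\divnabla(fZ)=\frac{1}{h\det(\tau)}\bar\nabla_i\left(h\det(\tau)fZ^i\right)=f\divnabla Z+Z^i\bar\nabla_i f=f\divnabla Z+g(\nabla f,Z),
\]
where the last equality uses $Z^i\bar\nabla_i f=df(Z)=g(\nabla f,Z)$. Taking $Z=\nabla\eta$ and recalling that $\divnabla\nabla\eta=\Delta\eta$, which is how $\Delta$ entered the Bochner argument above, I obtain
\[
\divnabla(f\nabla\eta)=f\Delta\eta+g(\nabla f,\nabla\eta).
\]

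If one wants to confirm $\divnabla\nabla\eta=\Delta\eta$ directly, the route is \autoref{lem:Delta-bar-nabla}. In $\bar g$-coordinates, $\nabla\eta$ has components $h\tau^{ij}\bar\nabla_j\eta$, since $g=h^{-1}\tau$. Substituting into \autoref{lem:divnabla-divbarnabla} then reproduces exactly the formula of \autoref{lem:Delta-bar-nabla}.

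Integrating the divergence identity over $\Omega_{w,\varepsilon}$ and applying \autoref{lem:unweighted-divergence-eps-cap} gives the boundary term
\[
\int_{\Sigma_{w,\varepsilon}}f\,g(\nabla\eta,\mathbf N_\varepsilon)\,d\sigma_{K,\varepsilon}=\int_{\Sigma_{w,\varepsilon}}f\eta_{\mathbf N_\varepsilon}\,d\sigma_{K,\varepsilon}.
\]
Rearranging yields the claim. The only delicate point is the regularity bookkeeping just discussed. The boundary $\Sigma_{w,\varepsilon}$ is smooth for $0<\varepsilon<\varepsilon_0$ by \autoref{lem:cap-boundary-smooth-x}, so the divergence theorem applies to $C^1$ fields up to the boundary.
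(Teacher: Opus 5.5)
Your proposal is correct and follows essentially the same route as the paper, which proves the lemma in one line by applying \autoref{lem:unweighted-divergence-eps-cap} to $Z=f\nabla\eta$. Your additional product-rule computation $\divnabla(f\nabla\eta)=f\Delta\eta+g(\nabla f,\nabla\eta)$ and your remark on $C^1$ regularity of $Z$ just make explicit what the paper leaves implicit.
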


\begin{proof}
Apply \autoref{lem:unweighted-divergence-eps-cap} to $Z=f\nabla\eta$.
\end{proof}

\begin{lemma}
\label{lem:weighted-divergence-eps-cap}
For every smooth vector field $Z$ on a neighborhood of $\overline{\Omega_{w,\varepsilon}}$,
\eq{
\int_{\Omega_{w,\varepsilon}} \divPhi Z\,d\mu_{w,\alpha}=\int_{\Sigma_{w,\varepsilon}} g(Z,\mathbf{N}_{\varepsilon})\,d\sigma_{w,\alpha,\varepsilon},
}
where $\divPhi Z:=\divnabla Z-g(\nabla\Phi_{w,\alpha},Z)$.
\end{lemma}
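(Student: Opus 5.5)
The plan is to reduce the weighted statement to the unweighted divergence formula of \autoref{lem:unweighted-divergence-eps-cap}, applied to the rescaled vector field $e^{-\Phi_{w,\alpha}}Z$.

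First we check that this field is admissible. On $\overline{\Omega_{w,\varepsilon}}$ we have $\ell_w\ge\varepsilon>0$. Hence $e^{-\Phi_{w,\alpha}}=\ell_w^{\alpha}$ is smooth on a neighborhood of $\overline{\Omega_{w,\varepsilon}}$: shrink the neighborhood so that $\ell_w>\varepsilon/2$ there. Consequently $\tilde Z:=\ell_w^{\alpha}Z$ is a smooth vector field on a neighborhood of $\overline{\Omega_{w,\varepsilon}}$, and \autoref{lem:unweighted-divergence-eps-cap} applies to it.

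Next we use the product rule for $\divnabla$. With respect to the centro-affine structure,
\[
\divnabla(uZ)=u\divnabla Z+g(\nabla u,Z).
\]
This follows, for instance, from \autoref{lem:divnabla-divbarnabla}: there $\bar{\nabla}_i(h\det(\tau)uZ^i)=u\bar{\nabla}_i(h\det(\tau)Z^i)+h\det(\tau)Z^i\bar{\nabla}_iu$, and $Z^i\bar{\nabla}_iu=du(Z)=g(\nabla u,Z)$. Taking $u=e^{-\Phi}$, so that $\nabla u=-e^{-\Phi}\nabla\Phi$, we get
\[
\divnabla(e^{-\Phi}Z)=e^{-\Phi}\bigl(\divnabla Z-g(\nabla\Phi,Z)\bigr)=e^{-\Phi}\divPhi Z.
\]

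Finally we integrate. By the previous identity and \autoref{lem:unweighted-divergence-eps-cap},
\[
\int_{\Omega_{w,\varepsilon}}\divPhi Z\,d\mu_{w,\alpha}=\int_{\Omega_{w,\varepsilon}}\divnabla(e^{-\Phi}Z)\,dV_K=\int_{\Sigma_{w,\varepsilon}}e^{-\Phi}g(Z,\mathbf{N}_\varepsilon)\,d\sigma_{K,\varepsilon}.
\]
Since $d\sigma_{w,\alpha,\varepsilon}=e^{-\Phi_{w,\alpha}}\,d\sigma_{K,\varepsilon}$, the right-hand side equals $\int_{\Sigma_{w,\varepsilon}}g(Z,\mathbf{N}_\varepsilon)\,d\sigma_{w,\alpha,\varepsilon}$, which is the claim.

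There is no real obstacle. The only point needing care is that the weight stays smooth up to the boundary, which holds because $\varepsilon>0$. This is precisely why the argument is carried out on $\Omega_{w,\varepsilon}$ rather than on $\Omega_w$, where $\ell_w^{\alpha}$ may fail to be smooth at $\Sigma_w$.
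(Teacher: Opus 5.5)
Your proposal is correct and is the paper's own argument: the paper likewise uses $\divPhi Z\,d\mu_{w,\alpha}=\divnabla(e^{-\Phi_{w,\alpha}}Z)\,dV_K$ and applies the unweighted divergence lemma to $e^{-\Phi_{w,\alpha}}Z$. You also spell out the product rule for $\divnabla$ and check that $\ell_w^{\alpha}$ is smooth up to $\Sigma_{w,\varepsilon}$, two points the paper leaves implicit.
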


\begin{proof}
We compute
\eq{
\divPhi Z\, d\mu_{w,\alpha} =\left(\divnabla Z-g(\nabla\Phi_{w,\alpha},Z)\right)e^{-\Phi_{w,\alpha}}\, dV_K =\divnabla(e^{-\Phi_{w,\alpha}}Z)\, dV_K.
}
Now we apply \autoref{lem:unweighted-divergence-eps-cap} to the vector field $e^{-\Phi_{w,\alpha}}Z$:
\eq{
\int_{\Omega_{w,\varepsilon}} \divPhi Z\,d\mu_{w,\alpha} =\int_{\Sigma_{w,\varepsilon}} g(e^{-\Phi_{w,\alpha}}Z,\mathbf{N}_{\varepsilon})\,d\sigma_{K,\varepsilon} =\int_{\Sigma_{w,\varepsilon}} g(Z,\mathbf{N}_{\varepsilon})\,d\sigma_{w,\alpha,\varepsilon}.
}
\end{proof}

\begin{lemma}\label{lem:LPhi-eps-cap}
For all $f\in C^1(\overline{\Omega_{w,\varepsilon}})$ and $\eta\in C^2(\overline{\Omega_{w,\varepsilon}})$,
\eq{
\label{eq:green-eps-cap} 
\int_{\Omega_{w,\varepsilon}} f\LPhi\eta\,d\mu_{w,\alpha} =-\int_{\Omega_{w,\varepsilon}} g(\nabla f,\nabla\eta)\,d\mu_{w,\alpha}+\int_{\Sigma_{w,\varepsilon}} f\eta_{\mathbf{N}_{\varepsilon}}\,d\sigma_{w,\alpha,\varepsilon},
}
where $\LPhi\eta:=\Delta\eta-g(\nabla\Phi_{w,\alpha},\nabla\eta)$.
\end{lemma}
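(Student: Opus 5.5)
The identity \eqref{eq:green-eps-cap} is the weighted counterpart of \autoref{lem:unweighted-eps-cap-x}. I will obtain it by applying the weighted divergence theorem of \autoref{lem:weighted-divergence-eps-cap} to the vector field $Z=f\nabla\eta$, in the same way \autoref{lem:unweighted-eps-cap-x} follows from \autoref{lem:unweighted-divergence-eps-cap}.

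The first step is a Leibniz rule for the weighted divergence. For a $C^1$ function $f$ and a vector field $Y$, the unweighted rule is $\divnabla(fY)=f\divnabla Y+g(\nabla f,Y)$. This holds because $\divnabla$ is the trace of $\nabla Y$, and $\nabla_U(fY)=(Uf)Y+f\nabla_UY$, so the extra term is $\tr(U\mapsto (Uf)Y)=df(Y)=g(\nabla f,Y)$. Subtracting $g(\nabla\Phi_{w,\alpha},fY)$ from both sides gives
\[
\divPhi(fY)=f\divPhi Y+g(\nabla f,Y).
\]
With $Y=\nabla\eta$ and $\divPhi\nabla\eta=\LPhi\eta$, this becomes $\divPhi(f\nabla\eta)=f\LPhi\eta+g(\nabla f,\nabla\eta)$.

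The second step is to integrate this identity over $\Omega_{w,\varepsilon}$ against $d\mu_{w,\alpha}$ and apply \autoref{lem:weighted-divergence-eps-cap}. On the boundary the integrand is $g(f\nabla\eta,\mathbf{N}_\varepsilon)=f\eta_{\mathbf{N}_\varepsilon}$. Rearranging gives \eqref{eq:green-eps-cap}. The weight $e^{-\Phi_{w,\alpha}}=\ell_w^\alpha$ is smooth and positive on a neighborhood of $\overline{\Omega_{w,\varepsilon}}$ because $\ell_w\ge\varepsilon>0$ there, so the weight itself raises no issue.

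The only real obstacle is regularity. \autoref{lem:weighted-divergence-eps-cap} is stated for smooth vector fields on a neighborhood of the closure, whereas $f\nabla\eta$ is only $C^1$ on $\overline{\Omega_{w,\varepsilon}}$. I will handle this in one of two ways. The first is to note that the proofs of \autoref{lem:divnabla-divbarnabla} and of the Euclidean divergence theorem on the smooth domain $\Omega_{w,\varepsilon}\subset\Sn$ (smooth by \autoref{lem:cap-boundary-smooth-x}) require only $C^1$ vector fields up to the boundary. The second is to approximate: extend $f$ and $\eta$ to a neighborhood and mollify to get $f_k\to f$ in $C^1$ and $\eta_k\to\eta$ in $C^2$ on $\overline{\Omega_{w,\varepsilon}}$. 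Every term in \eqref{eq:green-eps-cap} is continuous under these convergences, since $\LPhi\eta$ involves at most second derivatives of $\eta$ and the boundary term involves only first derivatives. So the identity passes to the limit.
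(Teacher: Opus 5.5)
Your proof is correct and is essentially the paper's argument: the paper plugs $fe^{-\Phi_{w,\alpha}}$ into the unweighted Green identity of \autoref{lem:unweighted-eps-cap-x} and expands $\nabla(fe^{-\Phi_{w,\alpha}})$, while you apply \autoref{lem:weighted-divergence-eps-cap} to $f\nabla\eta$ via the weighted Leibniz rule, and both reduce to the divergence theorem for $e^{-\Phi_{w,\alpha}}f\nabla\eta$. Your regularity remark addresses a point the paper passes over: the divergence lemmas are stated for smooth fields, but $f\nabla\eta$ is only $C^1$. Either of your two fixes handles it.
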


\begin{proof}
We apply \autoref{lem:unweighted-eps-cap-x} with $fe^{-\Phi_{w,\alpha}}$ in place of $f$:
\eq{
\int_{\Omega_{w,\varepsilon}} f\Delta\eta\,d\mu_{w,\alpha} &=-\int_{\Omega_{w,\varepsilon}} g(\nabla(fe^{-\Phi_{w,\alpha}}),\nabla\eta)\, dV_K+\int_{\Sigma_{w,\varepsilon}} f\eta_{\mathbf{N}_{\varepsilon}}\,d\sigma_{w,\alpha,\varepsilon} \\ 
&=-\int_{\Omega_{w,\varepsilon}} g(\nabla f,\nabla\eta)\,d\mu_{w,\alpha}+\int_{\Omega_{w,\varepsilon}} fg(\nabla\Phi_{w,\alpha},\nabla\eta)\,d\mu_{w,\alpha} \\ 
&\quad+\int_{\Sigma_{w,\varepsilon}} f\eta_{\mathbf{N}_{\varepsilon}}\,d\sigma_{w,\alpha,\varepsilon}.
}
\end{proof}

Let $u\in C^3(\overline{\Omega_{w,\varepsilon}})$ and $Z=\nabla u$. In view of \autoref{lem:weighted-divergence-eps-cap} and \autoref{lem:LPhi-eps-cap} we have
\eq{
\label{eq:divergence-nablaZZ} 
\int_{\Omega_{w,\varepsilon}}\divPhi(\nabla_Z Z)\,d\mu_{w,\alpha}=\int_{\Sigma_{w,\varepsilon}} g(\nabla_Z Z,\mathbf{N}_{\varepsilon})\,d\sigma_{w,\alpha,\varepsilon},
}
and
\eq{
\label{eq:integration-by-parts-LPhi} 
\int_{\Omega_{w,\varepsilon}} Z(\LPhi u)\,d\mu_{w,\alpha}=-\int_{\Omega_{w,\varepsilon}} (\LPhi u)^2\,d\mu_{w,\alpha}+\int_{\Sigma_{w,\varepsilon}} (\LPhi u)\,u_{\mathbf{N}_{\varepsilon}}\,d\sigma_{w,\alpha,\varepsilon}.
}

\begin{definition}
For tangent vector fields $U,V$ along $\Sigma_{w,\varepsilon}$, define
\eq{
\mathrm{II}^*_{\varepsilon}(U,V)=g(U,\nabla_V^*\mathbf{N}_{\varepsilon}).
}
\end{definition}

\begin{theorem}\label{thm:R-type-Neumann-eps}
Let $u\in C^3(\overline{\Omega_{w,\varepsilon}})$ satisfy $u_{\mathbf{N}_{\varepsilon}}=g(\nabla u,\mathbf{N}_{\varepsilon})=0$ on $\Sigma_{w,\varepsilon}$.
Then
\eq{
&\int_{\Omega_{w,\varepsilon}} (\LPhi u)^2-|\left(\nabla^*\right)^2u|_g^2-(\Ric+\nabla^2\Phi_{w,\alpha})(\nabla u,\nabla u)\,d\mu_{w,\alpha} \\ 
&\quad=\int_{\Sigma_{w,\varepsilon}} \mathrm{II}^*_{\varepsilon}(\nabla u,\nabla u)\,d\sigma_{w,\alpha,\varepsilon}.
}
\end{theorem}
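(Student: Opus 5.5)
The plan is to integrate the pointwise weighted Bochner identity \eqref{eq:centro-affine-bochner-pointwise} over $\Omega_{w,\varepsilon}$ against $d\mu_{w,\alpha}$ with $\Phi=\Phi_{w,\alpha}$ and $Z=\nabla u$. That identity is purely local and its derivation holds verbatim on $\overline{\Omega_{w,\varepsilon}}$, where $\ell_w\ge\varepsilon>0$ so $\Phi_{w,\alpha}$ is smooth. The derivation uses only the asymmetric Bochner identity, \autoref{lem:grad-hess} and \eqref{eq:conjugacy}, so the $C^3$ regularity of $u$ suffices. Integrating gives
\[
\int_{\Omega_{w,\varepsilon}}\divPhi(\nabla_ZZ)\,d\mu_{w,\alpha}=\int_{\Omega_{w,\varepsilon}}Z(\LPhi u)\,d\mu_{w,\alpha}+\int_{\Omega_{w,\varepsilon}}\left[(\Ric+\nabla^2\Phi_{w,\alpha})(Z,Z)+\abs{(\nabla^*)^2u}_g^2\right]d\mu_{w,\alpha}.
\]

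Next I rewrite the two derivative terms using boundary formulas. For the left-hand side I use \eqref{eq:divergence-nablaZZ}, which turns it into $\int_{\Sigma_{w,\varepsilon}}g(\nabla_ZZ,\mathbf{N}_\varepsilon)\,d\sigma_{w,\alpha,\varepsilon}$. For the first term on the right I use \eqref{eq:integration-by-parts-LPhi}. Its boundary term $\int_{\Sigma_{w,\varepsilon}}(\LPhi u)\,u_{\mathbf{N}_\varepsilon}\,d\sigma_{w,\alpha,\varepsilon}$ vanishes by the Neumann condition. Rearranging then gives
\[
\int_{\Omega_{w,\varepsilon}}(\LPhi u)^2-\abs{(\nabla^*)^2u}_g^2-(\Ric+\nabla^2\Phi_{w,\alpha})(\nabla u,\nabla u)\,d\mu_{w,\alpha}=-\int_{\Sigma_{w,\varepsilon}}g(\nabla_ZZ,\mathbf{N}_\varepsilon)\,d\sigma_{w,\alpha,\varepsilon}.
\]

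The remaining step, and the only real point, is to identify the boundary integrand $-g(\nabla_ZZ,\mathbf{N}_\varepsilon)$ with $\mathrm{II}^*_\varepsilon(Z,Z)$. Since $u_{\mathbf{N}_\varepsilon}=0$ on $\Sigma_{w,\varepsilon}$, the field $Z$ is tangent to $\Sigma_{w,\varepsilon}$, so $g(Z,\mathbf{N}_\varepsilon)\equiv0$ along the hypersurface. Differentiating this in the tangent direction $Z$ and using the conjugacy relation \eqref{eq:conjugacy} gives
\[
0=Z\bigl(g(Z,\mathbf{N}_\varepsilon)\bigr)=g(\nabla_ZZ,\mathbf{N}_\varepsilon)+g(Z,\nabla^*_Z\mathbf{N}_\varepsilon),
\]
that is, $-g(\nabla_ZZ,\mathbf{N}_\varepsilon)=\mathrm{II}^*_\varepsilon(Z,Z)$. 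Only tangential derivatives appear here, so $\mathbf{N}_\varepsilon$ needs to be defined only along $\Sigma_{w,\varepsilon}$, or extended smoothly as $-\nabla\ell_w/|\nabla\ell_w|_g$ near it, which is legitimate for $\varepsilon<\varepsilon_0$ by \autoref{lem:cap-boundary-smooth-x}. Substituting this identity completes the proof.

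The main care point is choosing the pairing order in \eqref{eq:conjugacy}, with $\nabla$ on $Z$ and $\nabla^*$ on $\mathbf{N}_\varepsilon$. This order is what produces $\mathrm{II}^*$ rather than the analogous form built with $\nabla$, and it matches the definition.
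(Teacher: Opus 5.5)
Your proposal is correct and follows the paper's proof essentially step by step. You integrate the pointwise weighted Bochner identity, rewrite the two derivative terms via \eqref{eq:divergence-nablaZZ} and \eqref{eq:integration-by-parts-LPhi} with the Neumann condition killing the boundary term, and identify $-g(\nabla_ZZ,\mathbf{N}_\varepsilon)$ with $\mathrm{II}^*_\varepsilon(Z,Z)$ by differentiating $g(Z,\mathbf{N}_\varepsilon)=0$ tangentially along $Z$ using the conjugacy identity.
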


\begin{proof}
Let $Z=\nabla u$. By the weighted Bochner identity \eqref{eq:centro-affine-bochner-pointwise},
\eq{
\divPhi(\nabla_Z Z)=Z(\LPhi u)+(\Ric+\nabla^2\Phi_{w,\alpha})(Z,Z)+|\left(\nabla^*\right)^2u|_g^2.
}
Integrating over $\Omega_{w,\varepsilon}$, using \eqref{eq:divergence-nablaZZ} and \eqref{eq:integration-by-parts-LPhi}, after rearranging we obtain
\eq{
&\int_{\Omega_{w,\varepsilon}} (\LPhi u)^2-|\left(\nabla^*\right)^2u|_g^2-(\Ric+\nabla^2\Phi_{w,\alpha})(\nabla u,\nabla u)\,d\mu_{w,\alpha} \\ 
&\quad=-\int_{\Sigma_{w,\varepsilon}} g(\nabla_Z Z,\mathbf{N}_{\varepsilon})\,d\sigma_{w,\alpha,\varepsilon}.
}

Since $g(Z,\mathbf{N}_{\varepsilon})=0$ on $\Sigma_{w,\varepsilon}$, $Z$ is tangent to $\Sigma_{w,\varepsilon}$. Due to the conjugacy identity,
\eq{
Z\left(g(Z,\mathbf{N}_{\varepsilon})\right)=g(\nabla_Z Z,\mathbf{N}_{\varepsilon})+g(Z,\nabla_Z^*\mathbf{N}_{\varepsilon}).
}
Hence $-g(\nabla_Z Z,\mathbf{N}_{\varepsilon})=g(Z,\nabla_Z^*\mathbf{N}_{\varepsilon})=\mathrm{II}^*_{\varepsilon}(Z,Z)$.
\end{proof}

\begin{lemma}\label{lem:boundary-sign-eps}
For every tangent vector $Z\in T\Sigma_{w,\varepsilon}$, we have $\mathrm{II}^*_{\varepsilon}(Z,Z)=\varepsilon\frac{|Z|_g^2}{|\nabla \ell_w|_g}$.
\end{lemma}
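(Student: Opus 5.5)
Since $\mathrm{II}^*_{\varepsilon}(Z,Z)=g(Z,\nabla^*_Z\mathbf{N}_{\varepsilon})$ with $Z$ tangent to $\Sigma_{w,\varepsilon}$, I will write $\mathbf{N}_{\varepsilon}=-\nabla\ell_w/|\nabla\ell_w|_g$ and expand. The key inputs are the conjugacy identity \eqref{eq:grad-hess-plain}, $g(V,\nabla^*_U\nabla F)=\nabla^2F(U,V)$, and the identity $\nabla^2X+gX=0$ from \autoref{lem:homogeneous-extension} applied to the linear ($1$-homogeneous, with vanishing Euclidean Hessian) function $\tilde f(y)=\ip{y}{w}$. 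That lemma gives
\[
\nabla^2\ell_w=-\ell_w\,g .
\]

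First, write $\mathbf{N}_{\varepsilon}=\phi\,\nabla\ell_w$ with $\phi=-1/|\nabla\ell_w|_g$. Since $\nabla^*$ is a connection, we have $\nabla^*_Z(\phi\nabla\ell_w)=Z(\phi)\nabla\ell_w+\phi\,\nabla^*_Z\nabla\ell_w$. Pairing with $Z$ gives
\[
g(Z,\nabla^*_Z\mathbf{N}_\varepsilon)=Z(\phi)\,g(Z,\nabla\ell_w)+\phi\, g(Z,\nabla^*_Z\nabla\ell_w).
\]
The first term vanishes because $Z$ is tangent to the level set $\{\ell_w=\varepsilon\}$, so $g(Z,\nabla\ell_w)=d\ell_w(Z)=0$. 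Extending $Z$ to a vector field if needed is harmless, since the expression is tensorial in $Z$ once the first term is dropped.

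Second, by \eqref{eq:grad-hess-plain}, the remaining term equals $\phi\,\nabla^2\ell_w(Z,Z)$. Using $\nabla^2\ell_w=-\ell_w g$ and $\ell_w=\varepsilon$ on $\Sigma_{w,\varepsilon}$, this is
\[
\phi\,(-\varepsilon|Z|_g^2)=\frac{\varepsilon|Z|_g^2}{|\nabla\ell_w|_g}.
\]

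The only point requiring care is the sign and ordering convention in \eqref{eq:grad-hess-plain}: it must be $\nabla^*$ applied to $\nabla F$ that produces $\nabla^2F$ (the Hessian with respect to $\nabla$), and not $(\nabla^*)^2F$. This matches exactly the pairing in the definition of $\mathrm{II}^*_\varepsilon$, which is what makes the clean identity $\nabla^2 X=-gX$ applicable. I also note that $|\nabla\ell_w|_g\neq0$ on $\Sigma_{w,\varepsilon}$ by \autoref{lem:cap-boundary-smooth-x}, so $\mathbf{N}_\varepsilon$ and $\phi$ are well defined there.
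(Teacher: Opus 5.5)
Your proof is correct and follows essentially the same route as the paper. Both arguments expand $\nabla^*_Z\mathbf{N}_\varepsilon$ with $\mathbf{N}_\varepsilon=-\nabla\ell_w/|\nabla\ell_w|_g$, discard the term coming from differentiating the normalization because $d\ell_w(Z)=0$, rewrite $g(Z,\nabla^*_Z\nabla\ell_w)$ as $\nabla^2\ell_w(Z,Z)$ via the conjugacy identity, and conclude using $\nabla^2\ell_w=-\ell_w g$ together with $\ell_w=\varepsilon$ on $\Sigma_{w,\varepsilon}$.
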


\begin{proof}
We have $Z(\ell_w)=g(\nabla \ell_w,Z)=0$. Hence
\eq{
\mathrm{II}^*_{\varepsilon}(Z,Z)=g(Z,\nabla_Z^*\mathbf{N}_{\varepsilon}) =-\frac{g(Z,\nabla_Z^*\nabla \ell_w)}{|\nabla \ell_w|_g}.
}
By \autoref{lem:grad-hess}, $g(Z,\nabla_Z^*\nabla \ell_w)=\nabla^2 \ell_w(Z,Z)$. Using $\nabla^2 \ell_w=-\ell_w g$ on $\Sigma_{w,\varepsilon}$, where $\ell_w=\varepsilon$, we obtain $g(Z,\nabla_Z^*\nabla \ell_w)=-\varepsilon |Z|_g^2$.
\end{proof}

\begin{lemma}\label{lem:BE-cap}
On $\Omega_w$ we have
\eq{
\label{eq:BE-cap} 
\Ric+\nabla^2\Phi_{w,\alpha}-\frac{1}{\alpha} d\Phi_{w,\alpha}\otimes d\Phi_{w,\alpha}=(n-2+\alpha)g.
}
\end{lemma}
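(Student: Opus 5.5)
The identity is a direct computation from three ingredients: the centro-affine Ricci identity $\Ric=(n-2)g$, the affine-linear identity $\nabla^2\ell_w=-\ell_w g$, and the chain rule for $\nabla^2$ applied to $\Phi_{w,\alpha}=-\alpha\log\ell_w$.

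First, $\nabla^2\ell_w+\ell_w g=0$ follows from \autoref{lem:homogeneous-extension}. Take the $1$-homogeneous linear function $\tilde f(y)=\ip{y}{w}$, whose Euclidean Hessian vanishes. Equivalently, pair $\nabla^2X+gX=0$ with the constant vector $w$.

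Second, apply the chain rule for the Hessian. Since $\nabla^2F(U,V)=U(VF)-(\nabla_UV)F$ involves the connection only through a first-order term, for any $C^2$ function $\psi$ of one variable we have
\[
\nabla^2(\psi\circ\ell_w)=\psi'(\ell_w)\nabla^2\ell_w+\psi''(\ell_w)\,d\ell_w\otimes d\ell_w .
\]
Take $\psi(t)=-\alpha\log t$ on $\Omega_w$, where $\ell_w>0$. Then
\[
\nabla^2\Phi_{w,\alpha}=-\frac{\alpha}{\ell_w}\nabla^2\ell_w+\frac{\alpha}{\ell_w^2}\,d\ell_w\otimes d\ell_w=\alpha g+\frac{\alpha}{\ell_w^2}\,d\ell_w\otimes d\ell_w .
\]

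Third, $d\Phi_{w,\alpha}=-\alpha\,d\ell_w/\ell_w$, so
\[
\frac1\alpha\,d\Phi_{w,\alpha}\otimes d\Phi_{w,\alpha}=\frac{\alpha}{\ell_w^2}\,d\ell_w\otimes d\ell_w .
\]
This is exactly the rank-one term appearing in the Hessian. Subtracting it and adding $\Ric=(n-2)g$ gives
\[
\Ric+\nabla^2\Phi_{w,\alpha}-\frac1\alpha\,d\Phi_{w,\alpha}\otimes d\Phi_{w,\alpha}=(n-2+\alpha)g ,
\]
which is \eqref{eq:BE-cap}.

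The only step needing care is confirming that $\nabla^2$ here is the Hessian of the centro-affine connection $\nabla$. That is the same connection appearing in $\nabla^2X+gX=0$, in \autoref{lem:homogeneous-extension}, and in the Bochner formula \eqref{eq:centro-affine-bochner-pointwise}, so the identities combine consistently. No other obstacle arises.
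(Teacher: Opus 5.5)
Your proposal is correct and follows the paper's proof: both combine $\nabla^2\ell_w=-\ell_w g$, the chain rule for the Hessian of $-\alpha\log\ell_w$, and $\Ric=(n-2)g$, so that the rank-one term $\alpha\, d\log\ell_w\otimes d\log\ell_w$ cancels. Your derivation of $\nabla^2\ell_w=-\ell_w g$ from \autoref{lem:homogeneous-extension} with the linear function $\tilde f(y)=\ip{y}{w}$ is a valid justification of the identity the paper simply recalls.
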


\begin{proof}
Recall $\nabla^2 \ell_w=-\ell_w g$. Therefore
\eq{
\nabla^2(\log \ell_w)=\frac{1}{\ell_w}\nabla^2 \ell_w-\frac{1}{\ell_w^2}d \ell_w\otimes d \ell_w=-g-d\log \ell_w\otimes d\log \ell_w,
}
and $\nabla^2\Phi_{w,\alpha}=\alpha g+\alpha d\log \ell_w\otimes d\log \ell_w$.
We also have
\eq{
\frac{1}{\alpha} d\Phi_{w,\alpha}\otimes d\Phi_{w,\alpha}=\alpha d\log \ell_w\otimes d\log \ell_w.
}
Since $\Ric=(n-2)g$, identity \eqref{eq:BE-cap} follows.
\end{proof}

\begin{lemma}\label{lem:finite-N-BE}
For every smooth function $F$ on $\Omega_w$,
\eq{
\label{eq:finite-N-cap} 
&\abs{(\nabla^*)^2F}_g^2+\nabla^2\Phi_{w,\alpha}(\nabla F,\nabla F)\\ 
& \ge \frac{1}{n+\alpha-1}(\mathcal{L}_{\Phi}F)^2+\left(\nabla^2\Phi_{w,\alpha}-\frac{1}{\alpha}d\Phi_{w,\alpha}\otimes d\Phi_{w,\alpha}\right)(\nabla F,\nabla F).
}
\end{lemma}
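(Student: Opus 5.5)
The inequality is a pure pointwise linear-algebra statement, essentially the curvature-dimension $CD(\cdot,n-1+\alpha)$ trick. We cancel $\nabla^2\Phi_{w,\alpha}(\nabla F,\nabla F)$ from both sides, so it suffices to show
\begin{equation}
\abs{(\nabla^*)^2F}_g^2+\frac{1}{\alpha}\bigl(d\Phi_{w,\alpha}(\nabla F)\bigr)^2\ge \frac{1}{n+\alpha-1}(\LPhi F)^2 .
\end{equation}

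The first step bounds the Hessian term by its trace. The Hessian $(\nabla^*)^2F$ is a symmetric bilinear form on the $(n-1)$-dimensional space $T_x\Sn$, and its $g$-trace is $\Delta F$ by definition. The symmetry was already used in \autoref{lem:grad-hess}, and it follows from torsion-freeness of $\nabla^*$. Diagonalizing in a $g$-orthonormal frame and applying Cauchy--Schwarz gives
\[
\abs{(\nabla^*)^2F}_g^2\ge \frac{1}{n-1}(\Delta F)^2 .
\]

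The second step introduces $b:=g(\nabla\Phi_{w,\alpha},\nabla F)=d\Phi_{w,\alpha}(\nabla F)$, so that $\LPhi F=\Delta F-b$. The elementary inequality
\[
\frac{a^2}{n-1}+\frac{b^2}{\alpha}\ge\frac{(a-b)^2}{n-1+\alpha},
\]
valid for all real $a,b$ and $\alpha>0$, is Cauchy--Schwarz in the form $(a-b)^2\le\bigl(\tfrac{a^2}{n-1}+\tfrac{b^2}{\alpha}\bigr)(n-1+\alpha)$. Applying it with $a=\Delta F$ yields the displayed inequality. Adding $\nabla^2\Phi_{w,\alpha}(\nabla F,\nabla F)$ to both sides and rewriting $\frac{1}{\alpha}b^2=\frac{1}{\alpha}d\Phi_{w,\alpha}\otimes d\Phi_{w,\alpha}(\nabla F,\nabla F)$ gives \eqref{eq:finite-N-cap}.

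There is no real obstacle here. The only point needing care is the symmetry of $(\nabla^*)^2F$, which comes from torsion-freeness, together with the fact that $\tr_g$ of it equals $\Delta F$; this is exactly how $\Delta$ was defined. The inequality then holds pointwise on $\Omega_w$, where $\Phi_{w,\alpha}$ is smooth.
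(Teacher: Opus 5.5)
Your proposal is correct and follows essentially the same route as the paper: Cauchy--Schwarz gives $\abs{(\nabla^*)^2F}_g^2\ge\frac{1}{n-1}(\Delta F)^2$, and then an elementary inequality absorbs $b=g(\nabla\Phi_{w,\alpha},\nabla F)$ to reach the dimension $n-1+\alpha$. The paper writes that inequality as $\frac{1}{n-1}(a+b)^2\ge\frac{1}{N}a^2-\frac{1}{\alpha}b^2$ with $a=\LPhi F$, while you write it with $a=\Delta F$; these are the same bound.
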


\begin{proof}
Due to the Cauchy--Schwarz inequality, $\abs{(\nabla^*)^2F}_g^2 \ge \frac{1}{n-1}(\Delta F)^2$. Now let $a=\mathcal{L}_{\Phi}F$, $b=g(\nabla\Phi_{w,\alpha},\nabla F)$, and $N=n+\alpha-1$. Then $\Delta F=a+b$ and
\eq{
\frac{1}{n-1}(a+b)^2\ge \frac{1}{N}a^2-\frac{1}{N-n+1}b^2=\frac{1}{n+\alpha-1}a^2-\frac{1}{\alpha}b^2.
}
Hence
\eq{
\abs{(\nabla^*)^2F}_g^2 \ge \frac{1}{n+\alpha-1}(\mathcal{L}_{\Phi}F)^2-\frac{1}{\alpha}g(\nabla\Phi_{w,\alpha},\nabla F)^2.
}
\end{proof}

\begin{corollary}
\label{cor:spectral-eps}
Let $u\in C^3(\overline{\Omega_{w,\varepsilon}})$ satisfy $u_{\mathbf{N}_{\varepsilon}}=0$ on $\Sigma_{w,\varepsilon}$. Then
\eq{
\label{eq:spectral-eps} 
\int_{\Omega_{w,\varepsilon}}(\LPhi u)^2\,d\mu_{w,\alpha} \ge (n+\alpha-1)\int_{\Omega_{w,\varepsilon}}|\nabla u|_g^2\,d\mu_{w,\alpha}.
}
\end{corollary}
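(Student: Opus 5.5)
We combine the Neumann Reilly-type identity of \autoref{thm:R-type-Neumann-eps} with the pointwise estimates of \autoref{lem:finite-N-BE} and \autoref{lem:BE-cap}, and control the boundary term with \autoref{lem:boundary-sign-eps}. Throughout, $\Phi=\Phi_{w,\alpha}$.

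\textbf{Step 1 (identity).} Let $u\in C^3(\overline{\Omega_{w,\varepsilon}})$ with $u_{\mathbf{N}_\varepsilon}=0$ on $\Sigma_{w,\varepsilon}$. By \autoref{thm:R-type-Neumann-eps},
\[
\int_{\Omega_{w,\varepsilon}} (\LPhi u)^2\,d\mu_{w,\alpha}
=\int_{\Omega_{w,\varepsilon}}\Big(\abs{(\nabla^*)^2u}_g^2+(\Ric+\nabla^2\Phi)(\nabla u,\nabla u)\Big)\,d\mu_{w,\alpha}
+\int_{\Sigma_{w,\varepsilon}}\mathrm{II}^*_\varepsilon(\nabla u,\nabla u)\,d\sigma_{w,\alpha,\varepsilon}.
\]
Since $\nabla u$ is tangent to $\Sigma_{w,\varepsilon}$, \autoref{lem:boundary-sign-eps} gives
\[
\mathrm{II}^*_\varepsilon(\nabla u,\nabla u)=\varepsilon\,\frac{|\nabla u|_g^2}{|\nabla\ell_w|_g}\ge 0 .
\]
So the boundary term may be dropped, and the equality becomes an inequality $\ge$. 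The smoothness of $\Sigma_{w,\varepsilon}$ and the non-vanishing of $\nabla\ell_w$ there come from \autoref{lem:cap-boundary-smooth-x}.

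\textbf{Step 2 (pointwise lower bound).} Apply \autoref{lem:finite-N-BE} to the Hessian term, add $\Ric(\nabla u,\nabla u)$, and use \autoref{lem:BE-cap}. This gives, pointwise,
\[
\abs{(\nabla^*)^2u}_g^2+(\Ric+\nabla^2\Phi)(\nabla u,\nabla u)\ \ge\ \frac{1}{n+\alpha-1}(\LPhi u)^2+(n-2+\alpha)|\nabla u|_g^2 .
\]

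\textbf{Step 3 (absorb).} Substituting Step 2 into Step 1 yields
\[
\Big(1-\frac{1}{n+\alpha-1}\Big)\int(\LPhi u)^2\,d\mu_{w,\alpha}\ \ge\ (n-2+\alpha)\int|\nabla u|_g^2\,d\mu_{w,\alpha}.
\]
The left coefficient equals $\frac{n+\alpha-2}{n+\alpha-1}$, which is positive since $n\ge2$ and $\alpha>0$. Dividing by it gives
\[
\int_{\Omega_{w,\varepsilon}}(\LPhi u)^2\,d\mu_{w,\alpha}\ \ge\ (n+\alpha-1)\int_{\Omega_{w,\varepsilon}}|\nabla u|_g^2\,d\mu_{w,\alpha},
\]
which is \eqref{eq:spectral-eps}.

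The only delicate point is regularity. \autoref{thm:R-type-Neumann-eps} requires $u\in C^3$ up to the boundary and smooth $K$, $h$, $\Phi$ on a neighborhood of $\overline{\Omega_{w,\varepsilon}}$. This holds because $\ell_w\ge\varepsilon>0$ there, so $\Phi$ is smooth, and we work in the $C^\infty_+$ setting of \autoref{lem:cap-boundary-smooth-x}. The rest is the algebra of Step 3.
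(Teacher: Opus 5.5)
Your proof is correct and is essentially the paper's argument. You drop the nonnegative boundary term $\mathrm{II}^*_\varepsilon(\nabla u,\nabla u)=\varepsilon|\nabla u|_g^2/|\nabla\ell_w|_g$ from the Neumann Reilly-type identity, bound the interior integrand pointwise using the finite-$N$ estimate together with $\Ric+\nabla^2\Phi_{w,\alpha}-\frac{1}{\alpha}d\Phi_{w,\alpha}\otimes d\Phi_{w,\alpha}=(n-2+\alpha)g$, and then absorb the $\frac{1}{n+\alpha-1}(\LPhi u)^2$ term using $n+\alpha-2>0$.
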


\begin{proof}
In view of \autoref{thm:R-type-Neumann-eps} and \autoref{lem:boundary-sign-eps},
\eq{
\int_{\Omega_{w,\varepsilon}}(\LPhi u)^2\,d\mu_{w,\alpha} \ge \int_{\Omega_{w,\varepsilon}}|(\nabla^*)^2u|_g^2\,d\mu_{w,\alpha}+\int_{\Omega_{w,\varepsilon}}(\Ric+\nabla^2\Phi_{w,\alpha})(\nabla u,\nabla u)\,d\mu_{w,\alpha}.
}
Now we apply \autoref{lem:finite-N-BE} and \autoref{lem:BE-cap}:
\eq{
\int_{\Omega_{w,\varepsilon}}(\LPhi u)^2\,d\mu_{w,\alpha} \ge \frac{1}{n+\alpha-1}\int_{\Omega_{w,\varepsilon}}(\LPhi u)^2\,d\mu_{w,\alpha}+ (n-2+\alpha)\int_{\Omega_{w,\varepsilon}}|\nabla u|_g^2\,d\mu_{w,\alpha}.
}
\end{proof}

\begin{lemma}
\label{lem:neumann-eps}Suppose $K$ is $C^{\infty}_+$. For every $0<\varepsilon< \varepsilon_0$ and every $f_{\varepsilon}\in C^\infty(\overline{\Omega_{w,\varepsilon}})
$ with $ \int_{\Omega_{w,\varepsilon}}f_{\varepsilon}\,d\mu_{w,\alpha}=0$, there exists $u_{\varepsilon}\in C^\infty(\overline{\Omega_{w,\varepsilon}})$ such that
\eq{
\LPhi u_{\varepsilon}=-f_{\varepsilon}\quad \text{in }\Omega_{w,\varepsilon},\quad (u_{\varepsilon})_{\mathbf{N}_{\varepsilon}}=0\quad \text{on }\Sigma_{w,\varepsilon}.
}
The solution is unique in the class $\int_{\Omega_{w,\varepsilon}}u_{\varepsilon}\,d\mu_{w,\alpha}=0$.
\end{lemma}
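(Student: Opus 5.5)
We treat this as a standard Neumann problem for a uniformly elliptic operator on a smooth compact domain with boundary. First we check the structure. By \autoref{lem:Delta-bar-nabla}, $\Delta$ is a divergence-form operator in round coordinates whose coefficient matrix $h\tau^{ij}$ is smooth and positive definite. The weight $e^{-\Phi_{w,\alpha}}=\ell_w^{\alpha}$ is smooth and bounded between positive constants on $\overline{\Omega_{w,\varepsilon}}$, since $\ell_w\ge\varepsilon$ there and $\ell_w$ is bounded above. Consequently
\[
\LPhi u\, e^{-\Phi}h\det(\tau)=\bar{\nabla}_i\left(e^{-\Phi}h^2\tau^{ij}\det(\tau)\bar{\nabla}_j u\right),
\]
which is a uniformly elliptic divergence-form operator $\bar\nabla_i(a^{ij}\bar\nabla_j u)$ with smooth coefficients. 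By \autoref{lem:cap-boundary-smooth-x} the boundary $\Sigma_{w,\varepsilon}$ is smooth. The condition $u_{\mathbf N_\varepsilon}=0$ is the conormal condition $a^{ij}\bar\nabla_j u\,\mathbf n_i=0$: by \eqref{eq:N-n-relation} it is equivalent to $\bar g(\nabla u,\mathbf n_\varepsilon)=0$, i.e.\ $\bar g$-normal component of $h\tau^{ij}\bar\nabla_j u$ vanishing.

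Next we obtain a weak solution by Lax--Milgram or the Fredholm alternative. On $H=\{u\in H^1(\Omega_{w,\varepsilon}):\int u\,d\mu_{w,\alpha}=0\}$ we consider the bilinear form $B(u,v)=\int g(\nabla u,\nabla v)\,d\mu_{w,\alpha}$ and the functional $v\mapsto\int f_\varepsilon v\,d\mu_{w,\alpha}$. Coercivity of $B$ on $H$ follows from the Poincar\'e inequality for $H^1$ functions with zero weighted mean on a bounded connected Lipschitz domain. Connectedness of $\Omega_{w,\varepsilon}$ was shown above, and $g$, $d\mu_{w,\alpha}$ are comparable to $\bar g$, $dx$ with constants depending on $\varepsilon$. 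The unique minimizer satisfies $B(u,v)=\int f_\varepsilon v\,d\mu$ for all $v\in H$. Because $\int f_\varepsilon\,d\mu=0$, the identity extends to all $v\in H^1$, including constants. By \autoref{lem:LPhi-eps-cap}, this is exactly the weak formulation of $\LPhi u=-f_\varepsilon$ with zero Neumann data.

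Finally, regularity and uniqueness. Standard elliptic regularity for the oblique (conormal) derivative problem with smooth coefficients, smooth data and smooth boundary (e.g.\ Agmon--Douglis--Nirenberg, or bootstrapping $H^{k}\to H^{k+2}$) gives $u_\varepsilon\in C^\infty(\overline{\Omega_{w,\varepsilon}})$. For uniqueness, if $u_1,u_2$ are two solutions, their difference $v$ satisfies $\LPhi v=0$ with zero Neumann data. Taking $f=\eta=v$ in \eqref{eq:green-eps-cap} gives $\int|\nabla v|_g^2\,d\mu=0$, so $v$ is constant on the connected set $\Omega_{w,\varepsilon}$; the normalization then forces $v=0$.

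The main obstacle is only bookkeeping. We must verify that the centro-affine Neumann condition coincides with the natural conormal condition of the divergence-form operator, so that the weak formulation is the right one. This is exactly what \eqref{eq:N-n-relation} and \autoref{lem:LPhi-eps-cap} provide.
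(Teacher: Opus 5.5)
Your proposal is correct, but it reaches existence by a genuinely different route from the paper.

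\emph{The paper's route.} The paper treats $(u_\varepsilon)_{\mathbf N_\varepsilon}=0$ only as a uniformly oblique boundary condition, checking $\bar g(\mathbf N_\varepsilon,\mathbf n_\varepsilon)>0$. It then runs Lieberman's Schauder-space machinery:
\begin{enumerate}
\item[(a)] unique solvability of the shifted problem $(L-\kappa,M-\kappa)$ in $H^{2+\gamma}$ \cite[Thm. 2.27]{Lie13};
\item[(b)] a compact operator $\mathcal T$ and the Fredholm alternative \cite[Thm. 2.30]{Lie13};
\item[(c)] an identification of the one-dimensional space of obstructions with the functional $\Lambda$, using that the kernel consists of constants and that all solvable data lie in $\ker\Lambda$.
\end{enumerate}

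\emph{Your route.} You observe via \eqref{eq:N-n-relation} that the boundary condition is exactly the conormal condition for the divergence-form operator $\bar\nabla_i\bigl(a^{ij}\bar\nabla_j\,\cdot\bigr)$, with $a^{ij}=\ell_w^{\alpha}h^2\tau^{ij}\det(\tau)$. So the problem is variational. Riesz/Lax--Milgram on the weighted mean-zero subspace of $H^1$ gives the weak solution. The compatibility condition enters only when you extend the weak identity to constant test functions, so no range identification is needed.

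\emph{What each buys.} Your argument is more elementary and transparent: existence and uniqueness in $H$ come in one stroke, and the solvability condition is visibly orthogonality to constants. The cost is that you need up-to-the-boundary Sobolev regularity for weak solutions of conormal problems. That means the difference-quotient argument giving $f\in H^k\Rightarrow u\in H^{k+2}$, followed by Sobolev embedding. Agmon--Douglis--Nirenberg estimates by themselves presuppose some regularity of $u$, so the difference-quotient step is the one to cite. Conversely, the paper's route uses only obliqueness, so it would also cover non-conormal oblique conditions, at the price of heavier functional-analytic bookkeeping.

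Both proofs establish uniqueness the same way, via \eqref{eq:green-eps-cap} and connectedness of $\Omega_{w,\varepsilon}$.
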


\begin{proof}
Due to \autoref{lem:Delta-bar-nabla},
\eq{
\LPhi u&=\Delta u - g(\nabla\Phi_{w,\alpha},\nabla u)\\
&=\frac{e^{\Phi_{w,\alpha}}}{h\det(\tau)}\bar{\nabla}_i\left(e^{-\Phi_{w,\alpha}}h^2\tau^{ij}\det(\tau)\bar{\nabla}_j u\right).
}
Thus $\LPhi$ is uniformly elliptic on $\Omega_{w,\varepsilon}$. Moreover, the boundary condition $(u_{\varepsilon})_{\mathbf{N}_{\varepsilon}}=0$ is uniformly oblique. Indeed, for the outward normals $\mathbf{N}_{\varepsilon}$ and $\mathbf{n}_{\varepsilon}$ we have
\eq{
\bar{g}(\mathbf{N}_{\varepsilon},\mathbf{n}_{\varepsilon}) =\frac{\bar{g}(\nabla\ell_{w},\bar{\nabla}\ell_{w})} {|\nabla\ell_{w}|_g|\bar{\nabla}\ell_{w}|_{\bar{g}}} =\frac{|\nabla\ell_{w}|_g}{|\bar{\nabla}\ell_{w}|_{\bar{g}}}>0 \quad \text{on } \Sigma_{w,\varepsilon}.
}
Equivalently, the inward vector $-\mathbf{N}_{\varepsilon}$ is uniformly oblique with respect to the inward normal $-\mathbf{n}_{\varepsilon}$, which is the convention used in \cite[Thm. 2.30]{Lie13}.

The compatibility condition for the Neumann problem is $\int_{\Omega_{w,\varepsilon}} f_{\varepsilon}\,d\mu_{w,\alpha}=0$, which holds by assumption. For convenience, we reproduce the argument from \cite[Thm. 2.30]{Lie13} using notation adapted to our setting.

Let $L=\mathcal{L}_{\Phi}$ and, in the convention of \cite[Thm. 2.30]{Lie13}, define the inward oblique boundary operator by
\eq{
Mu=-g\left(\nabla u,\mathbf{N}_{\varepsilon}\right)
}
on the smooth bounded domain $\Omega_{w,\varepsilon}$. Both $L$ and $M$ have no zeroth-order term. Hence, for any $\kappa>0$, the shifted operators $L-\kappa$ and $M-\kappa$ have strictly negative zeroth-order coefficients. Therefore, by \cite[Thm. 2.27]{Lie13},
\eq{
(L-\kappa)u=f \quad \text{in }\Omega_{w,\varepsilon}, \quad (M-\kappa)u=\phi \quad \text{on }\Sigma_{w,\varepsilon}
}
has a unique solution $u\in H^{2+\gamma}\left(\overline{\Omega_{w,\varepsilon}}\right)$ for any $(f,\phi)\in \mathcal{B}:=H^{\gamma}\left(\Omega_{w,\varepsilon}\right)\times H^{1+\gamma}\left(\Sigma_{w,\varepsilon}\right)$.

Now we define the compact operator $\mathcal{T}:\mathcal{B}\to \mathcal{B}$ by
\eq{
\mathcal{T}(f,\phi)=\left(u,u|_{\Sigma_{w,\varepsilon}}\right).
}
As in the proof of \cite[Thm. 2.30]{Lie13}, we also define
\eq{
N_{\mu}&=\left\{(f,\phi)\in \mathcal{B}:\ (f,\phi)=\mu \mathcal{T}(f,\phi)\right\},\\ 
\mathfrak{S}&=\left\{\sigma=-\kappa-\mu:\ \mu\neq 0 \text{ and } \dim N_{\mu}>0\right\}.
}

If $c$ is constant and $(f,\phi)=\left(-\kappa c,-\kappa c\right)$, then $\mathcal{T}(f,\phi)=\left(c,c\right)$. Hence $\left(I+\kappa \mathcal{T}\right)(f,\phi)=(0,0)$ and $\dim N_{-\kappa}\ge 1$. Therefore, $0=-\kappa-(-\kappa)\in \mathfrak{S}$.

Consider the linear functional
\eq{
\Lambda(f,\phi)=\int_{\Omega_{w,\varepsilon}} f\, d\mu_{w,\alpha}+\int_{\Sigma_{w,\varepsilon}} \phi\,d\sigma_{w,\alpha,\varepsilon}.
}
If $(f,\phi)$ is solvable, that is, if $Lu=f$ in $\Omega_{w,\varepsilon}$ and $Mu=\phi$ on $\Sigma_{w,\varepsilon}$, then $g\left(\nabla u,\mathbf{N}_{\varepsilon}\right)=-\phi$, and \autoref{lem:LPhi-eps-cap} gives
\eq{
\int_{\Omega_{w,\varepsilon}} f \, d\mu_{w,\alpha}=\int_{\Omega_{w,\varepsilon}} Lu \, d\mu_{w,\alpha}=\int_{\Sigma_{w,\varepsilon}} g\left(\nabla u,\mathbf{N}_{\varepsilon}\right)\,d\sigma_{w,\alpha,\varepsilon}=-\int_{\Sigma_{w,\varepsilon}} \phi\,d\sigma_{w,\alpha,\varepsilon}.
}
Thus every solvable datum lies in $\ker\Lambda$. On the other hand, by \autoref{lem:LPhi-eps-cap}, the only solutions to $Lu=0$ in $\Omega_{w,\varepsilon}$ and $Mu=0$ on $\Sigma_{w,\varepsilon}$ are the constant functions, hence $I_0=1$. Now \cite[Thm. 2.30]{Lie13} yields a one-dimensional subspace $W_0^*\subset \mathcal{B}^*$ such that the problem $Lu=f$ in $\Omega_{w,\varepsilon}$, $Mu=\phi$ on $\Sigma_{w,\varepsilon}$ has a solution if and only if $(f,\phi)\in \left(W_0^*\right)^{\perp}$. We have $W_0^*=\operatorname{span}\{\Xi\}$ for some nonzero functional $\Xi\in \mathcal{B}^*$. Therefore the space of solvable data is exactly $\ker\Xi$ and $\ker\Xi\subset \ker\Lambda$. Since $\Lambda\neq 0$, this implies that $\ker\Xi=\ker\Lambda$. Consequently, the space of solvable data is exactly $\ker\Lambda$.

In our oblique problem we have $\phi=0$. Hence, due to $\int_{\Omega_{w,\varepsilon}} f_{\varepsilon}\,d\mu_{w,\alpha}=0$, we have
\eq{
\Lambda(-f_{\varepsilon},0)=-\int_{\Omega_{w,\varepsilon}} f_{\varepsilon}\,d\mu_{w,\alpha}=0.
}
Therefore there exists $u_{\varepsilon}\in H^{2+\gamma}\left(\overline{\Omega_{w,\varepsilon}}\right)$ such that
\eq{
\LPhi u_{\varepsilon}=-f_{\varepsilon}\quad \text{in }\Omega_{w,\varepsilon},\quad \left(u_{\varepsilon}\right)_{\mathbf{N}_{\varepsilon}}=0\quad \text{on }\Sigma_{w,\varepsilon}.
}
Since the coefficients, the boundary, and $f_{\varepsilon}$ are smooth, we have $u_{\varepsilon}\in C^\infty\left(\overline{\Omega_{w,\varepsilon}}\right)$. The uniqueness of the solution in the class $\int_{\Omega_{w,\varepsilon}}u_{\varepsilon}\,d\mu_{w,\alpha}=0$ follows from integration by parts and connectedness of $\Omega_{w,\varepsilon}$.
\end{proof}

\begin{theorem}\label{thm:cap-poincare-from-epsilon}
Let $w\in \R^n\setminus\{0\}$, $\alpha>0$, $F\in C^1(\overline{\Omega_w})$. Suppose $K$ is $C^2_+$. Then
\eq{
\label{eq:cap-poincare-final} 
(n+\alpha-1)\int_{\Omega_w}(F-\bar{F}_w)^2\langle X,w\rangle^\alpha\, dV_K \le \int_{\Omega_w}|\nabla F|_g^2\langle X,w\rangle^\alpha\, dV_K,
}
where
\eq{
\bar{F}_w= \frac{\int_{\Omega_w}F\langle X,w\rangle^\alpha\, dV_K} {\int_{\Omega_w}\langle X,w\rangle^\alpha\, dV_K}.
}
\end{theorem}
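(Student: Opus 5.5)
We will use the standard duality between the spectral estimate of \autoref{cor:spectral-eps} and the Poincar\'e inequality, first on the truncated domains $\Omega_{w,\varepsilon}$ for a smooth body, and then pass to the limit in $\varepsilon$ and in the body.

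\emph{Step 1: smooth body, truncated cap.} Assume $K$ is $C^\infty_+$ and $F$ is smooth on a neighborhood of $\overline{\Omega_w}$. Fix $0<\varepsilon<\varepsilon_0$. Set $f_\varepsilon=F-\bar F_{w,\varepsilon}$, where $\bar F_{w,\varepsilon}$ is the $\mu_{w,\alpha}$-average of $F$ over $\Omega_{w,\varepsilon}$. By \autoref{lem:neumann-eps} there is a smooth $u_\varepsilon$ with $\LPhi u_\varepsilon=-f_\varepsilon$ and $(u_\varepsilon)_{\mathbf N_\varepsilon}=0$. Using \autoref{lem:LPhi-eps-cap}, whose boundary term vanishes by the Neumann condition, and then Cauchy--Schwarz, we get
\[
\int_{\Omega_{w,\varepsilon}} f_\varepsilon^2\,d\mu_{w,\alpha}
=\int_{\Omega_{w,\varepsilon}} g(\nabla F,\nabla u_\varepsilon)\,d\mu_{w,\alpha}
\le \Big(\int|\nabla F|_g^2\,d\mu_{w,\alpha}\Big)^{1/2}\Big(\int|\nabla u_\varepsilon|_g^2\,d\mu_{w,\alpha}\Big)^{1/2}.
\]
By \autoref{cor:spectral-eps},
\[
\int|\nabla u_\varepsilon|_g^2 \le \frac{1}{n+\alpha-1}\int (\LPhi u_\varepsilon)^2 = \frac{1}{n+\alpha-1}\int f_\varepsilon^2 .
\]
Squaring and cancelling gives \eqref{eq:cap-poincare-final} on $\Omega_{w,\varepsilon}$, with the average taken over $\Omega_{w,\varepsilon}$.

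\emph{Step 2: $\varepsilon\to0$.} Since $\ell_w^\alpha$ is bounded and $\Omega_{w,\varepsilon}\uparrow\Omega_w$, monotone or dominated convergence gives convergence of both sides and of the averages. This yields the inequality on $\Omega_w$ for smooth $K$ and smooth $F$. For $F\in C^1(\overline{\Omega_w})$, we extend $F$ to a $C^1$ function on $\Sn$, approximate it in $C^1$ by smooth functions, and pass to the limit using the boundedness of the weights.

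\emph{Step 3: $C^2_+$ bodies.} We approximate $h_K$ in $C^2$ by support functions $h_k$ of $C^\infty_+$ bodies, for instance by mollification on $SO(n)$ or the standard smoothing. Then $X_k\to X$ uniformly, $g_k\to g$ uniformly with uniform positive bounds, and $dV_{K_k}\to dV_K$ uniformly. Hence $\langle X_k,w\rangle^\alpha\mathbf 1_{\{\ell^k_w>0\}}\to\langle X,w\rangle^\alpha\mathbf 1_{\{\ell_w>0\}}$ pointwise away from the null set $\Sigma_w$, since $0$ is a regular value by the argument of \autoref{lem:cap-boundary-smooth-x}, which only needs $C^1$ of $X$. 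Dominated convergence then passes all integrals to the limit. $F$ is fixed, extended to $\Sn$ as in Step 2 so that it is defined on the moving caps.

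We expect the main obstacle to be this last approximation step, specifically the convergence of the domains $\{\ell_w^k>0\}$ and the need to have $F$ defined beyond $\overline{\Omega_w}$. The key point is that $\Sigma_w$ has measure zero and the weight vanishes on it, so indicator discontinuities are harmless. The analytic core, Step 1, is routine given the earlier results.
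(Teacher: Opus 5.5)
Your proposal is correct and follows the paper's proof essentially verbatim. You solve the Neumann problem on $\Omega_{w,\varepsilon}$ via \autoref{lem:neumann-eps}, combine integration by parts and Cauchy--Schwarz with \autoref{cor:spectral-eps}, and let $\varepsilon\to 0$ by dominated convergence; your Steps 2--3 only spell out the reduction to smooth $F$ and $C^\infty_+$ bodies that the paper compresses into ``by approximation''. In that reduction the measure-zero argument for $\Sigma_w$ is not actually needed, since $(\ell_w^k)_+^\alpha\to(\ell_w)_+^\alpha$ uniformly on $\Sn$.
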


\begin{proof}
By approximation, we may assume $F\in C^{\infty}(\overline{\Omega_w})$ and $K$ is $C^{\infty}_+$. For each $\varepsilon\in (0,\varepsilon_0)$, we define
\eq{
\bar{F}_{w,\varepsilon}=\frac{\int_{\Omega_{w,\varepsilon}}F\,d\mu_{w,\alpha}}{\int_{\Omega_{w,\varepsilon}}d\mu_{w,\alpha}}, \quad f_{\varepsilon}=F-\bar{F}_{w,\varepsilon}.
}
In view of \autoref{lem:neumann-eps}, there exists $u_{\varepsilon}\in C^\infty(\overline{\Omega_{w,\varepsilon}})$ such that
\eq{
\LPhi u_{\varepsilon}=-f_{\varepsilon}\quad \text{in }\Omega_{w,\varepsilon},\quad (u_{\varepsilon})_{\mathbf{N}_{\varepsilon}}=0\quad \text{on }\Sigma_{w,\varepsilon}.
}
Since $\nabla f_{\varepsilon}=\nabla F$ and $(u_{\varepsilon})_{\mathbf{N}_{\varepsilon}}=0$, \autoref{lem:LPhi-eps-cap} implies that
\eq{
\int_{\Omega_{w,\varepsilon}} f_{\varepsilon}^2\,d\mu_{w,\alpha} =\int_{\Omega_{w,\varepsilon}} g(\nabla u_{\varepsilon},\nabla F)\,d\mu_{w,\alpha}.
}
Hence, by the Cauchy--Schwarz inequality,
\eq{
\label{eq:eps-cs} 
\int_{\Omega_{w,\varepsilon}}(F-\bar{F}_{w,\varepsilon})^2\,d\mu_{w,\alpha} \le \left(\int_{\Omega_{w,\varepsilon}}|\nabla u_{\varepsilon}|_g^2\,d\mu_{w,\alpha}\right)^{1/2} \left(\int_{\Omega_{w,\varepsilon}}|\nabla F|_g^2\,d\mu_{w,\alpha}\right)^{1/2}.
}
On the other hand, by \autoref{cor:spectral-eps},
\eq{
\int_{\Omega_{w,\varepsilon}}(F-\bar{F}_{w,\varepsilon})^2\,d\mu_{w,\alpha}=\int_{\Omega_{w,\varepsilon}}(\LPhi u_{\varepsilon})^2\,d\mu_{w,\alpha} \ge (n+\alpha-1)\int_{\Omega_{w,\varepsilon}}|\nabla u_{\varepsilon}|_g^2\,d\mu_{w,\alpha}.
}
Now substituting into \eqref{eq:eps-cs} yields
\eq{
\label{eq:eps-poincare} 
(n+\alpha-1)\int_{\Omega_{w,\varepsilon}} (F-\bar{F}_{w,\varepsilon})^2\,d\mu_{w,\alpha} \le \int_{\Omega_{w,\varepsilon}} |\nabla F|_g^2\,d\mu_{w,\alpha}.
}
Since $\mathbf{1}_{\Omega_{w,\varepsilon}}\to \mathbf{1}_{\Omega_w}$ as $\varepsilon\to 0$, and $d\mu_{w,\alpha}=\ell_w^\alpha\, dV_K$ is finite on $\Omega_w$, by the dominated convergence theorem we may pass to the limit in \eqref{eq:eps-poincare}.
\end{proof}

\begin{corollary}
\label{cor:w-even}
Assume that $K$ is a $C^2_+$ origin-symmetric convex body and $F\in C^1(\Sn)$ is even. Then
\eq{
(n+\alpha-1)\int_{\Sn}(F-\bar{F}_w)^2|\langle X,w\rangle|^\alpha\, dV_K \le \int_{\Sn} |\nabla F|_g^2|\langle X,w\rangle|^\alpha\, dV_K,
}
where $\bar{F}_w=\frac{\int_{\Sn}F|\langle X,w\rangle|^\alpha\, dV_K}{\int_{\Sn}|\langle X,w\rangle|^\alpha\, dV_K}$.
\end{corollary}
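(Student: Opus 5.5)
The plan is to reduce \autoref{cor:w-even} to \autoref{thm:cap-poincare-from-epsilon} by symmetry, splitting the sphere into the two caps $\Omega_w$ and $\Omega_{-w}$. Since $K$ is origin-symmetric, $h(-x)=h(x)$, so $X(-x)=-X(x)$, $\tau(-x)=\tau(x)$, and $dV_K$ is invariant under the antipodal map $A(x)=-x$. Moreover $g$ is $A$-invariant, because $g=\frac1h\tau$ and $h$, $\tau$ are even. Consequently, for even $F$ the function $|\nabla F|_g^2$ is even. Also $\ell_w(-x)=-\ell_w(x)$, so $A$ maps $\Omega_w$ onto $\Omega_{-w}=\{\ell_w<0\}$ and $|\ell_w|^\alpha$ is even.

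The set $\Sigma_w=\{\ell_w=0\}$ has $dV_K$-measure zero. It is a smooth hypersurface by the regularity argument in \autoref{lem:cap-boundary-smooth-x}: $d\ell_w\neq0$ on $\Sigma_w$, and this uses only the $C^2_+$ assumption. Therefore
\[
\int_{\Sn}G\,|\ell_w|^\alpha\,dV_K=2\int_{\Omega_w}G\,\ell_w^\alpha\,dV_K
\]
for every even integrable $G$. In particular, applying this with $G=F$ and with $G=1$ shows that $\bar F_w$ as defined in \autoref{cor:w-even} coincides with the cap average $\bar F_w$ of \autoref{thm:cap-poincare-from-epsilon}.

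Next apply \autoref{thm:cap-poincare-from-epsilon} to $F|_{\overline{\Omega_w}}\in C^1(\overline{\Omega_w})$. This gives
\[
(n+\alpha-1)\int_{\Omega_w}(F-\bar F_w)^2\ell_w^\alpha\,dV_K\le\int_{\Omega_w}|\nabla F|_g^2\ell_w^\alpha\,dV_K.
\]
Both integrands $(F-\bar F_w)^2$ and $|\nabla F|_g^2$ are even. Doubling both sides with the identity above yields the claimed inequality on $\Sn$.

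The only point needing care is the invariance of $g$ and $dV_K$ under $A$. The pullback $A^*g$ at $x$ equals $g$ at $-x$ under the identification $dA=-\mathrm{id}$. The sign cancels in the quadratic form, and evenness of $h$ and $\tau$ gives the rest. The rest is bookkeeping, and I expect no real obstacle.
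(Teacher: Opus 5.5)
Your proof is correct and follows the paper's route: split $\Sn$ into the caps $\Omega_{\pm w}$, apply \autoref{thm:cap-poincare-from-epsilon}, and use origin-symmetry to identify the cap average with the global $\bar F_w$. The only difference is cosmetic. The paper applies the theorem on both caps and adds the two inequalities, so it needs symmetry only to get $\bar F_w=\bar F_{-w}$. You double a single cap instead, which is why you also need the antipodal invariance of $g$ (this holds, as you checked). Finally, your null-set argument for $\Sigma_w$ is harmless but unnecessary, since $|\ell_w|^\alpha$ vanishes there.
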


\begin{proof}
In view of \autoref{thm:cap-poincare-from-epsilon},
\eq{
(n+\alpha-1)\left[\int_{\Omega_w}(F-\bar{F}_w)^2\ell_w^\alpha\, dV_K+ \int_{\Omega_{-w}}(F-\bar{F}_{-w})^2\ell_{-w}^\alpha\, dV_K \right] \le \int_{\Sn} |\nabla F|_g^2|\ell_w|^\alpha\, dV_K.
}
Since $K$ is origin-symmetric and $F$ is even, $\bar{F}_w=\bar{F}_{-w}$
and
\eq{
\int_{\Omega_w}(F-\bar{F}_w)^2\ell_w^\alpha\, dV_K+\int_{\Omega_{-w}}(F-\bar{F}_{-w})^2\ell_{-w}^\alpha\, dV_K=\int_{\Sn}(F-\bar{F}_w)^2|\ell_w|^\alpha\, dV_K.
}
\end{proof}

\begin{corollary}\label{cor:main-unconditional-poincare}
Assume that $K$ is a $C^{2}_{+}$ unconditional convex body and $F\in C^1(\Sn)$ is unconditional. Then
\eq{
(n+1)\int_{\Sn}(F-\bar{F})^2|X|^2\, dV_K \le \int_{\Sn} |\nabla F|_g^2|X|^2\, dV_K,\quad \bar{F}=\frac{\int_{\Sn}F|X|^2\, dV_K}{\int_{\Sn}|X|^2\, dV_K}.
}
\end{corollary}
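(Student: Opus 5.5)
The plan is to obtain the inequality by averaging the weighted cap inequality with $\alpha=2$ over all sign vectors. The quadratic weights $\langle X,w\rangle^2$ then add up to a multiple of $|X|^2$, and unconditional symmetry forces all the weighted means to agree.

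\emph{Step 1 (weights).} For $\sigma=(\sigma_1,\dots,\sigma_n)\in\{-1,1\}^n$, consider $\ell_\sigma=\langle X,\sigma\rangle$. Expanding the square, $\ell_\sigma^2=\sum_i X_i^2+\sum_{i\ne j}\sigma_i\sigma_jX_iX_j$. Summing over all $2^n$ sign vectors cancels the cross terms, since for $i\ne j$ we have $\sum_\sigma\sigma_i\sigma_j=0$. Hence
\[
\sum_{\sigma\in\{-1,1\}^n}\langle X,\sigma\rangle^2=2^n|X|^2 .
\]

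\emph{Step 2 (apply the cap inequality for each $\sigma$).} An unconditional body is origin-symmetric, and an unconditional $F$ is even, since $-\mathrm{id}$ is the composition of all coordinate reflections. So \autoref{cor:w-even} applies with $\alpha=2$ and $w=\sigma$, and gives
\[
(n+1)\int_{\Sn}(F-\bar F_\sigma)^2\langle X,\sigma\rangle^2\,dV_K\le\int_{\Sn}|\nabla F|_g^2\langle X,\sigma\rangle^2\,dV_K,
\]
where $\bar F_\sigma$ is the mean of $F$ with respect to the weight $\langle X,\sigma\rangle^2\,dV_K$.

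\emph{Step 3 (the means coincide).} This is the only point requiring care. Let $R$ be a coordinate reflection. Since $h\circ R=h$, we have $X(Rx)=RX(x)$. Moreover $dV_K$ is $R$-invariant, because $R$ is an isometry of $\Sn$ preserving $h$ and hence $\det(\tau)$. As $F\circ R=F$, the change of variables $x\mapsto Rx$ gives
\[
\int F\langle X,\sigma\rangle^2\,dV_K=\int F\langle X,R\sigma\rangle^2\,dV_K ,
\]
and likewise for the denominator. Since the reflections act transitively on $\{-1,1\}^n$, all the $\bar F_\sigma$ are equal. Taking ratios of sums over $\sigma$ and using Step 1, the common value is
\[
\frac{\sum_\sigma\int F\ell_\sigma^2\,dV_K}{\sum_\sigma\int\ell_\sigma^2\,dV_K}=\bar F .
\]
(Even without this symmetry argument, one could replace $\bar F_\sigma$ by $\bar F$, since $\bar F_\sigma$ minimizes $c\mapsto\int(F-c)^2\ell_\sigma^2\,dV_K$. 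The left-hand side can only increase, so the inequality in Step 2 holds with $\bar F$ in place of $\bar F_\sigma$ directly.)

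\emph{Step 4 (sum).} Summing the inequalities of Step 2 over all $\sigma$, with $\bar F_\sigma$ replaced by $\bar F$, and dividing by $2^n$ via Step 1 yields
\[
(n+1)\int_{\Sn}(F-\bar F)^2|X|^2\,dV_K\le\int_{\Sn}|\nabla F|_g^2|X|^2\,dV_K .
\]
The main obstacle is only the bookkeeping of the means, and the minimization remark in Step 3 removes even that. No regularity issues arise beyond those already handled in \autoref{thm:cap-poincare-from-epsilon}.
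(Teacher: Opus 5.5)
Your main argument is correct and is essentially the paper's. The paper applies \autoref{cor:w-even} with $\alpha=2$ only for $w=(1,\dots,1)$, then removes the cross terms $2\sum_{i<j}X_iX_j$ of $\ell_w^2$ by reflection-oddness in the mean, the variance and the gradient integral. You instead sum over all $2^n$ sign vectors, so that $\sum_\sigma\ell_\sigma^2=2^n|X|^2$ holds pointwise and symmetry is needed only to show $\bar F_\sigma=\bar F$. For unconditional $F$ the $2^n$ inequalities are in fact identical, so the two routes are equivalent.

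One correction: the parenthetical fallback in Step 3 is false. Delete it, together with the closing sentence of Step 4 that relies on it. Because $\bar F_\sigma$ minimizes $c\mapsto\int(F-c)^2\ell_\sigma^2\,dV_K$, replacing $\bar F_\sigma$ by $\bar F$ \emph{increases} the smaller side of an inequality $A\le B$, and that does not preserve the inequality. Precisely,
\[
\int_{\Sn}(F-\bar F)^2\ell_\sigma^2\,dV_K=\int_{\Sn}(F-\bar F_\sigma)^2\ell_\sigma^2\,dV_K+(\bar F_\sigma-\bar F)^2\int_{\Sn}\ell_\sigma^2\,dV_K .
\]
So the inequality with $\bar F$ follows from the one with $\bar F_\sigma$ only if the last term vanishes. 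Your reflection argument, which shows that all $\bar F_\sigma$ coincide with $\bar F$, is therefore essential rather than optional.
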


\begin{proof}
Let $w=(1,\dots,1)$. By \autoref{cor:w-even},
\eq{
(n+1)\int_{\Sn}(F-\bar{F}_w)^2\ell_w^2\, dV_K \le \int_{\Sn}|\nabla F|_g^2\ell_w^2\, dV_K.
}

We claim that $\bar{F}_w=\bar{F}$. Indeed,
\eq{
\ell_w^2=\left(\sum_{i=1}^n X_i\right)^2=|X|^2+2\sum_{1\le i<j\le n}X_iX_j.
}
Since both $F$ and $K$ are unconditional, for every $i\neq j$,
\eq{
\int_{\Sn}F X_iX_j\, dV_K=0,\quad
\int_{\Sn}X_iX_j\, dV_K=0.
}
Hence
\eq{
\int_{\Sn}F\ell_w^2\, dV_K=\int_{\Sn}F|X|^2\, dV_K, \quad \int_{\Sn}\ell_w^2\, dV_K=\int_{\Sn}|X|^2\, dV_K,
}
and
\eq{
\bar{F}_w=\frac{\int_{\Sn}F\ell_w^2\, dV_K}{\int_{\Sn}\ell_w^2\, dV_K}=\frac{\int_{\Sn}F|X|^2\, dV_K}{\int_{\Sn}|X|^2\, dV_K}=\bar{F}.
}

Moreover, both $(F-\bar{F})^2$ and $|\nabla F|_g^2$ are unconditional, so the same cancellation of mixed terms yields
\eq{
\int_{\Sn}(F-\bar{F})^2\ell_w^2\, dV_K&=\int_{\Sn}(F-\bar{F})^2|X|^2\, dV_K,\\ 
\int_{\Sn}|\nabla F|_g^2\ell_w^2\, dV_K&=\int_{\Sn}|\nabla F|_g^2|X|^2\, dV_K.
}
\end{proof}

\begin{remark}
\label{rem:full-bochner-weight-pinching}
Let $\Phi=-\alpha\log|X|$ with $\alpha>0$. Using the identities $\nabla^2X+gX=0$ and $dX=\tau$, we compute
\eq{
\nabla^2\Phi=\alpha g-\frac{\alpha}{|X|^2}\bar{g}(\tau(\cdot),\tau(\cdot))+2\alpha\,d\log|X|\otimes d\log|X|.
}
Hence, for any $N>n-1$,
\eq{
\nabla^2\Phi-\frac{1}{N-(n-1)}d\Phi\otimes d\Phi=~&\alpha g-\frac{\alpha}{|X|^2}\bar{g}(\tau(\cdot),\tau(\cdot))\\
&+\left(2\alpha-\frac{\alpha^2}{N-(n-1)}\right)d\log|X|\otimes d\log|X|.
}
Choosing $N=n-1+\frac{\alpha}{2}$ cancels the last term and we find
\eq{
\label{eq:BE-logX-remark}
\operatorname{Ric}_{\Phi,N}:=\operatorname{Ric}+\nabla^2\Phi-\frac{1}{N-(n-1)}d\Phi\otimes d\Phi=(n-2+\alpha)g-\frac{\alpha}{|X|^2}\bar{g}(\tau(\cdot),\tau(\cdot)).
}
The final term has the wrong sign and, in general, cannot be controlled from below by a positive multiple of $g$ without an additional pinching assumption. Thus the global weighted Bochner formula does not seem well suited for proving \autoref{thm:main-dualquermass-q}. This motivates the cap-based argument developed in this section, which yields \autoref{cor:main-unconditional-poincare}.
\end{remark}

\subsection{Poincar\'e inequalities on intersections of caps}

We extend the cap-based Bochner method used above to prove \autoref{thm:cap-poincare-from-epsilon} to intersections of caps and power weights, and then prove \autoref{thm:intersection-caps-general-weight}. As we will see, this extension plays a crucial role in deriving Poincar\'e-type inequalities for unconditional convex bodies here and in \autoref{sec:log-centro-affine-geometry}.

Let
\eq{
\ell_{w_i}=\ip{X}{w_i}, \quad \Omega=\bigcap_{i=1}^m\{\ell_{w_i}>0\}, \quad \Phi=-\sum_{i=1}^m \alpha_i\log \ell_{w_i},\quad a=\sum_{i=1}^m \alpha_i,
}
where $w_1,\dots,w_m\in\R^n\setminus\{0\}$ and $\alpha_1,\dots,\alpha_m>0$.
Since $\nabla^2\ell_{w_i}=-\ell_{w_i} g$,
\eq{
\nabla^2\Phi-\frac{1}{a}\,d\Phi\otimes d\Phi &=ag+\sum_{i=1}^m \alpha_i\, d\log \ell_{w_i}\otimes d\log \ell_{w_i} \\ 
&\quad -\frac{1}{a}\left(\sum_{i=1}^m \alpha_i d\log \ell_{w_i}\right)\otimes\left(\sum_{i=1}^m \alpha_i d\log \ell_{w_i}\right).
}
By the Cauchy--Schwarz inequality, $\nabla^2\Phi-\frac{1}{a}\,d\Phi\otimes d\Phi\ge ag$.
Therefore
\eq{
\Ric+\nabla^2\Phi-\frac{1}{a}\,d\Phi\otimes d\Phi\ge (n-2+a)g.
}
This suggests a weighted Poincar\'e inequality on $\Omega$ with spectral constant $n+a-1$.

The main additional difficulty, compared to the single-cap case, is that $\Omega$ has only piecewise smooth boundary. We therefore approximate $\Omega$ from inside by a family of smooth domains $\Omega_{\varepsilon}$. The key point is that this approximation must be chosen so that, in the weighted Bochner argument on $\Omega_{\varepsilon}$, the $\nabla^*$-second fundamental form of $\partial\Omega_{\varepsilon}$ is non-negative definite. In the single-cap case, this sign ultimately comes from the concavity of the defining function $\ell_w$, namely $\nabla^2\ell_w=-\ell_w g\le 0$.

For intersections of several caps, one therefore needs a smooth defining function for the approximation that enjoys the same concavity property. 

\begin{theorem}\label{thm:intersection-caps-poincare-ineq}
Suppose $K$ is $C^2_+$. With the notation above, assume that $\Omega\neq\emptyset$, and set
\eq{
d\mu_{\Phi}=e^{-\Phi}\, dV_K=\prod_{i=1}^m \ell_{w_i}^{\alpha_i}\, dV_K.
}
Then for every $F\in C^1(\Sn)$,
\eq{
(n+a-1)\int_\Omega (F-\bar{F}_{\Phi})^2\,d\mu_{\Phi} \le \int_\Omega |\nabla F|_g^2\,d\mu_{\Phi},\quad \bar{F}_{\Phi}=\frac{\int_\Omega F\,d\mu_{\Phi}}{\int_\Omega d\mu_{\Phi}}.
}
\end{theorem}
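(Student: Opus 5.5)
The proof follows the single-cap argument of \autoref{thm:cap-poincare-from-epsilon}. The only new ingredient is a smooth interior approximation $\Omega_\varepsilon$ of $\Omega$ whose $\nabla^*$-second fundamental form is nonnegative. By approximation we first assume $K$ is $C^\infty_+$ and $F\in C^\infty(\Sn)$.

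\emph{Step 1: the defining function.} Set
\[
\psi=\Big(\prod_{i=1}^m \ell_{w_i}^{\alpha_i}\Big)^{1/a}=e^{-\Phi/a}
\]
on $\Omega$, a weighted geometric mean of the $\ell_{w_i}$. Its extension $\tilde\psi(y)=\prod_i\ip{y}{w_i}^{\alpha_i/a}$ is $1$-homogeneous and concave on the open polyhedral cone $\{\ip{y}{w_i}>0\ \forall i\}$. Since the Hessian computation in \autoref{lem:homogeneous-extension} is local, it gives $\nabla^2\psi+\psi g\le 0$ on $\Omega$. Define $\Omega_\varepsilon=\{\psi>\varepsilon\}\cap\Omega$. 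Then $\overline{\Omega_\varepsilon}\subset\Omega$ is compact, since $\psi\to0$ at $\partial\Omega$, and $\Omega_\varepsilon\uparrow\Omega$ as $\varepsilon\to0$. By Sard's theorem, a.e.\ small $\varepsilon$ is a regular value of $\psi$, so $\Sigma_\varepsilon=\{\psi=\varepsilon\}$ is smooth. Connectedness of $\Omega_\varepsilon$ follows as for caps: join $X(x_0)$ and $X(x_1)$ by a segment and renormalize by $\|\cdot\|_K$. Along the segment $\tilde\psi\ge\min$ of the endpoint values $>\varepsilon$ by concavity, and dividing by $\|p_t\|_K\le1$ only increases $\tilde\psi$ by homogeneity.

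\emph{Step 2: boundary term and Neumann problem.} With $\mathbf N_\varepsilon=-\nabla\psi/|\nabla\psi|_g$, the boundary measures are defined by replacing $\ell_w$ with $\psi$ in \eqref{def: sigma-K-eps}. Then \autoref{lem:unweighted-divergence-eps-cap} through \autoref{thm:R-type-Neumann-eps} go through verbatim, since they used only that $\Sigma_\varepsilon$ is a smooth regular level set. The computation in \autoref{lem:boundary-sign-eps} now gives, for tangent $Z$,
\[
\mathrm{II}^*_\varepsilon(Z,Z)=-\frac{\nabla^2\psi(Z,Z)}{|\nabla\psi|_g}\ge\frac{\varepsilon|Z|_g^2}{|\nabla\psi|_g}\ge0 .
\]
The solvability argument of \autoref{lem:neumann-eps} carries over unchanged, because $\Phi$ is smooth on $\overline{\Omega_\varepsilon}$.

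\emph{Step 3: curvature-dimension bound.} Combine the computation preceding the theorem,
\[
\Ric+\nabla^2\Phi-\tfrac1a d\Phi\otimes d\Phi\ge(n-2+a)g,
\]
with the finite-dimensional inequality of \autoref{lem:finite-N-BE} for $N=n-1+a$, whose proof is purely algebraic. Exactly as in \autoref{cor:spectral-eps}, this yields
\[
\int_{\Omega_\varepsilon}(\LPhi u)^2\,d\mu_\Phi\ge(n+a-1)\int_{\Omega_\varepsilon}|\nabla u|_g^2\,d\mu_\Phi
\]
for Neumann solutions $u$.

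\emph{Step 4: duality and limit.} Solve $\LPhi u_\varepsilon=-(F-\bar F_{\Phi,\varepsilon})$ with Neumann boundary condition. Cauchy--Schwarz, as in \autoref{thm:cap-poincare-from-epsilon}, gives the Poincar\'e inequality on $\Omega_\varepsilon$. Let $\varepsilon\to0$ along regular values by dominated convergence; this is valid because $\prod\ell_{w_i}^{\alpha_i}$ is bounded on $\Omega$.

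The main obstacle is Step 1, namely producing a defining function that is simultaneously smooth and $g$-concave in the sense $\nabla^2\psi\le-\psi g$. The geometric-mean choice settles this via concavity of the geometric mean together with \autoref{lem:homogeneous-extension}. The remaining care is regularity of level sets, which is handled by Sard's theorem instead of the explicit argument of \autoref{lem:cap-boundary-smooth-x}.
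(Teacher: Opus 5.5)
Your proof is correct. Its architecture matches the paper's: smooth interior exhaustion with $\mathrm{II}^*_\varepsilon\ge 0$, Neumann problem, Bochner formula plus the finite-$N$ estimate, duality, and a limit. The difference lies in the defining function of the exhaustion.

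\emph{The paper's choice.} It uses the $\varepsilon$-dependent soft-min $q_\varepsilon=-\varepsilon\log\sum_i e^{-\ell_{w_i}/\varepsilon}$, cut at a regular level $c(\varepsilon)\in(0,\varepsilon)$. It checks $\nabla^2 q_\varepsilon\le-\bigl(\sum_i\theta_i\ell_{w_i}\bigr)g\le 0$ directly from $\nabla^2\ell_{w_i}=-\ell_{w_i}g$ and Cauchy--Schwarz.

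\emph{Your choice.} You take superlevel sets of the single function $\psi=e^{-\Phi/a}$, and the bound $\nabla^2\psi\le-\psi g$ follows from concavity of the weighted geometric mean via \autoref{lem:homogeneous-extension}. This buys three things:
\begin{enumerate}
\item[(i)] The domains are nested, $\Omega_\varepsilon\uparrow\Omega$, so the two-sided soft-min estimate is not needed.
\item[(ii)] You recover the same quantitative boundary bound $\mathrm{II}^*_\varepsilon(Z,Z)\ge\varepsilon|Z|_g^2/|\nabla\psi|_g$ as in the single-cap case.
\item[(iii)] Since $\nabla^2\Phi-\frac1a d\Phi\otimes d\Phi=-\frac{a}{\psi}\nabla^2\psi$, the boundary convexity and the interior bound $\nabla^2\Phi-\frac1a d\Phi\otimes d\Phi\ge ag$ are literally the same inequality.
\end{enumerate}

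\emph{What the paper's choice buys.} The soft-min involves only the caps, not the weight. That is why the same domains are reused verbatim in the proof of \autoref{thm:intersection-caps-general-weight}, and, with $Y_i$ in place of $\ell_{w_i}$, in the logarithmic geometry of \autoref{sec:log-centro-affine-geometry}.

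\emph{Extending your version.} To cover \autoref{thm:intersection-caps-general-weight}, decouple $\psi$ from $\Phi$, e.g.\ take $\psi=\prod_i\ell_{w_i}^{1/m}$. Weights such as $\omega=\sum_i t_i^\alpha$ in \autoref{cor:unconditional-sum-weight} do not vanish on $\partial\Omega$, so their own superlevel sets are not compactly contained in $\Omega$. Your argument only uses concavity of $\psi$ and $\psi|_{\partial\Omega}=0$, so it survives this change.
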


\begin{proof} We may assume $F\in C^{\infty}(\Sn)$ and $K$ is $C^{\infty}_+$. Let $q(x)=\min_{1\le i\le m}\ell_{w_i}(x)$. For $\varepsilon>0$ sufficiently small, on $S^{n-1}$ we define
\eq{
q_{\varepsilon}=-\varepsilon\log\left(\sum_{i=1}^m e^{-\ell_{w_i}/\varepsilon}\right).
}
Note that we have
\eq{
\label{eq:lower-upper-bounds-Q} 
q_{\varepsilon}(x)\le q(x)\le q_{\varepsilon}(x)+\varepsilon\log m,\quad \text{for all }x\in \Sn.
}

Due to Sard's theorem, for each $\varepsilon>0$ there exists a regular value $c(\varepsilon)\in(0,\varepsilon)$. We now define smooth open sets $\Omega_{\varepsilon}=\{q_{\varepsilon}>c(\varepsilon)\}$. We next show that they are connected.

\begin{claim}\label{claim:Omega-eps-connected}
$\Omega_{\varepsilon}$ is connected.
\end{claim}

\begin{proof}
Let $x_0,x_1\in \Omega_{\varepsilon}$ be arbitrary. We will construct a continuous curve in $\Omega_{\varepsilon}$ connecting $x_0$ to $x_1$. Set $p_0=X(x_0)$ and $p_1=X(x_1)$. For $t\in[0,1]$, define $p_t=(1-t)p_0+tp_1$. The points $p_0$ and $p_1$ belong to $\partial K$, and therefore the convexity of $K$ implies that $p_t\in K$ for every $t\in[0,1]$. 

Since $x_0,x_1\in\Omega_{\varepsilon}$, for $j=0,1$ we have
\eq{
\sum_{i=1}^{m}e^{-\ell_{w_i}(x_j)/\varepsilon}<e^{-c(\varepsilon)/\varepsilon}.
}
Hence $\ell_{w_i}(x_j)>c(\varepsilon)>0$ for every $i=1,\ldots,m$ and $j=0,1$. It follows that
\eq{
\langle p_t,w_i\rangle=(1-t)\langle p_0,w_i\rangle+t\langle p_1,w_i\rangle=(1-t)\ell_{w_i}(x_0)+t\ell_{w_i}(x_1)>c(\varepsilon)>0\quad \forall t\in[0,1].
}
In particular, $p_t\neq 0$ for every $t\in[0,1]$. We write 
\eq{
p_t=r_t\theta_t, \text{ where } r_t=|p_t|>0 \text{ and } \theta_t=p_t/|p_t|\in\Sn. 
}
Let $\rho_K$ denote the radial function of $K$. Then $r_t\le \rho_K(\theta_t)$. Hence, if we set
\eq{
\lambda_t=\frac{\rho_K(\theta_t)}{r_t}\ge 1,
}
then $\widehat{p}_t=\lambda_t p_t\in \partial K$. Now we define $x_t=X^{-1}(\widehat{p}_t)$. Since $X$ is a diffeomorphism, the curve $t\mapsto x_t$ is continuous. We prove that this curve stays in $\Omega_{\varepsilon}$. 

For each $i$,
\eq{
e^{-\langle p_t,w_i\rangle/\varepsilon}=e^{-((1-t)\ell_{w_i}(x_0)+t\ell_{w_i}(x_1))/\varepsilon}\le (1-t)e^{-\ell_{w_i}(x_0)/\varepsilon}+te^{-\ell_{w_i}(x_1)/\varepsilon}.
}
Summing this inequality over $i=1,\ldots,m$, we obtain
\eq{
\sum_{i=1}^{m}e^{-\langle p_t,w_i\rangle/\varepsilon}\le (1-t)\sum_{i=1}^{m}e^{-\ell_{w_i}(x_0)/\varepsilon}+t\sum_{i=1}^{m}e^{-\ell_{w_i}(x_1)/\varepsilon}<e^{-c(\varepsilon)/\varepsilon}.
}
Next, since $\langle p_t,w_i\rangle>c(\varepsilon)>0$, and $\lambda_t\ge 1$, we have
\eq{
e^{-\langle \widehat{p}_t,w_i\rangle/\varepsilon}=e^{-\lambda_t\langle p_t,w_i\rangle/\varepsilon}\le e^{-\langle p_t,w_i\rangle/\varepsilon}.
}
Consequently,
\eq{
\sum_{i=1}^{m}e^{-\ell_{w_i}(x_t)/\varepsilon}=\sum_{i=1}^{m}e^{-\langle X(x_t),w_i\rangle/\varepsilon}=\sum_{i=1}^{m}e^{-\langle \widehat{p}_t,w_i\rangle/\varepsilon}\le \sum_{i=1}^{m}e^{-\langle p_t,w_i\rangle/\varepsilon}<e^{-c(\varepsilon)/\varepsilon}.
}
That is, $x_t\in\Omega_{\varepsilon}$ for every $t\in[0,1]$.
\end{proof}

By \eqref{eq:lower-upper-bounds-Q}, we have $\Omega_{\varepsilon}\subset\Omega$. Conversely, let $x\in\Omega$. Then $q(x)>0$. Moreover, for all sufficiently small $\varepsilon$, we have $q_{\varepsilon}(x)\ge q(x)-\varepsilon\log m>\varepsilon>c(\varepsilon)$, and hence $x\in\Omega_{\varepsilon}$. We have shown
\eq{
\label{eq:set-convergence} 
\mathbf{1}_{\Omega_{\varepsilon}}(x)\to \mathbf{1}_\Omega(x)\quad \text{as }\varepsilon\to 0.
}

We next verify that $\nabla^2q_{\varepsilon}\le 0$. For convenience, we set
\eq{
\theta_i=\frac{e^{-\ell_{w_i}/\varepsilon}}{\sum_{j=1}^m e^{-\ell_{w_j}/\varepsilon}},\quad i=1,\ldots,m.
}
Using $\nabla^2 \ell_{w_i}=-\ell_{w_i} g$, we calculate
\eq{
\nabla q_{\varepsilon}&=\sum_{i=1}^m \theta_i \nabla \ell_{w_i},\\ 
\nabla^2 q_{\varepsilon}&=-\left(\sum_{i=1}^m \theta_i \ell_{w_i}\right)g -\frac{1}{\varepsilon}\left( \sum_{i=1}^m \theta_i\,d\ell_{w_i}\otimes d\ell_{w_i} -\sum_{i,j=1}^m \theta_i\theta_j\,d\ell_{w_i}\otimes d\ell_{w_j} \right).
}
Due to $\sum_{i=1}^m \theta_i=1$ and the Cauchy--Schwarz inequality, the second term is non-positive definite. Therefore
\eq{
\label{eq:hess-Q-epsilon} 
\nabla^2 q_{\varepsilon}\le 0 \quad \text{on } \Omega.
}

Let $\mathbf{N}_{\varepsilon}=-\frac{\nabla q_{\varepsilon}}{|\nabla q_{\varepsilon}|_g}$ be the outward $g$-unit normal to $\partial\Omega_{\varepsilon}$. For  vector fields $Z$ that are tangential along $\partial\Omega_{\varepsilon}$, due to \eqref{eq:hess-Q-epsilon}, we have
\eq{
\mathrm{II}^*_{\varepsilon}(Z,Z) =g(Z,\nabla_Z^*\mathbf{N}_{\varepsilon}) =-\frac{1}{|\nabla q_{\varepsilon}|_g}\nabla^2 q_{\varepsilon}(Z,Z)\ge 0.
}

Now applying the weighted Bochner formula on the smooth domain $\Omega_{\varepsilon}$, exactly as in the proof of \autoref{thm:R-type-Neumann-eps}, we obtain
\eq{
\label{eq:spectral-epsilon} 
\int_{\Omega_{\varepsilon}}(\LPhi u)^2\,d\mu_{\Phi}\ge (n+a-1)\int_{\Omega_{\varepsilon}} |\nabla u|_g^2\,d\mu_{\Phi}
}
for every $u\in C^3(\overline{\Omega_{\varepsilon}})$ with $u_{\mathbf{N}_{\varepsilon}}=0 $ on $ \partial\Omega_{\varepsilon}$.

Define $\bar{F}_{\Phi,\varepsilon}=
\frac{\int_{\Omega_{\varepsilon}} F\,d\mu_{\Phi}}{\int_{\Omega_{\varepsilon}} d\mu_{\Phi}}$ and $f_{\varepsilon}=F-\bar{F}_{\Phi,\varepsilon}$. Then there exists a solution $u_{\varepsilon}\in C^\infty(\overline{\Omega_{\varepsilon}})$ to
\eq{
\LPhi u_{\varepsilon}=-f_{\varepsilon} \quad \text{in } \Omega_{\varepsilon}, \quad (u_{\varepsilon})_{\mathbf{N}_{\varepsilon}}=0 \quad \text{on } \partial\Omega_{\varepsilon}.
}
Applying \eqref{eq:spectral-epsilon} to $u_{\varepsilon}$, integrating by parts, and using the Cauchy--Schwarz inequality:
\eq{
(n+a-1)\int_{\Omega_{\varepsilon}} (F-\bar{F}_{\Phi,\varepsilon})^2\,d\mu_{\Phi} \le \int_{\Omega_{\varepsilon}}|\nabla F|_g^2\,d\mu_{\Phi}.
}
Due to \eqref{eq:set-convergence} and the dominated convergence theorem, we may let $\varepsilon\to 0$ to obtain:
\eq{
(n+a-1)\int_{\Omega} (F-\bar{F}_{\Phi})^2\,d\mu_{\Phi} \le \int_{\Omega} |\nabla F|_g^2\,d\mu_{\Phi}.
}
\end{proof}

In \autoref{thm:intersection-caps-poincare-ineq}, consider the special case that $w_i=E_i$, $i=1,\ldots,n$, where $\{E_i\}$ is the standard coordinate basis of $\R^n$. Then the Poincar\'e constant can be improved to $n+a$. Indeed, for an unconditional convex body $Q$, define $|Q|_{\bar{\alpha},+}=\int_{Q\cap (0,\infty)^n} x_1^{\alpha_1}\cdots x_n^{\alpha_n}\, dx$ and
\eq{
f_{Q,\bar{\alpha}}(z)=\mathbf{1}_{Q\cap (0,\infty)^n}(e^{z_1},\ldots,e^{z_n})e^{(\alpha_1+1)z_1+\cdots+(\alpha_n+1)z_n}, \quad z\in\R^n,
}
where $\bar{\alpha}:=(\alpha_1,\ldots,\alpha_n)$ and we additionally allow $\alpha_i>-1$. Then, by the change of variables $x_i=e^{z_i}$, we have $|Q|_{\bar{\alpha},+}=\int_{\R^n} f_{Q,\bar{\alpha}}(z)\,dz$.

Since the density in $f_{Q,\bar{\alpha}}$ is log-affine, the argument of \cite{Sar15} based on the Pr\'ekopa--Leindler inequality gives
\eq{
\label{eq:Sar15-improved-constant} 
\big|(1-\lambda)\cdot K+_0 \lambda\cdot L\big|_{\bar{\alpha},+} \ge |K|_{\bar{\alpha},+}^{1-\lambda}|L|_{\bar{\alpha},+}^{\lambda}.
}

For $|s|$ sufficiently small, $h_s=he^{sF}$ is the support function of a $C^2_+$ unconditional convex body $K_s$, and by the definition of the $L_0$-sum,
\eq{
(1-\lambda)\cdot K_s+_0 \lambda\cdot K_t=K_{(1-\lambda)s+\lambda t}.
}
Hence \eqref{eq:Sar15-improved-constant} implies that
\eq{
\big|K_{(1-\lambda)s+\lambda t}\big|_{\bar{\alpha},+} \ge |K_s|_{\bar{\alpha},+}^{1-\lambda}|K_t|_{\bar{\alpha},+}^{\lambda}.
}
That is, $s\mapsto \log |K_s|_{\bar{\alpha},+}$ is concave for $|s|$ sufficiently small. We may now differentiate at $s=0$ to obtain the corresponding weighted Poincar\'e inequality with the improved constant $n+a$. See also \autoref{thm:power-product-unconditional-improved} for an alternative proof that does not rely on the Pr\'ekopa--Leindler inequality.

\begin{remark}
The inequality of \autoref{thm:intersection-caps-poincare-ineq} may be interpreted as an infinitesimal version of a Brunn--Minkowski-type inequality. For $a=\sum_{i=1}^m\alpha_i$, consider the functional
\eq{
\mathcal{F}(L)=\int_L\prod_{i=1}^m \ip{x}{w_i}_{+}^{\alpha_i}\, dx.
}
The Borell--Brascamp--Lieb inequality then gives the concavity of $\mathcal{F}^{1/(n+a)}$. Therefore, formally differentiating this statement along a linear variation $h_s=h+s\psi$, with $F=\psi/h$, leads to the same weighted Poincar\'e constant $n+a-1$ as in \autoref{thm:intersection-caps-poincare-ineq}.

There is, however, a technical nuisance in this variational interpretation. The Gauss image is the moving intersection
\eq{
\Omega_s=\bigcap_{i=1}^m\{\ell_{i,s}>0\}, \quad \ell_{i,s}=\ip{X_s}{w_i}.
}
Its boundary is generally only piecewise smooth, and differentiating the weight $\prod_{i=1}^m(\ell_{i,s})_{+}^{\alpha_i}$ requires keeping track of weak derivatives near the faces and corners. Although the singular factors $(\ell_{i,s})^{\alpha_i-1}$ are locally integrable for $\alpha_i>0$, and one expects this approach to yield the same result after a suitable approximation or weak differentiation argument, the full details are not clear to us.

The above cap-based proof overcomes these issues conveniently and provides an intrinsic derivation of the preceding Poincar\'e inequality.

In the capillary setting, this variational viewpoint becomes manageable, because the Gauss image is fixed and the admissible variations are well posed via a Robin boundary condition. We will not pursue this direction here.
\end{remark}

\begin{proof}[Proof of \autoref{thm:intersection-caps-general-weight}]
Set $\ell=(\ell_{w_1},\dots,\ell_{w_m})$. We calculate
\eq{
\nabla \Phi=-\sum_{i=1}^{m}\partial_i(\log\omega)(\ell)\nabla \ell_{w_i}
}
and
\eq{
\begin{multlined} 
\nabla^{2}\Phi-\frac{1}{\alpha}d\Phi\otimes d\Phi =\left(\sum_{i=1}^{m}\ell_{w_i}\partial_i(\log\omega)(\ell)\right)g \\ 
-\sum_{i,j=1}^{m}\left(\partial_{ij}^{2}(\log\omega)+\frac{1}{\alpha}\partial_i(\log\omega)\partial_j(\log\omega)\right)(\ell)\,d\ell_{w_i}\otimes d\ell_{w_j}. 
\end{multlined}
}
Note that \eqref{eq:matrix-condition-general-weight} says
\eq{
D^{2}\log\omega+\frac{1}{\alpha}D\log\omega\otimes D\log\omega =\alpha\omega^{-\frac{1}{\alpha}}D^{2}\omega^{\frac{1}{\alpha}}\le 0.
}
Therefore, the second term is non-negative definite, and by \eqref{eq:radial-condition-general-weight},
\eq{
\nabla^{2}\Phi-\frac{1}{\alpha}d\Phi\otimes d\Phi \ge \left(\sum_{i=1}^{m}\ell_{w_i}\partial_i(\log\omega)(\ell)\right)g\ge \beta g.
}
Since $\Ric=(n-2)g$, it follows that $\Ric+\nabla^{2}\Phi-\frac{1}{\alpha}d\Phi\otimes d\Phi\ge (n-2+\beta)g$. Using this and the finite-$N$ estimate (see the proof of \autoref{lem:finite-N-BE}), we obtain
\eq{
|(\nabla^{*})^{2}u|_{g}^{2}+(\Ric+\nabla^{2}\Phi)(\nabla u,\nabla u) \ge \frac{1}{n-1+\alpha}(\mathcal{L}_{\Phi}u)^{2}+(n-2+\beta)|\nabla u|_{g}^{2}.
}
The rest of the proof is identical to that of \autoref{thm:intersection-caps-poincare-ineq}.
\end{proof}

\begin{corollary}\label{cor:unconditional-sum-weight}
Let $0<\alpha\le 1$. Assume that $K$ is a $C^{2}_{+}$ unconditional convex body, and that $F\in C^1(\Sn)$ is unconditional. Then
\eq{
(n+\alpha-1)\int_{\Sn}(F-\bar{F})^2\sum_{i=1}^{n}|X_i|^\alpha\, dV_K \le \int_{\Sn} |\nabla F|_g^2\sum_{i=1}^{n}|X_i|^\alpha\, dV_K,
}
where
\eq{
\bar{F}=\frac{\int_{\Sn}F\sum_{i=1}^{n}|X_i|^\alpha\, dV_K}{\int_{\Sn}\sum_{i=1}^{n}|X_i|^\alpha\, dV_K}.
}
\end{corollary}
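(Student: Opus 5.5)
Apply Theorem~\ref{thm:intersection-caps-general-weight} on the positive orthant cap $\Omega_+=\bigcap_{i=1}^n\{\ell_{E_i}>0\}$, with $m=n$, $w_i=E_i$ and
\[
\omega(t)=\sum_{i=1}^n t_i^{\alpha},
\]
and then extend from $\Omega_+$ to all of $\Sn$ by unconditional symmetry, exactly as in Corollary~\ref{cor:w-even} and Corollary~\ref{cor:main-unconditional-poincare}.

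\textbf{Step 1: the hypotheses of Theorem~\ref{thm:intersection-caps-general-weight}.} Clearly $\omega\in C([0,\infty)^n)\cap C^2((0,\infty)^n)$ and $\omega>0$ on $(0,\infty)^n$.

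For the matrix condition \eqref{eq:matrix-condition-general-weight}, take the theorem's parameter equal to our $\alpha$. Then
\[
\omega^{1/\alpha}(t)=\Big(\sum_i t_i^\alpha\Big)^{1/\alpha}
\]
is the $\ell^\alpha$ "norm" restricted to the positive orthant. For $0<\alpha\le1$ this function is concave on $(0,\infty)^n$: it is the classical reverse Minkowski inequality, i.e. the concavity of the power mean of order $\alpha\le1$. Hence $D^2\omega^{1/\alpha}\le0$. (For $\alpha=1$ the function is linear and the Hessian vanishes.)

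For the radial condition \eqref{eq:radial-condition-general-weight}, $\omega$ is $\alpha$-homogeneous, so Euler's identity gives
\[
\sum_i t_i\partial_i\log\omega=\alpha .
\]
Thus we may take $\beta=\alpha$. The constant is then
\[
\lambda_{\alpha,\alpha}=\frac{n-1+\alpha}{n-2+\alpha}\,(n-2+\alpha)=n-1+\alpha,
\]
which is the constant claimed.

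By Lemma~\ref{lem:sign-xi-Xi}, $\Omega_+=\{x_i>0\ \forall i\}$ is nonempty. On $\Omega_+$ the weight is $\omega(\ell)=\sum_i X_i^\alpha=\sum_i|X_i|^\alpha$. So Theorem~\ref{thm:intersection-caps-general-weight} gives the inequality on $\Omega_+$, with the mean $\bar F_+$ taken over $\Omega_+$.

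\textbf{Step 2: symmetrization.} The sphere is divided into $2^n$ open orthants, up to a null set of coordinate hyperplanes. Each orthant is the image of $\Omega_+$ under a coordinate reflection $R$.

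Since $K$ is unconditional, $h\circ R=h$, and therefore:
\begin{itemize}
\item $X\circ R=RX$, so $|X_i|$ is $R$-invariant;
\item $dV_K$ is $R$-invariant;
\item $g$ and $\nabla$ are $R$-equivariant, so $|\nabla F|_g^2$ is unconditional whenever $F$ is.
\end{itemize}
Consequently each orthant contributes the same integral, and the weighted mean over each orthant equals $\bar F_+$. In particular $\bar F_+=\bar F$, the global weighted mean in the statement. Summing the $2^n$ copies of the inequality yields the claim.

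\textbf{Main obstacle.} The only nontrivial analytic input is the concavity of $(\sum_i t_i^\alpha)^{1/\alpha}$ on $(0,\infty)^n$. If a direct citation is not wanted, I would verify it via homogeneity: a positive $1$-homogeneous function is concave if and only if its superlevel set $\{\sum_i t_i^\alpha\ge1\}$ is convex. That set is convex because $\sum_i t_i^\alpha$ is concave, each $t_i^\alpha$ being concave for $\alpha\le1$.

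One must also check the equivariance of $g$ under reflections, which follows from the definition $g=h^{-1}(\bar\nabla^2h+hI)$ and the $R$-invariance of $h$.
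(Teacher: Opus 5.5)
Your proposal is correct and follows the paper's own proof essentially verbatim. Like the paper, you apply Theorem~\ref{thm:intersection-caps-general-weight} on $\Omega_+$ with $w_i=E_i$ and $\omega(t)=\sum_i t_i^\alpha$. The concavity of $\omega^{1/\alpha}$ for $0<\alpha\le 1$ gives the matrix condition, and $\alpha$-homogeneity gives $\beta=\alpha$, hence $\lambda_{\alpha,\alpha}=n+\alpha-1$. You then sum over the $2^n$ orthants using unconditionality. Your superlevel-set argument for concavity and your check that $X$, $dV_K$ and $g$ are reflection-equivariant supply details the paper leaves implicit.
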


\begin{proof}
We apply \autoref{thm:intersection-caps-general-weight} on $\Omega_+=\bigcap_{i=1}^{n}\{X_i>0\}$, with $w_i=E_i$ and the weight $\omega(t_1,\dots,t_n)=\sum_{i=1}^{n} t_i^\alpha$. Since $\omega^{1/\alpha}$ is concave on $(0,\infty)^n$ for $0<\alpha\le 1$, the matrix condition holds. Moreover, since $\omega$ is homogeneous of degree $\alpha$, $\sum_{i=1}^{n} t_i\partial_i\log\omega(t)=\alpha$. Hence
\eq{
( n+\alpha-1 )\int_{\Omega_+}(F-\bar{F}_{\Omega_+})^2\sum_{i=1}^{n}X_i^\alpha\, dV_K \le \int_{\Omega_+} |\nabla F|_g^2\sum_{i=1}^{n}X_i^\alpha\, dV_K,
}
where $\bar{F}_{\Omega_+}=\frac{\int_{\Omega_+}F\sum_{i=1}^{n}X_i^\alpha\, dV_K}
{\int_{\Omega_+}\sum_{i=1}^{n}X_i^\alpha\, dV_K}$. Since $K$ is unconditional and $F$ is unconditional, all orthants contribute equally.
\end{proof}

\begin{remark}\label{rem:unconditional-B1-ball}
Assume that $\tilde{f}\in C^2\left((0,\infty)^n\right)$ is positive, concave, and $k$-homogeneous on  $(0,\infty)^n$. Define $f(x)=\tilde{f}\left(X(x)\right),\,
\Phi(x)=-\alpha \log f(x)
$ on $ \Omega_+$, where $\alpha>0$. By \autoref{lem:homogeneous-extension},
\eq{
\label{eq:simple-poincare-derivation} 
\nabla^2\Phi-\frac{1}{\alpha}d\Phi\otimes d\Phi =-\frac{\alpha}{f}\nabla^2 f \ge \alpha kg \quad \text{on } \Omega_+.
}

For example, we may take $\tilde{f}(x)=x_1+\cdots+x_n$ on $(0,\infty)^n$. Then $\tilde{f}$ is concave and one-homogeneous. Thus, by \eqref{eq:simple-poincare-derivation},
\eq{
\label{eq:unconditional-l1-type-weight} 
(n+\alpha-1) \int_{\Sn}(F-\bar{F})^2\left(\sum_{i=1}^n |X_i|\right)^\alpha\, dV_K \le \int_{\Sn}|\nabla F|_g^2\left(\sum_{i=1}^n |X_i|\right)^\alpha\, dV_K,
}
where $\bar{F}=\frac{\int_{\Sn}F\left(\sum_{i=1}^n |X_i|\right)^\alpha\, dV_K}
{\int_{\Sn}\left(\sum_{i=1}^n |X_i|\right)^\alpha\, dV_K}$.
\end{remark}

\begin{remark}
There is a complementary formulation of the cap argument obtained by duality, in which the roles of $\nabla$ and $\nabla^*$ are interchanged and the measure $dV_K$ is replaced by $h^{-n}\, dx$. In this setting the exact analogue of the function $\ell_w(x)=\ip{X(x)}{w}$ is $l_w(x)=\ip{x}{w}/h(x)$. Using the identity $(\nabla^*)^2 l_w+l_wg=0$, which is dual to $\nabla^2\ell_w+\ell_wg=0$, one can obtain analogous cap inequalities in the dual setting.
\end{remark}


\section{Second variation in centro-affine geometry}\label{sec:boundary-variation-to-centro-affine-formulation}

Let $W:\R^n\to \R$ be a smooth function, and consider the associated weighted measure and its total mass on a smooth strictly convex body $K$ by
\eq{
d\mu(x)=e^{-W(x)}\, dx,\quad \mu(K)=\int_K e^{-W(x)}\, dx,\quad d\mu_{\partial K}=e^{-W}\,d\mathcal{H}^{n-1}\big|_{\partial K}.
}
Let $\psi\in C^2(\Sn)$, and for $|s|$ sufficiently small let $K_s$ be the convex body with support function $h_{K_s}=h_K+s\psi$. We define
\eq{
\phi=\psi\circ\nu\quad \text{i.e.,}\quad \psi=\phi\circ X.
}
Here $\nu=X^{-1}$ denotes the outward unit normal vector field on $\partial K$.

We recall the following second-variation formula (cf. \cite[Thm. 6.6]{KM18}, \cite[Eq. (10)]{SZ25}):
\eq{
\label{eq:standard-second-var} 
\left.\frac{d^2}{ds^2} \mu(K_s)\right|_{s=0} =\int_{\partial K}\left(H_W\phi^2-\ip{\mathrm{II}^{-1}\nabla_{\partial K}\phi}{\nabla_{\partial K}\phi}\right)\,d\mu_{\partial K},
}
where $H_W=H-\ip{DW}{\nu}$, $\mathrm{II}$ denotes the second fundamental form of the boundary $\partial K$, $\nabla_{\partial K}$ is the tangential gradient on $\partial K$, and $H$ denotes the mean curvature of $\partial K$.

In this section we rewrite the second variation formula \eqref{eq:standard-second-var} on the sphere $\Sn$ and express it in the language of centro-affine geometry. Recall first that the pullback of the Euclidean surface area measure under the inverse Gauss map $X$ is $X^*\left(d\mathcal{H}^{n-1}\big|_{\partial K}\right)=dS_K$. Consequently
\eq{
X^*(d\mu_{\partial K})=X^*\left(e^{-W}\,d\mathcal{H}^{n-1}\big|_{\partial K}\right) =e^{-W(X)}\,dS_K=e^{-w}\,dS_K,
}
where $w$ on the sphere is defined by $w=W\circ X$.

\begin{lemma}\label{lem:pullback-business} We have
\eq{
\int_{\partial K}\phi\,d\mu_{\partial K}=\int_{\Sn}\psi\frac{e^{-w}}{h}\, dV_K,
}
\eq{
\label{eq:pullback-HW} 
(H_W\circ X)(x)=\tr(\tau^{-1})(x)-\ip{DW(X(x))}{x},
}
\eq{
\label{eq:pullback-gradient} 
\left(\ip{\mathrm{II}^{-1}\nabla_{\partial K}\phi}{\nabla_{\partial K}\phi}\right)\circ X=\frac{1}{h}\abs{\nabla\psi}_g^2.
}
\end{lemma}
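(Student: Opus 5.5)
The plan is to pull everything back to $\Sn$ via the inverse Gauss map $X$. I will use that its differential is $dX=\tau$, by \eqref{eq:dX-tau-relation}, and that the outward unit normal of $\partial K$ at $X(x)$ is $x$.

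\emph{First identity.} The pullback of $d\mathcal{H}^{n-1}|_{\partial K}$ is $dS_K=\det(\tau)\,dx=\frac1h dV_K$, and $\phi\circ X=\psi$. The change of variables formula therefore gives
\[
\int_{\partial K}\phi\,d\mu_{\partial K}=\int_{\Sn}\psi e^{-w}\det(\tau)\,dx=\int_{\Sn}\psi\frac{e^{-w}}{h}\,dV_K .
\]

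\emph{Identity \eqref{eq:pullback-HW}.} The shape operator of $\partial K$ at $X(x)$ is the differential of the Gauss map $\nu$. Since $\nu=X^{-1}$ and $dX=\tau$, this is $\tau^{-1}$, acting on $T_{X(x)}\partial K=T_x\Sn=x^\perp$. Hence $H\circ X=\tr(\tau^{-1})$. Since $\nu(X(x))=x$, we also have $\ip{DW}{\nu}\circ X=\ip{DW(X(x))}{x}$. Adding the two gives \eqref{eq:pullback-HW}.

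\emph{Identity \eqref{eq:pullback-gradient}.} Identify tangent spaces as above. The second fundamental form is $\mathrm{II}(a,b)=\bar g(\tau^{-1}a,b)$, so $\mathrm{II}^{-1}$ is represented by $\tau$. For $v\in T_x\Sn$, the chain rule applied to $\psi=\phi\circ X$ gives $d\psi(v)=\ip{\nabla_{\partial K}\phi}{\tau v}$. Thus $\bar\nabla\psi=\tau\,\nabla_{\partial K}\phi$, because $\tau$ is $\bar g$-self-adjoint. It follows that
\[
\ip{\mathrm{II}^{-1}\nabla_{\partial K}\phi}{\nabla_{\partial K}\phi}=\ip{\tau\,\tau^{-1}\bar\nabla\psi}{\tau^{-1}\bar\nabla\psi}=\bar g(\tau^{-1}\bar\nabla\psi,\bar\nabla\psi).
\]
Finally, $g=\frac1h\bar g(\tau\cdot,\cdot)$, so the centro-affine gradient is $\nabla\psi=h\tau^{-1}\bar\nabla\psi$. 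Then
\[
|\nabla\psi|_g^2=\frac1h\bar g(\tau\, h\tau^{-1}\bar\nabla\psi,\,h\tau^{-1}\bar\nabla\psi)=h\,\bar g(\bar\nabla\psi,\tau^{-1}\bar\nabla\psi).
\]
This yields $\frac{1}{h}|\nabla\psi|_g^2=\bar g(\tau^{-1}\bar\nabla\psi,\bar\nabla\psi)$, which is the claim.

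The only delicate point is the third identity. There one must keep the tangent-space identification consistent and check that $\mathrm{II}^{-1}$, and not $\mathrm{II}$, corresponds to $\tau$; this can be checked on a diagonalizing frame $\tau e_i=\lambda_ie_i$, using \eqref{eq:g-frame}.
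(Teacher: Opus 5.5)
Your proof is correct and follows the same route as the paper. You pull back via $X$ with $dX=\tau$, identify the shape operator with $\tau^{-1}$ (so $\mathrm{II}^{-1}$ corresponds to $\tau$), and compare with $g=\frac1h\bar g(\tau\cdot,\cdot)$. The only difference is cosmetic: the paper does the third identity in a $\bar g$-orthonormal frame diagonalizing $\tau$, using $\hat e_i=\lambda_i^{-1}dX(e_i)$, while you do the same linear algebra frame-free via $\bar\nabla\psi=\tau\nabla_{\partial K}\phi$ and $\nabla\psi=h\tau^{-1}\bar\nabla\psi$.
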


\begin{proof}
The first identity follows from $X^*(d\mu_{\partial K})=\frac{e^{-w}}{h}\, dV_K$, together with $\psi=\phi\circ X$.

Next we prove \eqref{eq:pullback-HW}. At the point $X(x)\in \partial K$, the outer unit normal is $x$. Let $\{e_i\}_{i=1}^{n-1}$ be a local $\bar{g}$-orthonormal frame diagonalizing $\tau$, say $\tau e_i=\lambda_i e_i$. Then the principal curvatures are $\lambda_i^{-1}$, and $H\circ X=\tr(\tau^{-1})$. Since $\nu\circ X(x)=x$, it follows that
\eq{
H_W\circ X=H\circ X-\ip{DW(X)}{x}=\tr(\tau^{-1})-\ip{DW(X)}{x}.
}

We prove \eqref{eq:pullback-gradient}. Let $\hat{e}_i=\frac{1}{\lambda_i}dX(e_i)$. Then $\{\hat{e}_i\}$ is an orthonormal basis for $T_{X(x)}\partial K$ and
\eq{
\hat{e}_i\phi=d\phi(\hat{e}_i)=\frac{1}{\lambda_i}d(\phi\circ X)(e_i)=\frac{e_i\psi}{\lambda_i}.
}
Since the eigenvalues of $\mathrm{II}^{-1}$ are $\lambda_i$, from \eqref{eq:g-frame} we obtain
\eq{
\left(\ip{\mathrm{II}^{-1}\nabla_{\partial K}\phi}{\nabla_{\partial K}\phi}\right)\circ X =\sum_{i=1}^{n-1}\lambda_i(\hat{e}_i\phi)^2=\sum_{i=1}^{n-1}\frac{(e_i\psi)^2}{\lambda_i}=\frac{1}{h}\abs{\nabla\psi}_g^2.
}
\end{proof}

\begin{proposition}\label{prop:standard-second-var}
The formula \eqref{eq:standard-second-var} is equivalent to
\eq{
\label{eq:sphere-2nd-var} 
\left.\frac{d^2}{ds^2}\mu(K_s)\right|_{s=0} &=\int_{\Sn}\left(\left(\tr(\tau^{-1})-\ip{DW(X)}{x}\right)\psi^2\right)\frac{e^{-w}}{h}\, dV_K \\ 
&\quad -\int_{\Sn}\frac{e^{-w}}{h^2}\abs{\nabla\psi}_g^2\, dV_K.
}
\end{proposition}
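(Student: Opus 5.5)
The plan is to pull back the right-hand side of \eqref{eq:standard-second-var} to $\Sn$ via the inverse Gauss map $X$, treating the two integrands separately and using \autoref{lem:pullback-business} for each piece. Since $X:\Sn\to\partial K$ is a $C^1$ diffeomorphism, the change of variables formula gives, for any continuous integrand $G$ on $\partial K$,
\[
\int_{\partial K}G\,d\mu_{\partial K}=\int_{\Sn}(G\circ X)\,X^*(d\mu_{\partial K})=\int_{\Sn}(G\circ X)\frac{e^{-w}}{h}\,dV_K.
\]
This uses $X^*(d\mathcal{H}^{n-1}|_{\partial K})=dS_K=\frac1h dV_K$, which is exactly the first identity of \autoref{lem:pullback-business} applied with a general integrand in place of $\phi$.

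First I take $G=H_W\phi^2$. Because $\phi\circ X=\psi$, \eqref{eq:pullback-HW} gives $G\circ X=(\tr(\tau^{-1})-\ip{DW(X)}{x})\psi^2$. Inserting this into the change of variables formula produces the first integral in \eqref{eq:sphere-2nd-var}.

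Next I take $G=\ip{\mathrm{II}^{-1}\nabla_{\partial K}\phi}{\nabla_{\partial K}\phi}$. By \eqref{eq:pullback-gradient}, $G\circ X=\frac1h|\nabla\psi|_g^2$. Multiplying by $\frac{e^{-w}}{h}\,dV_K$ gives the second integral, with the weight $\frac{e^{-w}}{h^2}$.

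Subtracting the two pieces yields \eqref{eq:sphere-2nd-var}. Each step is reversible, since $X$ is a diffeomorphism and $\psi\leftrightarrow\phi$ is a bijection, so the two formulas are equivalent.

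I do not expect a real obstacle here, because all the pointwise identities are already recorded in \autoref{lem:pullback-business}. The one point to check with care is bookkeeping of the measure: the factor $\frac1h$ from $dS_K=\frac1h dV_K$ combines with the $\frac1h$ in \eqref{eq:pullback-gradient} to give $h^{-2}$ in the gradient term, while the curvature term carries only $h^{-1}$. I would also note that regularity ($K$ being $C^2_+$, $\psi\in C^2$) is enough for all the integrands to be continuous, so the change of variables is legitimate.
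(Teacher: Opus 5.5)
Your proposal is correct and follows essentially the same route as the paper. Both arguments pull back each of the two boundary integrals in \eqref{eq:standard-second-var} via $X$, using $X^*(d\mu_{\partial K})=\frac{e^{-w}}{h}\,dV_K$ together with the pointwise identities of \autoref{lem:pullback-business}, and then substitute the results.
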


\begin{proof}
Using \autoref{lem:pullback-business}, we compute
\eq{
\int_{\partial K}H_W\phi^2\,d\mu_{\partial K} &=\int_{\Sn}\left(\tr(\tau^{-1})-\ip{DW(X)}{x}\right)\psi^2\frac{e^{-w}}{h}\, dV_K,\\ 
\int_{\partial K}\ip{\mathrm{II}^{-1}\nabla_{\partial K}\phi}{\nabla_{\partial K}\phi}\,d\mu_{\partial K} &=\int_{\Sn}\frac{1}{h}\abs{\nabla\psi}_g^2\frac{e^{-w}}{h}\, dV_K=\int_{\Sn}\frac{e^{-w}}{h^2}\abs{\nabla\psi}_g^2\, dV_K.
}
Substituting these identities into \eqref{eq:standard-second-var} yields \eqref{eq:sphere-2nd-var}.
\end{proof}

We now specialize to the context of the dual quermassintegrals where
\eq{
\label{eq:W-special} 
W(x):=(n-q)\log\abs{x},\quad w(x)=W\circ X(x)=(n-q)\log\abs{X(x)}.
}
Although $W$ is singular at the origin, the singularity is irrelevant for the first and second variation calculations. Indeed, for $|s|$ sufficiently small, the support functions $h_s=h+s\psi$ remain uniformly positive. Hence there exists $r>0$ such that
\eq{
B_r\subset K_s
\quad
\text{for all sufficiently small } |s|.
}
We may now choose $\widetilde{W}\in C^{\infty}(\R^n)$ such that
\eq{
\widetilde{W}=W
\quad
\text{on } \R^n\setminus B_r.
}
Then
\eq{
\int_{K_s}e^{-W}\, dx-\int_{K_s}e^{-\widetilde{W}}\, dx
=\int_{B_r}\left(e^{-W}-e^{-\widetilde{W}}\right)\, dx.
}
Since the right-hand side is independent of $s$, \autoref{prop:standard-second-var} applies to $W$.

Define
\eq{
f=\frac{\psi}{h} e^{-w},\quad F=\frac{\psi}{h}=e^w f=\abs{X}^{n-q}f.
}
The $L^q$ cone-volume measure is defined by
\eq{
dV_{K,q}=\abs{X}^{q-n}\, dV_K=e^{-w}\, dV_K.
}

With the notation above,
\eq{
\int_{\Sn}\frac{\psi}{h}e^{-w}\, dV_K=\int_{\Sn}f\, dV_K=\int_{\Sn}F\,dV_{K,q}.
}
Moreover,
\eq{
\label{eq:gradF-identity} 
\nabla F=e^w\left(\nabla f+f\nabla w\right).
}

\begin{lemma}\label{lem:simple-cross}
We have $g\left(\nabla\log h,\nabla \log\abs{X}\right)=1-\frac{h^2}{\abs{X}^2}$.
\end{lemma}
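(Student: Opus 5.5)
The identity is a direct pointwise computation in a $\bar g$-frame, using only $g=\frac1h\tau$, $dX=\tau$ from \eqref{eq:dX-tau-relation}, and the decomposition $X=\bar\nabla h+hx$. No integration or curvature identity is needed.

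\emph{Inverse metric.} Since $g(U,V)=\frac1h\bar g(\tau U,V)$, the centro-affine gradient of a function $a$ is $\nabla a=h\,\tau^{-1}\bar\nabla a$. Hence, for any two functions $a,b$,
\begin{equation}
g(\nabla a,\nabla b)=da(\nabla b)=h\,\bar g\left(\bar\nabla a,\tau^{-1}\bar\nabla b\right).
\end{equation}
We apply this with $a=\log h$, for which $\bar\nabla a=\bar\nabla h/h$, and with $b=\log|X|$.

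\emph{Differential of $\log|X|$.} For $v\in T_x\Sn$, \eqref{eq:dX-tau-relation} gives $\frac12 d|X|^2(v)=\langle X,dX(v)\rangle=\langle X,\tau v\rangle$. Since $\tau v$ is tangent to $\Sn$ at $x$, only the tangential part $\bar\nabla h$ of $X=\bar\nabla h+hx$ contributes. Therefore $\langle X,\tau v\rangle=\bar g(\bar\nabla h,\tau v)=\bar g(\tau\bar\nabla h,v)$, using the $\bar g$-symmetry of $\tau$. It follows that
\begin{equation}
\bar\nabla\log|X|=\frac{\tau\bar\nabla h}{|X|^2}.
\end{equation}

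\emph{Combining.} Substituting into the formula for $g(\nabla a,\nabla b)$, the factor $\tau^{-1}$ cancels $\tau$ and the factor $h$ cancels $1/h$:
\begin{equation}
g(\nabla\log h,\nabla\log|X|)=h\,\bar g\left(\frac{\bar\nabla h}{h},\frac{\bar\nabla h}{|X|^2}\right)=\frac{|\bar\nabla h|_{\bar g}^2}{|X|^2}.
\end{equation}
Finally, $|X|^2=|\bar\nabla h|^2+h^2$ because $\bar\nabla h\perp x$. This gives $|\bar\nabla h|^2/|X|^2=1-h^2/|X|^2$, which is the claim.

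There is no real obstacle here. The only point needing care is the tangential-projection step: it uses that $\tau v\in T_x\Sn$, so the normal component $hx$ of $X$ drops out.
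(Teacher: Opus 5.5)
Your proof is correct and is essentially the paper's argument. The paper does the same computation in a local $\bar g$-orthonormal frame diagonalizing $\tau$, obtaining $e_i\log|X|=\lambda_i\,e_ih/|X|^2$ and letting the $\lambda_i$ cancel against $g^{-1}$, whereas you write it frame-free as $\tau^{-1}\tau$; the tangential-projection step $\langle X,e_i\rangle=e_ih$ is shared.
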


\begin{proof}
Choose a local $\bar{g}$-orthonormal frame $\{e_i\}$ diagonalizing $\tau$, with $\tau e_i=\lambda_i e_i$. By \eqref{eq:g-frame},
\eq{
g(\nabla a,\nabla b)=\sum_{i=1}^{n-1}\frac{h}{\lambda_i}(e_i a)(e_i b).
}
Due to \eqref{eq:dX-tau-relation}, $e_i(\abs{X}^2)=2\ip{X}{dX(e_i)}=2\lambda_i\ip{X}{e_i}$. Therefore
\eq{
e_i(\log\abs{X})=\frac{1}{2\abs{X}^2}e_i(\abs{X}^2)=\frac{\lambda_i}{\abs{X}^2}e_i h.
}
Combining the last two identities, we obtain
\eq{
g\left(\nabla\log h,\nabla\log\abs{X}\right) =\sum_{i=1}^{n-1}\frac{(e_i h)^2}{\abs{X}^2} =\frac{\abs{\bar{\nabla} h}^2}{\abs{X}^2}=1-\frac{h^2}{\abs{X}^2}.
}
\end{proof}

\begin{proposition} \label{prop:q-identity}
The formula \eqref{eq:sphere-2nd-var} can be rewritten as
\eq{
\label{eq:final-q-1} 
\left.\frac{d^2}{ds^2}\mu(K_s)\right|_{s=0}&=\int_{\Sn}(q-1)F^2-\abs{\nabla F}_{g}^{2}\,dV_{K,q}.
}
\end{proposition}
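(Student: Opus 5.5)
The plan is a direct computation starting from \eqref{eq:sphere-2nd-var}. With $W=(n-q)\log|x|$ we have $DW(X)=(n-q)X/|X|^2$. Since $\ip{X}{x}=h$, this gives $\ip{DW(X)}{x}=(n-q)h/|X|^2$. I substitute $\psi=hF$, so that $\psi^2/h=hF^2$ and $\nabla\psi=h\nabla F+F\nabla h$, hence
\[
\frac{|\nabla\psi|_g^2}{h^2}=|\nabla F|_g^2+2F\,g(\nabla F,\nabla\log h)+F^2|\nabla\log h|_g^2 .
\]
All terms are then integrated against $dV_{K,q}=e^{-w}dV_K$. The right-hand side of \eqref{eq:sphere-2nd-var} becomes
\[
\int_{\Sn}\Big[\Big(h\tr(\tau^{-1})-\tfrac{(n-q)h^2}{|X|^2}-|\nabla\log h|_g^2\Big)F^2-g(\nabla F^2,\nabla\log h)-|\nabla F|_g^2\Big]dV_{K,q}.
\]

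Next I remove the cross term by the weighted Green formula \eqref{eq:weighted-green} on the whole sphere, with $\Phi=w$, $u=F^2$ and $v=\log h$:
\[
-\int g(\nabla F^2,\nabla\log h)\,dV_{K,q}=\int F^2\big(\Delta\log h-g(\nabla w,\nabla\log h)\big)\,dV_{K,q}.
\]
Since $\nabla w=(n-q)\nabla\log|X|$, \autoref{lem:simple-cross} gives $g(\nabla w,\nabla\log h)=(n-q)\big(1-h^2/|X|^2\big)$. The $h^2/|X|^2$ terms then cancel exactly, and the coefficient of $F^2$ reduces to
\[
h\tr(\tau^{-1})+\Delta\log h-|\nabla\log h|_g^2-(n-q).
\]

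The remaining (key) step is the pointwise identity
\[
h\tr(\tau^{-1})+\Delta\log h-|\nabla\log h|_g^2=n-1.
\]
I would obtain it from \eqref{eq:centro-affine-laplacian} applied to $f=1/h$. Then $hf\equiv1$, so the right side of \eqref{eq:centro-affine-laplacian} is $\tau^{ij}\bar g_{ij}=\tr(\tau^{-1})$, giving $\Delta(1/h)+(n-1)/h=\tr(\tau^{-1})$. Since $\Delta=\tr_g(\nabla^*)^2$ is a second-order operator without zeroth-order term, the chain rule gives
\[
\Delta e^{-\log h}=e^{-\log h}\big(-\Delta\log h+|\nabla\log h|_g^2\big).
\]
Multiplying by $h$ yields the identity. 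Hence the coefficient of $F^2$ is $(n-1)-(n-q)=q-1$, which proves \eqref{eq:final-q-1}.

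The only delicate points are bookkeeping of signs and confirming that \eqref{eq:centro-affine-laplacian} applies with $f=1/h$. Both are routine, so I do not expect a real obstacle beyond this computation.
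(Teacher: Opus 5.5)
Your proposal is correct and follows essentially the same route as the paper: substitute $\psi=hF$, use the trace identity $\tr(\tau^{-1})=\Delta\frac{1}{h}+(n-1)\frac{1}{h}$ obtained from \eqref{eq:centro-affine-laplacian} with $f=1/h$, integrate by parts once, and let \autoref{lem:simple-cross} cancel the $h^2/|X|^2$ terms so that the coefficient of $F^2$ becomes $q-1$. The only difference is bookkeeping. The paper works with $f=e^{-w}F$ and integrates $\Delta\frac{1}{h}$ by parts against $he^wf^2$ in $dV_K$. You instead stay with $F$, apply the weighted Green formula in $dV_{K,q}$, and rewrite $\Delta\frac{1}{h}$ pointwise via the chain rule.
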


\begin{proof}
First we substitute $\psi=he^w f$ into \eqref{eq:sphere-2nd-var}:
\eq{
\left(\tr(\tau^{-1})-\ip{DW(X)}{x}\right)\psi^2\frac{e^{-w}}{h}\, dV_K =\left(\tr(\tau^{-1})-\ip{DW(X)}{x}\right)he^w f^2\, dV_K.
}
Next we rewrite the gradient term:
\eq{
\nabla\psi=\nabla(he^w f)=he^w\left(\nabla f+f\nabla\log h+f\nabla w\right),
}
so
\eq{
\label{eq:grad-step-1} 
\frac{e^{-w}}{h^2}\abs{\nabla\psi}_g^2=e^w\abs{\nabla f+f\nabla\log h+f\nabla w}_g^2.
}

Note that (see \eqref{eq:centro-affine-laplacian})
\eq{
\label{eq:trace-identity} 
\tr(\tau^{-1})=\Delta\frac{1}{h}+(n-1)\frac{1}{h}.
}
Using this and integrating by parts, we obtain
\eq{
\label{eq:ibp-expanded} 
&\int_{\Sn}he^w f^2\Delta\frac{1}{h}\, dV_K\\ 
&=-\int_{\Sn}g\left(\nabla\frac{1}{h},\nabla(he^w f^2)\right)\, dV_K \\ 
&=\int_{\Sn}e^w\left(f^2 g(\nabla\log h,\nabla w)+2f g(\nabla\log h,\nabla f)+f^2\abs{\nabla\log h}_g^2\right)\, dV_K.
}
We also rewrite \eqref{eq:grad-step-1} as follows:
\eq{
\label{eq:square-expanded} 
\abs{\nabla f+f\nabla\log h+f\nabla w}_g^2 
&=\abs{\nabla f+f\nabla w}_g^2+f^2\abs{\nabla\log h}_g^2 \\ 
&\quad+2f g(\nabla f,\nabla\log h)+2f^2 g(\nabla w,\nabla\log h).
}
Now substituting \eqref{eq:trace-identity}, \eqref{eq:ibp-expanded}, and \eqref{eq:square-expanded} into \eqref{eq:sphere-2nd-var}, we obtain
\eq{
\label{eq:after-cancel} 
\left.\frac{d^2}{ds^2}\mu(K_s)\right|_{s=0} =\int_{\Sn}e^w\left(-\abs{\nabla f+f\nabla w}_g^2+\mathcal{R}_q f^2\right)\, dV_K,
}
where $\mathcal{R}_q=(n-1)-h\ip{DW(X)}{x}-g(\nabla\log h,\nabla w)$. 

We have
\eq{
DW(x)=(n-q)\frac{x}{\abs{x}^2},\quad \nabla w=(n-q)\nabla\log\abs{X}
}
and thus
\eq{
h\ip{DW(X)}{x}=(n-q)\frac{h^2}{\abs{X}^2},\quad g(\nabla\log h,\nabla w)=(n-q)\left(1-\frac{h^2}{\abs{X}^2}\right),
}
where we used \autoref{lem:simple-cross}. Therefore $\mathcal{R}_q=q-1$ and \eqref{eq:after-cancel} becomes
\eq{
\label{eq:before-F} 
\left.\frac{d^2}{ds^2}\mu(K_s)\right|_{s=0}=\int_{\Sn}e^w\left(-\abs{\nabla f+f\nabla w}_g^2+(q-1)f^2\right)\, dV_K.
}

Finally, by \eqref{eq:gradF-identity}, $\abs{\nabla f+f\nabla w}_g^2=e^{-2w}\abs{\nabla F}_g^2$. Now recall $F=e^w f$ and $dV_{K,q}=e^{-w}dV_K$, and hence
\eq{
e^w\abs{\nabla f+f\nabla w}_g^2\, dV_K=\abs{\nabla F}_g^2\,dV_{K,q},\quad e^w f^2\, dV_K=F^2\,dV_{K,q}.
}
\end{proof}

\begin{proof}[First proof of \autoref{thm:main-dualquermass-q} (in the $C^2_+$ class and without equality characterization)]
Let
\eq{
\mu(K)=\int_K |x|^2\, dx =\frac{n}{n+2}\widetilde{V}_{n+2}(K) =\frac{1}{n+2}\int_{\Sn}|X|^2\, dV_K.
}
Due to \autoref{prop:q-identity} with $q=n+2$ and \autoref{cor:main-unconditional-poincare},
\eq{
\left.\frac{d^2}{ds^2}\mu(K_s)\right|_{s=0} \le \frac{n+1}{n+2}\frac{\left(\left.\frac{d}{ds}\mu(K_s)\right|_{s=0}\right)^2}{\mu(K)},
}
where we used the identity (cf. \cite[Thm. 6.6]{KM18})
\eq{
\left.\frac{d}{ds}\mu(K_s)\right|_{s=0}=\int_{\Sn}F|X|^{2}\, dV_K.
}
Thus $\mu^{1/(n+2)}$ is (locally) infinitesimally concave on the class of unconditional convex bodies. By the proof of \cite[Lem. 4]{SZ25} or \cite[Lem. 3.1]{CLM17}, this implies the (global) concavity of $\mu^{1/(n+2)}$ with respect to $L_1$-sum, and hence
\eq{
\mu(K+L)^{\frac{1}{n+2}} \ge \mu(K)^{\frac{1}{n+2}}+\mu(L)^{\frac{1}{n+2}}.
}
\end{proof}

\section{Applications of BBL and PL inequalities}

\begin{lemma}
\label{lem:linear-weight-alpha-vector}
Let $\alpha>0$ and $w\in\R^n\setminus\{0\}$, and define $\mathcal{H}_w=\{x\in\R^n:\ip{x}{w}\ge 0\}$. Suppose $K,L$ are convex bodies contained in $\mathcal{H}_w$. Then
\eq{
\left(\int_{K+L}\ip{x}{w}^{\alpha}\, dx\right)^{\frac1{n+\alpha}} \ge \left(\int_K\ip{x}{w}^{\alpha}\, dx\right)^{\frac1{n+\alpha}}+ \left(\int_L\ip{x}{w}^{\alpha}\, dx\right)^{\frac1{n+\alpha}}.
}
\end{lemma}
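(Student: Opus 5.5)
The plan is to apply the Borell--Brascamp--Lieb inequality directly in Euclidean coordinates. Set $f=\ip{x}{w}^{\alpha}\mathbf{1}_K$, $g=\ip{x}{w}^{\alpha}\mathbf{1}_L$ and $k=\ip{x}{w}^{\alpha}\mathbf{1}_{(1-\lambda)K+\lambda L}$ on $\R^n$, all supported in $\mathcal{H}_w$.

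Recall the BBL statement. Let $s\ge -1/n$ and $\lambda\in(0,1)$, and suppose $k((1-\lambda)x+\lambda y)\ge M_s^\lambda(f(x),g(y))$ for all $x,y$, where $M_s^\lambda$ is the $s$-mean (set to $0$ if either argument vanishes). Then
\begin{equation}
\int k\ge M^{\lambda}_{s/(ns+1)}\Big(\int f,\int g\Big).
\end{equation}
I will take $s=1/\alpha$, so that $s/(ns+1)=1/(n+\alpha)$.

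The pointwise hypothesis is checked as follows. Take $x\in K$ and $y\in L$, so that $a=\ip{x}{w}\ge0$ and $b=\ip{y}{w}\ge0$. The point $z=(1-\lambda)x+\lambda y$ lies in $(1-\lambda)K+\lambda L$, and by linearity $\ip{z}{w}=(1-\lambda)a+\lambda b$. Hence
\begin{equation}
k(z)=\big((1-\lambda)a+\lambda b\big)^{\alpha}=M^\lambda_{1/\alpha}(a^\alpha,b^\alpha),
\end{equation}
with equality, provided $a,b>0$. If $a=0$ or $b=0$, the right-hand side is $0$ by convention, so the inequality holds trivially. If $x\notin K$ or $y\notin L$, the right-hand side is again $0$. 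Thus the hypothesis holds with $s=1/\alpha>0\ge -1/n$.

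BBL then gives
\begin{equation}
\Big(\int_{(1-\lambda)K+\lambda L}\ip{x}{w}^\alpha dx\Big)^{\frac1{n+\alpha}}\ge(1-\lambda)\Big(\int_K\ip{x}{w}^\alpha dx\Big)^{\frac1{n+\alpha}}+\lambda\Big(\int_L\ip{x}{w}^\alpha dx\Big)^{\frac1{n+\alpha}}.
\end{equation}
This assumes both integrals are positive, which holds because a convex body in $\mathcal{H}_w$ has nonempty interior and $\ip{x}{w}>0$ on that interior. The integrals are also finite.

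To pass from this concavity statement to additivity, use homogeneity. The functional $\mathcal{F}(Q)=\int_Q\ip{x}{w}^\alpha dx$ satisfies $\mathcal{F}(tQ)=t^{n+\alpha}\mathcal{F}(Q)$ for $t>0$. Apply the concave inequality to $K'=K/(1-\lambda)$ and $L'=L/\lambda$, which still lie in $\mathcal{H}_w$, with $\lambda=\mathcal{F}(L)^{1/(n+\alpha)}/\big(\mathcal{F}(K)^{1/(n+\alpha)}+\mathcal{F}(L)^{1/(n+\alpha)}\big)$. Then $(1-\lambda)K'+\lambda L'=K+L$, and $(1-\lambda)\mathcal{F}(K')^{1/(n+\alpha)}=\lambda\mathcal{F}(L')^{1/(n+\alpha)}=\mathcal{F}(K)^{1/(n+\alpha)}+\mathcal{F}(L)^{1/(n+\alpha)}$, so both terms on the right combine into exactly the claimed sum.

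I see no real obstacle. The only care needed is the boundary case where $a$ or $b$ is $0$, handled by the BBL convention, and confirming that the exponent bookkeeping gives $s/(ns+1)=1/(n+\alpha)$.
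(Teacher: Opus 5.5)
Your proof is correct and follows the paper's route. Like the paper, you apply Borell--Brascamp--Lieb with $s=1/\alpha$ to $\ip{x}{w}^\alpha$ times the indicator functions to get concavity of $\mathcal{F}^{1/(n+\alpha)}$ along $(1-\lambda)K+\lambda L$, then rescale to $K/(1-\lambda)$, $L/\lambda$ and use $(n+\alpha)$-homogeneity. One small slip in the last step: for every $\lambda\in(0,1)$ one has $(1-\lambda)\mathcal{F}(K')^{1/(n+\alpha)}=\mathcal{F}(K)^{1/(n+\alpha)}$ and $\lambda\mathcal{F}(L')^{1/(n+\alpha)}=\mathcal{F}(L)^{1/(n+\alpha)}$; what equals the full sum under your choice of $\lambda$ is $\mathcal{F}(K')^{1/(n+\alpha)}=\mathcal{F}(L')^{1/(n+\alpha)}$, so your displayed chain is misstated, but the conclusion stands and no special $\lambda$ is needed.
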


\begin{proof}
Let $\lambda\in(0,1)$ and $A,B\subset \mathcal{H}_w$ be convex bodies:
\eq{
\label{eq:lambda-concavity} 
\left(\int_{(1-\lambda)A+\lambda B}\ip{x}{w}^{\alpha}\, dx\right)^{\frac1{n+\alpha}} \ge (1-\lambda)\left(\int_A\ip{x}{w}^{\alpha}\, dx\right)^{\frac1{n+\alpha}}+\lambda\left(\int_B\ip{x}{w}^{\alpha}\, dx\right)^{\frac1{n+\alpha}}.
}

Define
\eq{
\mathrm{f}(x)=\mathbf{1}_A(x)\ip{x}{w}^{\alpha}, \quad \mathrm{g}(y)=\mathbf{1}_B(y)\ip{y}{w}^{\alpha}, \quad \mathrm{h}(z)=\mathbf{1}_{(1-\lambda)A+\lambda B}(z)\ip{z}{w}^{\alpha}.
}
Then for all $x,y\in\R^n$,
\eq{
\mathrm{h}((1-\lambda)x+\lambda y)\ge \mathrm{M}_{\frac{1}{\alpha}}^\lambda(\mathrm{f}(x),\mathrm{g}(y)),
}
where, for $s\neq 0$,
\eq{
\mathrm{M}_s^\lambda(a,b)= \begin{cases} \left((1-\lambda)a^s+\lambda b^s\right)^{\frac{1}{s}}, & ab>0,\\ 0, & ab=0. 
\end{cases}
}
Therefore, by the Borell--Brascamp--Lieb inequality with $s=\alpha^{-1}$ (see, e.g. \cite{ILS25}),
\eq{
\left(\int_{\R^n} \mathrm{h}\right)^{\frac1{n+\alpha}} \ge (1-\lambda)\left(\int_{\R^n}\mathrm{f}\right)^{\frac1{n+\alpha}}+ \lambda\left(\int_{\R^n}\mathrm{g}\right)^{\frac1{n+\alpha}}.
}
Now we apply \eqref{eq:lambda-concavity} to the dilated sets $A=\frac{1}{1-\lambda}K$ and $B=\frac{1}{\lambda}L$:
\eq{
\left(\int_{K+L}\ip{x}{w}^{\alpha}\, dx\right)^{\frac1{n+\alpha}} \ge (1-\lambda)\left(\int_A\ip{x}{w}^{\alpha}\, dx\right)^{\frac1{n+\alpha}}+ \lambda\left(\int_B\ip{x}{w}^{\alpha}\, dx\right)^{\frac1{n+\alpha}}.
}
Using the homogeneity of the weighted integral under dilations, the claim follows.
\end{proof}

\begin{proof}[Second proof of \autoref{thm:main-dualquermass-q} (with equality characterization)]
Let $w=(1,\dots,1)\in\R^n$. For every unconditional measurable set $A\subset\R^n$,
\eq{
\label{eq:second-proof-halfspace-identity} 
\int_{A\cap \mathcal{H}_w}\ip{x}{w}^2\, dx =\frac{1}{2}\int_A |x|^2\, dx.
}
Note that
\eq{
(K\cap \mathcal{H}_w)+(L\cap \mathcal{H}_w)\subset (K+L)\cap \mathcal{H}_w.
}
Applying \autoref{lem:linear-weight-alpha-vector} with $\alpha=2$ to the convex bodies $K\cap \mathcal{H}_w$ and $L\cap \mathcal{H}_w$, and then using \eqref{eq:second-proof-halfspace-identity} for $K$, $L$, and $K+L$, we immediately obtain the inequality.

Assume that equality holds. Then
\eq{
\label{eq:equality-plus} 
\left(\int_{(K\cap\mathcal{H}_w)+(L\cap \mathcal{H}_w)}\ip{x}{w}^2\, dx\right)^{\frac{1}{n+2}}
=\left(\int_{K\cap\mathcal{H}_w}\ip{x}{w}^2\, dx\right)^{\frac{1}{n+2}}
+\left(\int_{L\cap\mathcal{H}_w}\ip{x}{w}^2\, dx\right)^{\frac{1}{n+2}}.
}
Due to Dubuc's equality characterization \cite{Dub77}, there exist
\eq{
a>0,\quad b>0,\quad y\in\R^n
}
such that $\mathrm{g}(x)=a\mathrm{f}(b^{-1}(x-y))$ for almost every $x\in\R^n$. Hence
\eq{
L\cap\mathcal{H}_w=b(K\cap\mathcal{H}_w)+y.
}
Since $K$ and $L$ are origin-symmetric, we also have
\eq{
L\cap\mathcal{H}_{-w}=b(K\cap\mathcal{H}_{-w})-y.
}

We next show that $\ip{y}{w}=0$. The equality relation for the densities implies that, for almost every $z\in K\cap\mathcal{H}_w$, $\ip{bz+y}{w}^{2}=c_0\ip{z}{w}^{2}$ for some constant $c_0>0$. Since $\ip{z}{w}$ ranges over a non-trivial interval of positive values, the identity
\eq{
\left(bt+\ip{y}{w}\right)^2=c_0t^2
}
holds for all $t$ in a non-trivial interval. Hence $\ip{y}{w}=0$. Now, intersecting with $w^\perp$, we find
\eq{
L\cap w^\perp=b(K\cap w^\perp)+y.
}
Both sides are centrally symmetric convex bodies in $w^\perp$; their centers are respectively $0$ and $y$. Hence $y=0$. Therefore $L\cap\mathcal{H}_w=b(K\cap\mathcal{H}_w)$. Together with the reflected identity on $\mathcal{H}_{-w}$, this implies that $L=bK$. Thus $K$ and $L$ are dilates.
\end{proof}

\begin{proof}[Proof of \autoref{thm:unconditional-logBM-Vq}]
Set
\eq{
K_+=K\cap(0,\infty)^n,\quad L_+=L\cap(0,\infty)^n,\quad Q=(1-\lambda)\cdot K+_0\lambda\cdot L.
}
By \cite[Lem. 4.1]{Sar15}, we have
\eq{
\label{eq:Saroglou-inclusion-theorem-Vq} 
\left\{\left(x_1^{1-\lambda}y_1^\lambda,\dots,x_n^{1-\lambda}y_n^\lambda\right): x\in K_+,\ y\in L_+ \right\}\subset Q\cap(0,\infty)^n.
}

We write
\eq{
A=\{x\in\R^n:e^x\in K_+\},\quad B=\{y\in\R^n:e^y\in L_+\}.
}
Note that $A$ and $B$ are closed convex sets, and
\eq{
\{e^{(1-\lambda)x+\lambda y}: x\in A,\ y\in B\}=\left\{\left(x_1^{1-\lambda}y_1^\lambda,\dots,x_n^{1-\lambda}y_n^\lambda\right): x\in K_+,\ y\in L_+ \right\}.
}

We also define
\eq{
\phi_q(z)=\left(\sum_{i=1}^n e^{2z_i}\right)^{\frac{q-n}{2}}e^{z_1+\cdots+z_n}, \quad z\in\R^n.
}
Since $z\mapsto \log(\sum_i e^{2z_i})$ is convex, $\phi_q$ is log-concave for $0<q<n$. Hence, for all $x,y\in\R^n$:
\eq{
\phi_q((1-\lambda)x+\lambda y)\ge \phi_q(x)^{1-\lambda}\phi_q(y)^\lambda.
}

Applying the Pr\'ekopa--Leindler inequality to the functions $\mathbf{1}_A\phi_q$ and $\mathbf{1}_B\phi_q$, we obtain
\eq{
\int_{(1-\lambda)A+\lambda B}\phi_q(z)\,dz \ge \left(\int_A\phi_q(z)\,dz\right)^{1-\lambda} \left(\int_B\phi_q(z)\,dz\right)^\lambda.
}
Using the change of variables $x_i=e^{z_i}$ and $dx=e^{z_1+\cdots+z_n}\,dz$, this becomes
\eq{
\int_{(1-\lambda)A+\lambda B}\phi_q(z)\,dz \ge \left(\int_{K_+}|x|^{q-n}\, dx\right)^{1-\lambda} \left(\int_{L_+}|x|^{q-n}\, dx\right)^\lambda.
}
On the other hand, in view of \eqref{eq:Saroglou-inclusion-theorem-Vq}, we have
\eq{
\int_{(1-\lambda)A+\lambda B}\phi_q(z)\,dz =\int_{\{e^{(1-\lambda)r+\lambda s}:\ r\in A,\ s\in B\}}|x|^{q-n}\, dx \le \int_{Q\cap(0,\infty)^n}|x|^{q-n}\, dx.
}
Therefore
\eq{
\int_{Q\cap(0,\infty)^n}|x|^{q-n}\, dx \ge \left(\int_{K_+}|x|^{q-n}\, dx\right)^{1-\lambda} \left(\int_{L_+}|x|^{q-n}\, dx\right)^\lambda.
}
Since all orthants contribute equally, this is equivalent to
\eq{
\widetilde{V}_q\left((1-\lambda)\cdot K+_0\lambda\cdot L\right) \ge \widetilde{V}_q(K)^{1-\lambda}\widetilde{V}_q(L)^\lambda.
}

Assume now that equality holds in \eqref{eq:unconditional-logBM-Vq}. By Dubuc's equality characterization \cite[Thm. 12]{Dub77}, there exist $a>0$ and $b\in\R^n$ such that
\eq{
\mathbf{1}_A(z)\phi_q(z)=a^\lambda \mathbf{1}_C(z-\lambda b)\phi_q(z-\lambda b) \quad\text{for a.e. }z\in\R^n,
}
and
\eq{
\mathbf{1}_B(z)\phi_q(z)=a^{-(1-\lambda)}\mathbf{1}_C(z+(1-\lambda)b)\phi_q(z+(1-\lambda)b) \quad\text{for a.e. }z\in\R^n,
}
where $C=(1-\lambda)A+\lambda B$. Therefore, up to null sets,
\eq{
A=C+\lambda b,\quad B=C-(1-\lambda)b.
}
In particular,
\eq{
\label{eq:psi-equality-dubuc-short} 
\phi_q(z)=a^\lambda\phi_q(z-\lambda b)\quad\text{for a.e. }z\in A.
}
Since $A$ is a closed convex set with non-empty interior and both sides of \eqref{eq:psi-equality-dubuc-short} are real analytic, \eqref{eq:psi-equality-dubuc-short} holds on $\operatorname{int}(A)$ and we may differentiate in $\operatorname{int}(A)$.

Next, we compute
\eq{
\partial_i\log\phi_q(z)=1+(q-n)p_i(z),\quad p_i(z):=\frac{e^{2z_i}}{\sum_{j=1}^n e^{2z_j}}.
}
Since $q\neq n$, differentiating \eqref{eq:psi-equality-dubuc-short}, we find
\eq{
p_i(z)=p_i(z-\lambda b) \quad\text{for all }i=1,\dots,n,\ \text{and all }z\in\operatorname{int}(A).
}
Hence, for all distinct $i,j$,
\eq{
\frac{p_i(z-\lambda b)}{p_j(z-\lambda b)} =\frac{p_i(z)}{p_j(z)} \implies 1=e^{2\lambda(b_j-b_i)}.
}
Thus $b_i=b_j$ for all distinct $i,j$ as $\lambda\in (0,1)$. That is, $b=t(1,\dots,1)$ for some $t\in\R$.

Since $A$ and $B$ are closed convex sets and $B=A-b$ up to null sets, we have
\eq{
B=A-t(1,\dots,1).
}
If $x\in K_+$, then $\log x\in A$, and hence
\eq{
\log x-t(1,\dots,1)\in B.
}
Thus  $e^{-t}K_+\subset L_+$. Similarly, $L_+\subset e^{-t}K_+$. Hence $L_+=e^{-t}K_+$, and since $K$ and $L$ are unconditional, this implies $L=e^{-t}K$.
\end{proof}

\section{Brunn--Minkowski inequalities for \texorpdfstring{$q>n$}{q>n}}\label{section:q-greater-than-n}
Let us recall in the context of the dual quermassintegral:
\eq{
W(x)&=(n-q)\log\abs{x},\quad && d\mu(x)=e^{-W(x)}\, dx,\\
\mu(K)&=\int_K e^{-W(x)}\, dx,\quad && d\mu_{\partial K}=e^{-W}\,d\mathcal{H}^{n-1}\big|_{\partial K}.
}

We begin this section with the following discussion. To handle the singularity of $W$ at the origin, one may work throughout with a suitable regularization:
\eq{
W_{\varepsilon}(x)=\frac{1}{2}(n-q)\log(|x|^2+\varepsilon^2),\quad \varepsilon>0.
}

Let $\alpha=q-n$ and consider the elliptic operator
\eq{
L(\cdot)=\Delta_{\mathrm{euc}}(\cdot)-\ip{DW}{D(\cdot)} =\Delta_{\mathrm{euc}}(\cdot)+\alpha\langle \frac{x}{|x|^2}, D(\cdot) \rangle.
}

According to \cite{KM18,KM22} and \cite{KL21}, in order to establish the log-concavity of the functional $\widetilde{V}_q(\cdot)$ along the $L_1$-sum, it is sufficient to prove the non-negativity of
\eq{
I_q(u):=\int_K\left(\|D^2u\|^2+D^2W(Du,Du)\right)\,d\mu
}
for every solution $u$ of the Neumann problem
\eq{
Lu=C_0 \quad\text{in }K,
}
where $C_0$ is a constant satisfying the compatibility condition
\eq{
C_0=\frac{\int_{\partial K}D_{\nu}u\,d\mu_{\partial K}}{\mu(K)}.
}

We begin by decomposing the Euclidean gradient $Du$ on $K\setminus\{0\}$ into its radial and tangential components (with respect to the origin):
\eq{
\partial_ru=\langle Du,\frac{x}{|x|}\rangle, \quad D_Tu=Du-\frac{x}{|x|}\partial_ru.
}
We calculate
\eq{
D^2W=-\frac{\alpha}{|x|^2}\left(I-2\frac{x\otimes x}{|x|^2}\right),
}
and therefore
\eq{
D^2W(Du,Du)=-\alpha\frac{|D_Tu|^2}{|x|^2}+\alpha\frac{|\partial_r u|^2}{|x|^2}.
}
Substituting this identity into the definition of $I_q(u)$ yields
\eq{
\label{eq:Iq} 
I_q(u)=\int_K\left(\|D^2u\|^2 -\alpha\frac{|D_Tu|^2}{|x|^2}+\alpha\frac{|\partial_ru|^2}{|x|^2}\right)\,d\mu.
}
This shows that the cases $\alpha>0$ and $\alpha<0$ are structurally different. When $\alpha>0$ the tangential term provides the unfavorable contribution, while when $\alpha<0$ the tangential contribution becomes favorable and the radial term is the one that may decrease the integrand.

Now suppose that $q<n$, the convex body $K$ is origin-symmetric and that the test function $u$ is even. In this setting, the inequality from \cite[Thm. 4]{CER23} applies and asserts that
\eq{
\int_K |D\partial_iu|^2\,d\mu\ge (n-q)\int_K \frac{(\partial_iu)^2}{|x|^2}\,d\mu \quad\text{for each $i=1,\dots,n$}.
}
Summing these $n$ inequalities yields a lower bound for the norm of the Hessian term:
\eq{
\int_K \|D^2u\|^2\,d\mu\ge (n-q)\int_K \frac{|Du|^2}{|x|^2}\,d\mu.
}
Inserting this estimate into the expression for $I_q(u)$, we find
\eq{
I_q(u) &\ge (n-q)\int_K \frac{|Du|^2}{|x|^2}\,d\mu+(n-q)\int_K \frac{|D_Tu|^2}{|x|^2}\,d\mu+(q-n)\int_K \frac{|\partial_ru|^2}{|x|^2}\,d\mu \\ 
&=2(n-q)\int_K \frac{|D_Tu|^2}{|x|^2}\,d\mu \ge 0.
}

Next we briefly outline a second proof of \autoref{thm:unconditional-logBM-Vq} in the class of unconditional bodies for $q\in(0,n)$, which additionally provides a perspective on the difference between the cases $q<n$ and $q>n$.

If $K$ is unconditional and $u$ is unconditional, then, similarly to the proof of \cite[Thm. 8.3]{KM22}, one can show that
\eq{
\label{eq:diagonal-negative-weight-boundary} 
\int_K\left(\sum_{i=1}^n u_{ii}^2+\alpha\frac{|Du|^2}{|x|^2} \right)|x|^{\alpha}\, dx \ge \int_{\partial K}\frac{u_\nu^2}{h}|x|^{\alpha}\,d\mathcal{H}^{n-1}.
}
Now, since $\|D^2u\|^2=\sum_{i=1}^n u_{ii}^2+2\sum_{i<j}u_{ij}^2$, we have
\eq{
&\|D^2u\|^2-\alpha\frac{|D_Tu|^2}{|x|^2}+\alpha\frac{(\partial_r u)^2}{|x|^2}\\ 
&\quad =2\sum_{i<j}u_{ij}^2-2\alpha\frac{|D_Tu|^2}{|x|^2}+\left(\sum_{i=1}^n u_{ii}^2+\alpha\frac{|Du|^2}{|x|^2}\right).
}
In particular, when $\alpha\le 0$, we obtain
\eq{
\label{eq:weighted-unconditional-q-less-n} 
\int_K\left(\|D^2u\|^2-\alpha\frac{|D_Tu|^2}{|x|^2}+\alpha\frac{(\partial_r u)^2}{|x|^2}\right)|x|^{q-n}\, dx \ge \int_{\partial K}\frac{u_\nu^2}{h}|x|^{q-n}\,d\mathcal{H}^{n-1}.
}
Hence, $\widetilde{V}_q$ is log-concave with respect to $L_0$-sum for $q\in(0,n)$. In fact, by \autoref{prop:q-identity}, \eqref{eq:weighted-unconditional-q-less-n} is equivalent to
\eq{
q\int_{\Sn}(F-\bar{F})^2|X|^{q-n}\, dV_K \le \int_{\Sn} |\nabla F|_g^2|X|^{q-n}\, dV_K,\quad \bar{F}=\frac{\int_{\Sn}F|X|^{q-n}\, dV_K}{\int_{\Sn}|X|^{q-n}\, dV_K}.
}

However, for $\alpha=q-n>0$, in order to prove the non-negativity of the left-hand side of \eqref{eq:weighted-unconditional-q-less-n}, one needs to additionally control the tangential term appearing in the following expression with the off-diagonal terms of $D^2u$:
\eq{
\sum_{i<j}u_{ij}^2-\alpha\frac{|D_Tu|^2}{|x|^2}.
}

Now we prove the log-concavity of $\widetilde{V}_q$ with respect to $L_p$-sum for $q>n$ very close to $n$.

\begin{definition}
For $0<p\le 1$, define
\eq{
T_p:(0,\infty)^n\to (0,\infty)^n,\quad T_p(x)=\left(\frac{x_1^p}{p},\dots,\frac{x_n^p}{p}\right).
}
If $K$ is unconditional, set
\eq{
K_+=K\cap(0,\infty)^n,\quad A_p(K)=T_p(K_+).
}
\end{definition}

\begin{lemma}
\label{lem:power-image-convex}
Let $K$ be an unconditional convex body. Then $A_p(K)$ is convex for every $0< p\le 1$.
\end{lemma}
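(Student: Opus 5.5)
We have to show that $A_p(K)=T_p(K_+)$ is convex. The plan is to describe $K_+$ by its support inequalities and check that their images are convex.

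\emph{Step 1: describe $K_+$ by support inequalities.} Since $K$ is unconditional, \autoref{lem:unconditional-convexity} shows that $K\cap[0,\infty)^n$ is a down-closed convex set. That is, if $x\in K$ has nonnegative coordinates and $0\le y_i\le x_i$, then $y\in K$. For such a set, supporting halfspaces with nonnegative normals suffice:
\eq{
K\cap[0,\infty)^n=\Bigl\{x\in[0,\infty)^n:\ \ip{x}{u}\le h_K(u)\ \ \forall u\in \Sn\cap[0,\infty)^n\Bigr\}.
}
To justify this, take $x\ge 0$ with $x\notin K$ and separate it from $K$ by some $u$ with $\ip{x}{u}>h_K(u)$. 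Replace $u$ by $|u|=(|u_1|,\dots,|u_n|)$. Unconditionality gives $h_K(|u|)=h_K(u)$, and since $x\ge0$ we have $\ip{x}{|u|}\ge\ip{x}{u}$. So the separating normal may be taken nonnegative. Consequently $K_+$ is the intersection of $(0,\infty)^n$ with the sets $\{\sum_i u_ix_i\le h_K(u)\}$, $u\ge0$.

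\emph{Step 2: push each constraint through $T_p$.} Write $y=T_p(x)$, so $x_i=(py_i)^{1/p}$. The constraint with normal $u\ge 0$ becomes
\eq{
\sum_i u_i (py_i)^{1/p}\le h_K(u).
}
For $0<p\le1$ the function $y\mapsto \sum_i u_i(py_i)^{1/p}$ is convex on $(0,\infty)^n$, because $1/p\ge1$ and $u_i\ge0$. Its sublevel sets are therefore convex. Since $T_p$ is a bijection of $(0,\infty)^n$, the image $A_p(K)$ equals the intersection of $(0,\infty)^n$ with all these convex sublevel sets, and is therefore convex.

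\emph{Main obstacle.} The only real point is Step 1, the reduction to nonnegative normals. It is exactly where unconditionality, via \autoref{lem:unconditional-convexity} and $h_K(|u|)=h_K(u)$, is used. Once nonnegative normals suffice, the convexity of $t\mapsto t^{1/p}$ finishes the argument.

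An alternative is to argue directly. Take $y,y'\in A_p(K)$ and set $x=T_p^{-1}y$, $x'=T_p^{-1}y'$. Then $T_p^{-1}((1-\lambda)y+\lambda y')$ has coordinates $\bigl((1-\lambda)x_i^p+\lambda x_i'^p\bigr)^{1/p}$. By the power-mean inequality these are at most $(1-\lambda)x_i+\lambda x_i'$. The point $(1-\lambda)x+\lambda x'$ lies in $K_+$ by convexity, so by \autoref{lem:unconditional-convexity} the coordinatewise smaller point also lies in $K_+$. This two-line argument is what I would actually write.
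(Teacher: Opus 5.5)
Your proposal is correct. The direct argument you say you would actually write is exactly the paper's proof. The paper uses concavity of $z\mapsto z^p$, i.e.\ $\mathrm{M}_p\le\mathrm{M}_1$ in each coordinate, to see that $T_p^{-1}$ of a convex combination of points of $A_p(K)$ is coordinatewise dominated by $(1-\lambda)x+\lambda x'\in K_+$, and then concludes with \autoref{lem:unconditional-convexity}.

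Your Steps 1--2 are a valid dual route: describe $K_+$ by halfspaces with nonnegative normals, then use convexity of $t\mapsto t^{1/p}$. Step 1 needs only $h_K(|u|)=h_K(u)$ and $\ip{x}{|u|}\ge\ip{x}{u}$; the down-closedness from \autoref{lem:unconditional-convexity} is not actually used there.
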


\begin{proof}
The map $z\mapsto z^p$, $z\in (0,\infty)$, is concave. The claim then follows from \autoref{lem:unconditional-convexity}.
\end{proof}

\begin{lemma}
\label{lem:Lp-change of-coordinates-inclusion}
Let $0<p\le 1$ and $\lambda\in[0,1]$. Suppose $K,L\subset\R^n$ are unconditional convex bodies. Then
\eq{
T_p^{-1}\left((1-\lambda)A_p(K)+\lambda A_p(L)\right)\subset\left((1-\lambda)\cdot K+_p\lambda\cdot L\right)\cap(0,\infty)^n.
}
\end{lemma}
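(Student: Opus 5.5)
Take a point $z\in(1-\lambda)A_p(K)+\lambda A_p(L)$ and write $z=(1-\lambda)T_p(x)+\lambda T_p(y)$ with $x\in K_+$, $y\in L_+$. Then $x':=T_p^{-1}(z)$ has coordinates
\[
x'_i=\left((1-\lambda)x_i^p+\lambda y_i^p\right)^{1/p}>0 .
\]
Set $Q=(1-\lambda)\cdot K+_p\lambda\cdot L$. By the definition of $+_p$, the claim $x'\in Q$ reduces to showing, for every $u\in\Sn$,
\[
\langle x',u\rangle\le\left((1-\lambda)h_K(u)^p+\lambda h_L(u)^p\right)^{1/p}.
\]

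First reduce to $u$ with nonnegative coordinates. Since $x'_i>0$, replacing $u$ by $(|u_1|,\dots,|u_n|)$ does not decrease $\langle x',u\rangle$. Since $K$ and $L$ are unconditional, $h_K$ and $h_L$ are unchanged by this replacement. So it suffices to take $u_i\ge0$.

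For such $u$, the core estimate is
\[
\sum_i u_i\left((1-\lambda)x_i^p+\lambda y_i^p\right)^{1/p}\le\left((1-\lambda)\langle x,u\rangle^p+\lambda\langle y,u\rangle^p\right)^{1/p}.
\]
Write $a_i=(1-\lambda)^{1/p}u_ix_i$ and $b_i=\lambda^{1/p}u_iy_i$. The left side is $\sum_i(a_i^p+b_i^p)^{1/p}$ and the right side is $\left((\sum_i a_i)^p+(\sum_i b_i)^p\right)^{1/p}$. The inequality is then the reverse Minkowski inequality for the $\ell^{1/p}$-"norm" with $1/p\ge 1$. Equivalently, $(s,t)\mapsto(s^p+t^p)^{1/p}$ is concave and $1$-homogeneous on $[0,\infty)^2$ for $0<p\le1$, hence superadditive, and summing over $i$ gives the estimate. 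This is the only step needing care; I would state the superadditivity explicitly. It follows from concavity plus homogeneity: $f(v+w)=2f\left(\tfrac{v+w}{2}\right)\ge f(v)+f(w)$.

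To finish, $\langle x,u\rangle\le h_K(u)$ and $\langle y,u\rangle\le h_L(u)$, both nonnegative since $u_i\ge0$ and $x,y$ lie in the positive orthant. Monotonicity of $(s,t)\mapsto\left((1-\lambda)s^p+\lambda t^p\right)^{1/p}$ on $[0,\infty)^2$ then gives the bound with $h_K,h_L$. Hence $x'\in Q\cap(0,\infty)^n$.

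The main obstacle is only the reverse-Minkowski step and the sign reduction using unconditionality. The case $\lambda\in\{0,1\}$ is trivial.
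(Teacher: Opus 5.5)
Your proof is correct and follows the paper's argument. The paper likewise uses that $\mathrm{M}_p(s,t)=((1-\lambda)s^p+\lambda t^p)^{1/p}$ is concave and one-homogeneous to get $\sum_i v_i\mathrm{M}_p(x_i,y_i)\le \mathrm{M}_p(\langle x,v\rangle,\langle y,v\rangle)$ for $v\in[0,\infty)^n$. It then bounds by $h_K,h_L$ and reduces general $v$ to $(|v_1|,\dots,|v_n|)$ via unconditionality; absorbing the weights into $a_i,b_i$, as you do, is only cosmetic.

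One labeling slip: the key estimate is the \emph{reverse} Minkowski inequality for the $\ell^p$ quasi-norm on $\R^2$, equivalently the ordinary Minkowski inequality in $\ell^{1/p}$ after substituting $a_i^p,b_i^p$, rather than a reversed inequality for $\ell^{1/p}$. The concavity-plus-homogeneity justification you actually rely on is right.
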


\begin{proof}
The proof is similar to the proof of the $p=0$ case in \cite{Sar15}; see also \cite{BL95}. Let
\eq{
z\in T_p^{-1}\left((1-\lambda)A_p(K)+\lambda A_p(L)\right).
}
Then there exist $x\in K_+$ and $y\in L_+$ such that
\eq{
z_i^p=(1-\lambda)x_i^p+\lambda y_i^p.
}
Note that $z_i=\mathrm{M}_p(x_i,y_i)$, where
\eq{
\mathrm{M}_p(a,b):=\left((1-\lambda)a^p+\lambda b^p\right)^{\frac{1}{p}}.
}
Since $\mathrm{M}_p$ is concave and one-homogeneous on $(0,\infty)^2$, for $v\in [0,\infty)^n$:
\eq{
\sum_{i=1}^n v_i\mathrm{M}_p(x_i,y_i)\le \mathrm{M}_p\left(\sum_{i=1}^n v_ix_i,\sum_{i=1}^n v_iy_i\right).
}
Therefore
\eq{
\langle z,v\rangle\le\left((1-\lambda)\langle x,v\rangle^p+\lambda\langle y,v\rangle^p\right)^{\frac{1}{p}}\le\left((1-\lambda)h_K(v)^p+\lambda h_L(v)^p\right)^{\frac{1}{p}}.
}

For general $v\in\R^n$, if we set $v_+=(|v_1|,\ldots,|v_n|)$, then
\eq{
\langle z,v\rangle\le \langle z,v_+\rangle,\quad h_K(v)=h_K(v_+),\quad h_L(v)=h_L(v_+).
}
Thus $z$ belongs to the $L_p$-sum of $K$ and $L$.
\end{proof}

\begin{lemma}\label{lem:log-sum-power-hessian}
Let $r\ge2$ and $\Phi(y)=\log\left(\sum_{i=1}^n y_i^r\right)$ on $(0,\infty)^n$. Then, for every $\xi\in\R^n$,
\eq{
D^2\Phi(y)[\xi,\xi]\le \frac{r(r-1)}{2}\sum_{i=1}^n\frac{\xi_i^2}{y_i^2}.
}
\end{lemma}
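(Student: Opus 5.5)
We compute the Hessian of $\Phi$ directly and reduce the inequality to an elementary bound on the variance of a discrete probability distribution. Set $S=\sum_{j}y_j^r$, define the probability weights $a_i=y_i^r/S\in(0,1]$ with $\sum_i a_i=1$, and set $t_i=\xi_i/y_i$.

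\emph{Step 1: the Hessian.} Differentiating $\log S$ gives $\partial_i\Phi=r y_i^{r-1}/S$ and
\[
D^2\Phi(y)[\xi,\xi]=\frac{r(r-1)}{S}\sum_{i=1}^n y_i^{r-2}\xi_i^2-\Big(\frac{r}{S}\sum_{i=1}^n y_i^{r-1}\xi_i\Big)^2 .
\]
In terms of $a_i$ and $t_i$ this reads
\[
D^2\Phi(y)[\xi,\xi]=r(r-1)\sum_i a_it_i^2-r^2\Big(\sum_i a_it_i\Big)^2 .
\]

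\emph{Step 2: split off the variance.} Let $m=\sum_i a_it_i$ and $\operatorname{Var}_a(t)=\sum_i a_it_i^2-m^2$. Then
\[
D^2\Phi(y)[\xi,\xi]=r(r-1)\operatorname{Var}_a(t)+\big(r(r-1)-r^2\big)m^2=r(r-1)\operatorname{Var}_a(t)-r\,m^2\le r(r-1)\operatorname{Var}_a(t),
\]
since $r>0$. (The hypothesis $r\ge2$ only guarantees $r(r-1)>0$; any $r>1$ would do here.) It therefore suffices to show
\[
\operatorname{Var}_a(t)\le\tfrac12\sum_i t_i^2 .
\]

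\emph{Step 3: the variance bound.} This is the only step with any content. We use the pairwise representation
\[
\operatorname{Var}_a(t)=\tfrac12\sum_{i\ne j}a_ia_j(t_i-t_j)^2 .
\]
Applying $(t_i-t_j)^2\le 2(t_i^2+t_j^2)$ and symmetrizing in $i,j$ gives
\[
\operatorname{Var}_a(t)\le\sum_{i\ne j}a_ia_j(t_i^2+t_j^2)=2\sum_i a_i(1-a_i)t_i^2 .
\]
Since $a_i(1-a_i)\le\frac14$, this yields $\operatorname{Var}_a(t)\le\frac12\sum_i t_i^2$.

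Inserting this into Step 2 gives $D^2\Phi(y)[\xi,\xi]\le\frac{r(r-1)}{2}\sum_i\xi_i^2/y_i^2$. The constant is sharp: equality holds for $n=2$, $a=(\frac12,\frac12)$ and $t=(1,-1)$.
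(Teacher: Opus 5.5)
Your proof is correct and follows the paper's argument essentially line by line. It uses the same Hessian expression in terms of the weights $y_i^r/S$ and the ratios $\xi_i/y_i$, drops the same nonpositive term $-r\,m^2$, and bounds the variance in the same pairwise way via $(t_i-t_j)^2\le 2(t_i^2+t_j^2)$ and $a_i(1-a_i)\le\frac14$; your side remarks (that $r>1$ suffices and that the constant is attained for $n=2$, $y_1=y_2$, $\xi=(y_1,-y_1)$) are also correct.
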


\begin{proof}
Let us put $S=\sum_{i=1}^n y_i^r$, $\theta_i=y_i^r/S$, and $a_i=\xi_i/y_i$. We calculate
\eq{
\partial_i\Phi=\frac{r y_i^{r-1}}{S},\quad
\partial_{ij}^2\Phi=\frac{r(r-1)y_i^{r-2}}{S}\delta_{ij}-\frac{r^2y_i^{r-1}y_j^{r-1}}{S^2}.
}
Therefore
\eq{
D^2\Phi(y)[\xi,\xi]&=r(r-1)\sum_{i=1}^n\theta_i a_i^2-r^2\left(\sum_{i=1}^n\theta_i a_i\right)^2\\
&\le r(r-1)\left[\sum_{i=1}^n\theta_i a_i^2-\left(\sum_{i=1}^n\theta_i a_i\right)^2\right].
}
Now, the claim follows from
\eq{
\sum_{i=1}^n\theta_i a_i^2-\left(\sum_{i=1}^n\theta_i a_i\right)^2&=\sum_{1\le i<j\le n}\theta_i\theta_j(a_i-a_j)^2\\
&\le 2\sum_{1\le i<j\le n}\theta_i\theta_j(a_i^2+a_j^2)\\
&=2\sum_{i=1}^n\theta_i(1-\theta_i)a_i^2\le \frac{1}{2}\sum_{i=1}^n a_i^2.
}
\end{proof}

\begin{lemma}
\label{lem:transformed-density-log-concave}
Let $\alpha>0$ and $0< p\le 1$. Under $y=T_p(x)$, the measure $|x|^\alpha dx$ becomes
\eq{
\rho_{p,\alpha}(y)dy,\quad \rho_{p,\alpha}(y):=p^{\frac{n+\alpha}{p}-n}\left(\sum_{i=1}^n y_i^{\frac{2}{p}}\right)^{\frac{\alpha}{2}}\prod_{i=1}^n y_i^{\frac{1}{p}-1}.
}
The density $\rho_{p,\alpha}$ is log-concave whenever $
\alpha\le \frac{2p(1-p)}{2-p}$.
\end{lemma}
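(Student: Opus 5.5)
The plan is to compute the pushforward density directly, take the logarithm, and bound its Hessian using \autoref{lem:log-sum-power-hessian} with exponent $r=2/p$.

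\textbf{Change of variables.} Inverting $y=T_p(x)$ gives $x_i=(py_i)^{1/p}$. The Jacobian is diagonal with entries $\partial x_i/\partial y_i=(py_i)^{\frac1p-1}$, so
\[
dx=\prod_{i=1}^n (py_i)^{\frac1p-1}\,dy .
\]
For the weight, $x_i^2=(py_i)^{2/p}$, hence
\[
|x|^\alpha=\Big(\sum_{i=1}^n (py_i)^{2/p}\Big)^{\alpha/2}=p^{\alpha/p}\Big(\sum_{i=1}^n y_i^{2/p}\Big)^{\alpha/2}.
\]
Collecting powers of $p$ gives $p^{\alpha/p+n(\frac1p-1)}=p^{\frac{n+\alpha}{p}-n}$. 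This is exactly the formula for $\rho_{p,\alpha}$, so the first claim is pure bookkeeping.

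\textbf{Log-concavity.} Up to an additive constant,
\[
\log\rho_{p,\alpha}(y)=\frac{\alpha}{2}\,\Phi(y)+\Big(\frac1p-1\Big)\sum_{i=1}^n\log y_i,\qquad \Phi(y)=\log\Big(\sum_{i=1}^n y_i^{r}\Big),\quad r=\frac2p\ge 2 .
\]
The second term has Hessian $-\big(\frac1p-1\big)\sum_i \xi_i^2/y_i^2$. For the first term, \autoref{lem:log-sum-power-hessian} applies since $r\ge 2$, and gives
\[
D^2\Phi(y)[\xi,\xi]\le \frac{r(r-1)}{2}\sum_{i=1}^n\frac{\xi_i^2}{y_i^2},\qquad \frac{r(r-1)}{2}=\frac{2-p}{p^2}.
\]
Because $\alpha>0$, this upper bound is preserved after multiplying by $\alpha/2$. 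Therefore
\[
D^2\log\rho_{p,\alpha}(y)[\xi,\xi]\le\Big(\frac{\alpha(2-p)}{2p^2}-\frac{1-p}{p}\Big)\sum_{i=1}^n\frac{\xi_i^2}{y_i^2}.
\]
The coefficient is nonpositive exactly when $\alpha\le \frac{2p(1-p)}{2-p}$. Under this condition the Hessian of $\log\rho_{p,\alpha}$ is negative semidefinite on the convex domain $(0,\infty)^n$, so $\rho_{p,\alpha}$ is log-concave there.

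\textbf{Remarks.} There is no serious obstacle: the heavy lifting is the Hessian bound already proved in \autoref{lem:log-sum-power-hessian}. The only points to watch are using $\alpha>0$ so the inequality direction survives scaling, and matching the constant $\frac{r(r-1)}{2}$ correctly with $r=2/p$. The endpoint $p=1$ is vacuous, since then the condition forces $\alpha\le 0$, which is excluded.
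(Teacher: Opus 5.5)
Your proposal is correct and follows the paper's proof essentially step for step. The paper also computes the pushforward density via $x_i=(py_i)^{1/p}$. It then applies \autoref{lem:log-sum-power-hessian} with $r=2/p$ to get the bound $\frac{2-p}{p^2}\sum_i \xi_i^2/y_i^2$ and reads off the same coefficient $\frac{\alpha(2-p)}{2p^2}-\frac{1-p}{p}$.
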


\begin{proof}
Since $x_i=(py_i)^{\frac{1}{p}}$ and $dx_i=p^{\frac{1}{p}-1}y_i^{\frac{1}{p}-1}dy_i$,
\eq{
|x|^\alpha dx=p^{\frac{n+\alpha}{p}-n}\left(\sum_{i=1}^n y_i^{\frac{2}{p}}\right)^{\frac{\alpha}{2}}\prod_{i=1}^n y_i^{\frac{1}{p}-1}dy.
}

Due to \autoref{lem:log-sum-power-hessian},
\eq{
D^2\log\left(\sum_i y_i^{2/p}\right)[\xi,\xi]\le \frac{2-p}{p^2}\sum_i\frac{\xi_i^2}{y_i^2}.
}
Hence
\eq{
D^2\log\rho_{p,\alpha}(y)[\xi,\xi]\le \left(\frac{\alpha(2-p)}{2p^2}-\frac{1-p}{p}\right)\sum_i\frac{\xi_i^2}{y_i^2}.
}
From this the claim follows.
\end{proof}

Let $\alpha_*=6-4\sqrt{2}$, which is the maximum value of $\frac{2p(1-p)}{2-p}$ on $[0,1]$. For $0<\alpha\le \alpha_*$ define
\eq{
p_-(\alpha)=\frac{2+\alpha-\sqrt{\alpha^2-12\alpha+4}}{4},\quad p_+(\alpha)=\frac{2+\alpha+\sqrt{\alpha^2-12\alpha+4}}{4}.
}
Then
\eq{
\alpha\le \frac{2p(1-p)}{2-p}\quad \text{for } p\in[p_-(\alpha),p_+(\alpha)].
}
Moreover, $p_-(\alpha)=\alpha+\frac{1}{2}\alpha^2+O(\alpha^3)$ as $\alpha\to 0$ and $p_-(\alpha_*)=p_+(\alpha_*)=2-\sqrt{2}$.

\begin{theorem}
\label{thm:power-coordinate-PL}
Let $\mu_\alpha(K)=\int_K |x|^\alpha dx$. Let $K,L$ be unconditional convex bodies, and let $\lambda\in(0,1)$. If $0<\alpha\le\alpha_*$ and $p\in[p_-(\alpha),p_+(\alpha)]$, then
\eq{
\mu_\alpha\left((1-\lambda)\cdot K+_p\lambda\cdot L\right)\ge\mu_\alpha(K)^{1-\lambda}\mu_\alpha(L)^\lambda.
}
Moreover, equality holds if and only if $K=L$.
\end{theorem}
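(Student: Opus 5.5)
The plan is to reduce the statement to the Prékopa--Leindler inequality in the power coordinates $y=T_p(x)$, using \autoref{lem:power-image-convex}, \autoref{lem:Lp-change of-coordinates-inclusion} and \autoref{lem:transformed-density-log-concave}. Set $A=A_p(K)$, $B=A_p(L)$, which are convex by \autoref{lem:power-image-convex}, and $Q=(1-\lambda)\cdot K+_p\lambda\cdot L$. By \autoref{lem:transformed-density-log-concave}, $T_p$ pushes $|x|^\alpha dx$ on $(0,\infty)^n$ forward to $\rho_{p,\alpha}(y)\,dy$. The hypothesis $p\in[p_-(\alpha),p_+(\alpha)]$ means exactly $\alpha\le \frac{2p(1-p)}{2-p}$, so $\rho_{p,\alpha}$ is log-concave on $(0,\infty)^n$.

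Apply Prékopa--Leindler to $f=\mathbf{1}_A\rho_{p,\alpha}$, $g=\mathbf{1}_B\rho_{p,\alpha}$ and $h=\mathbf{1}_{(1-\lambda)A+\lambda B}\rho_{p,\alpha}$. The required pointwise inequality $h((1-\lambda)y+\lambda y')\ge f(y)^{1-\lambda}g(y')^\lambda$ follows from the log-concavity of $\rho_{p,\alpha}$. This gives
\[
\int_{(1-\lambda)A+\lambda B}\rho_{p,\alpha}\ge \Big(\int_A\rho_{p,\alpha}\Big)^{1-\lambda}\Big(\int_B\rho_{p,\alpha}\Big)^{\lambda}=\mu_\alpha(K_+)^{1-\lambda}\mu_\alpha(L_+)^\lambda.
\]
Changing variables back, the left side equals the $|x|^\alpha dx$-measure of $T_p^{-1}((1-\lambda)A+\lambda B)$. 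By \autoref{lem:Lp-change of-coordinates-inclusion} this set lies in $Q\cap(0,\infty)^n$, so the left side is at most $\mu_\alpha(Q_+)$. By unconditionality each body's measure is $2^n$ times its positive-orthant part, since coordinate hyperplanes are null. The inequality follows.

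For equality, suppose equality holds. Then Dubuc's characterization \cite[Thm. 12]{Dub77} applies, as in the proof of \autoref{thm:unconditional-logBM-Vq}. It gives $a>0$ and $b\in\R^n$ with $A=C+\lambda b$ and $B=C-(1-\lambda)b$ up to null sets, where $C=(1-\lambda)A+\lambda B$, together with $\rho_{p,\alpha}(y)=a^\lambda\rho_{p,\alpha}(y-\lambda b)$ on $\operatorname{int}A$.

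Here the argument must differ from the log case, and this is the main obstacle. Translations in $y$-space do not correspond to dilations of $K$, so the goal is to force $b=0$, which gives $A=B$ and hence $K=L$. I would first try differentiating $\log\rho_{p,\alpha}(y)=\log\rho_{p,\alpha}(y-\lambda b)+\text{const}$. The term $(\frac1p-1)\sum_i\log y_i$ has gradient $(\frac1p-1)/y_i$, which is not translation invariant in $y_i$ when $b_i\ne0$. The other term $\frac{\alpha}{2}\log\sum_i y_i^{2/p}$ would have to compensate on an open set. Real analyticity lets me pass to a one-variable identity along a line in direction $E_i$ and compare asymptotics, or compare Hessians: they must agree, and the Hessian of $\log\rho_{p,\alpha}$ is strictly negative by the strict form of the bound in \autoref{lem:transformed-density-log-concave}.

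A cleaner route may be the following. $\log\rho_{p,\alpha}$ is strictly concave, so a strictly concave function differing from its translate by a constant on an open convex set must have zero translation. Indeed, $\langle\nabla\log\rho(y)-\nabla\log\rho(y-\lambda b),\lambda b\rangle<0$ if $b\neq0$, while the gradients are equal there, so $b=0$. Strict concavity holds at the endpoint $\alpha=\frac{2p(1-p)}{2-p}$ only if the estimate in \autoref{lem:log-sum-power-hessian} is strict. It is, since $\theta_i(1-\theta_i)<\frac14$ unless $\theta_i=\frac12$, and the Cauchy-type step is also strict in general; I would check this at the boundary case carefully. Conversely, $K=L$ trivially gives equality.
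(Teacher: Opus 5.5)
Your proof of the inequality is the same as the paper's. Both use convexity of $A=T_p(K_+)$ and $B=T_p(L_+)$, log-concavity of $\rho_{p,\alpha}$, Pr\'ekopa--Leindler, pulling back by $T_p$, the inclusion of \autoref{lem:Lp-change of-coordinates-inclusion}, and summing over orthants.

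For the equality case you take a genuinely different route.

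\textbf{The paper's argument.} It never uses the density relation from Dubuc's theorem. From $B=A-b$ up to null sets it gets $\bar B=\bar A-b$. Both $\bar A=T_p(K\cap[0,\infty)^n)$ and $\bar B$ lie in $[0,\infty)^n$ and both contain $T_p(0)=0$. Hence $b\in\bar A\subset[0,\infty)^n$ and $-b\in\bar B\subset[0,\infty)^n$, so $b=0$ at once. Then $\bar A=\bar B$, and since $T_p$ is a homeomorphism of $[0,\infty)^n$, unconditionality gives $K=L$. This closure step is also what makes your "$A=B$ hence $K=L$" rigorous.

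\textbf{Your argument.} You use $\nabla\log\rho_{p,\alpha}(y)=\nabla\log\rho_{p,\alpha}(y-\lambda b)$ on $\operatorname{int}A$, together with strict concavity of $\log\rho_{p,\alpha}$ along the segment from $y-\lambda b\in C\subset(0,\infty)^n$ to $y$. This is valid, but your justification of strict concavity is not accurate at the endpoint $\alpha=\frac{2p(1-p)}{2-p}$ when $n=2$. Take $y_1=y_2$ and $\xi=(t,-t)$. Then $\theta_1=\theta_2=\frac12$, $a_1=-a_2$ and $\sum_i\theta_ia_i=0$, so every inequality in the proof of \autoref{lem:log-sum-power-hessian} is an equality. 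Consequently $D^2\log\rho_{p,\alpha}[\xi,\xi]=0$ there, and the estimate is not strict.

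Strict concavity nevertheless holds. For $n\ge3$ the Hessian is negative definite, since $a_i=-a_j$ for all $i\ne j$ forces $a=0$. For $n=2$, any segment meets the degenerate configuration (diagonal point with direction $(1,-1)$) at most once, so the second derivative along the segment is negative except at isolated points. With this added, your route goes through.

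\textbf{Comparison.} The paper's origin argument is a one-liner and does not depend on the fine structure of $\rho_{p,\alpha}$. Yours costs this extra Hessian analysis. In exchange, it would still work in settings where $A$ and $B$ are not pinned to a common boundary point.
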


\begin{proof}
Let $K_+=K\cap(0,\infty)^n$ and $L_+=L\cap(0,\infty)^n$. Set $A=T_p(K_+)$ and $B=T_p(L_+)$. By \autoref{lem:power-image-convex}, $A$ and $B$ are convex. By \autoref{lem:transformed-density-log-concave}, the density $\rho_{p,\alpha}$ is log-concave. Hence, by the Pr\'ekopa--Leindler inequality,
\eq{
\int_{(1-\lambda)A+\lambda B}\rho_{p,\alpha}(y)\, dy\ge \left(\int_A\rho_{p,\alpha}(y)\, dy\right)^{1-\lambda}\left(\int_B\rho_{p,\alpha}(y)\, dy\right)^\lambda.
}
Pulling back by $T_p$,
\eq{
\int_{T_p^{-1}((1-\lambda)A+\lambda B)} |x|^\alpha dx\ge \left(\int_{K_+}|x|^\alpha dx\right)^{1-\lambda}\left(\int_{L_+}|x|^\alpha dx\right)^\lambda.
}

Due to \autoref{lem:Lp-change of-coordinates-inclusion}, we have
\eq{
T_p^{-1}((1-\lambda)A+\lambda B)\subset \left((1-\lambda)\cdot K+_p\lambda\cdot L\right)\cap(0,\infty)^n.
}
Therefore
\eq{
\int_{\left((1-\lambda)\cdot K+_p\lambda\cdot L\right)\cap(0,\infty)^n}|x|^\alpha dx\ge \left(\int_{K_+}|x|^\alpha dx\right)^{1-\lambda}\left(\int_{L_+}|x|^\alpha dx\right)^\lambda.
}
Since $K$, $L$, and $(1-\lambda)\cdot K+_p\lambda\cdot L$ are unconditional, and since $|x|^\alpha$ is unconditional, all orthants contribute equally. Multiplying by $2^n$ proves the inequality.

By Dubuc's equality characterization, up to null sets, 
\eq{
A=C+\lambda b,\quad B=C-(1-\lambda)b
}
for some $b\in\R^n$. Therefore, $B=A-b$ up to null sets. Since $A$ and $B$ are convex sets, we have $\bar{B}=\bar{A}-b$. Since $A$ and $B$ are contained in $(0,\infty)^n$ and their closures both contain the origin, we must have $b=0$. Moreover, $T_p$ extends continuously to a homeomorphism of $[0,\infty)^n$ onto itself, and thus
\eq{
\bar{A}=T_p\left(K\cap[0,\infty)^n\right)=T_p\left(L\cap[0,\infty)^n\right)=\bar{B}\implies K=L.
} 
\end{proof}

\begin{proof}[Proof of \autoref{thm:Lp-homogeneous-BM}]
Set $a=\widetilde{V}_q(K)^{\frac{p}{q}}$, $b=\widetilde{V}_q(L)^{\frac{p}{q}}$, and define
\eq{
K_0=a^{-\frac{1}{p}}K, \quad L_0=b^{-\frac{1}{p}}L.
}
Since $\widetilde{V}_q$ is $q$-homogeneous, we have
\eq{
\widetilde{V}_q(K_0)=a^{-\frac{q}{p}}\widetilde{V}_q(K)=1,\quad \widetilde{V}_q(L_0)=b^{-\frac{q}{p}}\widetilde{V}_q(L)=1.
}

Let
\eq{
1-\lambda=\frac{a}{a+b},
\quad
\lambda=\frac{b}{a+b}.
}
By the definition of the $L_p$-sum,
\eq{
(1-\lambda)\cdot K_0+_p\lambda\cdot L_0=(a+b)^{-\frac{1}{p}}(K+_pL).
}
Indeed, we have
\eq{
\left((1-\lambda)h_{K_0}^p+\lambda h_{L_0}^p\right)^{\frac{1}{p}}=\left(\frac{1}{a+b}h_K^p+\frac{1}{a+b}h_L^p\right)^{\frac{1}{p}}=(a+b)^{-\frac{1}{p}}(h_K^p+h_L^p)^{\frac{1}{p}}.
}

Applying \autoref{thm:power-coordinate-PL} to $K_0$ and $L_0$,
\eq{
\widetilde{V}_q\left((1-\lambda)\cdot K_0+_p\lambda\cdot L_0\right)\ge \widetilde{V}_q(K_0)^{1-\lambda}\widetilde{V}_q(L_0)^\lambda=1.
}
Using the identity above and $q$-homogeneity again, we obtain
\eq{
(a+b)^{-\frac{q}{p}}\widetilde{V}_q(K+_pL)\ge1.
}
Therefore
\eq{
\widetilde{V}_q(K+_pL)^{\frac{p}{q}}\ge a+b=\widetilde{V}_q(K)^{\frac{p}{q}}+\widetilde{V}_q(L)^{\frac{p}{q}}.
}

Now assume equality holds. Then
\eq{
\widetilde{V}_q\left((1-\lambda)\cdot K_0+_p\lambda\cdot L_0\right)=1.
}
By the equality case in \autoref{thm:power-coordinate-PL}, this implies
$K_0=L_0$. Hence $K$ and $L$ are dilates.
\end{proof}

For the range $q\in(n,n+3-2\sqrt{2}]$, we give an alternative proof, based on the Reilly approach, of the log-concavity of $\widetilde{V}_q$ with respect to the $L_1$-sum.

\begin{lemma}
\label{lem:1d-inequality}
Let $\alpha\in(-1,1)$ and $c>0$. If $f\in C^1([0,c])$ satisfies $f(0)=0$, then
\eq{
\int_0^c |f'(s)|^2 s^\alpha\,ds \ge \frac{(1-\alpha)^2}{4}\int_0^c f(s)^2 s^{\alpha-2}\,ds.
}
\end{lemma}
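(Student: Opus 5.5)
This is a weighted one-dimensional Hardy inequality. The plan is a ground-state substitution, or equivalently completing a square with the test function $s^{\beta}$, where $\beta=\frac{1-\alpha}{2}>0$.

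First assume $f\in C^1([0,c])$ with $f(0)=0$. Near $0$ we have $|f(s)|\le Cs$. Hence $f(s)^2s^{\alpha-2}\le C^2 s^{\alpha}$, which is integrable since $\alpha>-1$. So both sides are finite.

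Write $f(s)=s^{\beta}v(s)$ on $(0,c]$, so that $f'=\beta s^{\beta-1}v+s^\beta v'$. Expanding the square, and using $2\beta-2+\alpha=-1$ and $2\beta+\alpha=1$, gives
\[
|f'|^2s^\alpha=\beta^2 v^2 s^{-1}+2\beta v v'+ s\,(v')^2 .
\]
Since $\beta^2 v^2 s^{-1}=\beta^2 f^2 s^{\alpha-2}$ and $2\beta vv'=\beta (v^2)'$, integrating over $[\delta,c]$ yields
\[
\int_\delta^c|f'|^2s^\alpha\,ds\ge \beta^2\int_\delta^c f^2s^{\alpha-2}\,ds+\beta\bigl(v(c)^2-v(\delta)^2\bigr).
\]

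The main point is the boundary terms. The term $\beta v(c)^2\ge0$ can be dropped. For the other, $v(\delta)^2=f(\delta)^2\delta^{-2\beta}\le C^2\delta^{2-2\beta}=C^2\delta^{1+\alpha}\to0$ as $\delta\to0$, again because $\alpha>-1$. Letting $\delta\to0$ by monotone convergence gives the inequality with constant $\beta^2=\frac{(1-\alpha)^2}{4}$.

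Alternatively, one can avoid the substitution by expanding $0\le\int_\delta^c (f'-\gamma f/s)^2s^\alpha\,ds$ with $\gamma=-\beta$. Integrating the cross term $-2\gamma\int ff's^{\alpha-1}$ by parts produces $\gamma(1-\alpha)\int f^2s^{\alpha-2}$ plus boundary terms of the same sign structure. Optimizing in $\gamma$ gives the same constant. The hypothesis $\alpha<1$ ensures $\beta>0$, so the dropped term at $s=c$ has the favorable sign.
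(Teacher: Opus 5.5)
Your ground-state substitution is correct and is the paper's argument in different notation: the term $s(v')^2$ you discard equals $s^\alpha\bigl(f'-\beta f/s\bigr)^2$, which is exactly the square the paper expands, and your boundary term $\beta v(c)^2=\beta c^{\alpha-1}f(c)^2$ is the one the paper keeps. Your alternative paragraph has a sign slip, however: integrating the cross term $-2\gamma\int ff's^{\alpha-1}$ by parts produces $-\gamma(1-\alpha)\int f^2s^{\alpha-2}$, so the correct choice is $\gamma=+\beta$ (as in the paper), whereas $\gamma=-\beta$ as written gives a vacuous inequality.
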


\begin{proof} The proof is similar to that of \cite[Lem. 8]{CER23}, with $v=0$. Since
\eq{
\int_0^c \left|f'(s)-\frac{1-\alpha}{2}\frac{f(s)}{s}\right|^2 s^\alpha\,ds \ge 0,
}
integrating by parts gives
\eq{
\int_0^c |f'(s)|^2 s^\alpha\,ds \ge \frac{1-\alpha}{2}c^{\alpha-1}f(c)^2+\frac{(1-\alpha)^2}{4}\int_0^c f(s)^2 s^{\alpha-2}\,ds.
}
\end{proof}

\begin{corollary}\label{corollary:odd-weighted-slice}
Let $K$ be an unconditional convex body, $\alpha\in\left(0,1\right)$, and let $u\in C^1\left(K\right)$ be odd in the $i$-th coordinate. Then for every $\delta\ge 0$:
\eq{
\label{eq:odd-weighted-slice} 
\int_K \left|\partial_i u\right|^2 \left(\left|x\right|^2+\delta^2\right)^{\frac{\alpha}{2}}\, dx \ge \frac{\left(1-\alpha\right)^2}{4}\int_K u^2 \left(\left|x\right|^2+\delta^2\right)^{\frac{\alpha-2}{2}}\, dx.
}
\end{corollary}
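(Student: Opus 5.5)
The plan is to reduce \eqref{eq:odd-weighted-slice} to a one-dimensional weighted Hardy inequality along lines parallel to $E_i$, using Fubini. One cannot quote \autoref{lem:1d-inequality} directly, because the weight is not a pure power of $s$. The proof of that lemma does adapt, provided we keep the same multiplier $f/s$.

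\emph{Slicing.} Write $x=(x',s)$ with $s=x_i$ and $x'\in\R^{n-1}$ the remaining coordinates. Since $K$ is unconditional and convex, \autoref{lem:unconditional-convexity} shows that every nonempty slice $\{s:(x',s)\in K\}$ is a symmetric interval $[-c,c]$ with $c=c(x')$. Since $u$ is odd in $x_i$, the function $f(s)=u(x',s)$ satisfies $f(0)=0$ and $f\in C^1([0,c])$. Also $f'(s)=\partial_iu(x',s)$, and the weight $(|x|^2+\delta^2)^{\beta}$ is even in $s$. Hence it suffices to prove, for every $b\ge 0$ with $b=(|x'|^2+\delta^2)^{1/2}$, that
\begin{equation}
\int_0^c |f'(s)|^2 (s^2+b^2)^{\frac{\alpha}{2}}\,ds\ \ge\ \frac{(1-\alpha)^2}{4}\int_0^c f(s)^2 (s^2+b^2)^{\frac{\alpha-2}{2}}\,ds .
\end{equation}
Doubling for $s\in[-c,0]$ and integrating over $x'$ then gives the corollary.

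\emph{One-dimensional estimate.} Put $w=(s^2+b^2)^{\alpha/2}$ and $k=\frac{1-\alpha}{2}\in(0,\tfrac12)$. As in \autoref{lem:1d-inequality}, start from
\[
\int_0^c\Bigl|f'-k\frac{f}{s}\Bigr|^2w\,ds\ \ge\ 0 .
\]
This gives $\int f'^2w\ge 2k\int ff'\frac{w}{s}-k^2\int f^2\frac{w}{s^2}$. Integrate the cross term by parts, $2ff'=(f^2)'$, using
\[
\Bigl(\frac{w}{s}\Bigr)'=w\Bigl(\frac{\alpha}{s^2+b^2}-\frac{1}{s^2}\Bigr).
\]
The boundary term at $s=c$ is $k f(c)^2w(c)/c\ge0$. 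The boundary term at $s=0$ vanishes, since $f(s)=O(s)$ gives $f^2w/s=O(s\,w)\to0$; this also covers $b=0$, where $w=s^\alpha$. The result is
\[
\int_0^c f'^2w\ \ge\ (k-k^2)\int_0^c f^2\frac{w}{s^2}-k\alpha\int_0^c f^2\frac{w}{s^2+b^2}.
\]
Now use $k-k^2\ge0$ and $\frac{w}{s^2}\ge\frac{w}{s^2+b^2}$. The coefficient becomes $k(1-k-\alpha)=k\cdot\frac{1-\alpha}{2}=k^2$, which is exactly the constant $\frac{(1-\alpha)^2}{4}$. Note that $w/(s^2+b^2)=(s^2+b^2)^{(\alpha-2)/2}$, as required.

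\emph{Main obstacle.} The only delicate point is the choice of multiplier. A first guess adapted to the weight, $k f s/(s^2+b^2)$, produces an uncontrollable negative $b^2$ term. The plain Hardy multiplier $f/s$ works because the discarded difference $\frac{w}{s^2}-\frac{w}{s^2+b^2}$ enters with the favorable sign $k-k^2\ge0$. The remaining checks are routine: integrability of $f^2 w/s^2$ near $0$ (fine since $f=O(s)$ and $\alpha>0$), and measurability for Fubini. The set where $b=0$, namely $\{x'=0\}$ when $\delta=0$, is a null set, so it plays no role.
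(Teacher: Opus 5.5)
Your proof is correct. It follows the paper's overall scheme, slicing along $x_i$ and then applying a one-dimensional Hardy inequality, but it handles the non-power weight on each slice differently.

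\emph{The paper's route.} The paper does not redo the Hardy argument. It notes that on $K\cap\{x_i>0\}$, since $0<\alpha<1$, one has $(|x|^2+\delta^2)^{\frac{\alpha}{2}}\ge x_i^{\alpha}$ and $(|x|^2+\delta^2)^{\frac{\alpha-2}{2}}\le x_i^{\alpha-2}$. Both comparisons go in the favourable direction, so it suffices to prove the inequality with the pure powers $x_i^{\alpha}$ and $x_i^{\alpha-2}$. That is exactly \autoref{lem:1d-inequality} applied slice by slice. So your remark that the lemma ``cannot be quoted'' is too pessimistic: one pointwise comparison on each side makes it directly applicable, and this is the shorter route.

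\emph{Your route.} You run the completing-the-square argument with the true weight $w=(s^2+b^2)^{\alpha/2}$ and the same multiplier $kf/s$. The following steps are all correct:
\begin{itemize}
\item the derivative $(w/s)'=w\bigl(\frac{\alpha}{s^2+b^2}-\frac{1}{s^2}\bigr)$;
\item the use of $k-k^2\ge 0$ to replace $w/s^2$ by the smaller $w/(s^2+b^2)$;
\item the identity $k(1-k-\alpha)=k^2$;
\item the boundary, integrability and Fubini checks.
\end{itemize}
Your comment on the rejected multiplier $kfs/(s^2+b^2)$ is also accurate: it leaves a term $-(k+k^2)b^2$ relative to the target.

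\emph{What each buys.} The paper's reduction is a two-line step that reuses the pure-power lemma verbatim. Your argument is self-contained and slightly more general. It never uses $(s^2+b^2)^{\alpha/2}\ge s^{\alpha}$, and for $\alpha<0$ the term $-k\alpha\int f^2 w/(s^2+b^2)$ only helps. Hence your estimate holds for all $\alpha\in(-1,1)$, whereas the paper's comparison step needs $\alpha\ge 0$. For the intended application, the Reilly-type argument with $\alpha=q-n>0$, both routes give the same constant $\frac{(1-\alpha)^2}{4}$.
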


\begin{proof}
We integrate over coordinate slices of $K$, which are intervals; see also \cite[Lem. 8.2]{KM22}. Without loss of generality, we may assume that $i=1$. Since $K$ is unconditional and $u$ is odd in the first coordinate, $u\left(0,x'\right)=0$, where $x'=\left(x_2,\dots,x_n\right)$. Let $K^+=K\cap\left\{x_1>0\right\}$.

For $x\in K^+$ we have
\eq{
\left(\left|x\right|^2+\delta^2\right)^{\frac{\alpha}{2}}\ge x_1^\alpha \quad\text{and}\quad \left(\left|x\right|^2+\delta^2\right)^{\frac{\alpha-2}{2}}\le x_1^{\alpha-2}.
}
Thus it suffices to prove
\eq{
\int_{K^+}\left|\partial_1 u\right|^2 x_1^\alpha\, dx \ge \frac{\left(1-\alpha\right)^2}{4}\int_{K^+}u^2 x_1^{\alpha-2}\, dx.
}
The left-hand side of \eqref{eq:odd-weighted-slice} equals twice
\eq{
\int_{\R^{n-1}}\left(\int_{\left\{s\ge0:\ \left(s,x'\right)\in K^+\right\}}\left|\partial_1 u\left(s,x'\right)\right|^2 \left(s^2+\left|x'\right|^2+\delta^2\right)^{\frac{\alpha}{2}}\,ds\right)\, dx'.
}
The right-hand side can be rewritten similarly. Now \autoref{lem:1d-inequality} yields
\eq{
\int_{\left\{s\ge0:\ \left(s,x'\right)\in K^+\right\}}\left|\partial_1 u\left(s,x'\right)\right|^2 s^\alpha\,ds \ge \frac{\left(1-\alpha\right)^2}{4} \int_{\left\{s\ge 0:\ \left(s,x'\right)\in K^+\right\}}u\left(s,x'\right)^2 s^{\alpha-2}\,ds.
}
Now integrating over the projection of $K^+$ onto the $x'$-variables proves the claim.
\end{proof}

\begin{proposition}\label{thm:log-concavity-q-close-to-n}
Let $q\in(n,n+3-2\sqrt{2}]$. Then, in the class of unconditional bodies:
\eq{
\widetilde{V}_q(K+L)^{1/q}\ge \widetilde{V}_q(K)^{1/q}+\widetilde{V}_q(L)^{1/q}.
}
\end{proposition}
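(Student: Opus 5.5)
The plan is to use the Reilly-type criterion recalled at the beginning of \autoref{section:q-greater-than-n}, following \cite{KM18,KM22,KL21}. It suffices to show that
\[
I_q(u)=\int_K\left(\|D^2u\|^2+D^2W(Du,Du)\right)d\mu\ge 0
\]
for every solution $u$ of the Neumann problem $Lu=C_0$ in $K$, where $K$ is a smooth unconditional body. This gives log-concavity of $\widetilde V_q$ along the $L_1$-sum within the unconditional class. By $q$-homogeneity of $\widetilde V_q$, log-concavity upgrades to the $\frac1q$-concavity stated in \autoref{thm:log-concavity-q-close-to-n}, by the standard normalization argument used in the proof of \autoref{thm:Lp-homogeneous-BM} with $p=1$. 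The local-to-global passage is handled as in the first proof of \autoref{thm:main-dualquermass-q} (\cite[Lem. 4]{SZ25}, \cite[Lem. 3.1]{CLM17}), and non-smooth bodies follow by approximation.

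To avoid the singularity at the origin, I will work with the regularized potential $W_\delta=-\frac{\alpha}{2}\log(|x|^2+\delta^2)$, where $\alpha=q-n\in(0,3-2\sqrt2]$. A direct computation gives
\[
D^2W_\delta=-\frac{\alpha}{|x|^2+\delta^2}I+\frac{2\alpha\, x\otimes x}{(|x|^2+\delta^2)^2}\ge -\frac{\alpha}{|x|^2+\delta^2}I .
\]
Hence
\[
I_q(u)\ge\int_K\Big(2\sum_{i<j}u_{ij}^2+\sum_i u_{ii}^2-\alpha\frac{|Du|^2}{|x|^2+\delta^2}\Big)d\mu_\delta .
\]
The solution $u$ of the Neumann problem is unique up to constants, and $K$, $W_\delta$ and $C_0$ are unconditional. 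Therefore $u$ is unconditional, and each $\partial_iu$ is odd in the $i$-th coordinate.

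The key step is to apply \autoref{corollary:odd-weighted-slice} to the function $\partial_iu$ in direction $i$. This yields
\[
\int_K u_{ii}^2(|x|^2+\delta^2)^{\alpha/2}dx\ge\frac{(1-\alpha)^2}{4}\int_K(\partial_iu)^2(|x|^2+\delta^2)^{\frac{\alpha-2}{2}}dx .
\]
Summing over $i$ gives
\[
I_q(u)\ge\Big(\frac{(1-\alpha)^2}{4}-\alpha\Big)\int_K\frac{|Du|^2}{|x|^2+\delta^2}\,d\mu_\delta ,
\]
after discarding the nonnegative off-diagonal terms. The coefficient is nonnegative exactly when $\alpha^2-6\alpha+1\ge0$, that is, when $\alpha\le 3-2\sqrt2$. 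This is precisely the stated range. The estimate is uniform in $\delta$, so letting $\delta\to0$ recovers the statement for $W=(n-q)\log|x|$.

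I expect the main obstacle to be justifying the framework rather than the inequality itself. Three points need care. First, the Neumann solution must be regular enough ($C^2$ up to the boundary, or at least $H^2$) for the slice argument of \autoref{corollary:odd-weighted-slice} to apply to $\partial_iu$. Second, the Kolesnikov--Milman criterion must be verified in its exact form for the weight $e^{-W_\delta}$, which is not log-concave; only the sign of $I_q$ should be needed for log-concavity. Third, the limit $\delta\to0$ in the variational inequality must be justified. For the third point I would use that the second variation formula \eqref{eq:standard-second-var} depends continuously on $\delta$ away from the origin, since $B_r\subset K_s$. I would then pass to the limit in the infinitesimal concavity inequality before globalizing.
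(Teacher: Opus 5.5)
Your proposal is correct and is essentially the paper's own argument. It uses the same regularized weight $(|x|^2+\delta^2)^{\alpha/2}$, the same Reilly-type criterion $I_q\ge 0$, and the same application of \autoref{corollary:odd-weighted-slice} to $\partial_i u$. It also uses the same lower bound on $D^2W_\delta$: discarding the positive semidefinite term $2\alpha\, x\otimes x/(|x|^2+\delta^2)^2$ is exactly the paper's use of $\frac{|x|^2-\delta^2}{|x|^2+\delta^2}\ge -1$, and both lead to the coefficient $\frac{(1-\alpha)^2}{4}-\alpha\ge 0$.

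The differences are cosmetic. You make explicit the homogeneity step that upgrades log-concavity to the stated $\frac1q$-concavity, which the paper leaves implicit. Also, it is simpler to let $\delta\to 0$ directly in the global inequality $\mu_\delta((1-\lambda)K+\lambda L)\ge\mu_\delta(K)^{1-\lambda}\mu_\delta(L)^{\lambda}$ than in the infinitesimal one.
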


\begin{proof}
Let $\varepsilon>0$ and $\alpha=q-n$. We introduce the regularized weight
\eq{
W_{\varepsilon}(x)=-\frac{\alpha}{2}\log\left(|x|^2+\varepsilon^2\right),
}
together with the associated weighted measure
\eq{
d\mu_{\varepsilon}(x)=e^{-W_{\varepsilon}(x)}\, dx=\left(|x|^2+\varepsilon^2\right)^{\frac{\alpha}{2}}\, dx.
}
We also define the differential operator
\eq{
L_{\varepsilon} u=\Delta_{\mathrm{euc}}u-\ip{DW_{\varepsilon}}{Du} =\Delta_{\mathrm{euc}}u+\alpha\langle \frac{x}{|x|^2+\varepsilon^2},Du\rangle.
}

We now show that, for every unconditional $C^2$ function $u$ on the unconditional convex body $K$ satisfying $L_{\varepsilon} u=C$ for some constant $C$, one has
\eq{
I_{q,\varepsilon}(u):= \int_K\left(\|D^2u\|^2+D^2W_{\varepsilon}(Du,Du)\right)\,d\mu_{\varepsilon} \ge 0.
}
This non-negativity then implies the log-concavity of $\mu_{\varepsilon}$ with respect to the $L_1$-sum on unconditional convex bodies. Passing to the limit as $\varepsilon\to 0$, we deduce the log-concavity of $\widetilde{V}_q$ with respect to the $L_1$-sum on unconditional convex bodies.

By \autoref{corollary:odd-weighted-slice}, if $v\in C^1\left(K\right)$ is odd in the $i$-th coordinate, then
\eq{
\int_K |\partial_i v|^2\,d\mu_{\varepsilon} \ge \frac{(1-\alpha)^2}{4} \int_K v^2\left(|x|^2+\varepsilon^2\right)^{\frac{\alpha-2}{2}}\, dx.
}
Applying this to $v=\partial_iu$ and summing over all coordinate directions yields
\eq{
\int_K \|D^2u\|^2\,d\mu_{\varepsilon} \ge \frac{(1-\alpha)^2}{4} \int_K |Du|^2\left(|x|^2+\varepsilon^2\right)^{\frac{\alpha-2}{2}}\, dx.
}

On the other hand, a direct computation shows that
\eq{
D^2W_{\varepsilon} =-\alpha\left(\frac{I}{|x|^2+\varepsilon^2}-2\frac{x\otimes x}{(|x|^2+\varepsilon^2)^2}\right),
}
and hence on $K\setminus\{0\}$:
\eq{
D^2W_{\varepsilon}(Du,Du) &=-\alpha\frac{|D_Tu|^2}{|x|^2+\varepsilon^2}+\alpha\frac{|x|^2-\varepsilon^2}{(|x|^2+\varepsilon^2)^2}|\partial_r u|^2.
}

Combining these estimates, we obtain
\eq{
I_{q,\varepsilon}(u) &\ge \left(\frac{(1-\alpha)^2}{4}-\alpha\right) \int_K |D_Tu|^2\left(|x|^2+\varepsilon^2\right)^{\frac{\alpha-2}{2}}\, dx \\ 
&\quad+\int_K\left(\frac{(1-\alpha)^2}{4}+\alpha\frac{|x|^2-\varepsilon^2}{|x|^2+\varepsilon^2}\right)|\partial_r u|^2\left(|x|^2+\varepsilon^2\right)^{\frac{\alpha-2}{2}}\, dx.
}
Since $\frac{|x|^2-\varepsilon^2}{|x|^2+\varepsilon^2}\ge -1$, the coefficient of the radial term is at least $\frac{(1-\alpha)^2}{4}-\alpha$ as well. Therefore
\eq{
I_{q,\varepsilon}(u) \ge \left(\frac{(1-\alpha)^2}{4}-\alpha\right) \int_K |Du|^2\left(|x|^2+\varepsilon^2\right)^{\frac{\alpha-2}{2}}\, dx\ge 0.
}
\end{proof}

\begin{proof}[Proof of \autoref{thm:SZ-poincare}] By approximation, we may assume that $F\in C^{\infty}(\Sn)$.

\emph{Case 1:} If $0<q\le1$, the first asserted inequality is trivial. Suppose $K$ is origin-symmetric and $F$ is even. In this case, we may assume $q\in (1,n)$. Recall our setup at the beginning of \autoref{sec:boundary-variation-to-centro-affine-formulation}. Due to \cite[Thm. 2]{SZ25}, the map $s\mapsto \widetilde{V}_q\left(sK+(1-s)L\right)^{\frac{1}{q}}$ is concave on $[0,1]$ in the class of origin-symmetric convex bodies. Therefore, for every even linear deformation $h_s=h+s\psi$ with $\psi=hF$, the measure $\mu(K)=\frac{n}{q}\widetilde{V}_q(K)$ satisfies
\eq{
\left.\frac{d^2}{ds^2}\mu(K_s)\right|_{s=0} \le \frac{q-1}{q}\frac{\left(\left.\frac{d}{ds}\mu(K_s)\right|_{s=0}\right)^2}{\mu(K)}.
}
The claim now follows from \autoref{prop:q-identity} and the identity (cf. \cite[Thm. 6.6]{KM18})
\eq{
\left.\frac{d}{ds}\mu(K_s)\right|_{s=0}=\int_{\Sn}F|X|^{q-n}\, dV_K.
}

\emph{Case 2:} Suppose $K$ is unconditional and $F$ is unconditional. For $|s|$ sufficiently small, let
\eq{
h_s=he^{sF},
}
and $K_s$ be the $C^2_+$ unconditional convex body whose support function is $h_s$. We have
\eq{
h_{(1-\lambda)s+\lambda t}=h_s^{1-\lambda}h_t^{\lambda},
}
and hence $K_{(1-\lambda)s+\lambda t}=(1-\lambda)\cdot K_s+_0 \lambda\cdot K_t$, for $|s|,|t|$ sufficiently small. Therefore, due to \autoref{thm:unconditional-logBM-Vq}, the map
\eq{
s\mapsto \log \widetilde{V}_q(K_s)
}
is concave in a neighborhood of $0$. In particular, we have
\eq{
\label{eq:log-concavity-Vq-unconditional} 
\widetilde{V}_q(K)\left.\frac{d^2}{ds^2}\widetilde{V}_q(K_s)\right|_{s=0} \le \left(\left.\frac{d}{ds}\widetilde{V}_q(K_s)\right|_{s=0}\right)^2.
}

We now compute the first and second derivatives at $s=0$. Before invoking \autoref{prop:q-identity}, we make a few preliminary observations that simplify the calculation.

Consider the map
\eq{
f\mapsto \Phi(f):=\widetilde{V}_q(K_{h+f}),
}
defined for $f$ in a sufficiently small $C^2$-neighborhood of $0$. Here $K_{h+f}$ denotes the $C^2_+$ convex body whose support function is $h+f$. We have
\eq{
\widetilde{V}_q(K_s)=\Phi(f_s), \quad f_s:=h_s-h=h(e^{sF}-1).
}

Note that
\eq{
f_s&=shF+\frac{s^2}{2}hF^2+o(s^2),\\ 
\Phi(f_s)&=\Phi(0)+D\Phi_0[f_s]+\frac{1}{2} D^2\Phi_0[f_s,f_s]+o(s^2).
}
Here $D\Phi_0=D\Phi|_{s=0}$ and $D^2\Phi_0=D^2\Phi|_{s=0}$. Therefore
\eq{
\Phi(f_s)=\Phi(0)+sD\Phi_0[hF]+\frac{s^2}{2}D\Phi_0[hF^2]+\frac{s^2}{2}D^2\Phi_0[hF,hF]+o(s^2)
}
and
\eq{
\label{eq:first-second-by-taylor-expansion} 
\left.\frac{d}{ds}\right|_{s=0}\widetilde{V}_q(K_s)=D\Phi_0[hF], \quad \left.\frac{d^2}{ds^2}\right|_{s=0}\widetilde{V}_q(K_s) =D^2\Phi_0[hF,hF]+D\Phi_0[hF^2].
}

By the first variation formula for $\widetilde{V}_q$,
\eq{
D\Phi_0[hG]=\frac{q}{n}\int_{\Sn}G|X|^{q-n}\, dV_K \quad\text{for every }G\in C^2(\Sn).
}
Now, applying this with $G=F$ and $G=F^2$, we obtain
\eq{
\label{eq:first-variation-Vq-unconditional} 
\left.\frac{d}{ds}\right|_{s=0}\widetilde{V}_q(K_s) =\frac{q}{n}\int_{\Sn}F|X|^{q-n}\, dV_K,
}
and
\eq{
\label{eq:first-var-hF2} 
D\Phi_0[hF^2]=\frac{q}{n}\int_{\Sn}F^2|X|^{q-n}\, dV_K.
}

Next, by applying \autoref{prop:q-identity} to the linear perturbation $s\mapsto h+s(hF)$, we have
\eq{
\label{eq:additive-second-var-q} 
D^2\Phi_0[hF,hF]=\frac{q}{n}\int_{\Sn} \left((q-1)F^2-|\nabla F|_g^2\right)|X|^{q-n}\, dV_K.
}
Putting \eqref{eq:first-second-by-taylor-expansion}, \eqref{eq:first-var-hF2}, and \eqref{eq:additive-second-var-q} together, we obtain
\eq{
\label{eq:second-variation-Vq-unconditional} 
\left.\frac{d^2}{ds^2}\right|_{s=0}\widetilde{V}_q(K_s) =\frac{q}{n}\int_{\Sn}\left(qF^2-|\nabla F|_g^2\right)|X|^{q-n}\, dV_K.
}
Finally, substituting \eqref{eq:first-variation-Vq-unconditional} and
\eqref{eq:second-variation-Vq-unconditional} into
\eqref{eq:log-concavity-Vq-unconditional} proves the claim.
\end{proof}

\section{Uniqueness}
We now derive uniqueness consequences for the $L_{p,q}$-Minkowski problem in the class of $C^2_+$ unconditional convex bodies.

\begin{theorem}
\label{thm:unconditional-p-q-inequalities}
Let $K,L$ be $C^2_+$ unconditional convex bodies.

\begin{enumerate}
\item[\rm(i)] For $q=n+2$,
\eq{
\label{eq:minkowski-type-uniqueness} 
\frac{1}{n}\int_{\Sn} h_L|Dh_K|^2\,dS_K \ge \widetilde{V}_{n+2}(K)^{\frac{n+1}{n+2}} \widetilde{V}_{n+2}(L)^{\frac{1}{n+2}}.
}
Equality holds if and only if $K$ and $L$ are dilates.

\item[\rm(ii)] Let $q\in(0,n)$. Then
\eq{
\label{eq:unconditional-log-Vq} 
\frac{1}{n}\int_{\Sn} \log\frac{h_L}{h_K}|Dh_K|^{q-n}\, dV_K \ge \frac{\widetilde{V}_q(K)}{q}\log\frac{\widetilde{V}_q(L)}{\widetilde{V}_q(K)}.
}
Moreover, for every $p\in(0,q]$,
\eq{
\label{eq:unconditional-mixed-Vq-p-unified} 
\frac{1}{n}\int_{\Sn} h_L^ph_K^{1-p}|Dh_K|^{q-n}\,dS_K \ge \widetilde{V}_q(K)^{1-\frac{p}{q}}\widetilde{V}_q(L)^{\frac{p}{q}}.
}
Equality in either \eqref{eq:unconditional-log-Vq} or \eqref{eq:unconditional-mixed-Vq-p-unified} holds if and only if $K$ and $L$ are dilates.
\end{enumerate}
\end{theorem}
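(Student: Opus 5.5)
Both parts are Minkowski-type first-variation inequalities, obtained by differentiating the global Brunn--Minkowski inequalities already proved (\autoref{thm:main-dualquermass-q} and \autoref{thm:unconditional-logBM-Vq}) at the endpoint $K$. The key input is the first variation formula for $\widetilde{V}_q$: for a $C^2_+$ body $K$ and a perturbation $h_s=h_K+s\psi$ (or any $C^2$ family with $\dot h_0=\psi$),
\[
\left.\frac{d}{ds}\right|_{s=0}\widetilde{V}_q(K_s)=\frac{q}{n}\int_{\Sn}\frac{\psi}{h_K}|X|^{q-n}\,dV_K=\frac{q}{n}\int_{\Sn}\psi\,|Dh_K|^{q-n}\,dS_K,
\]
as used in the proof of \autoref{thm:SZ-poincare}, together with $|X|=|Dh_K|$ and $dV_K=h_K\,dS_K$. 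The variation need not keep the body $C^2_+$: for Minkowski and $L_0$ combinations, the first variation of $\widetilde V_q$ at a smooth $K$ in the direction of an arbitrary convex $L$ holds by the standard Aleksandrov-type lemma (e.g.\ \cite{HLYZ16}). Since $K,L$ are $C^2_+$, the combinations here are smooth enough that this is harmless.

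\textbf{Part (i).} I set $\phi(t)=\widetilde{V}_{n+2}(K+tL)^{1/(n+2)}$ for $t\ge0$. By \autoref{thm:main-dualquermass-q} and homogeneity, $\phi$ is concave: for $t,s\ge 0$ and $\lambda\in[0,1]$ we have $(1-\lambda)(K+tL)+\lambda(K+sL)=K+((1-\lambda)t+\lambda s)L$. It also satisfies $\phi(t)\ge\phi(0)+t\,\widetilde{V}_{n+2}(L)^{1/(n+2)}$. Hence $\phi'(0^+)\ge \widetilde{V}_{n+2}(L)^{1/(n+2)}$. Computing $\phi'(0^+)$ with $\psi=h_L$ and $q=n+2$ gives
\[
\frac{1}{n+2}\widetilde{V}_{n+2}(K)^{\frac{1}{n+2}-1}\cdot\frac{n+2}{n}\int h_L|Dh_K|^2\,dS_K,
\]
which rearranges to \eqref{eq:minkowski-type-uniqueness}. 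Equality for dilates is immediate from homogeneity, since $\int h_K|Dh_K|^2dS_K=n\widetilde V_{n+2}(K)$.

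For the converse I use the standard trick. If equality holds, then $\phi'(0^+)=\phi(1)-\phi(0)$, where the chord inequality comes from BM at $t=1$. A concave function whose right derivative at $0$ equals its chord slope on $[0,1]$ is affine on $[0,1]$. Hence
\[
\phi(1)=\phi(0)+\widetilde V_{n+2}(L)^{1/(n+2)},
\]
which is equality in \autoref{thm:main-dualquermass-q}, so $K$ and $L$ are dilates.

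\textbf{Part (ii).} I set $\Psi(\lambda)=\log\widetilde{V}_q(K_\lambda)$ with $K_\lambda=(1-\lambda)\cdot K+_0\lambda\cdot L$. Then $K_\lambda$ is the Wulff shape of $h_K^{1-\lambda}h_L^\lambda$, and iterating $L_0$ sums is consistent with this Wulff-shape description (Aleksandrov body), so $\Psi$ is concave on $[0,1]$ by \autoref{thm:unconditional-logBM-Vq}. Therefore
\[
\Psi'(0^+)\ge\Psi(1)-\Psi(0)=\log\frac{\widetilde V_q(L)}{\widetilde V_q(K)}.
\]
The derivative of the Wulff shape variation is $\dot h=h_K\log(h_L/h_K)$, and Aleksandrov's lemma lets me use it. This gives
\[
\Psi'(0^+)=\frac{q}{n\widetilde V_q(K)}\int\log\frac{h_L}{h_K}\,|Dh_K|^{q-n}\,dV_K,
\]
which is \eqref{eq:unconditional-log-Vq}. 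The equality case follows exactly as in part (i): equality forces $\Psi$ to be affine, hence equality in \autoref{thm:unconditional-logBM-Vq}, hence dilates.

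For \eqref{eq:unconditional-mixed-Vq-p-unified}, Jensen's inequality with respect to the probability measure $d\nu=|Dh_K|^{q-n}dV_K/(n\widetilde V_q(K))$ gives
\[
\int (h_L/h_K)^p\,d\nu\ge \exp\Big(p\int\log(h_L/h_K)\,d\nu\Big)\ge (\widetilde V_q(L)/\widetilde V_q(K))^{p/q}
\]
by \eqref{eq:unconditional-log-Vq}. Multiplying by $\widetilde V_q(K)$ gives the claim, since $(h_L/h_K)^p\,dV_K=h_L^ph_K^{1-p}\,dS_K$. Equality forces equality in \eqref{eq:unconditional-log-Vq}, hence dilates.

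The main technical obstacle is justifying the one-sided derivatives: the first variation of $\widetilde V_q$ along Minkowski and Wulff-shape (Aleksandrov) families in directions that need not be $C^2$-small perturbations. I would either cite the Aleksandrov variational lemma for dual quermassintegrals from \cite{HLYZ16} or note that, since $h_K,h_L$ are $C^2$, both families are $C^2$ in $s$ near $0$, so the smooth formula applies.
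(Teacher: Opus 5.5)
Your proposal is correct and follows essentially the same route as the paper. Both arguments take concavity of $\widetilde V_{n+2}^{1/(n+2)}$ along Minkowski combinations, and of $\log\widetilde V_q$ along the $L_0$ family, from \autoref{thm:main-dualquermass-q} and \autoref{thm:unconditional-logBM-Vq}; compare the one-sided derivative at $K$, computed by the first variation formula, with the chord slope; use Jensen's inequality for the $L_p$ version; and transfer the equality cases from those two theorems. The differences are cosmetic. You parametrize by $K+tL$ instead of $(1-t)K+tL$. You justify the derivative by noting that $h_K^{1-\lambda}h_L^{\lambda}$ is itself a $C^2_+$ support function for small $\lambda$, rather than citing \cite{LYZ18}. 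And you obtain the $L_p$ equality case from equality in the logarithmic inequality rather than from Jensen's equality case.
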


\begin{proof} The proof is the same as that of \cite[Prop. 4.1]{XZ22}. We first prove \rm(i). Set
\eq{
\phi(t)=\widetilde{V}_{n+2}\left((1-t)K+tL\right)^{\frac{1}{n+2}}, \quad t\in[0,1].
}
In view of \autoref{cor:main-unconditional-poincare}, the function $\phi$ is concave. Hence
\eq{
\label{eq:concavity-derivative-step-unified} 
\phi'(0)\ge \phi(1)-\phi(0) =\widetilde{V}_{n+2}(L)^{\frac{1}{n+2}} -\widetilde{V}_{n+2}(K)^{\frac{1}{n+2}}.
}
Let $K_t=(1-t)K+tL$. We have
\eq{
\left.\frac{d}{dt}\widetilde{V}_{n+2}(K_t)\right|_{t=0} =\frac{n+2}{n}\int_{\Sn}h_L|Dh_K|^2\,dS_K-(n+2)\widetilde{V}_{n+2}(K).
}
Therefore
\eq{
\phi'(0)=\widetilde{V}_{n+2}(K)^{-\frac{n+1}{n+2}} \left(\frac{1}{n}\int_{\Sn}h_L|Dh_K|^2\,dS_K-\widetilde{V}_{n+2}(K) \right).
}
Substituting this into \eqref{eq:concavity-derivative-step-unified} and multiplying by
$\widetilde{V}_{n+2}(K)^{\frac{n+1}{n+2}}$, we get
\eq{
\frac{1}{n}\int_{\Sn}h_L|Dh_K|^2\,dS_K-\widetilde{V}_{n+2}(K) \ge \widetilde{V}_{n+2}(K)^{\frac{n+1}{n+2}} \widetilde{V}_{n+2}(L)^{\frac{1}{n+2}} -\widetilde{V}_{n+2}(K).
}

If $K$ and $L$ are not dilates, then, by the characterization of equality cases in \autoref{thm:main-dualquermass-q}, $\phi'(0)>\phi(1)-\phi(0)$, and the inequality is strict.

We now prove \rm(ii). For $\lambda\in[0,1]$, set
\eq{
h_{\lambda}=h_K^{1-\lambda}h_L^\lambda, \quad K_{\lambda}=(1-\lambda)\cdot K+_0 \lambda\cdot L.
}
By definition, $K_{\lambda}$ is the Wulff shape generated by $h_{\lambda}$. In general, $h_{\lambda}$ need not be the support function of $K_{\lambda}$ pointwise. For the first variation formula we invoke \cite[Thm. 6.4]{LYZ18}; see also \cite[Lem. 2.1]{XZ22}.

By \autoref{thm:unconditional-logBM-Vq}, the function $\phi(\lambda):=\log \widetilde{V}_q(K_{\lambda})$ is concave on $[0,1]$. Hence
\eq{
\phi'(0)\ge \phi(1)-\phi(0) =\log\frac{\widetilde{V}_q(L)}{\widetilde{V}_q(K)}.
}
Since $\log h_{\lambda}=\log h_K+\lambda \log\frac{h_L}{h_K}$, the first variation formula for $\widetilde{V}_q$ implies that
\eq{
\left.\frac{d}{d\lambda}\right|_{\lambda=0}\widetilde{V}_q(K_{\lambda})=\frac{q}{n}\int_{\Sn} h_K \log\frac{h_L}{h_K}|Dh_K|^{q-n}\,dS_K.
}
Therefore
\eq{
\frac{1}{\widetilde{V}_q(K)} \frac{q}{n}\int_{\Sn} h_K \log\frac{h_L}{h_K}|Dh_K|^{q-n}\,dS_K \ge \log\frac{\widetilde{V}_q(L)}{\widetilde{V}_q(K)},
}
which is \eqref{eq:unconditional-log-Vq}. Finally \eqref{eq:unconditional-mixed-Vq-p-unified} follows from \eqref{eq:unconditional-log-Vq} and Jensen's inequality.

Equality in \eqref{eq:unconditional-log-Vq} implies that $\phi$ is affine on $[0,1]$, and hence equality holds in \autoref{thm:unconditional-logBM-Vq} and thus $K$ and $L$ are dilates. Equality in \eqref{eq:unconditional-mixed-Vq-p-unified} implies equality in Jensen's inequality, so $(h_L/h_K)^p$ is constant on $\Sn$, and thus again $K$ and $L$ are dilates.
\end{proof}

\begin{proof}[Proof of \autoref{thm:uniqueness-Lpq-unconditional}]
Assume that $K$ and $L$ are $C^2_+$ unconditional convex bodies solving
\eq{
\label{eq:Lpq-eq-proof-unified} 
|Dh|^{q-n}h^{1-p}\det(\bar{\nabla}^2 h+hI)=f.
}
We prove that $K=L$.

Consider the case $q=n+2$ and $1\le p<q$. By \eqref{eq:Lpq-eq-proof-unified} for $K$,
\eq{
\frac{1}{n}\int_{\Sn} h_L|Dh_K|^2\,dS_K =\frac{1}{n}\int_{\Sn} h_L h_K^{p-1}f\, dx.
}
By \eqref{eq:minkowski-type-uniqueness} and H\"older's inequality,
\eq{
\widetilde{V}_{n+2}(K)^{\frac{n+1}{n+2}} \widetilde{V}_{n+2}(L)^{\frac{1}{n+2}} &\le \frac{1}{n}\int_{\Sn} h_L h_K^{p-1}f\, dx\\ 
&\le \left(\frac{1}{n}\int_{\Sn} h_K^p f\, dx\right)^{\frac{p-1}{p}} \left(\frac{1}{n}\int_{\Sn} h_L^p f\, dx\right)^{\frac{1}{p}}.
}
Now using
\eq{
\widetilde{V}_{n+2}(K)=\frac{1}{n}\int_{\Sn} h_K^p f\, dx, \quad \widetilde{V}_{n+2}(L)=\frac{1}{n}\int_{\Sn} h_L^p f\, dx,
}
we find
\eq{
\widetilde{V}_{n+2}(K)^{\frac{n+1}{n+2}} \widetilde{V}_{n+2}(L)^{\frac{1}{n+2}} \le \widetilde{V}_{n+2}(K)^{\frac{p-1}{p}} \widetilde{V}_{n+2}(L)^{\frac{1}{p}}.
}
Interchanging the roles of $K$ and $L$ shows that $\widetilde{V}_{n+2}(K)=\widetilde{V}_{n+2}(L)$. Hence equality holds in H\"older's inequality and in
\eqref{eq:minkowski-type-uniqueness}. Thus $K$ and $L$ are dilates, and since $p<q$, the dilation factor must be $1$.

The remaining cases, corresponding to $q\in(0,n)$ and $0\le p<q$, are treated in the same way. For $p=0$ we use \eqref{eq:unconditional-log-Vq}, while for $0<p<q$ we use \eqref{eq:unconditional-mixed-Vq-p-unified}, together with the corresponding equality cases.
\end{proof}

\section{Logarithmic centro-affine geometry}\label{sec:log-centro-affine-geometry}

Our main reference for affine geometry in this section is \cite{NS94}. Throughout this section, $K\subset\R^n$ is an unconditional, smooth, strictly  convex body. We write
\eq{
\Omega_+=\{x\in\Sn:\ X_i(x)>0,\ i=1,\dots,n\}.
}
On $\Omega_+$, we define a smooth immersion
\eq{
Y:\Omega_+\to \R^n,\quad Y=\log X:=(\log X_1,\dots,\log X_n),
}
and we set $\bm{\xi}=(1,\dots,1)$, $P=\operatorname{diag}(X_1,\dots,X_n)$ and $Q=\operatorname{diag}(x_1,\ldots,x_n)$.

Since $dY=P^{-1}dX$, for every $v\in T_x\Sn$ we have
\eq{
\langle dY(v),Px\rangle=\langle P^{-1}dX(v),Px\rangle=\langle dX(v),x\rangle=0.
}
Thus a Euclidean unit normal to $Y(\Omega_+)$ is
\eq{
\mathbf{n}=\frac{Px}{\sigma},\quad \text{where }\sigma=\left(\sum_{i=1}^nX_i^2x_i^2\right)^{\frac{1}{2}}.
}
This implies that
\eq{
\langle\bm{\xi},\mathbf{n}\rangle=\frac{\langle\bm{\xi},Px\rangle}{\sigma}=\frac{h}{\sigma}.
}
Hence $\bm{\xi}$ is transverse to $Y(\Omega_+)$.

The vector $\bm{\xi}$ induces a torsion-free connection $\nabla^{\log}$ and a symmetric bilinear form $g_{\log}$:
\eq{\label{eq:log-gauss-formula}
D_vdY(w)=dY(\nabla^{\log}_vw)-g_{\log}(v,w)\bm{\xi}.
}

The choice of the transversal field $\bm{\xi}=(1,\dots,1)$ is natural. In centro-affine geometry, the transversal vector field is the position vector $X$, which may be interpreted as the infinitesimal direction of dilations $X\mapsto e^tX$. After passing to logarithmic coordinates $Y=\log X$, these dilations become translations $Y\mapsto Y+t\bm{\xi}$. Thus $\bm{\xi}$ is precisely the logarithmic representative of the centro-affine transversal vector field.

\begin{lemma}\label{lem:log-affine-metric}
We have
\eq{
\label{eq:glog-formula}
g_{\log}(v,w)=g(v,w)+\sum_{i=1}^n\frac{x_iX_i}{h}d\log X_i(v)d\log X_i(w).
}
In particular, $g_{\log}> g$ on $\Omega_+$.
\end{lemma}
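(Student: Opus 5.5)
The plan is to read off $g_{\log}$ from the Gauss formula \eqref{eq:log-gauss-formula} by pairing with a covector that annihilates the tangent space of $Y(\Omega_+)$, and then to compare the result with the centro-affine Gauss formula $D_UdX(V)=dX(\nabla_UV)-g(U,V)X$. The natural covector is the (unnormalized) normal $Px=(X_1x_1,\dots,X_nx_n)$. It satisfies $\langle dY(v),Px\rangle=0$, as computed just before the statement, and $\langle\bm{\xi},Px\rangle=\sum_iX_ix_i=\langle X,x\rangle=h$. Pairing \eqref{eq:log-gauss-formula} with $Px$ therefore gives
\begin{equation*}
g_{\log}(v,w)=-\frac{1}{h}\langle D_v dY(w),Px\rangle .
\end{equation*}

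Next I compute $D_vdY(w)$ componentwise. Since $dY(w)_i=dX_i(w)/X_i$, differentiating in the direction $v$ gives
\begin{equation*}
(D_vdY(w))_i=\frac{(D_vdX(w))_i}{X_i}-\frac{dX_i(v)\,dX_i(w)}{X_i^2}.
\end{equation*}
Pairing with $Px$, whose $i$-th entry is $X_ix_i$, yields
\begin{equation*}
\langle D_vdY(w),Px\rangle=\langle D_vdX(w),x\rangle-\sum_{i=1}^n x_iX_i\, d\log X_i(v)\,d\log X_i(w).
\end{equation*}
By the centro-affine Gauss formula, $\langle D_vdX(w),x\rangle=\langle dX(\nabla_vw),x\rangle-g(v,w)\langle X,x\rangle=-h\,g(v,w)$. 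Here I use that $dX(\cdot)=\tau(\cdot)$ takes values in $T_x\Sn=x^\perp$ and that $\langle X,x\rangle=h$. Dividing by $-h$ gives \eqref{eq:glog-formula}.

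For the strict inequality $g_{\log}>g$ on $\Omega_+$, I invoke \autoref{lem:sign-xi-Xi}: on $\Omega_+$ each $X_i>0$, hence each $x_i>0$, so every coefficient $x_iX_i/h$ is positive (recall $h>0$). For $v\neq0$, $dX(v)=\tau v\neq0$ because $\tau$ is positive definite. Thus some $d\log X_i(v)=dX_i(v)/X_i$ is nonzero, and the added quadratic form is strictly positive at $v$. This also shows that $g_{\log}$ is a genuine positive definite metric, since $g$ is.

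The only delicate point is the bookkeeping in the differentiation, namely that the product-rule term carries the factor $1/X_i^2$ and so produces $d\log X_i\otimes d\log X_i$ after multiplication by $X_ix_i$. With that in place, everything else follows from the two Gauss formulas and \autoref{lem:sign-xi-Xi}.
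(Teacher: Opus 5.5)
Your proof is correct and is essentially the paper's argument: both pair the Gauss formula \eqref{eq:log-gauss-formula} with the normal direction $Px$, using $\langle\bm{\xi},Px\rangle=h$, and both deduce $g_{\log}>g$ from \autoref{lem:sign-xi-Xi} together with the injectivity of $dX=\tau$. The only cosmetic difference is where the derivative lands: the paper differentiates the normal, computing $D_v(Px)=Pv+Q\,dX(v)$ and using $\bar{g}(v,\tau w)=h\,g(v,w)$, whereas you differentiate $dY(w)$ directly and read off $-h\,g(v,w)$ from the centro-affine Gauss formula, and the two computations agree by the product rule since $\langle dY(w),Px\rangle\equiv 0$.
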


\begin{proof}
Let $\mathrm{II}(v,w)=\langle D_v\mathbf{n},dY(w)\rangle$ be the Euclidean second fundamental form of $Y(\Omega_+)$. Since $\langle Px,dY(w)\rangle=0$, we have
\eq{
\mathrm{II}(v,w)=\frac{1}{\sigma}\left\langle D_v(Px),P^{-1}dX(w)\right\rangle.
}

On the other hand,
\eq{
D_v(Px)=Pv+Q dX(v).
}
Moreover, we have
\eq{
\left\langle Pv,P^{-1}dX(w)\right\rangle&=\langle v,dX(w)\rangle=\bar{g}(v,\tau w),\\
\left\langle QdX(v),P^{-1}dX(w)\right\rangle&=\sum_{i=1}^n\frac{x_i}{X_i}dX_i(v)dX_i(w).
}
Hence
\eq{
\mathrm{II}(v,w)=\frac{1}{\sigma}\left(\bar{g}(v,\tau w)+\sum_{i=1}^n\frac{x_i}{X_i}dX_i(v)dX_i(w)\right).
}

Taking the Euclidean scalar product of \eqref{eq:log-gauss-formula} with $\mathbf{n}$, we find
\eq{
-g_{\log}(v,w)\langle\bm{\xi},\mathbf{n}\rangle=\langle D_vdY(w),\mathbf{n}\rangle=-\mathrm{II}(v,w).
}
Thus
\eq{
g_{\log}(v,w)=\frac{\mathrm{II}(v,w)}{\langle\bm{\xi},\mathbf{n}\rangle}=\frac{\sigma}{h}\mathrm{II}(v,w).
}
Due to $\bar{g}(v,\tau w)=hg(v,w)$, \eqref{eq:glog-formula} follows.

Since $dX=\tau$ is injective, by \autoref{lem:sign-xi-Xi}, the second term in \eqref{eq:glog-formula} is positive definite. Consequently, $g_{\log}>g$ on $\Omega_+$.
\end{proof}

\begin{proposition}\label{prop:g-log-g-polar-log}
Let $K\subset\mathbb R^n$ be a $C^2_+$ unconditional convex body. Put
\eq{
y=\frac{x}{h(x)},\quad p_i=X_i y_i,\quad u_i=\log X_i,\quad v_i=\log y_i.
}
Then on $\Omega_+$:
\eq{
\sum_i p_idu_i=0,\quad \sum_i p_idv_i=0,\quad g=\sum_i p_idu_idv_i.
}
Define $g_{\log}=g+\sum_i p_idu_i^2$ and $ g_{\log}^{\circ}=g+\sum_i p_idv_i^2$. Then
\eq{
g_{\log}=\sum_i du_idp_i,\quad g_{\log}^{\circ}=\sum_i dv_idp_i.
}
Moreover, if $g_{\mathrm{av}}:=\frac{1}{2}\left(g_{\log}+g_{\log}^{\circ}\right)$, then
\eq{
 g_{\mathrm{av}}=\frac{1}{2}\sum_i\frac{1}{p_i}dp_i^2.
}
In particular, $ g_{\mathrm{av}}\ge 2g$. For a coordinate ellipsoid,
\eq{
 g_{\mathrm{av}}=g_{\log}=g_{\log}^{\circ}=2g.
}
\end{proposition}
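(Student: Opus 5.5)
The plan is to reduce everything to two pointwise identities coming from $X=Dh$, and then do algebra with the one-forms $du_i$, $dv_i$, $dp_i$ on $\Omega_+$. By \autoref{lem:sign-xi-Xi}, on $\Omega_+$ we have $X_i>0$ and $x_i>0$. Hence $y_i>0$ and $p_i>0$, so the logarithms are defined.

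\emph{Two basic facts.} For $v\in T_x\Sn$ we will use:
\begin{itemize}
\item $dX(v)=\tau v$ is tangent, so $\langle x,dX(v)\rangle=0$;
\item $\langle X,v\rangle=\bar{g}(\bar{\nabla}h,v)=dh(v)$, since $X=\bar{\nabla}h+hx$ and $v\perp x$.
\end{itemize}
Since $dy=\frac{v}{h}-\frac{x\,dh(v)}{h^2}$, the second fact gives $\langle X,dy(v)\rangle=\frac{dh(v)}{h}-\frac{h\,dh(v)}{h^2}=0$.

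\emph{First three identities.} Using $du_i=dX_i/X_i$ and $dv_i=dy_i/y_i$, we get
\[
\sum_i p_i\,du_i=\sum_i y_i\,dX_i=\frac{\langle x,dX\rangle}{h}=0,
\qquad
\sum_i p_i\,dv_i=\sum_i X_i\,dy_i=\langle X,dy\rangle=0 .
\]
Next, $\sum_i p_i\,du_i\,dv_i=\sum_i dX_i\,dy_i$. Evaluated on $(v,w)$ this is
\[
\left\langle \tau v,\ \frac{w}{h}-\frac{x\,dh(w)}{h^2}\right\rangle=\frac{\bar{g}(\tau v,w)}{h}=g(v,w),
\]
because $\tau v\perp x$; this is a symmetric expression, as it should be. Note also that the definition $g_{\log}=g+\sum_i p_i\,du_i^2$ agrees with \autoref{lem:log-affine-metric}, since $p_i=x_iX_i/h$.

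\emph{Expressions for $g_{\log}$, $g^\circ_{\log}$ and $g_{\mathrm{av}}$.} These follow from $\log p_i=u_i+v_i$, i.e. $dp_i=p_i(du_i+dv_i)$. Then
\[
\sum_i du_i\,dp_i=\sum_i p_i\,du_i^2+\sum_i p_i\,du_i\,dv_i=g_{\log},
\]
and symmetrically $\sum_i dv_i\,dp_i=g^\circ_{\log}$. Averaging gives
\[
g_{\mathrm{av}}=\tfrac12\sum_i p_i(du_i+dv_i)^2=\tfrac12\sum_i \frac{dp_i^2}{p_i}.
\]

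\emph{The bound $g_{\mathrm{av}}\ge 2g$.} Write $g_{\mathrm{av}}=g+\tfrac12\sum_i p_i(du_i^2+dv_i^2)$. Since $p_i>0$, the AM--GM inequality $\tfrac12(a^2+b^2)\ge ab$ gives $\tfrac12\sum_i p_i(du_i^2+dv_i^2)\ge \sum_i p_i\,du_i\,dv_i=g$.

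\emph{Coordinate ellipsoid.} Let $h=(\sum_i a_i^2x_i^2)^{1/2}$. Then $X_i=a_i^2x_i/h=a_i^2y_i$, so $du_i=dv_i$. Hence $g_{\log}=g^\circ_{\log}=g+\sum_i p_i\,du_i\,dv_i=2g$, and therefore $g_{\mathrm{av}}=2g$ as well.

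The only step needing care is the computation of $\langle X,dy\rangle$ and $\langle dX,dy\rangle$, where one must use that $dX(v)$ is orthogonal to $x$. Everything else is formal manipulation of the one-forms.
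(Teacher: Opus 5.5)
Your proposal is correct and follows the paper's proof. The relation $dp_i=p_i(du_i+dv_i)$ gives the formulas for $g_{\log}$, $g_{\log}^{\circ}$ and $g_{\mathrm{av}}$. Your AM--GM step is the paper's identity $g_{\mathrm{av}}-2g=\frac12\sum_i p_i(du_i-dv_i)^2\ge 0$, and the ellipsoid case is identical. The only difference is that you derive the first three identities directly from $\langle x,dX\rangle=0$ and $\langle X,v\rangle=dh(v)$, whereas the paper cites them from \cite[p.~28]{Mil23}.
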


\begin{proof}
The first set of identities was proved in \cite[p. 28]{Mil23}. From $p_i=X_i y_i$ we have
\eq{
dp_i=p_i(du_i+dv_i).
}
Therefore
\eq{
\sum_i du_idp_i=\sum_i p_idu_i^2+\sum_i p_idu_idv_i=g+\sum_i p_idu_i^2=g_{\log}.
}
Interchanging $u_i$ and $v_i$, we obtain
\eq{
\sum_i dv_idp_i=\sum_i p_idv_idu_i+\sum_i p_idv_i^2=g+\sum_i p_idv_i^2=g_{\log}^{\circ}.
}
Now using again $dp_i=p_i(du_i+dv_i)$, we obtain
\eq{
 g_{\mathrm{av}}=\frac{1}{2}\sum_i (du_i+dv_i)dp_i=\frac{1}{2}\sum_i\frac{1}{p_i}dp_i^2.
}
Finally,
\eq{
 g_{\mathrm{av}}-2g=\frac{1}{2}\sum_i p_i(du_i+dv_i)^2-2\sum_i p_i du_idv_i=\frac{1}{2}\sum_i p_i(du_i-dv_i)^2\ge 0.
}

For a coordinate ellipsoid $\{z\in\mathbb R^n:\sum_i z_i^2/a_i^2 \le 1\}$ we have $y_i=X_i/a_i^2$ and hence $dv_i=du_i$. That is, $g_{\log}=g_{\log}^{\circ}= g_{\mathrm{av}}=2g$.
\end{proof}

\begin{proposition}\label{prop:g-log-plus-g-polar-log-is-round}
Let $\Delta^{n-1}_+=\{p=(p_1,\ldots,p_n): p_i>0,\ \sum_{i=1}^n p_i=1\}$. On $\Delta^{n-1}_+$ consider the Riemannian metric $ g_{\mathrm{av}}=\frac{1}{2}\sum_i\frac{1}{p_i}dp_i^2$. Let $\tilde{\nabla}$ be its Levi-Civita connection. Then $\left(\Delta^{n-1}_+, g_{\mathrm{av}}\right)$ is locally isometric to the round sphere of radius $\sqrt{2}$.
\end{proposition}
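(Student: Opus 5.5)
The plan is to use the substitution $p_i=s_i^2$. Since $\Delta^{n-1}_+$ is the open simplex $\{p_i>0,\ \sum_i p_i=1\}$, I will consider the map
\[
\Psi:\Delta^{n-1}_+\to \R^n,\qquad \Psi(p)=\sqrt{2}\,(\sqrt{p_1},\dots,\sqrt{p_n}).
\]
Its image lies on the sphere of radius $\sqrt{2}$, because $\sum_i(\sqrt{2}\sqrt{p_i})^2=2\sum_i p_i=2$. More precisely, the image is the open positive orthant piece of that sphere, $\{s\in\R^n: s_i>0,\ |s|=\sqrt{2}\}$. The map is a diffeomorphism onto this piece, with inverse $s\mapsto (s_1^2/2,\dots,s_n^2/2)$, and that inverse lands in $\Delta^{n-1}_+$ exactly because $|s|^2=2$.

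The key step is to check that $\Psi$ is an isometry, i.e.\ that pulling back the Euclidean metric reproduces $g_{\mathrm{av}}$. Write $s_i=\sqrt{2p_i}$. Then $ds_i=\frac{dp_i}{\sqrt{2p_i}}$, so $ds_i^2=\frac{dp_i^2}{2p_i}$. Summing over $i$ and restricting to tangent vectors of $\Delta^{n-1}_+$ (those with $\sum_i dp_i=0$) gives
\[
\Psi^*\Big(\sum_i ds_i^2\Big)=\frac12\sum_i\frac{dp_i^2}{p_i}=g_{\mathrm{av}}.
\]
The restriction to $T\Delta^{n-1}_+$ needs no special care: $g_{\mathrm{av}}$ is by definition the restriction of the ambient quadratic form to the tangent space of the simplex, and the Euclidean metric restricted to the sphere is its induced round metric.

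It follows that $(\Delta^{n-1}_+,g_{\mathrm{av}})$ is isometric to an open subset (an orthant) of the round sphere of radius $\sqrt{2}$, which is in particular the claimed local isometry. Because the Levi-Civita connection is determined by the metric, $\tilde{\nabla}$ corresponds to the round connection, and the sectional curvature is constant equal to $1/2$. I do not expect a real obstacle. The only points to state explicitly are positive definiteness of $g_{\mathrm{av}}$ on $T\Delta^{n-1}_+$ (clear since $p_i>0$) and that $\Psi$ is an immersion ($ds_i\neq 0$ whenever $dp_i\neq 0$).
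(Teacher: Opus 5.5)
Your proposal is correct and follows the same approach as the paper. The paper also uses the map $p\mapsto(\sqrt{2p_1},\ldots,\sqrt{2p_n})$ onto the positive orthant of the sphere of radius $\sqrt{2}$, and checks that the pullback of $\sum_i dz_i^2$ equals $\frac{1}{2}\sum_i\frac{1}{p_i}dp_i^2$.
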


\begin{proof}
Define
\eq{
f: \Delta^{n-1}_+\to S^{n-1}_{\sqrt{2},+}, \quad f(p)=\left(\sqrt{2p_1},\ldots,\sqrt{2p_n}\right),
}
where
\eq{
S^{n-1}_{\sqrt{2},+}=\left\{z\in\mathbb R^n:z_i>0,\ \sum_i z_i^2=2\right\}.
}
Note that $f$ is a diffeomorphism from $\Delta^{n-1}_+$ onto its image. Moreover, we have
\eq{
f^{\ast}\left(\sum_i dz_i^2\right)=\sum_i df_i^2=\frac{1}{2}\sum_i\frac{1}{p_i}dp_i^2= g_{\mathrm{av}}.
}
Thus $\left(\Delta^{n-1}_+, g_{\mathrm{av}}\right)$ is locally isometric to the round sphere of radius $\sqrt{2}$.
\end{proof}

By the affine Weingarten formula,
\eq{
D_v\bm{\xi}=dY(\mathcal{S}v)-\theta(v)\bm{\xi},
}
where $\mathcal{S}$ is called the affine shape operator and $\theta$ is called the transversal connection form. Therefore we have $\mathcal{S}=0$ and $\theta=0$. Now the Gauss equation gives $\operatorname{Rm}^{\nabla^{\log}}=0$.

The structure $(g_{\log},\nabla^{\log})$ is an equiaffine structure, i.e. $\nabla^{\log}g_{\log}$ is Codazzi and the induced volume form $dV_{\log}$ satisfies $\nabla^{\log}dV_{\log}=0$; see \cite[pp. 30--32]{NS94}. 

Let $\nabla^{\log,*}$ denote the connection conjugate to $\nabla^{\log}$ with respect to $g_{\log}$, that is,
\eq{\label{eq:log-conjugacy}
U(g_{\log}(V,W))=g_{\log}(\nabla^{\log}_UV,W)+g_{\log}(V,\nabla_U^{\log,*}W).
}
The connection $\nabla^{\log,*}$ is torsion-free; see \cite[p. 21]{NS94}.

For $F\in C^\infty(\Omega_+)$ define
\eq{
\Delta_{\log}F=\operatorname{tr}_{g_{\log}}\left((\nabla^{\log,*})^2F\right).
}

\begin{lemma}\label{lem:log-affine-volume}
We have $dV_{\log}=(X_1\cdots X_n)^{-1}\, dV_K$. Equivalently, if we set $\eta:=\sum_{i=1}^nY_i$, then
\eq{
e^{\eta}\, dV_{\log}=dV_K.
 }
\end{lemma}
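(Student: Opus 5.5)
The plan is to use the standard description of the induced volume form of a nondegenerate affine immersion with transversal field. For the immersion $Y$ with transversal $\bm{\xi}$, the induced volume form is
\[
\theta(v_1,\dots,v_{n-1})=\det\bigl(dY(v_1),\dots,dY(v_{n-1}),\bm{\xi}\bigr).
\]
The affine volume $dV_{\log}$ is the measure $|\theta|$ (up to the normalization used in \cite{NS94}). We check this normalization against the centro-affine case, where the same recipe with immersion $X$ and transversal $X$ yields $dV_K$. Concretely, for a $\bar g$-orthonormal frame $\{e_i\}$ of $T_x\Sn$ we have $dX(e_i)=\tau e_i$, so
\[
\det(\tau e_1,\dots,\tau e_{n-1},X)=\det(\tau)\,\ip{X}{x}=h\det(\tau).
\]
This holds because the first $n-1$ columns lie in $x^\perp$, so only the $x$-component $h$ of $X$ contributes. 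Hence the centro-affine volume equals $h\det(\tau)\,dx=dV_K$, and this fixes the convention consistently.

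Next we compute the logarithmic volume directly. Since $dY=P^{-1}dX$ and $\bm{\xi}=P^{-1}X$ (because $X_i\cdot X_i^{-1}=1$), we get
\[
\det\bigl(P^{-1}dX(e_1),\dots,P^{-1}dX(e_{n-1}),P^{-1}X\bigr)=\det(P^{-1})\det\bigl(dX(e_1),\dots,dX(e_{n-1}),X\bigr).
\]
This equals $(X_1\cdots X_n)^{-1}h\det(\tau)$. On $\Omega_+$ all $X_i>0$, so the sign is positive and
\[
dV_{\log}=(X_1\cdots X_n)^{-1}\,dV_K.
\]
Finally, $\eta=\sum_i\log X_i$ gives $e^{\eta}=X_1\cdots X_n$, which is the equivalent form.

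The only delicate point is confirming that the induced volume of \cite{NS94} is exactly $\det(dY(\cdot),\dots,\bm{\xi})$, with no extra factor. One might worry that a factor depending on $g_{\log}$, such as $\sqrt{\det g_{\log}}$, should enter. It does not: for equiaffine structures the induced volume form is $\theta$ itself, and it is $\nabla^{\log}$-parallel because $\theta$-form vanishes (since $D\bm{\xi}=0$). We justify this via the parallel-ness already stated before the lemma, and via the consistency check with the centro-affine case above, which identifies $dV_K$ as the analogous induced volume.
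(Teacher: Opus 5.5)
Your proposal is correct and follows the paper's proof. You factor $\det P^{-1}$ out of $\det(dY(e_1),\dots,dY(e_{n-1}),\bm{\xi})$ using $dY=P^{-1}dX$ and $\bm{\xi}=P^{-1}X$, and identify $\det(dX(e_1),\dots,dX(e_{n-1}),X)=h\det(\tau)$ as the density of $dV_K$. Your check of this last identity is a step the paper only asserts; it works because the $\tau e_i$ span $x^\perp$ and so absorb the tangential part $\bar{\nabla}h$ of $X$.
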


\begin{proof}
Let $e_1,\ldots,e_{n-1}$ be a local positively oriented $\bar{g}$-orthonormal frame. Then, using $dY=P^{-1}dX$ and $P\bm{\xi}=X$, we have
\eq{
\frac{dV_{\log}}{dx}=\det(dY(e_1),\dots,dY(e_{n-1}),\bm{\xi})=\det P^{-1}\cdot\det(dX(e_1),\dots,dX(e_{n-1}),X).
}
The last determinant is the density of the cone-volume measure of $K$.
\end{proof}

\begin{lemma}\label{lem:log-hessian-flat}
Let $\Psi\in C^2(\R^n)$ and set $\psi=\Psi\circ Y$. Then
\eq{
\label{eq:log-hessian}
(\nabla^{\log})^2\psi(v,w)=D^2\Psi(Y)[dY(v),dY(w)]-D\Psi(Y)[\bm{\xi}]\,g_{\log}(v,w).
}
In particular, we have
\eq{
\label{eq:log-hessian-eta}
(\nabla^{\log})^2\eta+n g_{\log}=0.
}
Moreover, $r(y):=\frac{1}{2}\log\left(\sum_{i=1}^n e^{2y_i}\right)$ satisfies
\eq{
\label{eq:log-hessian-r}
(\nabla^{\log})^2(r\circ Y)=D^2r(Y)[dY,dY]-g_{\log}.
}
\end{lemma}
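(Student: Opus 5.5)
The plan mirrors the proof of \autoref{lem:homogeneous-extension}, with the immersion $Y$, the connection $\nabla^{\log}$ and the transversal field $\bm{\xi}$ in place of $X$, $\nabla$ and $X$. The three claims are proved in order: first \eqref{eq:log-hessian} by a two-step differentiation, then \eqref{eq:log-hessian-eta} and \eqref{eq:log-hessian-r} by plugging in specific $\Psi$.

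\textbf{Step 1: the general formula.} Fix $v,w$ and extend them to local vector fields. The chain rule gives
\[
w(\psi)=D\Psi(Y)[dY(w)].
\]
Differentiating once more in the direction $v$ gives
\[
v(w\psi)=D^2\Psi(Y)[dY(v),dY(w)]+D\Psi(Y)[D_v dY(w)].
\]
Next substitute the Gauss formula \eqref{eq:log-gauss-formula}, $D_vdY(w)=dY(\nabla^{\log}_vw)-g_{\log}(v,w)\bm{\xi}$. The first term it produces is
\[
D\Psi(Y)[dY(\nabla^{\log}_vw)]=(\nabla^{\log}_vw)\psi.
\]
Subtracting this term yields
\[
(\nabla^{\log})^2\psi(v,w)=v(w\psi)-(\nabla^{\log}_vw)\psi=D^2\Psi(Y)[dY(v),dY(w)]-D\Psi(Y)[\bm{\xi}]\,g_{\log}(v,w),
\]
which is \eqref{eq:log-hessian}.

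\textbf{Step 2: the two special cases.} For $\eta$, take $\Psi(y)=\sum_iy_i$. This $\Psi$ is linear, so $D^2\Psi=0$, and $D\Psi[\bm{\xi}]=\sum_i1=n$. Hence $(\nabla^{\log})^2\eta=-ng_{\log}$, which is \eqref{eq:log-hessian-eta}. For $r$, compute
\[
\partial_ir=\frac{e^{2y_i}}{\sum_je^{2y_j}}.
\]
These partial derivatives sum to $1$, so $Dr[\bm{\xi}]=1$, and \eqref{eq:log-hessian} gives \eqref{eq:log-hessian-r} directly. In general $r$ is only $1$-homogeneous in the additive sense, $r(y+t\bm{\xi})=r(y)+t$, and this is exactly the property that produces the coefficient $1$. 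It is the logarithmic analogue of Euler's identity used in \autoref{lem:homogeneous-extension}.

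\textbf{Main obstacle.} There is no serious obstacle; the argument is a direct computation. The only point needing care is the identification of $D\Psi(Y)[dY(\nabla^{\log}_vw)]$ with $(\nabla^{\log}_vw)\psi$, which is just the chain rule for $\psi=\Psi\circ Y$. One should also note that $Y$ is defined only on $\Omega_+$, so all identities hold there. Finally, $\Psi\in C^2(\R^n)$ is exactly the regularity needed for the two differentiations.
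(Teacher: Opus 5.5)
Your proposal is correct and follows essentially the same route as the paper: the chain rule twice, substitution of the Gauss formula \eqref{eq:log-gauss-formula} and subtraction of $(\nabla^{\log}_vw)\psi$. You then specialize to $\Psi(y)=\sum_iy_i$ (where $D\Psi[\bm{\xi}]=n$) and to $r$ (where $Dr[\bm{\xi}]=1$).
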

\begin{proof}
By definition,
\eq{
(\nabla^{\log})^2\psi(v,w)=v(w\psi)-(\nabla^{\log}_vw)\psi.
}
Since $\psi=\Psi\circ Y$, we have
\eq{
v(w\psi)=D^2\Psi(Y)[dY(v),dY(w)]+D\Psi(Y)[D_vdY(w)].
}
Moreover, we have $(\nabla^{\log}_vw)\psi=D\Psi(Y)[dY(\nabla^{\log}_vw)]$. Hence subtracting these two equations and using \eqref{eq:log-gauss-formula} yields \eqref{eq:log-hessian}.

The identity \eqref{eq:log-hessian-eta} follows from \eqref{eq:log-hessian} with $\Psi(y)=\sum_{i=1}^ny_i$: $D^2\Psi=0$ and $D\Psi(Y)[\bm{\xi}]=n$. 

Finally, since $Dr(Y)[\bm{\xi}]=1$, applying \eqref{eq:log-hessian} to $\Psi=r$ yields \eqref{eq:log-hessian-r}.
\end{proof}

For a weight $\Phi\in C^\infty(\Omega_+)$, we set
\eq{
d\mu_\Phi=e^{-\Phi}dV_{\log},\quad \mathcal{L}_{\Phi}^{\log}F=\Delta_{\log}F-g_{\log}(\nabla^{\log}\Phi,\nabla^{\log}F).
}
Moreover, for a vector field $Z$, we define
\eq{
\operatorname{div}^{\log}_{\Phi}Z=\operatorname{div}_{\nabla^{\log}}Z-g_{\log}(\nabla^{\log}\Phi,Z).
}

\begin{lemma}\label{lem:log-weighted-integration-by-parts}
For every $u\in C_c^\infty(\Omega_+)$ and every smooth vector field $Z$ there holds
\eq{
\int_{\Omega_+}u\,\operatorname{div}^{\log}_{\Phi}Z\,d\mu_\Phi=-\int_{\Omega_+}g_{\log}(\nabla^{\log}u,Z)\,d\mu_\Phi.
}
In particular, for smooth compactly supported functions $u,v$, we have
\eq{
\int_{\Omega_+}u\,\mathcal{L}_{\Phi}^{\log}v\,d\mu_\Phi=-\int_{\Omega_+}g_{\log}(\nabla^{\log}u,\nabla^{\log}v)\,d\mu_\Phi.
}
\end{lemma}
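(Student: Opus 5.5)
The plan is to mirror the proof of the weighted integration-by-parts formula \eqref{eq:weighted-ibp} in the centro-affine setting, with $(g,\nabla,dV_K)$ replaced by $(g_{\log},\nabla^{\log},dV_{\log})$. The first and main step is the \emph{unweighted} identity
\begin{equation}
\int_{\Omega_+}f\,\operatorname{div}_{\nabla^{\log}}Z\,dV_{\log}=-\int_{\Omega_+}g_{\log}(\nabla^{\log}f,Z)\,dV_{\log},\qquad f\in C_c^\infty(\Omega_+).
\end{equation}
Here $\nabla^{\log}f$ is the $g_{\log}$-gradient, so $g_{\log}(\nabla^{\log}f,Z)=df(Z)=Z f$. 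The identity is therefore equivalent to $\int_{\Omega_+}\operatorname{div}_{\nabla^{\log}}(fZ)\,dV_{\log}=0$, using $\operatorname{div}_{\nabla^{\log}}(fZ)=f\operatorname{div}_{\nabla^{\log}}Z+Zf$; this product rule holds because $\operatorname{div}_{\nabla^{\log}}Z=\operatorname{tr}(V\mapsto\nabla^{\log}_VZ)$.

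To prove that vanishing I will use the equiaffine property $\nabla^{\log}dV_{\log}=0$ recorded above (from \cite{NS94}). For a torsion-free connection $\nabla$ and a volume form $\omega$ with $\nabla\omega=0$, one has $\mathcal{L}_Z\omega=(\operatorname{div}_\nabla Z)\,\omega$. To check this, expand $(\mathcal{L}_Z\omega)(e_1,\dots,e_{n-1})$, replace $[Z,e_j]$ by $\nabla_Ze_j-\nabla_{e_j}Z$ using torsion-freeness, and use $\nabla_Z\omega=0$. By Cartan's formula, $\mathcal{L}_Z\omega=d(\iota_Z\omega)$. Applied to the compactly supported field $fZ$, Stokes' theorem on $\Omega_+$ then gives $\int_{\Omega_+}\operatorname{div}_{\nabla^{\log}}(fZ)\,dV_{\log}=0$.

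As an alternative cross-check, one can proceed as in \autoref{lem:divnabla-divbarnabla}: in coordinates, $\operatorname{div}_{\nabla^{\log}}Z=\rho^{-1}\partial_i(\rho Z^i)$ for the density $\rho$ of $dV_{\log}$. I expect this step, namely justifying that $dV_{\log}$ is $\nabla^{\log}$-parallel so that the divergence is the divergence with respect to $dV_{\log}$, to be the only real point. It is supplied by the equiaffine structure, which in turn comes from $\theta=0$ in the Weingarten formula.

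Next comes the weighted version. Apply the unweighted identity with $f=ue^{-\Phi}$, which is compactly supported since $u$ is. Expand $g_{\log}(\nabla^{\log}(ue^{-\Phi}),Z)=e^{-\Phi}g_{\log}(\nabla^{\log}u,Z)-ue^{-\Phi}g_{\log}(\nabla^{\log}\Phi,Z)$, and move the $\Phi$-term to the left. By the definition $\operatorname{div}^{\log}_\Phi Z=\operatorname{div}_{\nabla^{\log}}Z-g_{\log}(\nabla^{\log}\Phi,Z)$, this yields the first claim, exactly as in the proof of \eqref{eq:weighted-ibp}.

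Finally, take $Z=\nabla^{\log}v$. This requires $\operatorname{div}_{\nabla^{\log}}(\nabla^{\log}v)=\Delta_{\log}v$, which is the analogue of \eqref{eq:grad-hess-star}. By the conjugacy relation \eqref{eq:log-conjugacy}, $g_{\log}(\nabla^{\log}_U\nabla^{\log}v,V)=(\nabla^{\log,*})^2v(U,V)$. Taking the $g_{\log}$-trace gives $\operatorname{div}_{\nabla^{\log}}\nabla^{\log}v=\operatorname{tr}_{g_{\log}}(\nabla^{\log,*})^2v=\Delta_{\log}v$. Hence $\operatorname{div}^{\log}_\Phi(\nabla^{\log}v)=\mathcal{L}^{\log}_\Phi v$, and the second identity follows.
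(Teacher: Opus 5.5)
Your proposal is correct and follows essentially the paper's argument: both use $\nabla^{\log}dV_{\log}=0$ and torsion-freeness to get $\int_{\Omega_+}\operatorname{div}_{\nabla^{\log}}W\,dV_{\log}=0$ for compactly supported $W$, then absorb the weight through $e^{-\Phi}$ and a product rule, and finally take $Z=\nabla^{\log}v$. The differences are cosmetic. You substitute $f=ue^{-\Phi}$ into the unweighted identity, whereas the paper applies the weighted divergence identity to $uZ$. You also spell out two steps the paper leaves implicit: the Cartan/Stokes justification, and the identity $\operatorname{div}_{\nabla^{\log}}\nabla^{\log}v=\Delta_{\log}v$ obtained via conjugacy.
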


\begin{proof}
By \cite[Chap. 2, Prop. 1.4]{NS94}, $\nabla^{\log}dV_{\log}=0$. Since $\nabla^{\log}$ is torsion-free, by the divergence theorem, whenever $W$ has compact support in $\Omega_+$, we have
\eq{
\int_{\Omega_+}\operatorname{div}_{\nabla^{\log}}W\,dV_{\log}=0.
}
Therefore, since $\operatorname{div}_{\nabla^{\log}}(e^{-\Phi}Z)=e^{-\Phi}\operatorname{div}_{\Phi}^{\log}Z$, we have $\int_{\Omega_+}\operatorname{div}_{\Phi}^{\log}Z\,d\mu_\Phi=0$ for every compactly supported vector field $Z$. Applying this to $uZ$, and using
\eq{
\operatorname{div}_{\Phi}^{\log}(uZ)=u\operatorname{div}_{\Phi}^{\log}Z+g_{\log}(\nabla^{\log}u,Z),
}
we obtain the first identity. Taking $Z=\nabla^{\log}v$, the second identity follows.
\end{proof}

\begin{lemma}\label{lem:weighted-log-bochner}
Let $\Phi\in C^\infty(\Omega_+)$. Then, for every $F\in C_c^\infty(\Omega_+)$,
\eq{
\int_{\Omega_+}\left(\mathcal{L}_{\Phi}^{\log}F\right)^2\,d\mu_\Phi=\int_{\Omega_+}\left|(\nabla^{\log,*})^2F\right|_{g_{\log}}^2\,d\mu_\Phi+\int_{\Omega_+}(\nabla^{\log})^2\Phi(\nabla^{\log}F,\nabla^{\log}F)\,d\mu_\Phi.
}
\end{lemma}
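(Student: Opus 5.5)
The plan is to copy the proof of the weighted centro-affine Bochner formula \eqref{eq:main-centro-affine-bochner-integrated} almost word for word, replacing $(g,\nabla,\nabla^*,dV_K)$ by $(g_{\log},\nabla^{\log},\nabla^{\log,*},dV_{\log})$. The one structural change is that the curvature term vanishes, since $\operatorname{Rm}^{\nabla^{\log}}=0$ gives $\Ric^{\nabla^{\log}}=0$. Throughout, $F$ has compact support in $\Omega_+$, so every integration by parts is justified by \autoref{lem:log-weighted-integration-by-parts} and produces no boundary terms.

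\emph{Step 1: pointwise identities.} First I would verify the analogues of \autoref{lem:grad-hess} for the log structure. Write $Z=\nabla^{\log}F$, the $g_{\log}$-gradient. The conjugacy relation \eqref{eq:log-conjugacy} gives
\begin{equation}
g_{\log}(\nabla^{\log}_UZ,V)=(\nabla^{\log,*})^2F(U,V),\qquad g_{\log}(V,\nabla^{\log,*}_U\nabla^{\log}\Phi)=(\nabla^{\log})^2\Phi(U,V).
\end{equation}
Two consequences follow. The trace $\tr_{g_{\log}}(\nabla^{\log}Z\circ\nabla^{\log}Z)$ equals $|(\nabla^{\log,*})^2F|_{g_{\log}}^2$, using the symmetry of $(\nabla^{\log,*})^2F$, which holds because $\nabla^{\log,*}$ is torsion-free. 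Also $\operatorname{div}_{\nabla^{\log}}Z=\Delta_{\log}F$.

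\emph{Step 2: asymmetric Bochner identity.} Next I would invoke the identity for a torsion-free connection preserving a volume form:
\begin{equation}
\operatorname{div}_{\nabla^{\log}}(\nabla^{\log}_ZZ)=Z(\operatorname{div}_{\nabla^{\log}}Z)+\Ric^{\nabla^{\log}}(Z,Z)+\tr\left((\nabla^{\log}Z)\circ(\nabla^{\log}Z)\right).
\end{equation}
This is the same identity as \cite[Prop. 5.1]{Mil23}. It only uses that $\nabla^{\log}$ is torsion-free and that $\nabla^{\log}dV_{\log}=0$, and the latter holds because the structure is equiaffine. Setting $\Ric^{\nabla^{\log}}=0$ and adding the weight exactly as in \eqref{eq:weighted-step-1}--\eqref{eq:key-rewrite} gives
\begin{equation}
\operatorname{div}^{\log}_\Phi(\nabla^{\log}_ZZ)=Z(\mathcal{L}^{\log}_\Phi F)+(\nabla^{\log})^2\Phi(Z,Z)+|(\nabla^{\log,*})^2F|^2_{g_{\log}}.
\end{equation}

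\emph{Step 3: integration.} The vector field $\nabla^{\log}_ZZ$ is compactly supported, so integrating against $d\mu_\Phi$ kills the left side by \autoref{lem:log-weighted-integration-by-parts} with $u\equiv1$. For the first term on the right, the second identity of that lemma, applied with $u=\mathcal{L}^{\log}_\Phi F$ and $v=F$, gives
\begin{equation}
\int Z(\mathcal{L}^{\log}_\Phi F)\,d\mu_\Phi=-\int(\mathcal{L}^{\log}_\Phi F)^2\,d\mu_\Phi.
\end{equation}
Rearranging yields the claimed formula.

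\emph{Main obstacle.} The only point needing care is Step 2. One must confirm that the divergence used in the Bochner identity is the one associated with $dV_{\log}$ via $\nabla^{\log}$, and that the Ricci tensor of $\nabla^{\log}$ is symmetric and zero. Both follow from flatness of $\nabla^{\log}$ together with parallelism of $dV_{\log}$. I would also double-check the vanishing of $\mathcal{S}$ and $\theta$, which comes from $D\bm{\xi}=0$ in the Weingarten formula.
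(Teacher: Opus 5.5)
Your proposal is correct and follows the paper's proof. In both, the asymmetric Bochner identity for the flat torsion-free connection $\nabla^{\log}$ (the paper cites \cite[Eq. (8)]{Opo15}, which is your Milman identity with $\Ric^{\nabla^{\log}}=0$) is rewritten via conjugacy into $|(\nabla^{\log,*})^2F|^2_{g_{\log}}$ and $(\nabla^{\log})^2\Phi(Z,Z)$, then integrated using compact support and \autoref{lem:log-weighted-integration-by-parts}; note only that the pointwise identity needs just torsion-freeness, with $\nabla^{\log}dV_{\log}=0$ entering only in the integration step.
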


\begin{proof}
For readability, write $\nabla=\nabla^{\log}$, $\nabla^*=\nabla^{\log,*}$, $g=g_{\log}$, $dV=dV_{\log}$, $\mathcal{L}=\mathcal{L}_{\Phi}^{\log}$, and $Z=\nabla F$. We first prove the asymmetric affine Bochner identity
\eq{
\operatorname{div}_{\Phi}^{\log}(\nabla_ZZ)=Z(\mathcal{L}F)+\nabla^2\Phi(Z,Z)+\left|(\nabla^*)^2F\right|_g^2.
}

Since the logarithmic affine connection $\nabla^{\log}$ is flat, \cite[Eq. (8)]{Opo15} gives
\eq{
\operatorname{div}_{\nabla}(\nabla_ZZ)=Z(\operatorname{div}_{\nabla}Z)+\operatorname{tr}_g\left((\nabla Z)\circ(\nabla Z)\right).
}
Moreover, $\operatorname{div}_{\nabla}Z=\Delta F$. The conjugacy identity \eqref{eq:log-conjugacy} yields, as in the proof of \eqref{eq:trace-hess},
\eq{
\operatorname{tr}_g\left((\nabla Z)\circ(\nabla Z)\right)=\left|(\nabla^*)^2F\right|_g^2.
}
Consequently,
\eq{
\operatorname{div}_{\nabla}(\nabla_ZZ)=Z(\Delta F)+\left|(\nabla^*)^2F\right|_g^2.
}

Now using the definition of the weighted divergence, we obtain
\eq{
\operatorname{div}_{\Phi}^{\log}(\nabla_ZZ)=Z(\Delta F)-g(\nabla\Phi,\nabla_ZZ)+\left|(\nabla^*)^2F\right|_g^2.
}
Since $\mathcal{L}F=\Delta F-g(\nabla\Phi,Z)$, we have
\eq{
Z(\mathcal{L}F)=Z(\Delta F)-Z\left(g(\nabla\Phi,Z)\right).
}
Moreover, we have
\eq{
Z\left(g(\nabla\Phi,Z)\right)=g(\nabla_ZZ,\nabla\Phi)+g(Z,\nabla_Z^*\nabla\Phi),\quad g(Z,\nabla_Z^*\nabla\Phi)=\nabla^2\Phi(Z,Z).
}
Therefore,
\eq{
Z(\Delta F)-g(\nabla\Phi,\nabla_ZZ)=Z(\mathcal{L}F)+\nabla^2\Phi(Z,Z).
}
Substituting this into the expression for $\operatorname{div}_{\Phi}^{\log}(\nabla_ZZ)$ proves the identity.

The vector field $\nabla_ZZ$ has compact support. Hence
\eq{
\int_{\Omega_+}\operatorname{div}_{\Phi}^{\log}(\nabla_ZZ)\,d\mu_\Phi=0
}
and
\eq{
0=\int_{\Omega_+}Z(\mathcal{L}F)\,d\mu_\Phi+\int_{\Omega_+}\nabla^2\Phi(Z,Z)\,d\mu_\Phi+\int_{\Omega_+}\left|(\nabla^*)^2F\right|_g^2\,d\mu_\Phi.
}
Moreover, we have
\eq{
\int_{\Omega_+}Z(\mathcal{L}F)\,d\mu_\Phi=-\int_{\Omega_+}(\mathcal{L}F)^2\,d\mu_\Phi.
}
Therefore
\eq{
0=-\int_{\Omega_+}(\mathcal{L}F)^2\,d\mu_\Phi+\int_{\Omega_+}\nabla^2\Phi(\nabla F,\nabla F)\,d\mu_\Phi+\int_{\Omega_+}\left|(\nabla^*)^2F\right|_g^2\,d\mu_\Phi.
}
Now we return to logarithmic notation.
\end{proof}

For $\alpha\in\R$, we define $\Phi_\alpha=-\eta-\alpha r\circ Y$. Then, by \autoref{lem:log-affine-volume}, $e^{-\Phi_\alpha}dV_{\log}=|X|^\alpha dV_K$. We abbreviate $\mathcal{L}_{\alpha}^{\log}:=\mathcal{L}_{\Phi_\alpha}^{\log}$. In what follows, $\Phi_0=-\eta$, so that $dV_K=e^{-\Phi_0}dV_{\log}$.

\begin{lemma}
\label{lem:div-log-weighted-equals-centro-affine}
For every smooth vector field $Z$ on $\Omega_+$, one has
\eq{
\operatorname{div}_{\Phi_0}^{\log}Z=\operatorname{div}_{\nabla}Z.
}
\end{lemma}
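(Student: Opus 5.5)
The natural route is to compare both divergences through their integration-by-parts characterizations against a common volume form. We have $dV_K=e^{\eta}dV_{\log}=e^{-\Phi_0}dV_{\log}=d\mu_{\Phi_0}$ by \autoref{lem:log-affine-volume}. The claim should then reduce to a statement about divergences defined by one fixed density.

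The key observation is that for any torsion-free connection $\nabla'$ that is equiaffine with respect to a volume form $\omega$ ($\nabla'\omega=0$), one has $\operatorname{div}_{\nabla'}Z\,\omega=d(\iota_Z\omega)$. In other words, the divergence is the $\omega$-divergence in the sense of Lie derivatives. I would apply this twice.

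\emph{Centro-affine side.} The centro-affine connection $\nabla$ is equiaffine with respect to $dV_K$. This is implicit in \autoref{lem:divnabla-divbarnabla}, which gives $(\operatorname{div}_\nabla Z)\,dV_K=\bar\nabla_i(h\det(\tau)Z^i)\,dx$, i.e. $\mathcal{L}_Z dV_K$. Hence
\eq{
(\operatorname{div}_\nabla Z)\,dV_K=d(\iota_Z dV_K).
}

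\emph{Logarithmic side.} Similarly, $\nabla^{\log}dV_{\log}=0$ gives $(\operatorname{div}_{\nabla^{\log}}W)\,dV_{\log}=d(\iota_W dV_{\log})$. Taking $W=e^{\eta}Z$, as in the proof of \autoref{lem:log-weighted-integration-by-parts}, yields
\eq{
e^{\eta}\operatorname{div}^{\log}_{\Phi_0}Z\,dV_{\log}=\operatorname{div}_{\nabla^{\log}}(e^{\eta}Z)\,dV_{\log}=d(\iota_Z e^{\eta}dV_{\log})=d(\iota_Z dV_K).
}
Comparing the two displays gives $\operatorname{div}^{\log}_{\Phi_0}Z\,dV_K=\operatorname{div}_\nabla Z\,dV_K$, which is the claim.

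Only the weighted divergence identity $\operatorname{div}_{\nabla^{\log}}(e^{-\Phi}Z)=e^{-\Phi}\operatorname{div}^{\log}_\Phi Z$ is needed here. It follows from the Leibniz rule together with $g_{\log}(\nabla^{\log}\Phi,Z)=d\Phi(Z)$.

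An alternative, if one prefers to avoid differential forms, is to test against $u\in C_c^\infty(\Omega_+)$ and use the integration-by-parts formulas on both sides. Note that $g(\nabla u,Z)=du(Z)=g_{\log}(\nabla^{\log}u,Z)$, so both pairings reduce to $-\int du(Z)\,dV_K$. Equality of the integrals for all test functions then gives pointwise equality, since both divergences are continuous. For the centro-affine side the formula is needed for compactly supported functions on $\Omega_+$; this follows from \autoref{lem:divnabla-divbarnabla} or \eqref{eq:unweighted-div-ibp}.

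The main point to double check is the identification $\operatorname{div}_{\nabla'}Z\,\omega=d(\iota_Z\omega)$ for an equiaffine torsion-free connection. The standard computation expresses $\mathcal{L}_Z\omega$ via $\nabla'_Z\omega=0$ and torsion-freeness, so it should be routine. Everything else is bookkeeping of the density $e^{\eta}=X_1\cdots X_n$.
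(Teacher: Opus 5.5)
Your argument is correct and is essentially the paper's proof written invariantly. The paper works in local coordinates, using torsion-freeness together with $\nabla^{\log}dV_{\log}=0=\nabla dV_K$ to get $(\Gamma^{\log})^i_{ik}=\partial_k\log J_{\log}$ and $\Gamma^i_{ik}=\partial_k\log J_K$; this is exactly the coordinate form of your identity $\operatorname{div}_{\nabla'}Z\,\omega=d(\iota_Z\omega)$, and it then uses $J_K=e^{-\Phi_0}J_{\log}$ just as you use $dV_K=e^{\eta}dV_{\log}$.
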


\begin{proof}
We work in local coordinates $x^1,\dots,x^{n-1}$. We write
\eq{
dV_{\log}=J_{\log}\, dx^1\cdots dx^{n-1},\quad dV_K=J_K\, dx^1\cdots dx^{n-1}.
}
Since $dV_K=e^{-\Phi_0}dV_{\log}$, we have $J_K=e^{-\Phi_0}J_{\log}$. Moreover, $\nabla^{\log}dV_{\log}=0=\nabla dV_K$ and the connections are torsion-free, therefore
\eq{
(\Gamma^{\log})^i_{ik}=\partial_k\log J_{\log},\quad \Gamma^i_{ik}=\partial_k\log J_K,
}
where $\nabla^{\log}_{\partial_j}\partial_k=\left(\Gamma^{\log}\right)^i_{jk}\partial_i$ and $\nabla_{\partial_j}\partial_k=\Gamma^i_{jk}\partial_i$.

Now let $Z=Z^i\partial_i$. By definition,
\eq{
\operatorname{div}_{\Phi_0}^{\log}Z&=\partial_iZ^i+(\Gamma^{\log})^i_{ik}Z^k-Z^k\partial_k\Phi_0\\
&=\partial_iZ^i+Z^k\partial_k\log J_{\log}-Z^k\partial_k\Phi_0\\
&=\partial_iZ^i+Z^k\partial_k\log J_K\\
&=\partial_iZ^i+\Gamma^i_{ik}Z^k=\operatorname{div}_{\nabla}Z.
}
\end{proof}

\begin{lemma}
Let $\Omega\Subset\Omega_+$ be a smooth domain. Let $\mathbf{N}_{\Omega}$ be the outward $g_{\log}$-unit normal to $\partial\Omega$ and $\mathbf{n}_{\Omega}$ be the outward $\bar{g}$-unit normal to $\partial\Omega$. Then
\eq{
\int_{\Omega}\operatorname{div}_{\Phi_0}^{\log}Z\, dV_K
=\int_{\partial\Omega}
g_{\log}(Z,\mathbf{N}_{\Omega})\,d\sigma_{\log},
}
where $d\sigma_{\log}=h\det(\tau)\,\bar{g}(\mathbf{N}_{\Omega},\mathbf{n}_{\Omega})\,d\bar\sigma$.
\end{lemma}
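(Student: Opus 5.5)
The plan is to reduce the weighted logarithmic divergence to the centro-affine divergence and then reuse the round divergence theorem, as in \autoref{lem:unweighted-divergence-eps-cap}. The first step is \autoref{lem:div-log-weighted-equals-centro-affine}, which gives $\operatorname{div}_{\Phi_0}^{\log}Z=\operatorname{div}_{\nabla}Z$. Next, \autoref{lem:divnabla-divbarnabla} gives
\[
(\operatorname{div}_{\Phi_0}^{\log}Z)\,dV_K=(\divnabla Z)\,dV_K=\bar{\nabla}_i\left(h\det(\tau)Z^i\right)dx.
\]
Integrating over the smooth domain $\Omega\Subset\Omega_+$ and applying the divergence theorem for $(\Sn,\bar g)$ then yields
\[
\int_{\Omega}\operatorname{div}_{\Phi_0}^{\log}Z\,dV_K=\int_{\partial\Omega}h\det(\tau)\,\bar g(Z,\mathbf{n}_{\Omega})\,d\bar\sigma .
\]
There are no boundary singularities, since $\overline{\Omega}$ is compact in $\Omega_+$.

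It remains to show that $\bar g(Z,\mathbf n_\Omega)=\bar g(\mathbf N_\Omega,\mathbf n_\Omega)\,g_{\log}(Z,\mathbf N_\Omega)$ along $\partial\Omega$. Let $\rho$ be a local defining function with $\Omega=\{\rho<0\}$. Then $\mathbf n_\Omega=\bar\nabla\rho/|\bar\nabla\rho|_{\bar g}$ and $\mathbf N_\Omega=\nabla^{\log}\rho/|\nabla^{\log}\rho|_{g_{\log}}$, where $\nabla^{\log}\rho$ denotes the $g_{\log}$-gradient; $g_{\log}$ is positive definite by \autoref{lem:log-affine-metric}. For any $Z$,
\[
\bar g(Z,\mathbf n_\Omega)=\frac{d\rho(Z)}{|\bar\nabla\rho|_{\bar g}},\qquad g_{\log}(Z,\mathbf N_\Omega)=\frac{d\rho(Z)}{|\nabla^{\log}\rho|_{g_{\log}}}.
\]
Hence $\bar g(Z,\mathbf n_\Omega)=c\,g_{\log}(Z,\mathbf N_\Omega)$ with $c=|\nabla^{\log}\rho|_{g_{\log}}/|\bar\nabla\rho|_{\bar g}$. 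Choosing $Z=\mathbf N_\Omega$ gives $c=\bar g(\mathbf N_\Omega,\mathbf n_\Omega)$, because $g_{\log}(\mathbf N_\Omega,\mathbf N_\Omega)=1$. This is the same argument as \eqref{eq:N-n-relation}, with $g$ replaced by $g_{\log}$.

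Substituting this relation into the boundary integral gives exactly $\int_{\partial\Omega}g_{\log}(Z,\mathbf N_\Omega)\,d\sigma_{\log}$, with $d\sigma_{\log}$ as in the statement. The only point requiring care is confirming that \autoref{lem:div-log-weighted-equals-centro-affine} holds pointwise on all of $\Omega_+$, so that the identity holds up to $\partial\Omega$. That lemma is a local coordinate identity, so this is immediate, and I expect no genuine obstacle.
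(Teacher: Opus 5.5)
Your proposal is correct and follows the paper's proof: you reduce $\operatorname{div}^{\log}_{\Phi_0}$ to $\operatorname{div}_{\nabla}$, write $(\operatorname{div}_{\nabla}Z)\,dV_K=\bar{\nabla}_i\bigl(h\det(\tau)Z^i\bigr)\,dx$, apply the round divergence theorem, and convert using $\bar g(Z,\mathbf n_\Omega)=g_{\log}(Z,\mathbf N_\Omega)\,\bar g(\mathbf N_\Omega,\mathbf n_\Omega)$. The paper simply asserts that last identity, and your defining-function argument is a valid justification of it.
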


\begin{proof}
By \autoref{lem:div-log-weighted-equals-centro-affine} and \autoref{lem:divnabla-divbarnabla}, we have
\eq{
\operatorname{div}_{\nabla}Z\, dV_K
=\bar{\nabla}_i\left(h\det\left(\tau\right)Z^i\right)\, dx.
}
Applying the standard divergence theorem on $\left(\Sn,\bar{g}\right)$,
\eq{
\int_{\Omega}\operatorname{div}^{\log}_{\Phi_{0}}Z\, dV_K
=\int_{\partial\Omega}
h\det\left(\tau\right)\bar{g}\left(Z,\mathbf{n}_{\Omega}\right)\,d\bar{\sigma}.
}
Now the claim follows from the identity
\eq{
\bar{g}\left(Z,\mathbf{n}_{\Omega}\right)
=g_{\log}\left(Z,\mathbf{N}_{\Omega}\right)
\bar{g}\left(\mathbf{N}_{\Omega},\mathbf{n}_{\Omega}\right).
}
\end{proof}

We now derive the corresponding Bochner formula on smooth subdomains of $\Omega_+$. 

\begin{theorem}\label{thm:log-bochner-domain}
Let $\Omega\Subset\Omega_+$ be a smooth domain. For $u\in C^3(\overline\Omega)$, set $Z=\nabla^{\log}u$. Then
\eq{
&\int_{\Omega}\left((\mathcal{L}_0^{\log}u)^2-\left|(\nabla^{\log,*})^2u\right|_{g_{\log}}^2-n|\nabla^{\log}u|_{g_{\log}}^2\right)\, dV_K\\
&=\int_{\partial\Omega}\left((\mathcal{L}_0^{\log}u)u_{\mathbf{N}_{\Omega}}-g_{\log}(\nabla_Z^{\log}Z,\mathbf{N}_{\Omega})\right)\,d\sigma_{\log}.
}
Here $u_{\mathbf{N}_{\Omega}}=g_{\log}(\nabla^{\log}u,\mathbf{N}_{\Omega})$. In particular, if $u_{\mathbf{N}_{\Omega}}=0$ on $\partial\Omega$, then
\eq{
\int_{\Omega}\left((\mathcal{L}_0^{\log}u)^2-\left|(\nabla^{\log,*})^2u\right|_{g_{\log}}^2-n|\nabla^{\log}u|_{g_{\log}}^2\right)\, dV_K=\int_{\partial\Omega}\mathrm{II}_{\log}^{*}(\nabla^{\log}u,\nabla^{\log}u)\,d\sigma_{\log},
}
where $\mathrm{II}_{\log}^{*}(U,V):=g_{\log}(U,\nabla_V^{\log,*}\mathbf{N}_{\Omega})$ for tangent vector fields $U,V$ along $\partial\Omega$.
\end{theorem}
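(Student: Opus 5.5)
We follow the proof of \autoref{thm:R-type-Neumann-eps}, now in the logarithmic structure with weight $\Phi_0=-\eta$, so that $d\mu_{\Phi_0}=e^{\eta}dV_{\log}=dV_K$ by \autoref{lem:log-affine-volume}. Write $\nabla=\nabla^{\log}$, $g=g_{\log}$ and $Z=\nabla u$. The proof of \autoref{lem:weighted-log-bochner} gives, before any integration, the pointwise asymmetric Bochner identity
\begin{equation*}
\operatorname{div}^{\log}_{\Phi_0}(\nabla_ZZ)=Z(\mathcal{L}_0^{\log}u)+(\nabla^{\log})^2\Phi_0(Z,Z)+\left|(\nabla^{\log,*})^2u\right|_{g_{\log}}^2 .
\end{equation*}
This identity uses only flatness of $\nabla^{\log}$ and the conjugacy relation \eqref{eq:log-conjugacy}. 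The Hessian identity \eqref{eq:log-hessian-eta} gives $(\nabla^{\log})^2\Phi_0=-(\nabla^{\log})^2\eta=n\,g_{\log}$, so the curvature-type term becomes $n|\nabla^{\log}u|^2_{g_{\log}}$.

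Next we integrate this identity over $\Omega$ against $dV_K$, using the boundary divergence lemma just proved: $\int_\Omega \operatorname{div}^{\log}_{\Phi_0}W\,dV_K=\int_{\partial\Omega}g_{\log}(W,\mathbf{N}_\Omega)\,d\sigma_{\log}$.

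\begin{enumerate}
\item Taking $W=\nabla_ZZ$ gives the left-hand side as the boundary term $\int_{\partial\Omega}g_{\log}(\nabla_ZZ,\mathbf{N}_\Omega)\,d\sigma_{\log}$.
\item For the term $Z(\mathcal{L}_0^{\log}u)$ we take $W=(\mathcal{L}_0^{\log}u)Z$. The product rule $\operatorname{div}^{\log}_{\Phi_0}(fZ)=f\operatorname{div}^{\log}_{\Phi_0}Z+g_{\log}(\nabla f,Z)$ holds, and $\operatorname{div}^{\log}_{\Phi_0}Z=\mathcal{L}_0^{\log}u$ by definition. Hence
\begin{equation*}
\int_\Omega Z(\mathcal{L}_0^{\log}u)\,dV_K=-\int_\Omega(\mathcal{L}_0^{\log}u)^2\,dV_K+\int_{\partial\Omega}(\mathcal{L}_0^{\log}u)\,u_{\mathbf{N}_\Omega}\,d\sigma_{\log}.
\end{equation*}
\end{enumerate}

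Substituting both into the integrated identity and rearranging gives exactly the first displayed formula. Since $u\in C^3(\overline\Omega)$ and $\Omega\Subset\Omega_+$, all quantities are smooth up to the boundary, so no regularity issues arise.

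For the Neumann case, $u_{\mathbf{N}_\Omega}=0$ kills the first boundary term and makes $Z$ tangent to $\partial\Omega$. Differentiating $g_{\log}(Z,\mathbf{N}_\Omega)=0$ along $Z$ and using \eqref{eq:log-conjugacy} gives
\begin{equation*}
-g_{\log}(\nabla^{\log}_ZZ,\mathbf{N}_\Omega)=g_{\log}(Z,\nabla^{\log,*}_Z\mathbf{N}_\Omega)=\mathrm{II}^*_{\log}(Z,Z).
\end{equation*}
Here $\mathbf{N}_\Omega$ must be extended smoothly to a neighborhood; the value on the boundary is independent of the extension because $Z$ is tangent.

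The only delicate point is the sign and normalization bookkeeping in the boundary measure $d\sigma_{\log}$. We must confirm that the same measure appears for both boundary terms, which holds because both come from the single divergence lemma. We also need that $\Phi_0$ is exactly the potential making $\operatorname{div}^{\log}_{\Phi_0}$ coincide with $\operatorname{div}_\nabla$, which is \autoref{lem:div-log-weighted-equals-centro-affine}. Everything else is a direct transcription of the centro-affine argument.
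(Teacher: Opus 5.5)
Your proposal is correct and follows essentially the same argument as the paper. Both use the pointwise weighted Bochner identity from \autoref{lem:weighted-log-bochner} with $(\nabla^{\log})^2\Phi_0=n\,g_{\log}$, integrate over $\Omega$ via the boundary divergence lemma, integrate $Z(\mathcal{L}_0^{\log}u)$ by parts to get the $(\mathcal{L}_0^{\log}u)u_{\mathbf{N}_\Omega}$ boundary term, and in the Neumann case use the conjugacy identity \eqref{eq:log-conjugacy} to identify $-g_{\log}(\nabla^{\log}_ZZ,\mathbf{N}_\Omega)$ with $\mathrm{II}^*_{\log}(Z,Z)$; your remarks on extending $\mathbf{N}_\Omega$ and on regularity up to the boundary are accurate details the paper leaves implicit.
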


\begin{proof}
The pointwise weighted Bochner identity from the proof of \autoref{lem:weighted-log-bochner}, applied with $\Phi=\Phi_0$, yields
\eq{
\operatorname{div}_{\Phi_0}^{\log}(\nabla_Z^{\log}Z)=Z(\mathcal{L}_0^{\log}u)+n|\nabla^{\log}u|_{g_{\log}}^2+\left|(\nabla^{\log,*})^2u\right|_{g_{\log}}^2.
}
Here we used $(\nabla^{\log})^2\Phi_0=ng_{\log}$. Integrating over $\Omega$, we obtain
\eq{
\int_{\partial\Omega}g_{\log}(\nabla_Z^{\log}Z,\mathbf{N}_{\Omega})\,d\sigma_{\log}=\int_{\Omega}Z(\mathcal{L}_0^{\log}u)\, dV_K+n\int_{\Omega}|\nabla^{\log}u|_{g_{\log}}^2\, dV_K+\int_{\Omega}\left|(\nabla^{\log,*})^2u\right|_{g_{\log}}^2\, dV_K.
}
Moreover, we have
\eq{
\int_{\Omega}Z(\mathcal{L}_0^{\log}u)\, dV_K=-\int_{\Omega}(\mathcal{L}_0^{\log}u)^2\, dV_K+\int_{\partial\Omega}(\mathcal{L}_0^{\log}u)u_{\mathbf{N}_{\Omega}}\,d\sigma_{\log}.
}
Hence the first formula follows.

If $u_{\mathbf{N}_{\Omega}}=0$ on $\partial\Omega$, then $Z=\nabla^{\log}u$ is tangent to $\partial\Omega$. Since $g_{\log}(Z,\mathbf{N}_{\Omega})=0$ along $\partial\Omega$, differentiating tangentially in the direction $Z$ and using \eqref{eq:log-conjugacy}, we obtain
\eq{
0=Z(g_{\log}(Z,\mathbf{N}_{\Omega}))=g_{\log}(\nabla_Z^{\log}Z,\mathbf{N}_{\Omega})+g_{\log}(Z,\nabla_Z^{\log,*}\mathbf{N}_{\Omega}).
}
Thus
\eq{
-g_{\log}(\nabla_Z^{\log}Z,\mathbf{N}_{\Omega})=g_{\log}(Z,\nabla_Z^{\log,*}\mathbf{N}_{\Omega})=\mathrm{II}_{\log}^{*}(Z,Z),
}
and the second identity follows.
\end{proof}

\begin{lemma}\label{lem:log-boundary-2nd-fund-form}
For $\varepsilon>0$, suppose $c$ is a regular value of
\eq{
q_{\varepsilon}=-\varepsilon\log\left(\sum_{i=1}^n e^{-Y_i/\varepsilon}\right).
}
Set $\Omega_{\varepsilon}=\{q_{\varepsilon}>c\}\Subset\Omega_+$ and let
\eq{
\mathbf{N}_{\varepsilon}=-\frac{\nabla^{\log}q_{\varepsilon}}{\abs{\nabla^{\log}q_{\varepsilon}}_{g_{\log}}}
}
be the outward $g_{\log}$-unit normal to $\partial\Omega_{\varepsilon}$.
 Then, for every tangent vector $Z\in T\partial\Omega_{\varepsilon}$,
\eq{
\mathrm{II}_{\log,\varepsilon}^{*}(Z,Z)=-\frac{(\nabla^{\log})^2q_{\varepsilon}(Z,Z)}{|\nabla^{\log}q_{\varepsilon}|_{g_{\log}}}\ge 0.
}
\end{lemma}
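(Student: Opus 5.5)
The plan is to repeat the argument used for \autoref{lem:boundary-sign-eps} and for the intersection-of-caps approximation in \autoref{thm:intersection-caps-poincare-ineq}, now with the flat logarithmic structure $(g_{\log},\nabla^{\log})$. The proof has two steps: a pointwise identity for $\mathrm{II}^*_{\log,\varepsilon}$, followed by a concavity estimate for $(\nabla^{\log})^2 q_\varepsilon$. Throughout, the roles of $\ell_{w_i}$ and $\nabla$ are taken by $Y_i$ and $\nabla^{\log}$.

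\textbf{Step 1: the identity.} Let $Z\in T\partial\Omega_\varepsilon$. Since $q_\varepsilon\equiv c$ on $\partial\Omega_\varepsilon$, we have $Zq_\varepsilon=g_{\log}(\nabla^{\log}q_\varepsilon,Z)=0$. Extend $Z$ tangentially and compute
\[
\mathrm{II}^*_{\log,\varepsilon}(Z,Z)=g_{\log}(Z,\nabla^{\log,*}_Z\mathbf{N}_\varepsilon).
\]
The derivative of the normalizing factor $|\nabla^{\log}q_\varepsilon|^{-1}_{g_{\log}}$ multiplies $g_{\log}(Z,\nabla^{\log}q_\varepsilon)=0$, so that term drops out. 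What remains is $-|\nabla^{\log}q_\varepsilon|^{-1}_{g_{\log}}\,g_{\log}(Z,\nabla^{\log,*}_Z\nabla^{\log}q_\varepsilon)$. By the logarithmic analogue of \eqref{eq:grad-hess-plain}, this equals $-|\nabla^{\log}q_\varepsilon|^{-1}_{g_{\log}}\,(\nabla^{\log})^2q_\varepsilon(Z,Z)$. That analogue is proved exactly as in \autoref{lem:grad-hess}, using the conjugacy \eqref{eq:log-conjugacy}. The identity is also well defined: $\nabla^{\log}q_\varepsilon\ne0$ on $\partial\Omega_\varepsilon$ because $c$ is a regular value.

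\textbf{Step 2: the sign.} It remains to show that $(\nabla^{\log})^2 q_\varepsilon\le 0$. Write $q_\varepsilon=\Psi\circ Y$ with $\Psi(y)=-\varepsilon\log\sum_i e^{-y_i/\varepsilon}$ and apply \eqref{eq:log-hessian} from \autoref{lem:log-hessian-flat}.

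First, $\Psi$ is a soft-min of the coordinates, which is concave: its Hessian is $-\frac1\varepsilon\big(\operatorname{diag}(\theta)-\theta\otimes\theta\big)\le 0$ with $\theta_i=e^{-y_i/\varepsilon}/\sum_j e^{-y_j/\varepsilon}$, by Cauchy--Schwarz as in \eqref{eq:hess-Q-epsilon}. Hence $D^2\Psi(Y)[dY,dY]\le0$.

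Second, $D\Psi(Y)[\bm{\xi}]=\sum_i\theta_i=1$, so the remaining term in \eqref{eq:log-hessian} is $-g_{\log}$, which is $\le 0$ because $g_{\log}>g>0$ by \autoref{lem:log-affine-metric}.

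Together these give $(\nabla^{\log})^2q_\varepsilon\le -g_{\log}<0$, and hence $\mathrm{II}^*_{\log,\varepsilon}(Z,Z)\ge |Z|^2_{g_{\log}}/|\nabla^{\log}q_\varepsilon|_{g_{\log}}\ge0$.

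\textbf{Main obstacle.} The only delicate point is justifying the logarithmic version of \eqref{eq:grad-hess-plain} and the cancellation of the derivative of the normalizing factor. Both follow formally from the conjugacy identity, since $\nabla^{\log,*}$ is torsion-free and conjugate to $\nabla^{\log}$ with respect to $g_{\log}$. The rest is a direct application of \autoref{lem:log-hessian-flat}. Notably, no flatness or curvature input is needed for this lemma beyond \eqref{eq:log-hessian}.
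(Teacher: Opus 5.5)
Your proposal is correct and follows essentially the paper's proof. The paper expands $(\nabla^{\log})^2q_\varepsilon$ by the chain rule from $(\nabla^{\log})^2Y_i=-g_{\log}$, where you apply \eqref{eq:log-hessian} directly to the soft-min $\Psi$; this produces the same formula, the same bound $(\nabla^{\log})^2q_\varepsilon\le -g_{\log}$, and the same conjugacy argument for the identity $\mathrm{II}^*_{\log,\varepsilon}(Z,Z)=-(\nabla^{\log})^2q_\varepsilon(Z,Z)/|\nabla^{\log}q_\varepsilon|_{g_{\log}}$.
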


\begin{proof}
Since $(\nabla^{\log})^2Y_i=-g_{\log}$ (see \eqref{eq:log-hessian}), direct differentiation gives
\eq{\label{eq:approximation-log-centro-affine}
dq_{\varepsilon}&=\sum_{i=1}^n\theta_i dY_i,\\
(\nabla^{\log})^2q_{\varepsilon}&=-g_{\log}-\frac1\varepsilon\left(\sum_{i=1}^n\theta_i dY_i\otimes dY_i-\sum_{i,j=1}^n\theta_i\theta_j dY_i\otimes dY_j\right),
}
where $\theta_i=\frac{e^{-Y_i/\varepsilon}}{\sum_{j=1}^n e^{-Y_j/\varepsilon}}$. The tensor in parentheses is non-negative definite, and therefore
\eq{
(\nabla^{\log})^2q_{\varepsilon}\le -g_{\log}\le 0.
}

Note that $dq_{\varepsilon}(Z)=0$, that is, $g_{\log}(Z,\nabla^{\log}q_{\varepsilon})=0$. Now using \eqref{eq:log-conjugacy},
\eq{
\mathrm{II}_{\log,\varepsilon}^{*}(Z,Z)&=g_{\log}(Z,\nabla_Z^{\log,*}\mathbf{N}_{\varepsilon})=-\frac{g_{\log}(Z,\nabla_Z^{\log,*}\nabla^{\log}q_{\varepsilon})}{|\nabla^{\log}q_{\varepsilon}|_{g_{\log}}}=-\frac{(\nabla^{\log})^2q_{\varepsilon}(Z,Z)}{|\nabla^{\log}q_{\varepsilon}|_{g_{\log}}}\ge 0.
}
\end{proof}

The proof of \autoref{thm:log-proof-unconditional-poincare} extends verbatim to the weighted measures $|X|^{q-n}dV_K$ for all $q\in(0,n)$. For simplicity, we present the details only in the case $q=n$. We mention that the inequality \eqref{ineq:unconditional-centro-affine-n}, without the equality characterization, was proved by Kolesnikov--Milman in \cite{KM22}. 

\begin{theorem}\label{thm:log-proof-unconditional-poincare}
Let $K\subset\R^n$ be a $C^2_+$ unconditional convex body, and let $F\in C^1(\Sn)$ be unconditional. Then
\eq{
n\int_{\Omega_+}(F-\bar{F}_+)^2\, dV_K\le \int_{\Omega_+}|\nabla^{\log}F|_{g_{\log}}^2\, dV_K\le \int_{\Omega_+}|\nabla F|_{g}^2\, dV_K,\quad \bar{F}_+=\frac{\int_{\Omega_+}F\, dV_K}{\int_{\Omega_+}dV_K}.
}
In particular
\eq{\label{ineq:unconditional-centro-affine-n}
n\int_{\Sn}(F-\bar{F})^2\, dV_K\le \int_{\Sn}|\nabla F|_g^2\, dV_K,\quad \bar{F}=\frac{\int_{\Sn}F\, dV_K}{\int_{\Sn}dV_K}.
}
Moreover, equality in \eqref{ineq:unconditional-centro-affine-n} holds only for constant functions.
\end{theorem}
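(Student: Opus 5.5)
We follow the same scheme as the proof of \autoref{thm:intersection-caps-poincare-ineq}, now in the logarithmic structure $(g_{\log},\nabla^{\log})$ with weight $\Phi_0=-\eta$, so that $e^{-\Phi_0}dV_{\log}=dV_K$ by \autoref{lem:log-affine-volume}. By approximation we may assume $K$ is $C^\infty_+$ and $F\in C^\infty$. For small $\varepsilon>0$, pick by Sard a regular value $c(\varepsilon)$ of $q_\varepsilon=-\varepsilon\log\sum_i e^{-Y_i/\varepsilon}$ with $c(\varepsilon)\to-\infty$ slowly; for instance $c(\varepsilon)\in(-1/\varepsilon,-1/\varepsilon+\varepsilon)$. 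Set $\Omega_\varepsilon=\{q_\varepsilon>c(\varepsilon)\}$. Then $\Omega_\varepsilon\Subset\Omega_+$, since $q_\varepsilon\le\min_i Y_i$ forces $X_i>e^{c}$, and $\mathbf 1_{\Omega_\varepsilon}\to\mathbf 1_{\Omega_+}$. Connectedness follows as in \autoref{claim:Omega-eps-connected}. In log coordinates the segment $(1-t)Y_0+tY_1$ corresponds to a point below the geometric mean $X_0^{1-t}X_1^t$, which lies in $K_+$ by convexity of $A=\{\log x\in K_+\}$; radial pushing outward only increases the $Y_i$, and $q_\varepsilon$ is concave in $Y$.

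Let $\bar F_\varepsilon$ be the $dV_K$-mean of $F$ on $\Omega_\varepsilon$. Solve the Neumann problem $\mathcal L_0^{\log}u_\varepsilon=-(F-\bar F_\varepsilon)$ in $\Omega_\varepsilon$ with $g_{\log}(\nabla^{\log}u_\varepsilon,\mathbf N_\varepsilon)=0$, exactly as in \autoref{lem:neumann-eps}. Here we use that $\mathcal L_0^{\log}$ is uniformly elliptic on $\overline{\Omega_\varepsilon}$ and, by \autoref{lem:div-log-weighted-equals-centro-affine}, equals the divergence-form operator $u\mapsto\operatorname{div}_\nabla(\cdot)$ applied to the $g_{\log}$-gradient, which is self-adjoint with respect to $dV_K$. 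Then apply \autoref{thm:log-bochner-domain}. The boundary term is $\ge0$ by \autoref{lem:log-boundary-2nd-fund-form}, and Cauchy--Schwarz gives $|(\nabla^{\log,*})^2u|^2\ge\frac1{n-1}(\Delta_{\log}u)^2$. Now $\Delta_{\log}u=\mathcal L_0^{\log}u-g_{\log}(\nabla^{\log}\eta,\nabla^{\log}u)$, and the weight term must be absorbed; this is the delicate step, since the unweighted curvature term is already $n|\nabla^{\log} u|^2$ and we cannot also use a finite-$N$ loss.

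My first attempt drops the trace part and uses only $\int(\mathcal L_0^{\log}u)^2\ge n\int|\nabla^{\log}u|^2_{g_{\log}}$ with $dV_K$. Combined with $\int(F-\bar F_\varepsilon)^2=\int g_{\log}(\nabla^{\log}u,\nabla^{\log}F)$ and Cauchy--Schwarz, as in \autoref{thm:cap-poincare-from-epsilon}, this gives $n\int_{\Omega_\varepsilon}(F-\bar F_\varepsilon)^2dV_K\le\int_{\Omega_\varepsilon}|\nabla^{\log}F|^2_{g_{\log}}dV_K$. Letting $\varepsilon\to0$ by dominated convergence yields the first inequality. The second follows pointwise from $g_{\log}\ge g$ (\autoref{lem:log-affine-metric}): the dual norms satisfy $|dF|_{g_{\log}}\le|dF|_g$. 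The whole-sphere statement then follows because an unconditional $F$ has $\bar F=\bar F_+$, and, by \autoref{lem:sign-xi-Xi} and unconditionality, all $2^n$ orthants contribute equally, with the coordinate hyperplanes being null sets.

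For equality in \eqref{ineq:unconditional-centro-affine-n}, both inequalities on $\Omega_+$ must be equalities. Equality in the second one forces $|dF|_{g_{\log}}=|dF|_g$ a.e. Since $g_{\log}-g=\sum_i p_i\,du_i^2$ is positive definite on $T\Omega_+$ (by injectivity of $dX$), we have $|dF|^2_g-|dF|^2_{g_{\log}}>0$ wherever $dF\ne0$. Hence $dF=0$ on $\Omega_+$, and $F$ is constant. The main obstacle I anticipate is justifying the approximation: checking $\Omega_\varepsilon$ exhausts $\Omega_+$ with $dV_K$-finite limits and that the oblique Neumann solvability transfers with $g_{\log}$-normals. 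The $\mathrm{II}^*$ sign is already provided.
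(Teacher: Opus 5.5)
Your proposal is correct and follows the paper's proof essentially step by step: approximating domains $\{q_\varepsilon>c\}\Subset\Omega_+$, the Neumann problem for $\mathcal{L}_0^{\log}$, the logarithmic Bochner formula with $\mathrm{II}^*_{\log}\ge 0$, Cauchy--Schwarz, $g_{\log}\ge g$, summing over orthants, and the strict inequality $g_{\log}>g$ for the equality case. The ``delicate step'' you flag is not actually an obstacle, because $\nabla^{\log}$ is flat and $(\nabla^{\log})^2\Phi_0=ng_{\log}$, so simply dropping $|(\nabla^{\log,*})^2u|^2_{g_{\log}}\ge 0$ already gives the constant $n$; this is exactly what the paper does, and it is the argument you end up using.
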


\begin{proof}
By approximation, we may assume that $F\in C^\infty(\Sn)$ and $K$ is $C^{\infty}_+$. On $\Omega_+$,
\eq{
q(x)-\varepsilon\log n\le q_{\varepsilon}(x)\le q(x),\quad \text{where } q=\min_iY_i.
}

Let $\varepsilon_j\to 0$ and fix $x_0\in\Omega_+$. Since $q_{\varepsilon_j}(x_0)\to q(x_0)>-\infty$, there exists $j_0$ such that $q_{\varepsilon_j}(x_0)>-j$ for every $j\ge j_0$. Hence, for every $j\ge j_0$, by Sard's theorem we can choose a regular value $c_j\in(-j-1,-j)$ of $q_{\varepsilon_j}$. Now set $\Omega_j=\{x\in\Omega_+:\ q_{\varepsilon_j}(x)>c_j\}$, which is a non-empty smooth domain. Since $q_{\varepsilon_j}\le q$, the condition $q_{\varepsilon_j}>c_j$ implies $Y_i>c_j$ for every $i$, equivalently $X_i>e^{c_j}$ for every $i$; hence $\Omega_j\Subset\Omega_+$. Moreover, $\Omega_j$ is connected; the proof is similar to that of \autoref{claim:Omega-eps-connected}. Finally, it is easy to see that $\mathbf{1}_{\Omega_j}\to\mathbf{1}_{\Omega_+}$ pointwise.

Let $\mathbf{N}_j$ be the outward $g_{\log}$-unit normal to $\partial\Omega_j$. For each $j$, we define
\eq{
\bar{F}_j=\frac{\int_{\Omega_j}F\, dV_K}{\int_{\Omega_j}dV_K},\quad f_j=F-\bar{F}_j.
}
Since $\mathcal{L}_0^{\log}$ is uniformly elliptic on $\Omega_j$, and since the compatibility condition holds, there exists a smooth solution $u_j$, unique up to an additive constant, of
\eq{
\mathcal{L}_0^{\log}u_j=-f_j\quad\text{in }\Omega_j,\quad (u_j)_{\mathbf{N}_j}=0 \quad\text{on }\partial\Omega_j.
}
See the proof of \autoref{lem:neumann-eps} for more details.

Due to the Bochner formula of \autoref{thm:log-bochner-domain}, together with \autoref{lem:log-boundary-2nd-fund-form},
\eq{
\int_{\Omega_j}f_j^2\, dV_K=\int_{\Omega_j}(\mathcal{L}_0^{\log}u_j)^2\, dV_K\ge n\int_{\Omega_j}|\nabla^{\log}u_j|_{g_{\log}}^2\, dV_K.
}
On the other hand, integration-by-parts, the Neumann condition, and the Cauchy--Schwarz inequality imply
\eq{
n\int_{\Omega_j}(F-\bar{F}_j)^2\, dV_K \le \int_{\Omega_j}|\nabla^{\log}F|_{g_{\log}}^2\, dV_K;
}
see the proof of \autoref{thm:cap-poincare-from-epsilon} for more details.

By \autoref{lem:log-affine-metric}, $g_{\log}^{-1}\le g^{-1}$. Therefore
\eq{
n\int_{\Omega_j}(F-\bar{F}_j)^2\, dV_K\le\int_{\Omega_j}|\nabla^{\log}F|_{g_{\log}}^2\, dV_K
\le \int_{\Omega_j}|\nabla F|_g^2\, dV_K\le \int_{\Omega_+}|\nabla F|_g^2\, dV_K.
}
Letting $j\to \infty$ and using the dominated convergence theorem, we conclude that
\eq{
n\int_{\Omega_+}(F-\bar{F}_+)^2\, dV_K\le\int_{\Omega_+}|\nabla^{\log}F|_{g_{\log}}^2\, dV_K\le  \int_{\Omega_+}|\nabla F|_{g}^2\, dV_K.
}
Since $K$ and $F$ are unconditional, summing over all orthants implies the non-strict version of the inequality on $\Sn$. 

Now suppose $F\in C^1(\Sn)$ and $K$ is $C^2_+$. On $\Omega_+$, due to \autoref{lem:log-affine-metric}, we have $g_{\log}>g$, and hence
\eq{
|\nabla^{\log}F|_{g_{\log}}^2<|\nabla F|_g^2
}
at every point where $dF\neq 0$. Therefore, if $F$ is non-constant on $\Omega_+$, then
\eq{
\int_{\Omega_+}|\nabla^{\log}F|_{g_{\log}}^2\, dV_K< \int_{\Omega_+}|\nabla F|_{g}^2\, dV_K.
}
Thus equality in the theorem can occur only if $F$ is constant on $\Omega_+$. By unconditionality and continuity, this implies that $F$ is constant on $\Sn$.
\end{proof}

\begin{theorem}\label{thm:log-poincare-equality-case}
Let $K\subset\R^n$ be a $C^{\infty}_+$ unconditional convex body.
The equality in
\eq{
n\int_{\Omega_+}(F-\bar{F}_+)^2\, dV_K\le\int_{\Omega_+}|\nabla^{\log}F|_{g_{\log}}^2\, dV_K
}
is attained by the functions
\eq{
F_i=\frac{x_iX_i}{h}-\frac{1}{n},\quad i=1,\ldots,n.
}
\end{theorem}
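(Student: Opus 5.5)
The plan is to show that each $F_i$ is an eigenfunction of the weighted logarithmic Laplacian $\mathcal{L}_0^{\log}=\Delta_{\log}+g_{\log}(\nabla^{\log}\eta,\nabla^{\log}\cdot)$ with eigenvalue $-n$. Equality in the Poincaré inequality then follows by integrating by parts against $dV_K=e^{\eta}dV_{\log}$. Write $p_i=x_iX_i/h$, so $F_i=p_i-\frac1n$. Since $\sum_i x_iX_i=\langle x,X\rangle=h$, we have $\sum_ip_i=1$.

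\textbf{Step 1: the eigenfunction equation.} I will establish $\mathcal{L}_0^{\log}p_i+np_i=1$. The cleanest route goes through \autoref{prop:g-log-g-polar-log}, where $g_{\log}=\sum_j du_j\,dp_j$ with $u_j=Y_j$. Hence $g_{\log}(\nabla^{\log}p_i,V)=dp_i(V)$, and for any function $\phi$ we get $g_{\log}(\nabla^{\log}\phi,\cdot)=d\phi=\sum_j(\partial\ldots)$. More usefully, the identity says that the vector field $\nabla^{\log}$-dual structure makes $\nabla^{\log}Y_j$ and $dp_i$ pair canonically. Concretely, I would compute $\mathcal{L}_0^{\log}p_i=\operatorname{div}_{\Phi_0}^{\log}(\nabla^{\log}p_i)=\operatorname{div}_\nabla(\nabla^{\log}p_i)$ using \autoref{lem:div-log-weighted-equals-centro-affine}, and then evaluate the divergence in round coordinates via \autoref{lem:divnabla-divbarnabla}.

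An alternative, more intrinsic route is probably easier. By \eqref{eq:log-gauss-formula}, the ambient position $Y$ satisfies $(\nabla^{\log})^2Y_j=-g_{\log}$, by \eqref{eq:log-hessian}. The conormal $\bm\xi^*=(p_1,\dots,p_n)$ satisfies $\langle\bm\xi^*,dY\rangle=\sum_jp_j\,du_j=0$ and $\langle\bm\xi^*,\bm\xi\rangle=1$. Standard affine-geometry identities for a conormal of a flat equiaffine immersion with $\mathcal S=0$ (see \cite{NS94}) give $\langle d\bm\xi^*(V),dY(W)\rangle=-g_{\log}(V,W)$. This shows that $\nabla^{\log}p_i$ is the tangential field corresponding to $-E_i$. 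Taking the $\nabla^{\log,*}$-Hessian and its trace, the conormal equation $\Delta_{\log}\bm\xi^*=-(n-1)\bm\xi^*+(\text{transversal term})$ follows. The weight $\eta=\sum_jY_j$ contributes $g_{\log}(\nabla^{\log}\eta,\nabla^{\log}p_i)=dp_i(\nabla^{\log}\eta)$, which together with the constraint $\sum_ip_i=1$ produces the constant $1$ and the coefficient $n$. I expect this bookkeeping to be the main obstacle. To be safe, I would verify the final identity first on a coordinate ellipsoid, where \autoref{prop:g-log-g-polar-log} gives $g_{\log}=2g$.

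\textbf{Step 2: equality.} Given $\mathcal{L}_0^{\log}F_i=-nF_i$ on $\Omega_+$, integrating $F_i$ against $dV_K$ gives
\[
n\int_{\Omega_+}F_i\,dV_K=-\int_{\Omega_+}\mathcal{L}_0^{\log}F_i\,dV_K=-\int_{\partial\Omega_+}(\cdots).
\]
Multiplying by $F_i$ instead gives $\int_{\Omega_+}|\nabla^{\log}F_i|^2_{g_{\log}}dV_K=n\int_{\Omega_+}F_i^2\,dV_K$ plus boundary terms. I will use the exhaustion $\Omega_j$ from the proof of \autoref{thm:log-proof-unconditional-poincare} and show that the boundary fluxes vanish as $j\to\infty$. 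Near $\{X_k=0\}$ one has $p_k\to0$ (since $x_k\to0$ by \autoref{lem:sign-xi-Xi}), and $dV_K$ is smooth and bounded. The flux density of $\nabla^{\log}p_i$ computed in round terms involves $h\det\tau\,\bar g(\nabla^{\log}p_i,\mathbf n)$, which should extend continuously to $\partial\Omega_+$ and be tangential there, because the reflection symmetry makes $p_i$ even across $\{x_k=0\}$. Hence the flux tends to $0$. This yields $\bar F_{i,+}=0$ and equality in the inequality, completing the argument.
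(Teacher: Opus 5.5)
Your overall plan (prove $\mathcal{L}_0^{\log}p_i+np_i=1$, then integrate by parts on an exhaustion of $\Omega_+$) matches the paper's. However, Step~1, which is the heart of the statement, is never carried out, and your intrinsic sketch contains two errors that break the bookkeeping.

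First, a sign. Differentiating $\langle\bm{\xi}^*,dY(V)\rangle=0$ along $U$ with \eqref{eq:log-gauss-formula} gives $\langle d\bm{\xi}^*(U),dY(V)\rangle=+g_{\log}(U,V)$; for the unit disk with $x=(\cos t,\sin t)$, both sides equal $2$ at $U=V=\partial_t$. So $E_i=dY(\nabla^{\log}p_i)+p_i\bm{\xi}$, and $\nabla^{\log}p_i$ is the tangential part of $+E_i$, not $-E_i$.

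Second, and decisively, the claim $\Delta_{\log}\bm{\xi}^*=-(n-1)\bm{\xi}^*+(\text{transversal})$ is false. That is the centro-affine relation (shape operator equal to the identity) transplanted to a structure with $\mathcal{S}=0$. The missing idea is that the $p_i$ are $\nabla^{\log,*}$-affine. Differentiating $E_i=dY(\nabla^{\log}p_i)+p_i\bm{\xi}$ along $U$ with the Gauss formula gives $0=dY(\nabla^{\log}_U\nabla^{\log}p_i)+\bigl(dp_i(U)-g_{\log}(U,\nabla^{\log}p_i)\bigr)\bm{\xi}=dY(\nabla^{\log}_U\nabla^{\log}p_i)$. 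Hence $\nabla^{\log}p_i$ is $\nabla^{\log}$-parallel, so $(\nabla^{\log,*})^2p_i=0$ by \eqref{eq:log-conjugacy}, and $\Delta_{\log}p_i=0$. Everything then comes from the drift: $g_{\log}(\nabla^{\log}\eta,\nabla^{\log}p_i)=d\eta(\nabla^{\log}p_i)=\langle\bm{\xi},E_i-p_i\bm{\xi}\rangle=1-np_i$. With an extra $-(n-1)p_i$ from $\Delta_{\log}$ you would get $1-(2n-1)p_i$. This also contradicts your own (correct) remark that the weight alone produces the constant $1$ and the coefficient $n$.

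Your first, round-coordinates route does work once $\nabla^{\log}p_i$ is identified. From $dX(\nabla^{\log}p_i)=X_iE_i-p_iX$ you get $h\det(\tau)\nabla^{\log}p_i=X_i\,h^2\det(\tau)\tau^{-1}\bar\nabla(x_i/h)$. Then \autoref{lem:div-log-weighted-equals-centro-affine}, \autoref{lem:divnabla-divbarnabla} and \autoref{lem:Delta-bar-nabla} give $\mathcal{L}_0^{\log}p_i=X_i\Delta(x_i/h)+g(\nabla X_i,\nabla(x_i/h))=-(n-1)p_i+(1-p_i)$. This is where $-(n-1)$ legitimately enters: through the centro-affine Laplacian of $x_i/h$ (see \eqref{eq:centro-affine-laplacian}), not through $\Delta_{\log}$.

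In Step~2, obtaining $\int_{\Omega_+}F_i\,dV_K=0$ by integrating the eigenvalue equation is a legitimate alternative to the paper. The paper instead invokes the isotropy identity $\int_{\Sn}\frac{x}{h}\otimes X\,dV_K=V(K)I$ from \cite[Lem. 4.4]{HI25}; your route is more self-contained. But you only assert that the boundary terms vanish. Tangency of $\nabla^{\log}p_i$ along $\partial\Omega_+$ is plausible. Yet the level sets $\partial\Omega_j$ round off the corners of $\Omega_+$, so you would also need uniform control of their normals and of their $\bar{g}$-areas. The paper sidesteps this with the smooth cutoff $\chi_j=\rho_j\circ q_j$ and the exact identity $dq_j(\nabla^{\log}p_i)=\theta_{j,i}-p_i\in(-1,1)$. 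Every error term is then bounded by a constant multiple of $dV_K(\{-j-1<q_j<-j\})$, which tends to $0$. The same estimate, with test function $\chi_j$ in place of $\chi_jF_i$, gives your mean-zero claim.
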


\begin{proof}
Let $\bm{\xi}^{\ast}$ be the conormal field defined by
\eq{
\bm{\xi}^{\ast}=\frac{Px}{h}=\left(\frac{x_1X_1}{h},\ldots,\frac{x_nX_n}{h}\right).
}
Then
\eq{
\langle \bm{\xi}^{\ast},dY(V)\rangle=0 \quad \text{for all } V\in T\Omega_+,\quad \langle \bm{\xi}^{\ast},\bm{\xi}\rangle=1.
}

We first prove
\eq{
\mathcal{L}^{\log}_0 \bm{\xi}^{\ast}+n\bm{\xi}^{\ast}=\bm{\xi}.
}
 
Differentiating the relations $\langle \bm{\xi}^{\ast},\bm{\xi}\rangle=1$ and $\langle \bm{\xi}^{\ast},dY(V)\rangle=0$ in the direction $U$ and using the affine Gauss formula
\eq{
D_UdY(V)=dY(\nabla^{\log}_UV)-g_{\log}(U,V)\bm{\xi}
}
yields
\eq{
\langle d\bm{\xi}^{\ast}(U),\bm{\xi}\rangle=0, \quad \langle d\bm{\xi}^{\ast}(U),dY(V)\rangle=g_{\log}(U,V).
}

For $a\in\R^n$, set $f_a=\langle \bm{\xi}^{\ast},a\rangle$.
Note that the vector $a$ decomposes uniquely into a tangential part and a transversal part:
\eq{
a=dY(W)+f_a\bm{\xi}.
}
For every $V\in T\Omega_+$, we have
\eq{
df_a(V)=\langle d\bm{\xi}^{\ast}(V),a\rangle.
}
Hence, using $\langle d\bm{\xi}^{\ast}(V),\bm{\xi}\rangle=0$ and $\langle d\bm{\xi}^{\ast}(V),dY(W)\rangle=g_{\log}(V,W)$,
\eq{
df_a(V)=\langle d\bm{\xi}^{\ast}(V),dY(W)\rangle=g_{\log}(V,W).
}
Thus $W=\nabla^{\log}f_a$ and
\eq{
a=dY(\nabla^{\log}f_a)+f_a\bm{\xi}.
}

Now we differentiate this identity in the direction $U$:
\eq{
0=D_UdY(\nabla^{\log}f_a)+df_a(U)\bm{\xi}.
}
Using the affine Gauss formula,
\eq{
D_UdY(Z)=dY(\nabla^{\log}_UZ)-g_{\log}(U,Z)\bm{\xi},
}
we obtain
\eq{
dY(\nabla^{\log}_U\nabla^{\log}f_a)=0 \implies \nabla^{\log}_U\nabla^{\log}f_a=0.
}
Moreover, by the conjugacy relation between $\nabla^{\log}$ and $\nabla^{\log,*}$,
\eq{
g_{\log}(\nabla^{\log}_U\nabla^{\log}f_a,V)=(\nabla^{\log,*})^2f_a(U,V).
}
Therefore
\eq{
(\nabla^{\log,*})^2f_a=0\implies \Delta_{\log}f_a=0 
\implies \mathcal{L}^{\log}_0 f_a=g_{\log}(\nabla^{\log}\eta,\nabla^{\log}f_a).
}

Taking the Euclidean inner product of $\bm{\xi}$ with $a$ yields
\eq{
\langle dY(\nabla^{\log}f_a),\bm{\xi}\rangle=\langle a,\bm{\xi}\rangle-nf_a.
}
On the other hand, we have
\eq{
\langle dY(\nabla^{\log}f_a),\bm{\xi}\rangle=d\eta(\nabla^{\log}f_a)=g_{\log}(\nabla^{\log}\eta,\nabla^{\log}f_a).
}
Hence
\eq{
 \mathcal{L}^{\log}_0 f_a+nf_a=\langle a,\bm{\xi}\rangle.
}

Next, we justify the integrations by parts on $\Omega_+$. We recall from the proof of \autoref{thm:log-proof-unconditional-poincare} that $q=\min_kY_k$ and $q_{\varepsilon}=-\varepsilon\log\left(\sum_{k=1}^ne^{-Y_k/\varepsilon}\right)$. We choose a sequence $\varepsilon_j\to 0$, and for simplicity we set $q_j=q_{\varepsilon_j}$. Let $\chi_j=\rho_j\circ q_j$, where $\rho_j\in C^\infty(\mathbb R)$ satisfies $0\le\rho_j\le 1$, $\rho_j=0$ on $(-\infty,-j-1]$, $\rho_j=1$ on $[-j,\infty)$, and $|\rho_j'|\le 2$. Then $\chi_j\in C_c^\infty(\Omega_+)$ and $\chi_j\to 1$ pointwise on $\Omega_+$.

Recall $p_i=x_iX_i/h$. From $E_i=dY(\nabla^{\log}p_i)+p_i\bm{\xi}$, we obtain $dY_k(\nabla^{\log}p_i)=\delta_{ik}-p_i$. Moreover, if we set $\theta_{j,k}=e^{-Y_k/\varepsilon_j}/\sum_{\ell=1}^ne^{-Y_\ell/\varepsilon_j}$, then $dq_j=\sum_k\theta_{j,k}dY_k$. Therefore
\eq{
dq_j(\nabla^{\log}p_i)
=\sum_{k=1}^n\theta_{j,k}dY_k(\nabla^{\log}p_i)
=\sum_{k=1}^n\theta_{j,k}(\delta_{ik}-p_i)=\theta_{j,i}-p_i,
}
where we used $\sum_k\theta_{j,k}=1$. Since $ p_i, \theta_{j,i}\in (0,1)$ on $\Omega_+$, we have 
\eq{
|dq_j(\nabla^{\log}p_i)|\le 1.
}

Due to $\mathcal{L}_0^{\log}F_i=-nF_i$, $\nabla^{\log}F_i=\nabla^{\log}p_i$, and \autoref{lem:log-weighted-integration-by-parts},
\eq{
n\int_{\Omega_+}\chi_jF_i^2\, dV_K
=\int_{\Omega_+}\chi_j|\nabla^{\log}F_i|_{g_{\log}}^2\, dV_K+\int_{\Omega_+}F_i\rho_j'(q_j)dq_j(\nabla^{\log}p_i)\, dV_K.
}
Note that $\rho_j'(q_j)$ is supported in $A_j=\{-j-1<q_j<-j\}$, and $\mathbf{1}_{A_j}\to 0$ pointwise. Hence
\eq{
\left|\int_{\Omega_+}F_i\rho_j'(q_j)dq_j(\nabla^{\log}p_i)\, dV_K\right|\le 2\|F_i\|_{L^\infty(\Omega_+)}dV_K(A_j)\to 0.
}
Since $0\le \chi_j\le 1$ and $\chi_j\to 1$ pointwise, Fatou's lemma gives
\eq{
\int_{\Omega_+}|\nabla^{\log}F_i|_{g_{\log}}^2\, dV_K\le n\int_{\Omega_+}F_i^2\, dV_K.
}

Finally, by \cite[Lem. 4.4]{HI25}, we have
\eq{
\frac{1}{V(K)}\int_{\Sn}\frac{x}{h}\otimes X\, dV_K=I \implies \int_{\Omega_+}F_i\, dV_K=0.
}
Consequently, due to \autoref{thm:log-proof-unconditional-poincare},
\eq{
n\int_{\Omega_+}F_i^2\, dV_K=\int_{\Omega_+}|\nabla^{\log}F_i|_{g_{\log}}^2\, dV_K.
}
\end{proof}

The next theorem should be compared with \autoref{thm:intersection-caps-poincare-ineq}, which gives only the constant $(n+\sum\alpha_i)-1$ in the unconditional case; see also the discussion thereafter.

\begin{theorem}
\label{thm:power-product-unconditional-improved} Let $\alpha_i\ge 0$. Let $K$ be a $C^2_+$ unconditional convex body and $F\in C^1(\Sn)$ be unconditional. Then
\eq{
(n+\sum\alpha_i)\int_{\Sn}(F-\bar{F})^2 |X_1|^{\alpha_1}\cdots |X_n|^{\alpha_n}\, dV_K \le 
\int_{\Sn}|\nabla F|_g^2 |X_1|^{\alpha_1}\cdots |X_n|^{\alpha_n}\, dV_K,
}
where
\eq{
\bar{F}=\frac{\int_{\Sn}F|X_1|^{\alpha_1}\cdots |X_n|^{\alpha_n}\, dV_K}
{\int_{\Sn}|X_1|^{\alpha_1}\cdots |X_n|^{\alpha_n}\, dV_K}.
}
Moreover, equality holds only for constant functions.
\end{theorem}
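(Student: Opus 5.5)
The plan is to run the logarithmic Bochner argument of \autoref{thm:log-proof-unconditional-poincare}, but with the weight $\Phi=-\eta-\sum_i\alpha_iY_i$ on $\Omega_+$. First note that
\[
e^{-\Phi}\,dV_{\log}=\prod_i X_i^{\alpha_i}\,dV_K
\]
by \autoref{lem:log-affine-volume}.

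The key computation is the log-Hessian of this potential. Since $\Psi(y)=\sum_i(1+\alpha_i)y_i$ is linear, \eqref{eq:log-hessian} gives
\[
(\nabla^{\log})^2\Phi=D\Psi[\bm{\xi}]\,g_{\log}=\Bigl(n+\sum_i\alpha_i\Bigr)g_{\log}.
\]
So the weighted log-geometry has Bakry--Émery curvature exactly $n+a$, where $a=\sum\alpha_i$. Because $\nabla^{\log}$ is flat, no dimensional ($N$) correction is needed, and that is why the constant is $n+a$ rather than $n+a-1$.

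The domain version then goes through as before.
\begin{itemize}
\item The weighted analogue of \autoref{thm:log-bochner-domain} holds on $\Omega_j=\{q_{\varepsilon_j}>c_j\}\Subset\Omega_+$. The boundary measure becomes $e^{-(\Phi-\Phi_0)}\,d\sigma_{\log}$, and divergence identities follow from \autoref{lem:div-log-weighted-equals-centro-affine} applied to $e^{-(\Phi-\Phi_0)}Z$.
\item The boundary term is $\mathrm{II}^*_{\log}\ge0$ by \autoref{lem:log-boundary-2nd-fund-form}, since that lemma does not depend on the weight.
\item For a Neumann solution $u$ of $\mathcal{L}^{\log}_\Phi u=-f$, this yields $\int(\mathcal{L}^{\log}_\Phi u)^2\,d\mu_\Phi\ge(n+a)\int|\nabla^{\log}u|^2_{g_{\log}}\,d\mu_\Phi$. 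Solvability follows exactly as in \autoref{lem:neumann-eps}, because $\mathcal{L}^{\log}_\Phi$ is uniformly elliptic on $\Omega_j$.
\item The duality and Cauchy--Schwarz step of \autoref{thm:cap-poincare-from-epsilon} then gives
\[
(n+a)\int_{\Omega_j}(F-\bar F_j)^2\,d\mu_\Phi\le\int_{\Omega_j}|\nabla^{\log}F|^2_{g_{\log}}\,d\mu_\Phi\le\int_{\Omega_+}|\nabla F|_g^2\,d\mu_\Phi .
\]
The last inequality uses $g_{\log}>g$ from \autoref{lem:log-affine-metric}.
\item Let $j\to\infty$ by dominated convergence, using that $\prod X_i^{\alpha_i}$ is bounded since $\alpha_i\ge0$.
\item Sum over orthants using unconditionality and \autoref{lem:sign-xi-Xi}. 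The orthant averages all agree with $\bar F$, so the inequality holds on $\Sn$.
\end{itemize}

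For the equality case, argue as in \autoref{thm:log-proof-unconditional-poincare}. On $\Omega_+$ we have $g_{\log}>g$ strictly, so $|\nabla^{\log}F|_{g_{\log}}<|\nabla F|_g$ wherever $dF\ne0$. The weight $\prod X_i^{\alpha_i}\,dV_K$ is positive on $\Omega_+$, so equality forces $dF\equiv0$ on $\Omega_+$. Hence $F$ is constant there, and by unconditionality and continuity $F$ is constant on $\Sn$.

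The main obstacle is technical rather than conceptual. We need the approximation $F\in C^1$, $K\in C^2_+$ to $C^\infty$ to preserve the strict-inequality argument; as in the earlier theorem, this is handled by proving the non-strict inequality by approximation and then applying the pointwise strict comparison directly to the original $F$. We also need the boundary terms under the weight to behave, which is fine since $\Omega_j\Subset\Omega_+$, where everything is smooth and positive.
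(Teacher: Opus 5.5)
Your proposal is correct and follows the paper's proof essentially verbatim. The paper uses the same weight $\Phi=-\eta-\sum_i\alpha_iY_i$ and reads off $(\nabla^{\log})^2\Phi=(n+\sum_i\alpha_i)g_{\log}$ from the flat log-Hessian formula (note $\Phi=-\Psi\circ Y$ for your linear $\Psi$). It then simply says to repeat the proof of the unweighted logarithmic Poincar\'e theorem with $dV_K,\mathcal{L}_0^{\log}$ replaced by $d\mu_\Phi,\mathcal{L}_\Phi^{\log}$, and the details you supply (the weighted boundary measure $e^{-(\Phi-\Phi_0)}d\sigma_{\log}$, the weight-independent sign $\mathrm{II}^*_{\log}\ge 0$, boundedness of $\prod X_i^{\alpha_i}$ for dominated convergence, and the equality case via $g_{\log}>g$) are exactly what that repetition requires.
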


\begin{proof}
On $\Omega_+$, consider the weight $\Phi=-\eta-\sum \alpha_i Y_i$. By \autoref{lem:log-affine-volume},
\eq{
d\mu_\Phi=e^{-\Phi}\,dV_{\log}=e^{\eta+\sum \alpha_iY_i}\, dV_{\log}=|X_1|^{\alpha_1}\cdots |X_n|^{\alpha_n}\, dV_K.
}
By \autoref{lem:log-hessian-flat}, $(\nabla^{\log})^2\Phi=(n+\sum\alpha_i) g_{\log}$. Now one repeats the proof of \autoref{thm:log-proof-unconditional-poincare}, with $dV_K$ and $\mathcal{L}_0^{\log}$ replaced by $d\mu_\Phi$ and $\mathcal{L}_\Phi^{\log}$.
\end{proof}

\begin{remark}
For $p\in (0,1]$, the $L_p$ version of centro-affine geometry for $C^{\infty}_+$ unconditional convex bodies is obtained by replacing the logarithmic map by the power map
\eq{
T_p:(0,\infty)^n\to (0,\infty)^n, \quad T_p(z)=\frac{1}{p}\left(z_1^p,\dots,z_n^p\right).
}
On $\Omega_+$, we define
\eq{
Y_p=T_p\circ X,\quad (Y_p)_i=\frac{X_i^p}{p},\quad \bm{\xi}_p=pY_p.
}
The induced $L_p$-centro-affine structure is given by
\eq{
D_vdY_p(w)&=dY_p(\nabla^{(p)}_vw)-g_p(v,w)\bm{\xi}_p, \quad dV_p/dx=\det(dY_p(e_1),\dots,dY_p(e_{n-1}),\bm{\xi}_p).
}

Since $D_v\bm{\xi}_p=p\, dY_p(v)$,
\eq{
D_v\bm{\xi}_p=dY_p(\mathcal{S}_pv)-\theta_p(v)\bm{\xi}_p \quad \text{with} \quad \mathcal{S}_p=pI, \quad \theta_p=0.
}
Thus the structure is equiaffine, and by the Gauss equation:
\eq{
\operatorname{Rm}^{\nabla^{(p)}}(u,v)w
=p\left(g_p(v,w)u-g_p(u,w)v\right) \implies \operatorname{Ric}^{\nabla^{(p)}}=p(n-2)g_p.
}

The case $p=1$ corresponds to the classical centro-affine geometry, which is globally defined on the whole sphere. For $0\le p<1$, however, the corresponding $L_p$-type geometry is confined to the positive orthant $\Sn\cap(0,\infty)^n$. Nevertheless, for $p=0$, this restriction is compensated by the fact that the resulting geometry is flat.

By deriving the corresponding Bochner formulas and carrying out the same approximation scheme as in the case $p=0$, together with an appropriate Bakry--\'Emery argument, one should obtain another proof of \autoref{thm:power-coordinate-PL}. We leave the details to the interested reader.

It is tempting to view the logarithmic centro-affine geometry as the shadow of a geometry for origin-symmetric $C^{\infty}_+$ convex bodies, one that would ideally produce the additional $+1$ contribution without needing the restriction to $(0,\infty)^n$. At present, however, such a geometry remains elusive, and with it the source of the additional $+1$ contribution. 

We end this remark with a dictionary relating the objects in logarithmic centro-affine geometry in the unconditional class to those in centro-affine geometry. Let us define the $g$-self-adjoint endomorphism $\mathcal{A}:T\Omega_+\to T\Omega_+$ by
\eq{
\mathcal{A}U&=
U+\sum_{i=1}^n\frac{x_iX_i}{h}d\log X_i(U)\nabla \log X_i,
}
where $\nabla \log X_i$ denotes the $g$-gradient of $\log X_i$. Recall $\eta=\sum_{i=1}^n\log X_i$. We have:

\[
\renewcommand{\arraystretch}{1.1}
\begin{array}{|c|c|}
\hline
\text{Logarithmic centro-affine} & \text{Centro-affine}\\
\hline
Y=\log X,\, \text{transversal }(1,\ldots,1) &
X,\, \text{transversal }X \\
\hline
g_{\log}(U,V) &
g(\mathcal{A}U,V) \\
\hline
\nabla^{\log}u &
\mathcal{A}^{-1}\nabla u \\
\hline
e^{\eta}dV_{\log} &
dV_K \\
\hline
\operatorname{div}^{\log}_{-\eta}W &
\operatorname{div}_{\nabla}W \\
\hline
\Rm^{\nabla^{\log}}=0 &
\Ric^{\nabla}=(n-2)g \\
\hline
(\nabla^{\log})^2\eta+ng_{\log}=0 &
\nabla^2X_i+X_i g=0 \\
\hline
\mathcal{L}^{\log}_{-\eta}\left(\frac{x_iX_i}{h}\right)
=-n\left(\frac{x_iX_i}{h}-\frac{1}{n}\right) &
\Delta\frac{x_i}{h}=-(n-1)\frac{x_i}{h} \\
\hline
\end{array}
\]
\end{remark}

\section*{Acknowledgment}
Hu was supported by the National Key Research and Development Program of China (Grant No. 2021YFA1001800). Ivaki was supported by the Austrian Science Fund (FWF) under Project P36545.

\vspace{5mm}

\noindent
\begin{minipage}[t]{0.40\textwidth}
\textsc{School of Mathematical\\ Sciences, Beihang University,\\ Beijing 100191, China}\\
\email{\href{mailto:huyingxiang@buaa.edu.cn}{huyingxiang@buaa.edu.cn}}
\end{minipage}
\hfill
\begin{minipage}[t]{0.40\textwidth}
\textsc{Institut f\"{u}r Diskrete\\ Mathematik und Geometrie,\\ Technische Universit\"{a}t Wien, Wiedner Hauptstra{\ss}e 8--10,\\ 1040 Wien, Austria}\\
\email{\href{mailto:mohammad.ivaki@tuwien.ac.at}{mohammad.ivaki@tuwien.ac.at}}
\end{minipage}


\begin{thebibliography}{99}

\bibitem[And99]{And99} B. Andrews, 
\textit{Gauss curvature flow: the fate of the rolling stones}, Invent. Math. 138 (1999): 151--161.

\bibitem[BL95]{BL95} B. Bollob\'as, I. Leader, 
\textit{Products of unconditional bodies}, Geometric aspects of functional analysis (Israel, 1992--1994), Oper. Theory Adv. Appl., 77, Birkh\"auser, Basel, (1995), 13--24.

\bibitem[BLYZ12]{BLYZ12} K. J. B\"or\"oczky, E. Lutwak, D. Yang, G. Zhang, 
\textit{The log-Brunn-Minkowski inequality}, Adv. Math. 231 (2012): 1974--1997.

\bibitem[BCD17]{BCD17} S. Brendle, K. Choi, P. Daskalopoulos, 
\textit{Asymptotic behavior of flows by powers of the Gaussian curvature}, Acta Math. 219 (2017): 1--16.

\bibitem[CSX24]{CSX24} S. Chen, Q.-R. Li, L. Xu, 
\textit{Symmetry of solutions to a class of geometric equations for hypersurfaces in $\R^{n+1}$}, J. Differential Equations 401 (2024): 671--682.

\bibitem[CHG17]{CHG17} A. Colesanti, D. Hug, E. S. Gomez, 
\textit{Monotonicity and concavity of integral functionals involving area measures of convex bodies}, Commun. Contemp. Math. 19 (2017): 1650033.

\bibitem[CLM17]{CLM17} A. Colesanti, G. Livshyts, A. Marsiglietti, 
\textit{On the stability of Brunn-Minkowski type inequalities}, J. Funct. Anal. 273 (3) (2017): 1120--1139.

\bibitem[CER23]{CER23} D. Cordero-Erausquin, L. Rotem, 
\textit{Improved log-concavity for rotationally invariant measures of symmetric convex sets}, Ann. Probab. 51 (3) (2023): 987--1003.

\bibitem[Dub77]{Dub77} S. Dubuc, 
\textit{Crit\`eres de convexit\'e et in\'egalit\'es int\'egrales}, Ann. Inst. Fourier (Grenoble) 27 (1977): 135--165.

\bibitem[Had56]{Had56} H. Hadwiger,
\textit{Konkave Eik\"orperfunktionale und h\"ohere Tr\"agheitsmomente}, Comment. Math. Helv. 30 (1956): 285--296

\bibitem[HI24]{HI24} Y. Hu, M. N. Ivaki, 
\textit{On the uniqueness of solutions to the isotropic $L_p$ dual Minkowski problem}, Nonlinear Anal. 241 (2024): 113493.

\bibitem[HI25]{HI25} Y. Hu, M. N. Ivaki, 
\textit{Stability of the cone-volume measure with near constant density}, Int. Math. Res. Not. IMRN 2025, no. 6 (2025).

\bibitem[HLYZ16]{HLYZ16} Y. Huang, E. Lutwak, D. Yang, G. Zhang, 
\textit{Geometric measures in the dual Brunn-Minkowski theory and their associated Minkowski problems}, Acta Math. 216 (2) (2016): 325--388.

\bibitem[HYZ25]{HYZ25} Y. Huang, D. Yang, G. Zhang, 
\textit{Minkowski problems for geometric measures}, Bull. Amer. Math. Soc. 62, no. 3 (2025): 359--425.

\bibitem[ILS25]{ILS25} K. Ishige, Q. Liu, P. Salani, 
\textit{A parabolic PDE-based approach to Borell-Brascamp-Lieb inequality}, Math. Ann. 392 (4) (2025): 4891--4937.

\bibitem[Iva23]{Iva23} M. N. Ivaki, 
\textit{Uniqueness of solutions to a class of non-homogeneous curvature problems}, arXiv preprint arXiv:2307.06252 (2023).

\bibitem[IM23]{IM23} M. N. Ivaki, E. Milman, 
\textit{Uniqueness of solutions to a class of isotropic curvature problems}, Adv. Math. 435 (2023): No. 109350.

\bibitem[KL21]{KL21} A. V. Kolesnikov, G. V. Livshyts, 
\textit{On the Gardner-Zvavitch conjecture: symmetry in inequalities of Brunn-Minkowski type}, Adv. Math. 384 (2021): 107689.

\bibitem[KM16]{KM16} A. V. Kolesnikov, E. Milman, 
\textit{Riemannian metrics on convex sets with applications to Poincar\'e and log-Sobolev inequalities},  Calc. Var. Partial Differential Equations 55, no. 4 (2016): 77.

\bibitem[KM18]{KM18} A. V. Kolesnikov, E. Milman, 
\textit{Poincar\'e and Brunn-Minkowski inequalities on the boundary of weighted Riemannian manifolds}, Amer. J. Math. 140 (5) (2018): 1147--1185.

\bibitem[KM22]{KM22} A. V. Kolesnikov, E. Milman, 
\textit{Local $L^p$-Brunn-Minkowski inequalities for $p<1$}, Mem. Amer. Math. Soc. 277 (2022): No. 1360. 

\bibitem[LW24]{LW24} H. Li, Y. Wan, 
\textit{Uniqueness of solutions to some classes of anisotropic and isotropic curvature problems}, J. Funct. Anal. 287 (3) (2024): 110471.

\bibitem[LW26]{LW26} H. Li, Y. Wan, 
\textit{Classification of solutions for the planar isotropic $L_p$ dual Minkowski problem}, J. Reine Angew. Math. 834 (2026): 27--56.

\bibitem[Lie13]{Lie13} G. M. Lieberman, 
\textit{Oblique derivative problems for elliptic equations}, World Scientific, 2013.

\bibitem[LYZ18]{LYZ18} E. Lutwak, D. Yang, G. Zhang,
\textit{$L_p$ dual curvature measures}, Adv. Math. 329 (2018): 85--132.

\bibitem[Mil23]{Mil23}
E. Milman, \textit{Centro-affine differential geometry and the log-Minkowski problem}, J. Eur. Math. Soc. DOI: 10.4171/JEMS/1386 (2023).

\bibitem[NS94]{NS94} K. Nomizu, T. Sasaki, 
\textit{Affine Differential Geometry}, Cambridge Univ. Press, 1994.

\bibitem[Opo15]{Opo15} B. Opozda, 
\textit{Bochner's technique for statistical structures}, Ann. Global Anal. Geom. 48, no. 4 (2015): 357--395.

\bibitem[SZ25]{SZ25} S. Sadovsky, G. Zhang,
\textit{Brunn--Minkowski and reverse isoperimetric inequalities for dual quermassintegrals}, Adv. Math. 480 (2025): 110456.

\bibitem[Sar15]{Sar15} C. Saroglou, 
\textit{Remarks on the conjectured log-Brunn-Minkowski inequality}, Geom. Dedicata 177, no. 1 (2015): 353--365.

\bibitem[Sch14]{Sch14} R. Schneider, 
\textit{Convex bodies: the Brunn-Minkowski theory}, volume 151 of Encyclopedia of Mathematics and its Applications. Cambridge University Press, Cambridge, second expanded edition, 2014.

\bibitem[XZ22]{XZ22} D. Xi, Z. Zhang, 
\textit{The $L_p$ Brunn-Minkowski inequalities for dual quermassintegrals}, Proc. Amer. Math. Soc. 150 (7) (2022): 3075--3086.

\end{thebibliography}
\end{document}